\newtheorem{thm}{Theorem}[subsection]
\newtheorem{prop}[thm]{Proposition}
\newtheorem{lem}[thm]{Lemma}
\newtheorem{cor}[thm]{Corollary}
\newtheorem{ques}[thm]{Question}
\newtheorem{guess}[thm]{Guess}
\theoremstyle{definition}
\newtheorem{defn}[thm]{Definition}
\theoremstyle{remark}
\newtheorem{remk}[thm]{Remark}
\newtheorem{remks}[thm]{Remarks}
\newtheorem{exm}[thm]{Example}
\newtheorem{exms}[thm]{Examples}
\newtheorem{notat}[thm]{Notation}
\numberwithin{equation}{subsection}
\newcommand{\CH}{{\rm CH}}
\newcommand{\TCH}{{\rm TCH}}
\newcommand{\Hom}{{\rm Hom}}
\newcommand{\Spec}{{\rm Spec \,}}
\newcommand{\Tr}{{\rm Tr}}
\newcommand{\ord}{{\rm ord}}
\renewcommand{\max}{{\operatorname{\rm max}}}
\newcommand{\ds}{{/\kern-3pt/}}
\renewcommand{\log}{{\operatorname{log}}}
\newcommand{\Log}{{\operatorname{Log}}}
\newcommand{\TZ}{{\operatorname{Tz}}}
\renewcommand{\TH}{{\operatorname{TCH}}}
\newcommand{\un}{\underline}
\newcommand{\ov}{\overline}
\renewcommand{\dim}{\text{\rm dim}}
\newcommand{\tuborg}{\left\{\begin{array}{ll}}
\newcommand{\sluttuborg}{\end{array}\right.}
\renewcommand{\mod}{ {\rm \ mod \ } }
\begin{document}
\title[de Rham-Witt forms and non-reduced schemes]{The big de Rham-Witt forms over fields and motives of non-reduced schemes}
\author{Jinhyun Park}
\address{Department of Mathematical Sciences, KAIST, 291 Daehak-ro Yuseong-gu, Daejeon, 34141, Republic of Korea (South)}
\email{jinhyun@mathsci.kaist.ac.kr; jinhyun@kaist.edu}


\keywords{algebraic cycle, motivic cohomology, vanishing cycle, Milnor $K$-theory, de Rham-Witt form, Artin ring}

\subjclass[2020]{Primary 14C25; Secondary 19D45, 13F35, 14F42}

\begin{abstract}

Using algebraic cycles as a medium, we prove that the groups of the big (Hesselholt-Madsen) de Rham-Witt forms over arbitrary fields are isomorphic to the relative improved (Gabber-Kerz) Milnor $K$-groups of Artin local algebras of embedding dimension $1$. This answers an old problem on the relative Milnor $K$-groups studied since 1970s, especially in ${\rm char} (k) = p>0$.

Applications include an interpretation of the big de Rham-Witt forms precisely as the vanishing cycles of the Elmanto-Morrow motivic cohomology of non-reduced schemes, as well as a construction of an extended logarithmic derivative map $d\log$ on the Milnor $K$-theory of some Artin rings to the de Rham-Witt forms.
\end{abstract}

\maketitle

\setcounter{tocdepth}{1}

\tableofcontents

\section{Introduction}

The main objective of this article is to study one old problem on computations of the relative Milnor $K$-theory of some Artin $k$-algebras over a field $k$. The earliest relevant result the author is aware of is the calculation $K_2 ^M (k[\epsilon]/(\epsilon^2), (\epsilon)) \simeq \Omega_{k/\mathbb{Z}} ^1$ in terms of the absolute K\"ahler forms by W. van der Kallen \cite{vdK} in 1971 when ${\rm char} (k) \not = 2$. Since then for about five decades, numerous people including S. Bloch \cite{Bloch 1973}, \cite{Bloch 1975} in the 1970s to B. Dribus \cite{Dribus}, S. Gorchinskiy et. al. \cite{GO}, \cite{GT} in more recent years contributed to the question, under some conditions on the base field $k$.

In this article we furnish a complete answer on all Artin local $k$-algebras of embedding dimension $1$ with the residue field $k$, where $k$ is an arbitrary field of arbitrary characteristic. Specifically,  for integers $m, n \geq 1$, we establish that for $k_{m+1} := k[t]/(t^{m+1})$
$$
\widehat{K}_n ^M (k_{m+1}, (t)) \simeq \mathbb{W}_m \Omega_k ^{n-1},
$$
where the left is the relative improved Milnor $K$-group  defined by O. Gabber and M. Kerz \cite{Kerz finite}, while the right is the group of the big de Rham-Witt forms of Hesselholt-Madsen \cite{HeMa} (see also L. Hesselholt \cite{Hesselholt}).

When ${\rm char} (k) = 0$, or when ${\rm char} (k) \not = 0$ but at least many integers are invertible in $k$, the above isomorphism has been known for several years. Rather unexpectedly, the author learned in 2021 from Matthew Morrow that the case of ${\rm char}(k) = p>0$ had been still open. Morrow asked him if there is a possibility of handling the problem via certain algebraic cycles with which the author has some familiarity. This article grew out of the attempts to answer it. We answer the problem by interpreting it in algebro-geometric terms via algebraic cycles, and our method is characteristic free.

\subsection{Background and some historical accounts}

For a commutative ring $R$ with unity, let $K_n ^M (R)$ denote the $n$-th Milnor $K$-group (recalled in \S \ref{sec:Milnor K}), and let $\Omega_{R/\mathbb{Z}}^n$ denote the $R$-module of the absolute K\"ahler differential $n$-forms. We have the natural $d\log$ homomorphism
$$
d\log: K_n ^M (R) \to \Omega_{R/\mathbb{Z}} ^n
$$
that sends the Milnor symbol $\{ a_1, \cdots, a_n \}$ to $d\log (a_1) \wedge \cdots \wedge d\log (a_n)$, where $d\log (c)$ for $c \in R^{\times}$ means $ \frac{dc}{c}$. This homomorphism is generally far from being an isomorphism, while it was observed that the behavior in the relative situation is often a bit better in the following sense.

For a nilpotent ideal $I \subset R$, we have the relative group $ K_n ^M (R, I) = \ker (K_n ^M (R) \to K_n ^M (R/I))$. In case there is a splitting homomorphism for the surjection $R\to R/I$, by functoriality we obtain the decomposition 
$$
 K_n ^M (R) = K_n ^M (R, I) \oplus K_n ^M (R/I).
 $$ 
 We can similarly define $\Omega_{(R, I)/ \mathbb{Z}} ^n$, and obtain the induced map
$$
d\log: K_n ^M (R, I) \to \Omega_{(R,I)/ \mathbb{Z}} ^n.
$$

Under some extra assumptions such as invertibility of a set of integers in $R$, we have a suitable notion of $``\log" $ for $a \in \ker (R^{\times} \to (R/I)^{\times})$, and by abuse of notations, we may find an ``anti-derivative" of
$$
d \log (a_1) \wedge \cdots \wedge d\log (a_n) = d \left( ``\log" (a_1) d \log (a_2) \wedge \cdots \wedge d \log (a_n) \right),
$$
and this ``anti-derivative" gives a map
$$
B: K_n ^M (R, I) \to \Omega_{ (R,I)/ \mathbb{Z}} ^{n-1} / d \Omega_{(R,I)/\mathbb{Z}} ^{n-2},
$$
to the group modulo exact forms. This kind of idea was sketched in the papers of S. Bloch \cite{Bloch 1973}, \cite{Bloch 1975} when $\mathbb{Q} \subset R$. Thus as an homage to him, we call it \emph{the Bloch map} $B$ whenever such a map exists. This is a $\log$-like map which sends the products of elements in $R^{\times}$ to the addition of forms. Another way to view it is as the Milnor analogue the calculation of the relative Quillen $K$-theory in terms of the relative cyclic homology, see e.g. T. Goodwillie \cite{Goodwillie} (cf. B. Dribus \cite{Dribus}). 

\medskip

Some natural questions arise: 
\begin{enumerate}
\item [(Q1)] When do we have the Bloch map $B$?
\item [(Q2)] If $B$ exists, is it an isomorphism?
\item [(Q3)] If the above kind of map $B$ does not exist, what can we do to improve the situation?
\end{enumerate}

It turns out that for the questions (Q1) and (Q2), there is a sufficiency condition on $R$. Many important results in the literature such as W. van der Kallen \cite{vdK}, \cite{vdK2}, S. Bloch \cite{Bloch 1973}, \cite{Bloch 1975} in the 1970s, to the more recent results of B. Dribus \cite{Dribus}, Gorchinskiy-Osipov \cite{GO} and Gorchinskiy-Tyurin \cite{GT} contributed in this direction.

The latter one \cite{GT} in particular shows that when $R$ is weakly $5$-fold stable, $I^N = 0$, and $\frac{1}{N!} \in R$, then the Bloch map $B$ exists and it is indeed an isomorphism. Recall that a ring $R$ is called \emph{weakly $r$-fold stable} if for any $(r-1)$ members $a_1, \cdots, a_{r-1} \in R$, there exists some $b \in R^{\times}$ such that all $a_i + b \in R^{\times}$. For instance, if $R$ is a field with $|R| \geq 6$, then $R$ is weakly $5$-fold stable. See \cite[Definition 2.2]{GO}, which is equivalent to the one due to M. Morrow \cite[Definition 3.1]{Morrow}, and it traces back to a similar older notion of $r$-fold stability due to W. van der Kallen \cite{vdK2}.

\medskip

While \cite{GT} answered the problem for a large class of cases, the assumption $\frac{1}{N!} \in R$ could be dissatisfactory to some people. For instance, when $k$ is a field of characteristic $p>0$, and $R=k_{m+1}$ with $I= (t)$, the result of \cite{GT} says little about the computation of the relative Milnor $K$-groups $K_n ^M (k_{m+1}, (t))$ for $m \geq p$. Because this is when (Q1) and (Q2) are answered negatively, the situation demands us to think about the plan B, i.e. (Q3).

\medskip

Regardless of these inconveniences, to guess the right question to ask, temporarily review what happened when ${\rm char} (k) = 0$. Here, the Bloch map exists and gives an isomorphism
$$ 
K_n ^M (k_{m+1}, (t)) \simeq \Omega_{(k_{m+1}, (t))/\mathbb{Z}} ^{n-1} /d \Omega_{ (k_{m+1}, (t))/\mathbb{Z}}^{n-2},
$$
where the latter group is isomorphic to the big de Rham-Witt forms 
via
$$
\simeq  \bigoplus_{i=1} ^m t^i \Omega_{k/\mathbb{Z}} ^{n-1} \simeq \bigoplus_{i=1} ^m \Omega_{k/\mathbb{Z}}^{n-1} \simeq  \mathbb{W}_m \Omega_k ^{n-1}.
$$
See, e.g. Gupta-Krishna \cite[Theorem 1.3-(r)]{Gupta-Krishna1}, Iwasa-Kai \cite[Lemma 4.4]{Iwasa-Kai}, Park-\"Unver \cite[Lemma 5.4.3]{PU Milnor}, or Park \cite[Lemma 2.12]{Park AMS}. The intermediate isomorphisms in the above essentially depend on the assumption that ${\rm char} (k) = 0$, under which there exist the formal logarithm and the formal exponential
$$
\Log : ( 1 + t k[[t]])^{\times} \overset{ \sim}{\longrightarrow} (t k[[t]], +),
$$
with ${\rm Exp} = \Log^{-1}$, both of which are isomorphisms of groups (see \eqref{eqn:Log}).

When ${\rm char} (k)= p>0$, we may not have such logarithm or exponential, and the intermediate isomorphisms in the above mostly fail. However, for the two terminal groups, it might be reasonable to ask whether we have an isomorphism via a different path, namely
$$
\widehat{K}_n ^M (k_{m+1}, (t)) \overset{?}{\simeq} \mathbb{W}_m \Omega_k ^{n-1}
$$
for all fields $k$ of arbitrary characteristic. 
In this direction, the best known result so far has been the following (see e.g. Gupta-Krishna \cite[Theorem 1.6]{Gupta-Krishna2} or R\"ulling-Saito \cite{RS}):

\begin{thm}[Gupta-Krishna, R\"ulling-Saito]\label{thm:main pro-iso}
Let $k$ be a field of characteristic $p>0$. Let $n \geq 1$ be an integer.

Then there exists an isomorphism of pro-groups
\begin{equation}\label{eqn:main pro-iso 0}
\left\{ \mathbb{W}_m \Omega_k ^{n-1} \right\}_{m \geq 1} \overset{\simeq}{\longrightarrow} \left\{ \widehat{K}_n ^M (k_{m+1}, (t)) \right\}_{m \geq 1}.
\end{equation}
\end{thm}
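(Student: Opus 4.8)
The plan is to construct an explicit morphism between the two towers and to prove it is a pro-isomorphism by producing a ``pro-inverse'' that is only defined after coarsening the index. \emph{Step 1: the forward map.} The starting point is the standard degree shift: $\widehat{K}_1^M(k_{m+1},(t))$ is the multiplicative group $\bigl(1+t\,k[t]/(t^{m+1})\bigr)^{\times}$, which is canonically the group of big Witt vectors $\mathbb{W}_m(k)=\mathbb{W}_m\Omega_k^0$. So $\widehat{K}_{\bullet+1}^M(k_{\bullet+1},(t))$ is a candidate to receive a map from the big de Rham--Witt complex once one produces on it the requisite operations: Verschiebungen $V_r$ (from $t\mapsto t^r$, suitably renormalized), Frobenii $F_r$ (transfers along the same maps), a differential $d$ realized as cup product with a fixed relative symbol supported on $(t)$, and the graded product. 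I would \emph{define} $\phi_m:\mathbb{W}_m\Omega_k^{n-1}\to \widehat{K}_n^M(k_{m+1},(t))$ on a standard generating set of the left-hand side by
$$
V_i([a])\,d\log[u_1]\cdots d\log[u_{n-1}]\ \longmapsto\ \{1-at^i,\,u_1,\dots,u_{n-1}\},\qquad a\in k,\ u_j\in k^{\times},
$$
and analogously on the $dV_i$-type generators, then check that the defining relations of the big de Rham--Witt complex (the Leibniz rule, $F_rV_r=r$, $F_r\,d\,V_r=d$, multiplicativity of $V_r$, and the Witt-vector relations in degree $0$) translate into identities among Milnor symbols in $k_{m+1}$. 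Since these formulas manifestly respect both the restriction maps on the source and the surjections $k[t]/(t^{m+2})\twoheadrightarrow k[t]/(t^{m+1})$ on the target (note $\{1-at^i,\dots\}$ becomes trivial in $k_{m+1}$ as soon as $i>m$), $\phi=\{\phi_m\}$ is a morphism of pro-abelian groups. One may also organize this via a universal property of $\mathbb{W}\Omega_k$ and reduce to checking that the shifted tower fits that formalism.

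\emph{Step 2: pro-surjectivity.} Here one needs $\widehat{K}_n^M(k_{m+1},(t))$ to be generated as an abelian group by the symbols $\{1-at^i,u_1,\dots,u_{n-1}\}$ above. That the relative group is generated by symbols with at least one entry in $1+(t)$ is classical for a local ring with enough units (Dennis--Stein, van der Kallen \cite{vdK2}); that the remaining entries may be taken to be Teichm\"uller units in $k^{\times}$ follows by multilinearity together with the Bloch--van der Kallen manipulation rewriting a symbol with two entries of levels $i,j$ in $1+(t)$ through symbols of level $i+j$. Passing to the improved group $\widehat{K}_n^M$ of Gabber--Kerz \cite{Kerz finite} is what makes this generation uniform in $k$, finite residue fields included. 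Thus $\phi$ is in fact surjective level by level.

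\emph{Step 3: pro-injectivity, the crux.} For each $m$ I would fix $m'$ large relative to $m$ and $n$ and build a homomorphism
$$
\psi_m:\widehat{K}_n^M(k_{m'+1},(t))\longrightarrow \mathbb{W}_m\Omega_k^{n-1}
$$
sending $\{1-at^i,u_1,\dots,u_{n-1}\}\mapsto V_i([a])\,d\log[u_1]\cdots d\log[u_{n-1}]$ for $i\le m$ and $\mapsto 0$ for $i>m$ (and analogously on the remaining generators). The whole point is that $\psi_m$ is well defined: the Steinberg relation $\{v,1-v\}=0$, bimultiplicativity, and the relations collapsing a product $(1-at^i)(1-bt^i)$ to $1-(a+b)t^i$ produce on the de Rham--Witt side only correction terms of level $>m$, hence zero after truncation --- provided $m'$ is large enough (a bound of the shape $m'\gtrsim nm$) that every relation holding in $k_{m'+1}$ involves only such controllable errors. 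Granting well-definedness, the formulas give $\psi_m\circ\phi_{m'}=R^{\,m'-m}$ (iterated restriction on $\mathbb{W}_\bullet\Omega_k^{n-1}$) and $\phi_m\circ\psi_m$ equals the Milnor transition map from level $m'$ to level $m$; combined with Step 2 this shows $\phi$ is an isomorphism of pro-abelian groups.

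\emph{Main obstacle and an alternative.} The hard part is exactly the well-definedness of $\psi_m$: it requires a relation-by-relation control of which correction terms the big de Rham--Witt complex produces and at which level, together with the right choice of $m'=m'(m,n)$; the fact that one must enlarge the level before a Milnor relation becomes visible is precisely why the conclusion is only a pro-isomorphism. A more conceptual route, closer to the method of the present paper, replaces Step 3 by algebraic cycles: identify $\mathbb{W}_m\Omega_k^{n-1}$ with an additive higher Chow group of $\Spec k$ of modulus $m+1$ (R\"ulling's realization of the big de Rham--Witt complex as a complex of zero-cycles), identify $\widehat{K}_n^M(k_{m+1},(t))$ with the matching relative motivic cohomology group by a relative Nesterenko--Suslin--Totaro comparison, and observe that the two cycle descriptions coincide as pro-systems by continuity of the modulus condition --- essentially the route of \cite{Gupta-Krishna2} and \cite{RS}. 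In that argument too, a loss of control at each finite level is unavoidable, and removing it is exactly what the main theorem of this paper accomplishes.
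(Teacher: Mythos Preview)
First, a framing point: this theorem is not proved in the paper. It is stated as a prior result, attributed to \cite{Gupta-Krishna2} and \cite{RS}, and the paper's contribution is precisely to upgrade it from a pro-isomorphism to a level-wise one. So there is no ``paper's own proof'' to compare against, only the literature proofs and the paper's commentary on them.

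That said, your Step~1 contains a genuine gap that the paper itself flags. You propose to define $\phi_m$ \emph{level by level} by putting a restricted Witt-complex structure on $\{\widehat{K}_n^M(k_{m+1},(t))\}_{m,n}$ and invoking the universal property of $\mathbb{W}_\bullet\Omega_k^\bullet$. The paper's introduction singles out exactly this as the central difficulty: ``this approach requires a rather cumbersome process of proving that we have a few operations such as the Verschiebung $V_r$, the Frobenius $F_r$ \ldots\ and checking all the axioms. This is not at all straightforward to perform on the relative Milnor $K$-groups.'' The Addendum (\S\ref{sec:addendum}) makes this sharper: the author explicitly isolates the needed Witt-complex structure as \textbf{Assumption $(\star)$} and concludes that he ``does not have a good argument for Assumption $(\star)$ without resorting to the cycle-theoretic results proven in this article.'' In other words, the level-wise map you treat as a preliminary is essentially the paper's main theorem; you cannot assume it while proving the weaker pro-statement that historically preceded it.

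Concretely, the literature does not proceed this way. As the paper records, Gupta--Krishna construct maps $\mathbb{W}_{2m+1}\Omega_k^{n-1}\to\widehat{K}_n^M(k_{m+1},(t))$ via cycles with modulus (Binda--Saito), i.e.\ already with an index shift on the \emph{forward} map, and R\"ulling--Saito likewise work on the cycle side. So what you list at the end as ``an alternative'' is in fact the actual route; your primary route (Steps~1--3 via direct Witt-complex verification on Milnor $K$) is the road not taken because nobody could make Step~1 go through without the machinery of this paper. Your Step~2 also leans on a generation statement---that the non-first slots can be reduced to Teichm\"uller constants in $k^\times$---which is stronger than Lemma~\ref{lem:KS} and would itself require an argument of the type in Lemma~\ref{lem:KS improved}.
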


As said before, Matthew Morrow enlightened the author that, in case ${\rm char} (k) = p>0$ whether the above pro-group isomorphism could be promoted to a level-wise one had not yet been resolved. 
This question turned out to be deeper than the author initially imagined, and even constructing a level-wise map was challenging in general. For instance, via some general results on cycles with modulus (see Binda-Saito \cite{BS}), Gupta-Krishna \cite{Gupta-Krishna2} constructed homomorphisms $\mathbb{W}_{2m+1} \Omega_k ^{n-1} \to \widehat{K}_n ^M (k_{m+1}, (t))$ for $m \geq 1$ that induce a pro-map, but this is not a level-wise map at all.

\medskip

The central theorem of this article answers this problem, offering an affirmative response to Morrow in particular. We summarize some of them in the following form. All objects in Theorem \ref{thm:intro main vanishing}, except for the one in (5), will be defined or at least sketched in this article:

\begin{thm}\label{thm:intro main vanishing}
Let $k$ be an arbitrary field. Let $m, n \geq 1$ be integers. The following groups are all isomorphic:
\begin{enumerate}
\item $\mathbb{W}_m \Omega_k ^{n-1}:$ the big de Rham-Witt $(n-1)$-forms of modulus $m$ over $k$.
\item $\TH^n (k, n;m):$  the additive higher Chow group of $ \Spec (k)$ of modulus $m$.
\item $\CH_{{\rm v}} ^n (X/ (m+1), n):$ the Chow group  of strict vanishing cycles modulo $I^{m+1}$, where $X=\Spec (A)$ is any integral regular henselian local $k$-scheme of dimension $1$ with the residue field $k=A/I$.
\item $\widehat{K}_n ^M (k_{m+1}, (t)) :$ the relative improved Milnor $K$-group of $k_{m+1}$.
\item $\ker ( {\rm H}_{\mathcal{M}} ^n (\Spec (k_{m+1}), \mathbb{Z} (n)) \to {\rm H}_{\mathcal{M}} ^n (\Spec (k), \mathbb{Z}(n)): $ the vanishing cycles of the Elmanto-Morrow motivic cohomology ${\rm H}_{\mathcal{M}} ^n (\Spec (k_{m+1}), \mathbb{Z} (n))$.

\end{enumerate}
\end{thm}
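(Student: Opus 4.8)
The plan is to prove Theorem~\ref{thm:intro main vanishing} by producing a chain of natural isomorphisms $(1)\cong(2)\cong(3)\cong(4)\cong(5)$, in the displayed order, using the algebraic-cycle groups $(2)$ and $(3)$ as the bridge between the de Rham--Witt side $(1)$ and the Milnor $K$-theory side $(4)$, and deducing $(5)$ afterwards by transport along the Elmanto--Morrow comparison. The first link $(1)\cong(2)$, namely $\mathbb{W}_m\Omega_k^{n-1}\cong\TH^n(k,n;m)$, is R\"ulling's theorem identifying the additive higher Chow groups of a field with its big de Rham--Witt complex; here the only work is to fix the modulus/weight indexing so that ``modulus $m$'' matches ``level $m$'', which is bookkeeping.

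For $(2)\cong(3)$ I would construct a natural isomorphism $\CH_{{\rm v}}^n(X/(m+1),n)\cong\TH^n(k,n;m)$ for every integral regular henselian local $k$-scheme $X=\Spec A$ of dimension one with residue field $k=A/I$; this simultaneously shows $(3)$ is independent of the choice of $X$. The mechanism is that $A$ is a henselian DVR containing a coefficient field $k$ equal to its residue field, so $A/I^{m+1}\cong k_{m+1}$ as augmented $k$-algebras, and the closed point of $X$ with the weight $I^{m+1}$ plays exactly the role of $0\in\mathbb{A}^1_k$ with multiplicity $m+1$ in the additive theory. Concretely I would: set up the complex of strict vanishing cycles on $X\times\square^{\bullet}$ with the strict-modulus and face conditions; prove a rigidity statement showing the homology depends on $X$ only through the henselian local pair $(A,I)$, hence --- because of the modulus $m+1$ --- only through $(A/I^{m+1},I/I^{m+1})$; and match the resulting complex termwise with the additive cycle complex of $\Spec k$ of modulus $m$. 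Passing the termwise identification to homology requires a moving lemma for cycles with modulus; in this one-dimensional, henselian-local setting I expect it to be delicate but tractable.

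The crux is $(3)\cong(4)$: the Chow group of strict vanishing cycles modulo $I^{m+1}$ is isomorphic to the relative \emph{improved} Milnor $K$-group $\widehat{K}_n^M(k_{m+1},(t))$ of \cite{Kerz finite}. This is the relative, modulus-theoretic analogue of the Nesterenko--Suslin--Totaro isomorphism $\CH^n(\Spec F,n)\cong K_n^M(F)$, and the decisive point is that it is characteristic-free: no formal logarithm or exponential, hence no invertibility hypothesis on integers, intervenes, which is precisely why the classical Bloch-map route --- available only when enough integers are units --- is bypassed. I would construct an explicit symbol map sending Steinberg-type cycles to relative symbols such as $\{1+at^i,\dots\}$ and $\{t,\dots\}$, check it is well defined on the relative part, prove surjectivity by exhibiting cycles that realise a generating set of $\widehat{K}_n^M(k_{m+1},(t))$, and prove injectivity by matching the defining relations of the improved group --- multilinearity, the Steinberg relation, and the generic-point corrections built into the Gabber--Kerz improvement --- with boundaries of explicit cycles, most likely within the framework of cycles with modulus (Binda--Saito \cite{BS}). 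That the \emph{improved} group, rather than the naive one, appears is forced: for the small non-regular ring $k_{m+1}$ the naive Milnor $K$-group has the wrong size, while the cycle complex already encodes the corrections defining the improvement. I expect this step, together with the moving lemma in $(2)\cong(3)$, to be the main obstacle.

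Finally $(4)\cong(5)$ is obtained by transporting $(4)$ through the Elmanto--Morrow comparison theorem in top weight: for a commutative local ring $R$ --- possibly non-reduced --- one has ${\rm H}_{\mathcal{M}}^n(\Spec R,\Z(n))\cong\widehat{K}_n^M(R)$, compatibly with ring maps. Applying this to $R=k_{m+1}$ and $R=k$ and using the section $k_{m+1}\to k$ of the structure map $k\to k_{m+1}$ to split the restriction, one identifies the kernel in $(5)$ with $\ker\big(\widehat{K}_n^M(k_{m+1})\to\widehat{K}_n^M(k)\big)=\widehat{K}_n^M(k_{m+1},(t))$, which is $(4)$. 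The only care needed is that the Elmanto--Morrow identification holds integrally and for non-reduced inputs in this diagonal bidegree, and is compatible with restriction to the reduction; granting this, the comparison of vanishing cycles is formal, and the chain $(1)\cong(2)\cong(3)\cong(4)\cong(5)$ completes the proof.
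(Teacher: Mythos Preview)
Your chain $(1)\cong(2)\cong(3)\cong(4)\cong(5)$ and the identifications at the endpoints are right: $(1)\cong(2)$ is R\"ulling, $(4)\cong(5)$ is Elmanto--Morrow in top weight, and your description of $(3)\cong(4)$ as a relative Nesterenko--Suslin--Totaro style graph/symbol map is exactly what happens. But you have misallocated the difficulty: the isomorphism $(3)\cong(4)$ is \emph{not} the crux of this paper --- it is quoted from \cite{Park presentation} (Theorem~\ref{thm:graph final}), where the graph map $gr_{{\rm v}}:\widehat{K}_n^M(k_{m+1},(t))\to\CH_{{\rm v}}^n(X/(m+1),n)$ is constructed and shown to be an isomorphism for regular henselian $X$. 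The new content of the present paper is entirely the link $(2)\cong(3)$.

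Your proposed route to $(2)\cong(3)$ --- a rigidity statement reducing to $(A/I^{m+1},I/I^{m+1})$ followed by a ``termwise'' matching of cycle complexes and a moving lemma --- is the genuine gap. The two complexes do not sit in the same ambient geometry: additive cycles live in $\mathbb{A}^1\times\square^{n-1}$ with a modulus condition along $\{x=0\}$, while strict vanishing cycles live in $X\times\square^n$ with the extended conditions $(GP)_*$, $(SF)_*$ and empty special fiber. There is no termwise identification to hope for, and a moving lemma alone does not bridge a change of ambient space. What the paper actually does is construct an explicit correspondence $\Lambda\subset\widehat{B}_n^0\times X\times\square^n$ cut out by $y_1=1-t/x$, $y_{i+1}=z_i-xt$, and define a pull--push map $\varphi:\TZ^n(k,n;m)\to z_{{\rm v}}^n(X,n)$ through it (\S\ref{sec:geo construct}). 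Checking that $\varphi$ lands in ${\rm v}$-cycles and descends modulo $I^{m+1}$ uses the strong sup modulus condition in an essential way (Proposition~\ref{prop:inverse Bloch general adm}, Corollary~\ref{cor:tilde W face y_1}). Injectivity is then obtained by building a nontrivial \emph{deconcatenation} map ${\rm Dec}:\CH_{{\rm v}}^n(X,n)\to\CH_{{\rm v}}^1(X,1)\otimes\CH^{n-1}(k,n-1)$ (\S\ref{sec:decon 1}), and surjectivity by an induction on $n$ using a refinement of Lemma~\ref{lem:KS} to generators with irreducible polynomial representatives (Lemma~\ref{lem:KS improved}) together with a ``strong surjectivity'' notion and a twist action (\S\ref{sec:surj inverse Bloch}). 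None of these ingredients are visible from a rigidity/moving-lemma viewpoint, and that is where your plan would stall.
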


Here, an isomorphism $(1) \simeq (2)$ is known by K. R\"ulling \cite{R} (see Theorem \ref{thm:Rulling2}), while $(4) \simeq (5)$ immediately follows from Elmanto-Morrow \cite[Theorem 1.8]{EM}. The group in (3) is defined in J. Park \cite{Park presentation}, and an isomorphism $(3) \simeq (4)$ is also established there. Thus the goal of this article can be rephrased as to establish an isomorphism $(2) \simeq (3)$ based on the new definitions and the results of \cite{Park presentation}.

\medskip

One of the nontrivial steps is to construct a level-wise homomorphism
\begin{equation}\label{eqn:intro varphi}
\varphi= \varphi_{m,n, k}: \mathbb{W}_m \Omega_k ^{n-1} \to \widehat{K}_n ^M (k_{m+1}, (t)).
\end{equation}
We call it the \emph{the inverse Bloch map}, and once proven to exist and be invertible, \emph{define} the Bloch map to be $B:=\varphi^{-1}$
$$
B: \widehat{K}_n ^M (k_{m+1}, (t))\overset{\simeq}{ \to} \mathbb{W}_m \Omega_k ^{n-1}.
$$

Later in \S \ref{sec:dlog}, we will interpret $\varphi$ as a kind of the exponential map $\exp_n$ and $B$ as a kind of the logarithm map $\log_n$ for fields $k$ of arbitrary characteristic. It could be of independent interest in the future.

\medskip

Theorem \ref{thm:intro main vanishing} implies the equivalence of two different definitions of the motivic cohomology of the non-reduced scheme $\Spec (k_{m+1})$ given by Krishna-Park \cite{KP Jussieu} and Elmanto-Morrow \cite{EM}:

\begin{cor}\label{cor:main intro motivic}
Let $k$ be an arbitrary field. Let $ m, n \geq 1$ be integers. The following groups are isomorphic to each other:
\begin{enumerate}
\item $\Hom_{\mathcal{DM} (k;m)} ( \underline{\mathbb{Z}}, h (\Spec (k)) (n) [r]):$ the $\Hom$ for the triangulated category $\mathcal{DM} (k,m)$ of the mixed motives over $k[t]/(t^{m+1})$ of Krishna-Park.
\item ${\rm H}_{\mathcal{M}} ^n (\Spec (k_{m+1}), \mathbb{Z} (n)):$ the Elmanto-Morrow motivic cohomology.
\end{enumerate}
\end{cor}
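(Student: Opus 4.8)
The plan is to deduce \corref{cor:main intro motivic} from \thmref{thm:intro main vanishing} by exhibiting, on both sides, a compatible splitting of the motivic cohomology of $\Spec(k_{m+1})$ along the retraction $k \hookrightarrow k_{m+1} \twoheadrightarrow k$ (the surjection being $t \mapsto 0$) into a ``reduced'' summand attached to the point $\Spec(k)$ and a complementary ``nilpotent'' summand. Since both constructions are functorial in the ring, applying them to these two ring maps makes ${\rm H}_{\mathcal M}^{\ast}(\Spec(k), \mathbb Z(n))$ a direct summand of each side, and it then suffices to match the two reduced summands and the two nilpotent summands separately.

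\emph{Reduced summands.} In the Krishna-Park category $\mathcal{DM}(k;m)$ the subcategory generated by motives of smooth $k$-schemes is Voevodsky's $\mathcal{DM}(k)$, so the summand of $\Hom_{\mathcal{DM}(k;m)}(\underline{\mathbb Z}, h(\Spec(k))(n)[r])$ cut out by the section $k \hookrightarrow k_{m+1}$ is the usual motivic cohomology ${\rm H}_{\mathcal M}^{r}(\Spec(k), \mathbb Z(n))$ of the field $k$ (see \cite{KP Jussieu}); and by construction of the Elmanto-Morrow theory as an extension of the classical one, the corresponding summand on the other side is again ${\rm H}_{\mathcal M}^{r}(\Spec(k), \mathbb Z(n))$. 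In the relevant bidegree ($r=n$, weight $n$) both are $K_n^M(k) = \widehat K_n^M(k)$. So the reduced summands literally coincide.

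\emph{Nilpotent summands.} By \cite{KP Jussieu}, the complementary summand on the Krishna-Park side is the additive higher Chow group $\TH^n(k, n; m)$ of $\Spec(k)$, i.e.\ item (2) of \thmref{thm:intro main vanishing}; in cohomological degrees $r \neq n$ this complement vanishes, matching the Elmanto-Morrow side. On the Elmanto-Morrow side the complementary summand is by definition $\ker\big({\rm H}_{\mathcal M}^n(\Spec(k_{m+1}), \mathbb Z(n)) \to {\rm H}_{\mathcal M}^n(\Spec(k), \mathbb Z(n))\big)$, which is item (5) of \thmref{thm:intro main vanishing} and equals the relative improved Milnor $K$-group $\widehat K_n^M(k_{m+1}, (t))$ by \cite[Theorem 1.8]{EM}. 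Now \thmref{thm:intro main vanishing} gives an isomorphism between (2) and (5); combined with the identification of the reduced summands, this yields the asserted isomorphism of (1) and (2) in \corref{cor:main intro motivic}.

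The substantive content is entirely \thmref{thm:intro main vanishing}; the rest is bookkeeping. The one step requiring genuine care --- and the main, though routine, obstacle --- is reconciling the indexing and normalization conventions of \cite{KP Jussieu} with those of \cite{EM} (in particular pinning down which degree $r$ in (1) corresponds to the degree-$n$, weight-$n$ group in (2)) and checking that the splitting of the Krishna-Park construction along $k \hookrightarrow k_{m+1} \twoheadrightarrow k$ is compatible with its identification with additive higher Chow groups; this compatibility is furnished by the functoriality properties established in \cite{KP Jussieu}.
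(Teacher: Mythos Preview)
Your proposal is correct and follows essentially the same route as the paper: both arguments split each side along the retraction $k \hookrightarrow k_{m+1} \twoheadrightarrow k$, identify the reduced summand with $K_n^M(k)$ (via Nesterenko--Suslin/Totaro on one side and \cite[Theorem~1.8]{EM} on the other) and the nilpotent summand with $\TH^n(k,n;m)$ resp.\ $\widehat{K}_n^M(k_{m+1},(t))$, and then invoke \thmref{thm:intro main vanishing}. The paper phrases the Krishna--Park side directly as the total higher Chow group $\CH^n(k,n)\oplus\TH^n(k,n;m)$ from \cite{KP Jussieu}, but this is exactly your decomposition; your caveat about pinning down $r=n$ is well taken, as the paper silently works in that bidegree.
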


In \cite{KP Jussieu}, it is proven that the $\Hom$ group in (1) is isomorphic to the \emph{total} higher Chow group $\CH^n (\Spec (k), n;m)$ in \emph{ibid.}, which is the direct sum $\CH^n (k, n) \oplus \TH^n (k,n;m)$ of the higher Chow group and the additive higher Chow group (see \S \ref{sec:HC} and \S \ref{sec:ACH}). Combined with Theorem \ref{thm:intro main vanishing}, we deduce that this is isomorphic to the group in (2) because
$$
\CH^n (k, n) \oplus \TH^n (k, n;m) \simeq K_n ^M (k) \oplus \widehat{K}^M_n (k_{m+1}, (t)) = \widehat{K}^M_n (k_{m+1}),
$$
where the first isomorphism follows from Theorem \ref{thm:intro main vanishing} together with the theorem of Nesterenko-Suslin \cite{NS} and Totaro \cite{Totaro} (recalled in Theorem \ref{thm:NST}), and the last group is isomorphic to the Elmanto-Morrow motivic cohomology group ${\rm H}_{\mathcal{M}} ^n (\Spec (k_{m+1})$ in (2) by \cite[Theorem 1.8]{EM}.

\medskip

The big de Rham-Witt complex can also be described in terms of the category of motives with modulus in \cite{KMSY3}. See Koizumi-Miyazaki \cite{KM}. While the algebraic $K$-theory is not representable in the category $\mathcal{DM}(k)$ of motives of Voevodsky, the above suggests that it might be possible that certain relative $K$-theory could be representable in the larger category of motives with modulus. The author would like to understand a clearer picture here in the future.

\medskip

In \S \ref{sec:intro sketch}, we sketch the ideas of the proof of Theorem \ref{thm:intro main vanishing}, and we offer glimpses of some applications in \S \ref{sec:intro applications}. Those who wonder how this article's detour via geometric presentations bypass some technical difficulties for which some past works had to assume invertibility of large number of integers, e.g. as in Gorchinskiy-Tyurin \cite{GT}, can read \S \ref{sec:intro sketch} as well as Remark \ref{remk:GT comparison} for a rapid insight on how our geometric arguments work.

\subsection{Sketch of the main ideas}\label{sec:intro sketch}
First of all, defining a level-wise homomorphism
$$
\varphi: \mathbb{W}_m \Omega_k ^{n-1} \to \widehat{K}_n ^M (k_{m+1}, (t))
$$
 in \eqref{eqn:intro varphi} is itself nontrivial. A typical way to construct a homomorphism from the group $\mathbb{W}_m \Omega_k ^{n-1}$ would be to collect the pro-system of the target groups over $m \geq 1$, and to verify the axioms for a restricted Witt-complex (see Definition \ref{defn:Witt complex}), for which $\{\mathbb{W}_m \Omega_k ^{\bullet}\}_{m \geq 1}$ is an initial object (see \cite[Proposition 1.15]{R})). Unfortunately, this approach requires a rather cumbersome process of proving that we have a few operations such as the Verschiebung $V_r$, the Frobenius $F_r$ for $r \geq 1$, and the differential $d$, etc, and checking all the axioms. This is not at all straightforward to perform on the relative Milnor $K$-groups. (Just in case the reader managed to do it and still want to continue in this direction, one can read \S \ref{sec:addendum} Addendum to see what one should do.)
 
 \medskip

 A key idea of the alternative route we follow is, as briefly mentioned in the paragraph near Theorem \ref{thm:intro main vanishing}, to use good presentations of the groups $\mathbb{W}_m \Omega_k ^{n-1}$ and $\widehat{K}_n ^M (k_{m+1}, (t))$, through which we avoid some exhausting procedures. 
 
\medskip

The first group $\mathbb{W}_m \Omega_k ^{n-1}$ admits the theorem of K. R\"ulling (recalled as Theorem \ref{thm:Rulling2}) which gives a presentation by the additive higher Chow group $\TH^n (k, n;m)$. Here, instead of the notations of the earliest articles, e.g. Bloch-Esnault \cite{BE2}, K. R\"ulling \cite{R} or J. Park \cite{P2}, we use the notations more commonly used these days, e.g. in Krishna-Park \cite{KP crys}.

For the other group $\widehat{K}_n ^M (k_{m+1}, (t))$, we remark that in general finding presentations of certain (relative) $K$-groups has been an important problem since 1970s (see, e.g. W. van der Kallen's 1978 Helsinki ICM lecture \cite{vdK ICM}). We use the recent geometric presentation of the relative Milnor $K$-groups by cycle class groups given in J. Park \cite{Park presentation}, which follows the tradition of the epoch in a sense; the difference is that the generators and relations are free abelian groups given geometrically by algebraic cycles. We use the fact from \cite{Park presentation} (recalled in Theorem \ref{thm:graph final}) that $\widehat{K}_n ^M (k_{m+1}, (t)) \simeq \CH_{{\rm v}} ^n (X/ (m+1), n)$ for the Chow group of \emph{strict vanishing cycles} over an integral regular henselian local $k$-scheme $X$ of dimension $1$ with the residue field $k$. All relevant definitions and results will be recalled in \S \ref{sec:2 recollection}.

\medskip

For the above presentations of $\mathbb{W}_m \Omega_k ^{n-1}$ and $\widehat{K}_n ^M (k_{m+1}, (t))$, the question of defining the algebraic inverse Bloch map $\varphi$ as in \eqref{eqn:intro varphi} is transformed into the question of constructing the geometric (cycle-theoretic) inverse Bloch map on the corresponding cycle class groups (see Proposition \ref{prop:inverse Bloch} and Definition \ref{defn:inverse Bloch})
\begin{equation}\label{eqn:inverse Bloch intro}
\varphi: \TH^n (k, n;m) \to \CH_{{\rm v}} ^n (X/ (m+1), n).
\end{equation}
Eventually we construct this \eqref{eqn:inverse Bloch intro} and prove that it is an isomorphism of groups.

\medskip

The reader may wonder what these strict vanishing cycles in $\CH_{{\rm v}} ^n (X/ (m+1), n)$ defined in Definition \ref{defn:mod t^{m+1} v} are. To give a rough idea, first recall that in the definition of the cubical version of higher Chow groups (see \S \ref{sec:HC}), we use the ambient spaces $\square^n = (\mathbb{P}^1 \setminus \{ 1 \})^n$, so that the ``infinity" of $\square^n$ is given by the union of the divisors $\{y_i = 1\} \subset \overline{\square}^n$ for $1 \leq i \leq n$, where $\overline{\square}= \mathbb{P}^1$. Geometrically speaking, the strict vanishing cycle condition roughly means that the closure of such a cycle intersected with the special fiber over $X$ gives $y_i = 1$ for some $ 1 \leq i \leq n$. This is analogous to the shapes of generators of the relative Milnor $K$-theory, given e.g. by Kato-Saito \cite[Lemma 1.3.1, p.26]{KS} or Gorchinskiy-Tyurin \cite[Lemma 2.2]{GT}:

\begin{lem}[Kato-Saito]\label{lem:KS} Let $R$ be a local ring and let $I \subset R$ be an ideal. Then the relative group $K_n ^M (R, I)$ is generated by the elements of the form $\{ a_1, \cdots, a_n \}$ with $a_i \in R^{\times}$ for all $1 \leq i \leq n$, where for some $1 \leq i_0\leq n$, we have $a_{i_0} \in \ker (R^{\times} \to (R/I)^{\times})$.
\end{lem}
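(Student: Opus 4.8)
The plan is to identify $K_n^M(R,I)$ with the subgroup $J \subseteq K_n^M(R)$ generated by the symbols appearing in the statement, by constructing a splitting of the restriction map $K_n^M(R) \to K_n^M(R/I)$ after dividing by $J$. We may assume $I \neq R$, the other case being trivial, so that $I$ lies in the maximal ideal $\fm$ of $R$; then $1 + I \subseteq R^\times$ and $1 + I = \ker(R^\times \to (R/I)^\times)$. Let $J$ be the subgroup of $K_n^M(R)$ generated by the symbols $\{a_1, \cdots, a_n\}$ with all $a_i \in R^\times$ and $a_{i_0} \in 1 + I$ for some $1 \leq i_0 \leq n$. Each such symbol maps to a symbol with a trivial entry in $K_n^M(R/I)$, so $J \subseteq K_n^M(R,I)$; since $K_n^M(R)$ is generated by symbols of units, it remains only to prove that the induced surjection $\pi\colon K_n^M(R)/J \surj K_n^M(R/I)$ is injective, and I would do this by exhibiting a two-sided inverse.

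The first step is the key lift-independence observation: if $b_1, \cdots, b_n$ and $b_1', \cdots, b_n'$ are units of $R$ with $b_i \equiv b_i' \pmod I$ for every $i$, then $\{b_1, \cdots, b_n\} \equiv \{b_1', \cdots, b_n'\} \pmod J$. One proves this by changing a single entry at a time: writing $b_i' = b_i u$ with $u \in 1 + I$, multilinearity of Milnor symbols gives $\{b_1, \cdots, b_i', \cdots, b_n\} = \{b_1, \cdots, b_i, \cdots, b_n\} + \{b_1, \cdots, u, \cdots, b_n\}$, and the second term lies in $J$ by definition. Granting this, since $R$ is local the reduction $R^\times \surj (R/I)^\times$ is surjective, so I can define a homomorphism on the free abelian group on symbols of $(R/I)^\times$ by sending $\{\bar b_1, \cdots, \bar b_n\}$ to $\{b_1, \cdots, b_n\} \bmod J$ for arbitrary unit lifts $b_i \in R^\times$ of $\bar b_i$; the lift-independence observation makes this assignment well defined, independent of all choices.

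It then remains to check that this assignment kills the defining relators of $K_n^M(R/I)$, thereby descending to a homomorphism $\varphi\colon K_n^M(R/I) \to K_n^M(R)/J$. For a multilinearity relator, lifts $c$ of $\bar b\bar b'$ and $b, b'$ of $\bar b, \bar b'$ satisfy $c \equiv bb' \pmod I$, so $\{\cdots, c, \cdots\} \equiv \{\cdots, bb', \cdots\} = \{\cdots, b, \cdots\} + \{\cdots, b', \cdots\} \pmod J$. For a Steinberg relator $\{\cdots, \bar a, 1 - \bar a, \cdots\}$, pick a lift $a \in R^\times$ of $\bar a$; since $R/I$ is local, $1 - \bar a$ being a unit forces $1 - a \in R^\times$, and it lifts $1 - \bar a$, so the image is $\{\cdots, a, 1 - a, \cdots\} \bmod J = 0$ because $\{a, 1-a\} = 0$ already in $K_n^M(R)$. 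By construction $\pi \circ \varphi = \id$, and $\varphi \circ \pi = \id$ on the symbol generators of $K_n^M(R)/J$ (again by lift-independence), so $\pi$ is an isomorphism compatible with the natural maps out of $K_n^M(R)$; hence $J = \ker(K_n^M(R) \to K_n^M(R/I)) = K_n^M(R,I)$, which is the assertion. The lemma is soft: the only point needing a genuine (if short) argument is the lift-independence observation, and beyond that one only uses that $R$ is local, which supplies the surjectivity of $R^\times \to (R/I)^\times$ and the fact that the Steinberg partner $1-a$ of a lift $a$ of a unit $\bar a$ with $1-\bar a$ a unit is again a unit.
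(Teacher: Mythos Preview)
Your proof is correct. The paper does not supply its own proof of this lemma: it is merely recalled from the literature (Kato--Saito \cite[Lemma~1.3.1]{KS} and Gorchinskiy--Tyurin \cite[Lemma~2.2]{GT}), so there is nothing in the paper to compare against. Your argument is the standard one, and each step is sound; in particular, the point that $1-\bar a \in (R/I)^\times$ forces $1-a \in R^\times$ holds because $I \subset \fm$ makes $R/I$ local with maximal ideal $\fm/I$, so $1-a \notin \fm$.
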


Later in Lemma \ref{lem:KS improved}, when $R= k_{m+1}$, we will have an improvement of Lemma \ref{lem:KS} needed for part of the proof of the main theorem, especially for the surjectivity of the map $\varphi$ in \eqref{eqn:inverse Bloch intro}. The surjectivity is proven by inductive on $n \geq 1$ in \S \ref{sec:surj inverse Bloch}.

\medskip

The injectivity of the map $\varphi$ proven in \S \ref{sec:inj inverse Bloch} requires a nontrivial map called the \emph{de}concatenation 
\begin{equation}\label{eqn:decon intro}
{\rm Dec}: \CH_{{\rm v}} ^n (X, n) \to \CH_{{\rm v}} ^1 (X, 1) \otimes \CH^{n-1} (k, n-1),
\end{equation}
constructed in \S \ref{sec:decon 1}. 

\medskip

 We obtain the eventual {algebraic} inverse Bloch map of \eqref{eqn:intro varphi} between the two groups in the algebraic world via a zigzag of isomorphisms through a journey into the geometric world of algebraic cycles
$$ 
\varphi: \mathbb{W}_m \Omega_k ^{n-1} \overset{gr_k}{\underset{\simeq}{\to}} \TCH^n (k, n;m) \overset{\varphi}{\underset{\simeq}{\to}} \CH_{\rm v} ^n (X/ (m+1), n) \overset{ gr_{{\rm v}}}{\underset{\simeq}{\leftarrow}} \widehat{K}_n ^M (k_{m+1}, (t)),
$$
where $gr_k$ is the isomorphism of K. R\"ulling \cite{R} (see Theorem \ref{thm:Rulling2}), the middle $\varphi$ is the geometric inverse Bloch map in \eqref{eqn:inverse Bloch intro}, and the last $gr_{{\rm v}}$ is an isomorphism of J. Park \cite{Park presentation} (see Theorem \ref{thm:graph final}).

\subsection{Some applications and remarks}\label{sec:intro applications}

 Recall we have the concatenation map
 $$
 {\rm Con}: \mathbb{W}_m (k) \otimes K_{n-1} ^M (k) \to \mathbb{W}_m \Omega_k ^{n-1},
 $$
 defined by sending $\alpha \otimes \{ c_1, \cdots, c_{n-1} \}$ to $\alpha d \log [ c_1] \wedge \cdots \wedge d \log [ c_{n-1}]$, where $\alpha \in \mathbb{W}_m (k)$, $c_i \in k^{\times}$ and $[c_i]$ is the Teichm\"uller lift in $\mathbb{W}_m (k)$ of $c_i$.
 From Theorem \ref{thm:intro main vanishing} and the deconcatenation map \eqref{eqn:decon intro}, we deduce the following map in the opposite direction, though they are not inverse to each other in gneral:
 \begin{cor}
 Let $k$ be a field. Let $m, n \geq 1$ be integers. Then we have
 $$
 {\rm Dec}: \mathbb{W}_m \Omega_k ^{n-1} \to \mathbb{W}_m (k) \otimes K_{n-1} ^M (k).
 $$

 \end{cor}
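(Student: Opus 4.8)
The plan is to \emph{construct} the stated map ${\rm Dec}$ as a three-fold composition, using only the results assembled above; indeed the corollary is exactly the combination of Theorem \ref{thm:intro main vanishing} with the geometric deconcatenation \eqref{eqn:decon intro}. Fix an integral regular henselian local $k$-scheme $X = \Spec(A)$ of dimension $1$ with residue field $k = A/I$ (the group below is independent of this choice by item (3) of Theorem \ref{thm:intro main vanishing}). First, the isomorphism $gr_k$ of R\"ulling (Theorem \ref{thm:Rulling2}) followed by the geometric inverse Bloch map $\varphi$ of \eqref{eqn:inverse Bloch intro} gives
\[
\mathbb{W}_m \Omega_k^{n-1} \;\xrightarrow[\ \simeq\ ]{gr_k}\; \TH^n(k, n; m) \;\xrightarrow[\ \simeq\ ]{\varphi}\; \CH_{\rm v}^n(X/(m+1), n).
\]
Second, the cycle-theoretic deconcatenation of \S\ref{sec:decon 1}, in its form compatible with reduction modulo $I^{m+1}$, is a homomorphism
\[
{\rm Dec}:\; \CH_{\rm v}^n(X/(m+1), n) \;\longrightarrow\; \CH_{\rm v}^1(X/(m+1), 1) \otimes \CH^{n-1}(k, n-1).
\]
Third, the right-hand side is identified with $\mathbb{W}_m(k) \otimes K_{n-1}^M(k)$: the $n = 1$ instance of Theorem \ref{thm:intro main vanishing} gives $\CH_{\rm v}^1(X/(m+1), 1) \simeq \mathbb{W}_m\Omega_k^0 = \mathbb{W}_m(k)$, and the Nesterenko--Suslin--Totaro theorem (Theorem \ref{thm:NST}) gives $\CH^{n-1}(k, n-1) \simeq K_{n-1}^M(k)$. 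The composite of the three arrows is the desired ${\rm Dec}: \mathbb{W}_m\Omega_k^{n-1} \to \mathbb{W}_m(k) \otimes K_{n-1}^M(k)$.

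What remains is only bookkeeping. One must check that the cycle-level deconcatenation of \S\ref{sec:decon 1} carries the subgroup used to form the $\bmod\, I^{m+1}$ quotient in the source into the corresponding subgroup of the target, so that the middle map above is well defined; this is read off directly from the explicit formula for ${\rm Dec}$ on admissible cycles. One should also note that the identification $\CH_{\rm v}^1(X/(m+1), 1) \simeq \mathbb{W}_m(k)$ introduces nothing new — $\mathbb{W}_m\Omega_k^0 = \mathbb{W}_m(k)$ is the degree-zero part of the big de Rham--Witt complex — and that each arrow in the composite is a homomorphism of abelian groups, hence so is ${\rm Dec}$.

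There is thus no genuine obstacle: the substantive work, namely the construction of the geometric deconcatenation and the coincidence of the groups in Theorem \ref{thm:intro main vanishing}, has already been carried out, and the corollary is a formal consequence. I would, however, add the remark that ${\rm Dec}$ is in general neither a section nor a retraction of the concatenation map ${\rm Con}: \mathbb{W}_m(k) \otimes K_{n-1}^M(k) \to \mathbb{W}_m\Omega_k^{n-1}$; computing ${\rm Dec} \circ {\rm Con}$ on a generator $\alpha \otimes \{c_1, \dots, c_{n-1}\}$ amounts to tracking the Teichm\"uller lifts $[c_i]$ and the class of $\alpha$ through the cycle-theoretic isomorphisms above, which is the natural follow-up computation but is not needed for the statement as given.
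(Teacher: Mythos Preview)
Your proof is correct and follows essentially the same route as the paper: compose the main isomorphism $\mathbb{W}_m\Omega_k^{n-1}\simeq \CH_{\rm v}^n(X/(m+1),n)$ with the mod $I^{m+1}$ deconcatenation of Corollary~\ref{cor:decon mod m}, then identify the tensor factors via Theorem~\ref{thm:main vanishing} (for $n=1$) and Theorem~\ref{thm:NST}. The paper records this in one line as ``it follows from Corollary~\ref{cor:decon mod m} and Theorem~\ref{thm:main vanishing}''; your write-up simply unpacks the same composite.
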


 \medskip
 
 The following improves the pro-vanishing of the pro-groups $\{ \widehat{K}_n ^M (k_{m+1}, (t)) \}_{m \geq 1}$ and $\{ \widehat{K}_n ^M (k_{m+1})\}_{m \geq 1}$ for a finite field $k$ and $n \geq 2$ proven by Gupta-Krishna \cite[Corollary 4.13]{Gupta-Krishna2} to actual level-wise vanishing and more:

\begin{cor}\label{cor:finite intro}
Let $k$ be a perfect field of characteristic $p>0$, and let $m \geq 1, n \geq 2$ be integers. Then
$$
\widehat{K}_n ^M (k_{m+1}, (t)) = 0, \ \ \mbox{ and } \ \ \ \widehat{K}_n ^M (k_{m+1}) = K_n ^M (k).
$$

In particular, if $k$ is finite, then $\widehat{K}_n ^M (k_{m+1}) =0$.
\end{cor}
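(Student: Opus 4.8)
The plan is to reduce the entire corollary, via Theorem~\ref{thm:intro main vanishing}, to the vanishing of the big de Rham-Witt forms of a perfect field in positive degrees. Concretely, Theorem~\ref{thm:intro main vanishing} gives $\widehat{K}_n^M(k_{m+1},(t)) \simeq \mathbb{W}_m\Omega_k^{n-1}$, so it suffices to show
$$
\mathbb{W}_m\Omega_k^{q} = 0 \quad\text{for all } q \geq 1,\ \text{when } k \text{ is perfect of characteristic } p>0 .
$$
Granting this, the case $q = n-1 \geq 1$ yields $\widehat{K}_n^M(k_{m+1},(t)) = 0$ for $n \geq 2$. The identity $\widehat{K}_n^M(k_{m+1}) = K_n^M(k)$ then follows from the split decomposition $\widehat{K}_n^M(k_{m+1}) = \widehat{K}_n^M(k_{m+1},(t)) \oplus K_n^M(k)$ induced by the retraction $k_{m+1}\to k$, $t\mapsto 0$ — the same decomposition already invoked in the discussion of Corollary~\ref{cor:main intro motivic}, which in particular uses that improved and naive Milnor $K$-theory agree for a field. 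Finally, if $k=\mathbb{F}_q$ is finite then $K_n^M(\mathbb{F}_q) = 0$ for $n\geq 2$ — a consequence of $K_2^M(\mathbb{F}_q)=0$ and the fact that Milnor $K$-theory is generated in degree one — and hence $\widehat{K}_n^M(k_{m+1}) = 0$.

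For the vanishing $\mathbb{W}_m\Omega_k^{q}=0$, $q\geq 1$, I would argue in two steps. First, since $k$ is an $\mathbb{F}_p$-algebra, the big de Rham-Witt complex attached to the truncation set $\{1,\dots,m\}$ splits, following Hesselholt-Madsen, as a finite product of (Frobenius-twisted) $p$-typical de Rham-Witt complexes $W_{s}\Omega_k^{\bullet}$ indexed by the integers $j\in\{1,\dots,m\}$ prime to $p$, where $s=s(j)$ records how long the chain $j, jp, jp^2,\dots$ remains inside $\{1,\dots,m\}$; the Frobenius twists occurring in this decomposition are harmless because $k$ is perfect. Second, the $p$-typical de Rham-Witt complex of a perfect field is concentrated in degree $0$, i.e. $W_s\Omega_k^{q}=0$ for all $q\geq 1$, which is Illusie's classical computation. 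Combining the two gives $\mathbb{W}_m\Omega_k^{q}=0$ for $q\geq 1$. As a sanity check, for $m=q=1$ this recovers $\mathbb{W}_1\Omega_k^{1}=\Omega_{k/\mathbb{Z}}^1=\Omega_{k/\mathbb{F}_p}^1 = 0$, which is clear because every element of a perfect field of characteristic $p$ is a $p$-th power, whence $da=d(b^p)=p\,b^{p-1}\,db=0$.

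I do not expect a genuine obstacle here: all of the weight is carried by Theorem~\ref{thm:intro main vanishing}, and the remaining ingredients — vanishing of $W_s\Omega_k^{\bullet}$ in positive degrees for perfect $k$, coincidence of improved and naive Milnor $K$-theory for fields, and $K_2^M(\mathbb{F}_q)=0$ — are standard. The single point that merits care is citing the correct form of the reduction from the \emph{big} to the \emph{$p$-typical} de Rham-Witt complex over an $\mathbb{F}_p$-algebra, so that Illusie's theorem applies to each factor of the product; with that in hand the corollary is immediate.
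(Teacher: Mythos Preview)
Your proposal is correct and follows essentially the same approach as the paper: the paper also invokes Theorem~\ref{thm:intro main vanishing} together with the vanishing $\mathbb{W}_m\Omega_k^{n-1}=0$ for perfect $k$ of characteristic $p>0$, the split exact sequence, and $K_n^M(k)=0$ for finite $k$ with $n\geq 2$. The only difference is that the paper simply asserts the de Rham-Witt vanishing as known, whereas you sketch the reduction to the $p$-typical case via the Hesselholt--Madsen decomposition and Illusie's computation; this extra detail is correct and harmless.
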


For a perfect field $k$ of characteristic $p>0$, we know that $\mathbb{W}_m \Omega_k ^{n-1} = 0$. Thus the main theorem $\mathbb{W}_m \Omega_k ^{n-1} \simeq \widehat{K}_n ^M (k_{m+1}, (t))$ of Theorem \ref{thm:intro main vanishing} and the resulting exact sequence 
$$
0 \to  \mathbb{W}_m \Omega_{k} ^{n-1} \to \widehat{K}_n ^M (k_{m+1}) \to K^M_n (k) \to 0$$
together imply the first assertion of Corollary \ref{cor:finite intro}. The second one follows from the first one because $K_n ^M (k) = 0$ when $k$ is finite and $n \geq 2$. This Corollary shows that $\widehat{K}^M_n (k_{m+1})$ contains more information than $K^M_n (k)$ only when $k$ is imperfect.

\medskip

Recall that we had the map $d\log_k: K_n ^M (k) \to \mathbb{W}_m \Omega_{k} ^n$, see, e.g. Krishna-Park \cite[Corollary 7.5]{KP crys}, sending $\{ c_1, \cdots, c_n \}$ to $d \log [ c_1] \wedge \cdots \wedge d\log [c_n]$, where $[c]$ is the Teichm\"uller lift. We have the following generalization to the Artin $k$-algebra $k_{m+1}$:

\begin{cor}
Let $k$ be a field. Let $m, n \geq 1$ be integers. Then there exist the extended $d\log$ map generalizing $d\log_k : \widehat{K}_n ^M (k) \to \mathbb{W}_m \Omega_k ^n$, denoted by
$$
d\log_t: \widehat{K}_n ^M (k_{m+1}) \to \mathbb{W}_m \Omega_k ^n,
$$
given by sending $(\alpha, \beta) \in K_n ^M (k) \oplus \widehat{K}_n ^M (k_{m+1}, (t))$ to $ d\log_k (\alpha) + d \log_n (\beta) \in \mathbb{W}_m \Omega_k ^n$, where $\log_n$ is the Bloch map
$$
\log_n = B:  \widehat{K}_n ^M (k_{m+1}, (t)) \overset{\sim}{\longrightarrow}  \mathbb{W}_m \Omega_k ^{n-1},
$$
and $d$ in front of $\log_n (\beta)$ is the exterior derivation $d: \mathbb{W}_m \Omega_k ^{n-1} \to \mathbb{W}_m \Omega_k ^{n}$.
\end{cor}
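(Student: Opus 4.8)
The plan is to deduce this corollary formally from Theorem~\ref{thm:intro main vanishing} together with the structural facts already recalled, so that essentially nothing new needs to be proved. First I would record the direct sum decomposition. The inclusion $k \hookrightarrow k_{m+1}$ is a ring-theoretic section of the quotient $k_{m+1} \surj k_{m+1}/(t) = k$; by functoriality of the improved Milnor $K$-groups, and since $\widehat{K}_n^M(k) = K_n^M(k)$ for the field $k$, this yields the splitting
\[
\widehat{K}_n^M(k_{m+1}) = K_n^M(k) \oplus \widehat{K}_n^M(k_{m+1},(t)),
\]
which is exactly the decomposition already invoked in Corollary~\ref{cor:main intro motivic}. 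Hence to specify a homomorphism out of $\widehat{K}_n^M(k_{m+1})$ it suffices to specify one out of each summand.

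On the first summand I would take the classical logarithmic derivative $d\log_k : K_n^M(k) \to \mathbb{W}_m\Omega_k^n$, $\{c_1,\dots,c_n\} \mapsto d\log[c_1] \wedge \cdots \wedge d\log[c_n]$ of Krishna-Park \cite[Corollary 7.5]{KP crys}, which is a group homomorphism. On the second summand I would compose the Bloch isomorphism $\log_n = B : \widehat{K}_n^M(k_{m+1},(t)) \overset{\sim}{\to} \mathbb{W}_m\Omega_k^{n-1}$ provided by Theorem~\ref{thm:intro main vanishing} together with \eqref{eqn:intro varphi}, with the exterior differential $d : \mathbb{W}_m\Omega_k^{n-1} \to \mathbb{W}_m\Omega_k^n$ of the de Rham-Witt complex; both are group homomorphisms, so $d \circ B$ is too. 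Setting $d\log_t := (d\log_k) \circ \pi_1 + (d \circ B) \circ \pi_2$, where $\pi_1, \pi_2$ denote the two projections of the decomposition above, gives a well-defined group homomorphism $\widehat{K}_n^M(k_{m+1}) \to \mathbb{W}_m\Omega_k^n$ realizing the stated formula. That $d\log_t$ generalizes $d\log_k$ is then immediate: its restriction along $K_n^M(k) = \widehat{K}_n^M(k) \hookrightarrow \widehat{K}_n^M(k_{m+1})$ is $d\log_k$ by construction.

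I do not expect a genuine obstacle here; the whole difficulty of this circle of ideas has already been spent on constructing the inverse Bloch map $\varphi$ and proving it is an isomorphism. The only points warranting a sentence of care are, first, that the direct sum decomposition is compatible with the improved (rather than the naive) Milnor $K$-groups, which holds because it is purely formal, resting only on functoriality and the ring section; and second, that one should not expect $d\log_t$ to be given by a uniform symbol formula $\{a_1,\dots,a_n\} \mapsto d\log[a_1] \wedge \cdots \wedge d\log[a_n]$ for $a_i \in k_{m+1}^\times$, since there is no ring homomorphism $k_{m+1} \to \mathbb{W}_m(k)$ supplying Teichm\"uller-type lifts in the nonreduced direction; the nonclassical component of $d\log_t$ is intrinsically the geometric map $d \circ B$. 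If an explicit description is wanted, one can evaluate the second component on the Kato-Saito generators of Lemma~\ref{lem:KS}, using additivity of $d \circ B$ and the cycle-theoretic definition of $B$.
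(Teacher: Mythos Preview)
Your proposal is correct and follows essentially the same route as the paper: the corollary is purely formal once Theorem~\ref{thm:intro main vanishing} supplies the Bloch isomorphism $B$, and the paper's construction in \S\ref{sec:dlog} (Definition~\ref{defn:log log^0} and the definition of $d\log_t$ via \eqref{eqn:dlog_t}) is exactly your map $(d\log_k)\circ\pi_1 + (d\circ B)\circ\pi_2$ on the split decomposition. Your additional remarks about why a uniform symbol formula should not be expected are sound commentary but not needed for the proof itself.
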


We leave the following remark on cycles with modulus:

\begin{remk}
The additive higher Chow group $\TH^n (k, n;m)$ (see \S \ref{sec:ACH}) is defined using the general position condition, and the modulus condition on cycles on $\mathbb{A}^1 \times \square^{n-1}$ (see Definition \ref{defn:modulus condition}), where the latter condition is rather non-geometric.

On the other hand, the isomorphic group $\CH_{{\rm v}} ^n (X/ (m+1), n)$ uses a different set of conditions, namely the extended general position condition $(GP)_*$, the extended special fiber condition $(SF)_*$, and the vanishing fiber condition (see \S \ref{sec:SF* condition}). All these are entirely geometrically given in terms of intersections. 
\qed
\end{remk}

\medskip

When $R$ is a regular local $k$-algebra essentially of finite type, by Krishna-Park \cite{KP sfs}, \cite{KP crys} we know that $\TH^n (R, n;m) \simeq \mathbb{W}_m \Omega_R ^{n-1}$. Given this, we expect that Theorem \ref{thm:intro main vanishing} may admit the following generalization:

\begin{guess} Let $k$ be a field. Let $m, n \geq 1$ be integers. Let $R$ be a regular local $k$-algebra essentially of finite type and let $R_{m+1} := R[t]/(t^{m+1})$. 

Then there exists an isomorphism, call it the inverse Bloch map $\varphi_R$:
$$
 \varphi_R: \mathbb{W}_m \Omega_R ^{n-1} \simeq \widehat{K}_n ^M (R_{m+1}, (t)).
 $$
 \end{guess}
This is in progress. In the process, Theorem \ref{thm:intro main vanishing} plays a significant role.
Once completed, this could reconnect the two classical articles of S. Bloch \cite{Bloch crys} and L. Illusie \cite{Illusie}, without the necessity of applying the $p$-typicalization.

 \bigskip
 
 \textbf{Conventions:} Unless said otherwise $k$ is an arbitrary field of arbitrary characteristic, a $k$-scheme is a noetherian separated $k$-scheme of finite Krull dimension, but it is not necessarily of finite type over $k$. The fiber product $\times$ means $\times_k$.

\section{Recollections of definitions and results}\label{sec:2 recollection}

 We recall some definitions and results on the objects used in this article.

\subsection{The Milnor $K$-theory}\label{sec:Milnor K}

We recall the functors $K^M_n (-)$ and $\widehat{K}^M_n (-)$ on the category of commutative rings with unity.

\subsubsection{The Milnor $K$-groups}
Recall the classical Milnor $K$-groups of rings, e.g. from J. Milnor \cite{Milnor IM}. Let $R$ be a commutative ring with unity. For the multiplicative abelian group $R^{\times}$, we have the tensor $\mathbb{Z}$-algebra
$$
T_{\mathbb{Z}} R^{\times} := \bigoplus_{n\geq 0} T_n R^{\times},
$$
where $T_0 R^{\times} = \mathbb{Z}$, $T_1 R^{\times} = R^{\times}$, and $T_n R^{\times}= \underset{n}{\underbrace{R^{\times} \otimes \cdots \otimes R^{\times}}}$ for $n \geq 2$. For the two-sided ideal ${\rm St} \subset T_{\mathbb{Z}} R^{\times}$ generated by the elements of the form $a \otimes (1-a)$ over all $a \in R^{\times}$ such that $1-a \in R^{\times}$, the Milnor $K$-ring is the graded ring
$$ 
K_* ^M (R):= T_{\mathbb{Z}} R^{\times} / {\rm St}.
$$
 Its $n$-th graded piece $K_n ^M (R)$ is the $n$-th Milnor $K$-group of $R$. Each $K_n ^M (R)$ is abelian and $K_* ^M (R)$ is a graded-commutative ring. Each $K_n ^M (-)$ is a covariant functor from the category of commutative rings with unity to the category of abelian groups. 

\medskip

For an ideal $I \subset R$, the relative Milnor $K$-group $K_n ^M (R, I)$ is defined to be $ \ker \left( K_n ^M (R) \to K_n ^M (R/I) \right).$ 

\subsubsection{The improved Milnor $K$-groups of Gabber-Kerz}\label{sec:Milnor Kerz}

Recall the improved Milnor $K$-theory of O. Gabber (unpublished) and M. Kerz \cite{Kerz finite}. This is a functor $\widehat{K}_n ^M$ with a surjective transform $K_n ^M ( -) \to \widehat{K}_n ^M (-)$ satisfying a universal property, with respect to the property of having transfers for finite \'etale maps. 
Some essential properties of $\widehat{K}_n ^M$ needed are:

\begin{thm}[{M. Kerz \cite[Proposition 10]{Kerz finite}}]\label{thm:Khat_univ} For local rings $A$, we have:
\begin{enumerate}
\item The natural map $K_n ^M (A) \to \widehat{K}_n ^M (A)$ is surjective.
\item The map in $(1)$ is an isomorphism in the following cases:
\begin{enumerate}
\item When $n=1$, for all local rings $A$.
\item When $A$ is a field, for all $n \geq 1$.
\item There exists a universal integer $M_n>1$ such that when the residue field of $A$ has the cardinality $> M_n$. 
\end{enumerate}
\item For each finite \'etale homomorphism $A \to B$, there exists the norm map ${\rm N}_{B/A} : \widehat{K}^M _n (B) \to \widehat{K}^M_n (A)$, and the norm maps ${\rm N}$ are subject to some properties including the transitivity.
\item (Gersten conjecture) When $A$ is a local domain and $F={\rm Frac} (A)$, the natural map $\widehat{K}^M_n (A) \to \widehat{K}^M_n (F)$ is injective.
\end{enumerate}

\end{thm}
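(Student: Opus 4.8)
All four statements are due to Kerz \cite{Kerz finite}; I only indicate the shape of the argument. The starting point is the explicit construction of $\widehat{K}^M_n$: for a local ring $(A,\mathfrak{m})$ one forms $A(t)$, the localization of $A[t]$ at the multiplicative set of polynomials with nonzero image in $(A/\mathfrak{m})[t]$, so that $A(t)$ is again local, now with the \emph{infinite} residue field $(A/\mathfrak{m})(t)$; one then sets
$$
\widehat{K}^M_n(A) := \im\big(K^M_n(A(t_1))\big) \cap \im\big(K^M_n(A(t_2))\big) \subseteq K^M_n\big(A(t_1,t_2)\big),
$$
the canonical map $K^M_n(A)\to\widehat{K}^M_n(A)$ being induced by $A\hookrightarrow A(t_i)$. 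The first step is to record this together with the localized Milnor sequence $0\to K^M_n(A)\to K^M_n(A(t))\to\bigoplus_{\mathfrak{p}} K^M_{n-1}(\kappa(\mathfrak{p}))\to 0$ (sum over closed points of $\mathbb{A}^1_A$, boundary the tame symbol), which is split exact for local rings with infinite residue field; this is the workhorse, itself invoked as a theorem.

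Given that, for $(1)$ one chases an element of the intersection through this sequence in the $t_1$- and $t_2$-directions simultaneously, forcing the residue terms to cancel so that the element lies in the common summand $K^M_n(A)$. For $(2)$: the case $n=1$ is immediate, since $\widehat{K}^M_1(A)=A^\times$ by construction; the field case follows from the classical Milnor computation, which gives $K^M_n(F(t_1))\cap K^M_n(F(t_2))=K^M_n(F)$ inside $K^M_n(F(t_1,t_2))$, hence $\widehat{K}^M_n(F)=K^M_n(F)$; and the large-residue-field case $(2)(c)$ comes from the injectivity half of the Milnor sequence together with a counting argument bounding, in terms of $n$ alone, how large $|A/\mathfrak{m}|$ must be so that no nonzero residue can survive every specialization. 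For $(3)$ --- the raison d'\^etre of the improved theory --- one passes to the finite extension $A(t)\to B(t)$ of local rings with infinite residue fields, applies the classical Milnor $K$-theory norm there, and checks that it preserves the intersection defining $\widehat{K}^M_n$, with transitivity and the projection formula inherited from the classical case.

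The hard part is $(4)$, the Gersten conjecture, which is not formal. The plan there is to reduce by a limit argument to $A$ the local ring of a smooth affine variety over a field, apply the Gabber-Panin geometric presentation lemma to bring any class supported in codimension $\geq 1$ into a position finite over an affine line, and then run a homotopy/specialization argument using the transfers of $(3)$ to show such a class dies in $\widehat{K}^M_n(F)$, where $F$ is the fraction field; exactness of the Gersten complex in the bottom degrees then gives the asserted injectivity $\widehat{K}^M_n(A)\hookrightarrow\widehat{K}^M_n(F)$. Since only $(1)$--$(3)$ are used essentially in this article and $(4)$ is invoked just once, I would simply cite all four from \cite{Kerz finite} rather than reprove them.
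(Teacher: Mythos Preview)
The paper does not prove this theorem at all: it is stated as a recalled result from Kerz \cite[Proposition 10]{Kerz finite} with no accompanying proof, and is used purely as a black box in the rest of the article. Your own closing remark --- that you would ``simply cite all four from \cite{Kerz finite} rather than reprove them'' --- is exactly what the paper does, so there is nothing to compare on the level of proof strategy.

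That said, your sketch of Kerz's arguments is broadly accurate (the $A(t_1)$--$A(t_2)$ intersection model, the Milnor localization sequence, and the Gabber--Panin presentation for Gersten). One caution: the Gersten statement as phrased in the paper's item (4) --- injectivity for an arbitrary local domain --- is looser than what Kerz actually proves, which requires $A$ to be regular and equicharacteristic (so that Popescu's theorem and the geometric presentation lemma apply). Your sketch correctly reflects Kerz's actual hypotheses by reducing to smooth varieties over a field; the paper's formulation is simply a bit imprecise here, and since it only invokes (4) in contexts where $A$ is a regular local $k$-algebra this causes no trouble downstream.
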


For an ideal $I \subset A$, the relative group $\widehat{K}^M_n (A, I)$ is defined similarly.

\subsection{Big Witt vectors and Big de Rham-Witt forms}

We recall the definitions of the big Witt vectors $\mathbb{W}_m (R)$ and the big de Rham-Witt forms $\mathbb{W}_m \Omega_R ^n$. One may use K. R\"ulling \cite{R} or L. Hesselholt \cite{Hesselholt} as useful references.

\subsubsection{The ring of big Witt vectors}
When $R$ is a commutative ring with unity, let $\mathbb{W}(R):= R^{\mathbb{N}}$ as a set. Define the ghost map
$$ 
gh: \mathbb{W}(R) \to R^{\mathbb{N}},
$$
$$ 
gh ( a_n) = (w_n), \ \ \ \mbox{ where} \ \ \ w_n = \sum_{d | n } d a_d ^{ \frac{n}{d}}.
$$
The target $R^{\mathbb{N}}$ of $gh$ has the coordinate-wise ring structure. A fundamental theorem is that there exists a unique functorial ring structure on $\mathbb{W} (R)$ such that the ghost map $gh$ becomes a functorial homomorphism of rings.

An alternative useful description of the ring structure on $\mathbb{W}(R)$ is the following. Identify $\mathbb{W} (R)$ with $ (1 + t R[[t]])^{\times}$ as a set. Here as groups $(\mathbb{W}(R), +) \simeq (1 + t R[[t]])^{\times}$ while the product on $\mathbb{W}(R)$ is a bit more complicated to describe in terms of $(1 + tR[[t]])^{\times}$. It can be given using:

\begin{prop}[{S. Bloch \cite[I-\S 1-1, p.192]{Bloch crys}}]\label{prop:Witt elements}
Each $x \in (1+ t R[[t]])^{\times}$ can be uniquely expressed as an infinite product
\begin{equation}\label{eqn:Witt infinite}
 x = \prod_{i=1} ^{\infty} (1 - \alpha_i t^i),
 \end{equation}
where $\alpha_i \in R$.
\end{prop}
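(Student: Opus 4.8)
The plan is to establish both existence and uniqueness of the expansion \eqref{eqn:Witt infinite} by a recursive ``peeling off'' argument, using only that $R[[t]]$ is $t$-adically complete, that $(1+tR[[t]])^{\times}$ is a group under multiplication, and that for each $n\ge 1$ only the finitely many factors $(1-\alpha_i t^i)$ with $i\le n$ affect the coefficient of $t^n$.

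First I would check that, for any sequence $(\alpha_i)_{i\ge 1}$ in $R$, the right-hand side of \eqref{eqn:Witt infinite} is a well-defined element of $1+tR[[t]]$. For $N\ge n$ the partial products $\prod_{i=1}^N(1-\alpha_i t^i)$ all agree modulo $t^{n+1}$, since the additional factors are $\equiv 1 \pmod{t^{n+1}}$ and $1+tR[[t]]$ is closed under multiplication; hence the partial products form a $t$-adically Cauchy sequence, and their limit in the complete ring $R[[t]]$ lies in $1+tR[[t]]$. This is the precise meaning of the infinite product, and the meaning of ``converges'' in the statement.

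For existence, given $x=1+c_1t+c_2t^2+\cdots$ I would build the $\alpha_i$ together with auxiliary series $x_0=x,x_1,x_2,\dots$ so that $x_n\equiv 1\pmod{t^{n+1}}$. Having produced $x_{n-1}=1+c\,t^n+(\text{higher order terms})$, set $\alpha_n:=-c$ and $x_n:=x_{n-1}\cdot(1-\alpha_n t^n)^{-1}$; since $(1-\alpha_n t^n)^{-1}=1+\alpha_n t^n+\cdots$, the coefficient of $t^n$ in $x_n$ is $c+\alpha_n=0$ while the coefficients of $t^i$ for $i<n$ are unchanged, so indeed $x_n\equiv 1\pmod{t^{n+1}}$. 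Unwinding the recursion gives $x=x_n\cdot\prod_{i=1}^n(1-\alpha_i t^i)$ for every $n$; since $x_n\to 1$ $t$-adically, passing to the limit yields $x=\prod_{i=1}^{\infty}(1-\alpha_i t^i)$.

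For uniqueness, suppose $\prod_{i\ge 1}(1-\alpha_i t^i)=\prod_{i\ge 1}(1-\beta_i t^i)$. Comparing coefficients of $t$ gives $-\alpha_1=-\beta_1$, so $\alpha_1=\beta_1$; multiplying both sides by $(1-\alpha_1 t)^{-1}$ and repeating proves $\alpha_n=\beta_n$ for all $n$ by induction on $n$ (equivalently, the coefficient of $t^n$ in $\prod_{i\ge 1}(1-\alpha_i t^i)$ has the shape $-\alpha_n+P_n(\alpha_1,\dots,\alpha_{n-1})$ for a universal polynomial $P_n$, so the $\alpha_n$ are determined recursively). I do not expect a genuine obstacle here; the only point needing care is the bookkeeping ensuring the recursion is well-posed and that the $t$-adic limit of the partial products recovers $x$.
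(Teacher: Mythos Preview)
Your argument is correct and is the standard proof of this well-known fact. Note that the paper does not actually prove this proposition: it is stated with a citation to Bloch \cite[I-\S 1-1, p.192]{Bloch crys} and used as a black box, so there is no ``paper's own proof'' to compare against beyond the reference, which contains essentially the same recursive peeling-off argument you give.
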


Under the Proposition \ref{prop:Witt elements}, a product structure $\star$ on $\mathbb{W} (R)$ which is commutative, associative, and distributive over $+$ is determined uniquely by requiring
\begin{equation}\label{eqn:star detail}
(1- a t^m) \star (1- b t^n) = (1- a^{ \frac{n}{r} } b ^{ \frac{m}{r}} t^{\frac{mn}{r}} )^r,
\end{equation}
for all $m, n \geq 1$ and $a, b \in R$, where $r= {\rm gcd} (m,n)$. See \emph{loc.cit.}

In general, finding the sequence of elements $\alpha_1, \alpha_2, \cdots$ in \eqref{eqn:Witt infinite} can be done by elementary but tedious calculations. Even if $x$ is a polynomial in $t$, in \eqref{eqn:Witt infinite} we could possibly have infinitely many non-vanishing entries $\alpha_i$. This can be moderated by taking the quotients by the subgroups $U^m:= (1 + t^{m+1} R[[t]])^{\times}$, namely $\mathbb{W}_m (R):= \mathbb{W} (R) / U^m$. Using \eqref{eqn:star detail}, one checks that the subgroup $U^m \subset \mathbb{W} (R)$ is an ideal. Hence there exists a unique ring structure on $\mathbb{W} _m (R)$ such that the canonical surjection $\mathbb{W} (R) \to \mathbb{W}_m (R)$ is a ring homomorphism.

\subsubsection{The big de Rham-Witt forms}\label{sec:DRW}

Recall the definition of the big de Rham-Witt complexes over a ring $R$. It was originally defined by Hesselholt-Madsen \cite{HeMa}, extending the $p$-typical de Rham-Witt forms of S. Bloch \cite{Bloch crys} and L. Illusie \cite{Illusie}. In this article, we exclusively work with $k$-algebras over a field $k$, so the most general version from L. Hesselholt \cite{Hesselholt} is equivalent to the one from \cite{HeMa}. This description is also found in K. R\"ulling \cite[Definition 1.14]{R}:

\begin{defn}[Hesselholt-Madsen]\label{defn:Witt complex}
Let $k$ be a field and let $R$ be a $k$-algebra. A \emph{restricted Witt complex over $R$} is a pro-differential graded $\mathbb{Z}$-algebra $\{ E_m ^{\bullet} \}_{m \in \mathbb{N}}$, equipped with the restriction homomorphisms $\mathfrak{R}: E_{m+1} ^{\bullet} \to E_m ^{\bullet}$ of the differential graded algebras over $m \in \mathbb{N}$, the graded ring homomorphisms for $r \geq 1$
$$ 
F_r: E_{rm + r -1} ^{\bullet} \to E_m ^{\bullet},
$$
and the graded group homomorphisms for $r \geq 1$
$$ 
V_r: E_m ^{\bullet} \to E_{rm + r -1} ^{\bullet},
$$
satisfying the following requirements:
\begin{enumerate}
\item [(i)] $\mathfrak{R} F_r = F_r \mathfrak{R}_r$, $\mathfrak{R}^r V_r = V_r \mathfrak{R}$, $F_1 = V_1 = {\rm Id}$, $F_r F_s = F_{rs}$, $V_r V_s = V_{rs}$ for all $r, s \geq 1$.
\item [(ii)] $F_r V_r = r$ for all $r \geq 1$ and $F_r V_s = V_s F_r$ for all $r,s \geq 1$ such that $(r,s) = 1$.
\item [(iii)] $V_r (F_r (x) y)  = x V_r (y)$ for $x \in E_{rm + r -1}^{\bullet}$ and $y  \in E_m ^{\bullet}$.
\item [(iv)] $F_r d V_r = d$,
and there exist the homomorphisms of the rings
$$ 
 \{  \lambda : \mathbb{W}_m (R) \to E_m ^0 \}_{ m \geq 1}
 $$
compatible with $F_r$ and $V_r$ for all $r \geq 1$ such that 
\item [(v)] $F_r d \lambda ( [a]) = \lambda ( [a] ^{r-1}) d \lambda ( [a])$ for all $a \in R$ and $r\geq 1$.\qed
\end{enumerate}
\end{defn}

The restricted Witt complexes over $R$ form a category, in which the big de Rham-Witt complexes $\{ \mathbb{W}_m \Omega_R ^{\bullet}\}_{m \geq 1}$ over $R$ are characterized as an initial object (see e.g. \cite[Proposition 1.15]{R}).

While we don't give its full definition, at least we mention that $\mathbb{W}_m \Omega_R ^{\bullet}$ can be expressed as the differential graded algebra $\Omega_{\mathbb{W}_m (R)/ \mathbb{Z}} ^{\bullet}$ modulo a differential graded ideal $\mathcal{N}^{\bullet}$ given by an explicit set of generators, see e.g. \cite[\S 4]{Hesselholt} or \cite[Proposition 1.2]{R}. For any truncation set $S \subset \mathbb{N}$, i.e. a nonempty subset closed under taking all divisors of the members of $S$, we similarly have $\mathbb{W}_S \Omega_R ^{\bullet}$. In case $S= \{ 1, p, p^2, \cdots \}$, we obtain the classical $p$-typical de Rham-Witt complex denoted by $W\Omega_R ^{\bullet}$, while if we have $S= \{ 1, p, \cdots, p^r \}$, then we obtain $W_r \Omega_R ^{\bullet}$. See \cite{Bloch crys} and \cite{Illusie}.

\subsection{Higher Chow groups}\label{sec:HC}

Recall the definition of the cubical version of the higher Chow groups for a scheme $X$ over a field $k$ from the notes of S. Bloch \cite{Bloch notes}. The original simplicial version is from S. Bloch \cite{Bloch HC} and it gives the same cycle class groups. One can read also the introduction of \cite{Park localization} for some general discussions. For simplicity, we suppose that $X$ is equidimensional and of finite Krull dimension.

\medskip

Let $\overline{\square}:= \mathbb{P}_k ^1$ and $\square = \overline{\square}  \setminus \{ 1 \}$. For $n \geq 1$, we let $\square^n$ (resp. $\overline{\square}^n$) be the $n$-fold self-fiber product of $\square$ (resp. $\overline{\square}$) over $k$, while we let $\square^0=\overline{\square}^0:= \Spec (k)$. We use $(y_1, \cdots, y_n) \in \square^n$ (resp. $\in \overline{\square}^n$) for the coordinates.

Write $\square_X ^n:= X \times \square_k ^n$ (resp. $\overline{\square}_X ^n:= X \times \overline{\square} ^n$). If $X= \Spec (A)$, then we often write $\square_A^n$ (resp. $\overline{\square}_A^n$). 

\medskip

Suppose $n \geq 1$. Using the $k$-rational points $\{ 0, \infty \} \subset \square_k $, define the faces (resp. extended faces) as follows: a \emph{face} of $\square_X ^n$ (resp. an \emph{extended face} of $\overline{\square}_X ^n$) is a closed subscheme defined by a finite set of equations of the form $\{ y_{i_1} = \epsilon_1, \cdots, y_{i_s} = \epsilon _s\}$, where $1 \leq i_1 < i_2 < \cdots< i_s \leq n$ with $\epsilon_j \in \{ 0, \infty\}$. Here, we allow $s=0$, i.e. the empty set of equations, in which case the corresponding face (resp. extended face) is the entire space $\square_X^n$ (resp. $\overline{\square}_X ^n$). When $s=1$, we have the codimension $1$ faces (resp. extended faces), and the one given by $\{y_i = \epsilon\}$ is denoted by $F_{i} ^{\epsilon}$ or $F_{i, X} ^{\epsilon}$ (resp. $\overline{F}_i ^{\epsilon}$ or $\overline{F}_{i, X} ^{\epsilon}$). 

\medskip

The extended faces are not needed in \S \ref{sec:HC}, but we use them to improve a result on additive higher Chow groups in \S \ref{sec:ACH}, and they will be essential in \S \ref{sec:our cycle} where we recall the notions of ${\rm d}$-cycles and ${\rm v}$-cycles from \cite{Park presentation}. Especially the conditions $(GF)_*$ and $(SF)_*$, called the extended general position condition, and the extended special fiber condition, respectively, in \S \ref{sec:SF* condition}, are defined in terms of the extended faces.

\begin{defn}[S. Bloch]\label{defn:HC}
Let $ \un{z}^q (X, 0):= z^q (X)$, the group of codimension $q$-cycles on $X$. 

\begin{enumerate}
\item $(GP)$ For $n \geq 1$, let $\un{z}^q (X, n)$ be the subgroup of $z^q (\square_X ^n)$ consisting of the irreducible closed subschemes on $\square_X ^n$ that intersect all faces of $\square_X^n$ properly.

Here, the acronym $(GP)$ stands for \emph{general position}.

\item The subgroup $\un{z}^q (X, n)_{\rm degn} \subset \un{z}^q (X, n)$ of degenerate cycles is generated by the cycles obtained as pull-backs of cycles via various projections $\square_X ^n \to \square_X ^i$, $0 \leq i < n$, defined by omitting some coordinates. We let
$$
 z^q (X, n):= \frac{ \un{z}^q (X, n)}{ \un{z} ^q (X, n)_{\rm degn}}.
$$

\item For each face $F_{i, X} ^{\epsilon}$ of codimension $1$, taking the intersection with the closed immersion $\iota_{i} ^{\epsilon} : \square_X ^{n-1} \hookrightarrow \square_X ^n$ induces a face homomorphism 
$$
 \partial_i ^{\epsilon} : z^q (X, n) \to z^q (X, n-1)
 $$
after identifying $F_{i, X}^{\epsilon}$ with $\square_X ^{n-1}$. The maps $\{ \partial _i ^{\epsilon} \}$ satisfy the cubical identities, so we have a cubical abelian group $\{ \un{n} \mapsto z^q (X, n) \}$. Let $\partial:= \sum_{i=1} ^n (-1)^i (\partial_i ^{\infty} - \partial_i ^0)$. We check that $\partial \circ \partial = 0$ using the usual cubical formalism. The complex $(z^q (X, \bullet), \partial)$ is the higher Chow complex of $X$, and its $n$-th homology is denoted by $\CH^q (X, n)$ and called the $n$-th higher Chow group of $X$ of codimension $q$.\qed
\end{enumerate}
\end{defn}

Recall the following from Nesterenko-Suslin \cite{NS} and B. Totaro \cite{Totaro}:
\begin{thm}[Nesterenko-Suslin, Totaro]\label{thm:NST}
Let $k$ be a field. Then we have an isomorphism $K^M_n (k) \simeq \CH^n (k, n)$ such that the Milnor symbol $\{ a_1, \cdots, a_n \}$ with $a_i \in k^{\times}$ is mapped to the closed point on $\square^n_k$ given by the system $\{ y_1 = a_1, \cdots, y_n = a_n \}$. 
\end{thm}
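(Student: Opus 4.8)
The plan is to construct the isomorphism $K^M_n(k)\simeq\CH^n(k,n)$ in the direction $K^M_n(k)\to\CH^n(k,n)$ by first defining the map on generators and then verifying it is well-defined, and finally producing an inverse. Concretely, for $a_1,\dots,a_n\in k^\times$ I would send the Milnor symbol $\{a_1,\dots,a_n\}$ to the class of the closed point $P=(a_1,\dots,a_n)\in\square^n_k$. The first task is to check that $P$ is an admissible cycle: since each $y_i$ is a unit, $P$ avoids every face $\{y_i=0\}$ and $\{y_i=\infty\}$ (and all their intersections), so $P$ intersects all faces properly — indeed trivially — and hence $\partial P=0$, so $P$ defines a class in $\CH^n(k,n)$. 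The nontrivial part of well-definedness is the Steinberg relation: if some $a_{i}+a_{i+1}=1$ (say consecutive for notational ease), then $\{\dots,a_i,1-a_i,\dots\}$ must map to zero in $\CH^n(k,n)$. Here I would exhibit an explicit admissible cycle $Z\in\un z^n(k,n+1)$ whose boundary is $\pm P$ plus degenerate cycles: the standard choice is (up to the other coordinates being held at constants) the curve in $\square^2$ parametrized by $y\mapsto(y,1-y)$ times the point in the remaining coordinates, or rather the graph-type cycle supported on $\{y_i=u,\ y_{i+1}=1-u\}$ as $u$ ranges over $\square^1$; computing $\partial$ of this gives the Steinberg symbol as a boundary. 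One also checks multilinearity, which at the level of cycles amounts to standard manipulations with the curves $\{y_1=u,\ y_2=a/u\}$ realizing $\{a_1a_1',a_2\}=\{a_1,a_2\}+\{a_1',a_2\}$ modulo boundaries.

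For the inverse, I would use the structure of $\CH^n(k,n)$ over a field. A cycle in $\un z^n(k,n)$ is a zero-cycle on $\square^n_k$ in general position with respect to all faces; any closed point has coordinates $y_i=a_i$ with $a_i$ nonzero, non-infinite algebraic over $k$, i.e. lying in a finite extension $k(P)/k$, and one sends it to $N_{k(P)/k}\{a_1,\dots,a_n\}$ using the Milnor $K$-theory norm (transfer) for finite extensions. One then checks this kills boundaries from $\un z^n(k,n+1)$ — this is the reciprocity/Weil-type computation: the boundary of an irreducible curve $C\subset\square^{n+1}_k$ in general position unwinds, via the tame symbol and Weil reciprocity on the normalization of $C$, to a sum of norm-symbols summing to zero in $K^M_n(k)$. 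Degenerate cycles map to zero because a point with some coordinate omitted forces a symbol with a repeated or trivial entry. The two maps are mutually inverse essentially by construction: the composite $K^M_n\to\CH^n\to K^M_n$ sends $\{a_1,\dots,a_n\}$ to $N_{k/k}\{a_1,\dots,a_n\}=\{a_1,\dots,a_n\}$ on $k$-rational generators, and the other composite is the identity once one knows $\CH^n(k,n)$ is generated by closed points, which follows from a moving lemma / the fact that any $0$-cycle is a sum of closed points.

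The main obstacle — and the technical heart of both Nesterenko--Suslin's and Totaro's arguments — is showing that the boundary map is compatible with the Milnor $K$-theory norm, i.e. that the inverse map is well-defined on homology. This requires the functoriality of $K^M$-transfers and a careful analysis (via Weil reciprocity for curves and the projection formula for norms) of how $\partial$ acts on cycles supported on higher-dimensional subvarieties of $\square^\bullet_k$; handling non-rational closed points and the combinatorics of the many faces is where the real work lies. A secondary subtlety is ensuring the Steinberg cycle $Z$ above is itself admissible (in general position with respect to all faces of $\square^{n+1}_k$), which may require a small perturbation or a moving-lemma argument; for the bare field case this is manageable but it is the kind of point one must not skip. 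Everything else — admissibility of $P$, multilinearity, vanishing on degenerate cycles — is routine once these two points are in place. Since the excerpt only needs the \emph{statement} of Theorem \ref{thm:NST} for later use, I would in the actual paper simply cite \cite{NS} and \cite{Totaro} rather than reproduce this argument.
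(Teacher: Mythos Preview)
The paper does not prove this theorem; it is stated with the preface ``Recall the following from Nesterenko--Suslin \cite{NS} and B.~Totaro \cite{Totaro}'' and no proof is given. Your final sentence anticipates exactly this: the theorem is a classical input that the paper simply cites, so there is nothing to compare your sketch against in the paper itself.

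That said, your outline of the Nesterenko--Suslin/Totaro argument is broadly accurate in spirit, and you correctly identify the two genuine technical points (admissibility of the Steinberg witness cycle, and the Weil-reciprocity/norm compatibility needed for the inverse map to descend modulo boundaries). One small correction: the closed points of $\square^n_k$ need not have coordinates in $k$ individually --- a closed point with residue field $k(P)$ has its coordinates $a_i\in k(P)^\times$, not necessarily algebraic elements of $k$ one at a time --- so your description of the inverse map via $N_{k(P)/k}\{a_1,\dots,a_n\}$ is right but the parenthetical ``algebraic over $k$'' should refer to the residue field, not the individual $a_i$. Also, your remark that degenerate cycles map to zero ``because a point with some coordinate omitted forces a symbol with a repeated or trivial entry'' is not quite the mechanism: degenerate cycles are pullbacks along projections, and the point is rather that the resulting symbol has one slot equal to the free coordinate, which after push-forward contributes trivially. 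These are minor wording issues; the strategic shape of your sketch matches the literature.
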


\subsection{Additive higher Chow group}\label{sec:ACH}

We recall the definition of additive higher Chow groups. For simplicity, in Definition \ref{defn:ACH} we consider only the special cases relevant to this article, namely the additive $0$-cycles and $1$-cycles over $\Spec (k)$.

They were first defined for $0$-cycles in the Milnor range by Bloch-Esnault \cite{BE2} and K. R\"ulling \cite{R}, and were extended to the general range by J. Park \cite{P2}. They were expected to play roles in constructing motivic cohomology over $k_{m+1}$ (cf. Krishna-Park \cite{KP Jussieu}). Some minor extensions to various modulus conditions were first considered in Krishna-Park \cite{KP moving}. Instead of the original notations, we follow the notations  that conform to the present day usages. These are largely consistent with more recent usages in e.g. Krishna-Park \cite[\S 7]{KP DM} and \cite{KP crys}. 

On the other hand, in Definition \ref{defn:ACH2}, we will present a minor modification of the group, which will be proven to be isomorphic to the group in Definition \ref{defn:ACH}.

\medskip

For $n \geq 1$, let $B_n:= \mathbb{A}^1 \times \square^{n-1}$, and let $\widehat{B}_n := \mathbb{P}^1 \times \overline{\square}^{n-1}$. Let $(x, z_1, \cdots, z_{n-1}) \in \widehat{B}_n$ denote the coordinates.  We define a face $F \subset B_n$ (resp. $\overline{F} \subset \widehat{B}_n$) to be a closed subscheme defined by a finite set of equations of the form $\{ z_ {i_1}= \epsilon_1, \cdots, z_{i_s} = \epsilon_s \}$, where $1 \leq i_1 < \cdots < i_s \leq n-1$ and $\epsilon_i \in \{ 0, \infty \}$. We allow $F= B_n$ (resp. $\overline{F} = \widehat{B}_n$) as well.

\begin{defn}[{\cite[Definition 2.3]{KP moving}}] \label{defn:modulus condition}
Let $V  \subset B_n$ be an integral closed subscheme. Let $\overline{V}$ be its Zariski closure in $\widehat{B}_n$. Let 
$$
\nu: \overline{V}^N \to \overline{V} \hookrightarrow \widehat{B}_n
$$
be the composite from the normalization of $\overline{V}$. Let $m \geq 1$ be an integer.
\begin{enumerate}
\item We say that $V$ satisfies the \emph{sum modulus condition $M_{sum}$} with the modulus $m$, if we have on $\overline{V}^N$ 
\begin{equation}
 (m+1) [ \nu^* \{ x = 0 \}] \leq \sum_{i=1} ^{n-1}[  \nu^* \{ z_i = 1 \}].
 \end{equation}
 It was the version considered in Bloch-Esnault \cite{BE2} and K. R\"ulling \cite{R}.
 
\item We say that $V$ satisfies the \emph{strong sup modulus condition} $M_{ssup}$ with the modulus $m$, if for an integer $1 \leq i_0 \leq n-1$, we have on $\overline{V}^N$
\begin{equation}\label{eqn:modulus cond}
(m+1) [ \nu^* \{ x= 0 \} ] \leq  [\nu^* \{ z_{i_0} = 1 \}].
\end{equation}
It was introduced in Krishna-Park \cite{KP moving}.
\end{enumerate}
In general, any of the above modulus conditions implies that $V \cap \{ x= 0 \} = \emptyset$. 
\qed
\end{defn}

\begin{defn}[{\cite[Definition 2.5]{KP moving}}]\label{defn:ACH}
Let $M$ be a modulus condition, either $M_{sum}$ or $M_{ssup}$ in Definition \ref{defn:modulus condition}. Let $n \geq 1$, $m \geq 1$ be integers.
\begin{enumerate}
\item Let ${\TZ}^n (k , n; m)_M$ be the free abelian group on the set of closed points of $B_n$ disjoint from the divisor $\{ x = 0 \}$, and all proper faces of $B_n$.
\item Let $\underline{\TZ}^n (k, n+1;m)_M$ be the free abelian group on the set of integral closed $1$-dimensional subschemes $Z \subset B_{n+1}$ such that
\begin{enumerate}
\item (General Position) $Z$ intersects each face $F \subset B_{n+1}$ properly on $B_{n+1}$.
\item (Modulus condition) $Z$ satisfies the modulus condition $M$.
\end{enumerate}

\item Let $\TZ^n (k, n+1;m)_M:= \frac{ \underline{\TZ}^n (k, n+1;m)_M}{( \underline{\TZ}^n (k, n+1;m)_M)_{\rm degn}}$, modulo the degenerate cycles generated by pull-backs  via the projections $B_{n+1} \to B_{i}$ for$1 \leq i \leq n$ that omit some of the coordinates $\{ z_1, \cdots, z_{n+1} \}$.

\item For each $1 \leq i \leq n$ and $\epsilon \in \{ 0, \infty \}$, we define $\partial_i ^{\epsilon} (Z)$ to be the cycle $[(\iota_i ^{\epsilon})^* (Z)]$ associated to the face map $\iota_i ^{\epsilon}:B_{n} \hookrightarrow B_{n+1}$ which sends $(x, z_1, \cdots, z_{n-1})$ to $(x, z_1, \cdots, z_{i-1}, \epsilon, z_i, \cdots, z_{n-1})$. We have the associated boundary map $\partial = \sum_{i=1} ^{n} (-1)^i ( \partial_i ^{\infty} - \partial_i ^0)$. We let $\TCH^n (k, n;m)_M:= {\rm coker} (\partial : \TZ^n (k, n+1;m)_M \to \TZ^n (k, n; m)_M). $ \qed
\end{enumerate}
\end{defn}

\begin{remk}
Although the notation $\TZ^n(k, n;m)_M$ has the integer $m$ in the expression, in the case of $0$-cycles this group does not depend on $m$.
\qed
\end{remk}

The following Theorem \ref{thm:Rulling} was originally proven for the sum modulus $M_{sum}$ and ${\rm char} (k) \not = 2$ by K. R\"ulling \cite{R}. We reflect some small extensions to $M_{ssup}$ in Krishna-Park \cite[Theorem 3.4]{KP moving} and to ${\rm char} (k) = 2$ in K. R\"ulling \cite[Appendix]{KP crys}:

\begin{thm}[R\"ulling]\label{thm:Rulling}
Let $k$ be an arbitrary field. We have isomorphisms
$$
 \mathbb{W}_m \Omega_k ^{n-1} \underset{gr_k}{\overset{\sim}{\longrightarrow}} \TH^n (k, n;m)_{M_{ssup}} \overset{\sim}{\longrightarrow} \TH^n (k, n;m)_{M_{sum}}.
 $$
\end{thm}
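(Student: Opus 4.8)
The plan is to obtain $gr_k$ from the universal property of the big de Rham--Witt complex and then to invert it by an explicit cycle-theoretic construction, following Bloch--Esnault \cite{BE2} and R\"ulling \cite{R}. First I would promote the bigraded family $\{\TH^{q}(k,q;m)_M\}_{q,m}$ to a restricted Witt complex over $k$ in the sense of Definition \ref{defn:Witt complex}: the pro-structure $\mathfrak{R}$ is induced on cycles by the inclusion of modulus $(m+1)$ cycles into modulus $m$ cycles; the graded-commutative product is the external product of cycles followed by a diagonal pullback on the $\square$-coordinates, which preserves the modulus condition because the $\mathbb{A}^1$-coordinate is left untouched; the Frobenius $F_r$ and Verschiebung $V_r$ come from pullback and pushforward along the degree-$r$ self-map of the additive-group factor; the differential $d$ is a fixed universal correspondence that adjoins one $\square$-coordinate; and the ring map $\lambda\colon \mathbb{W}_m(k)\to \TH^1(k,1;m)$ is the Bloch--Esnault isomorphism, sending $[a]$ for $a\in k^{\times}$ to the class of the rational point $x=a$ on $\mathbb{A}^1\setminus\{0\}$. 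Verifying axioms (i)--(v) is then a long but essentially formal exercise: the operator identities follow from the cubical formalism and the multiplicativity of the power maps, while the remaining relations (including (iii), (v) and that $\lambda$ is a ring map compatible with $F_r,V_r$) reduce to equalities of explicit $0$- and $1$-cycles.

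Granting this, \cite[Proposition 1.15]{R} supplies the canonical homomorphism of Witt complexes $gr_k\colon \mathbb{W}_m\Omega_k^{n-1}\to \TH^n(k,n;m)_M$, and it remains to show it is bijective. For this I would build a candidate inverse $\rho$ on cycle groups: a generating closed point $P\in B_n=\mathbb{A}^1\times\square^{n-1}$ has a finite residue extension $L=k(P)$ and coordinates $(a;b_1,\dots,b_{n-1})$ with $a\in L^{\times}$ and $b_i\in L\setminus\{0,1\}$, and I set
\[
\rho(P)=\Tr_{L/k}\!\left([a]\,d\log[b_1]\wedge\cdots\wedge d\log[b_{n-1}]\right)\in \mathbb{W}_m\Omega_k^{n-1},
\]
where $[a]\in\mathbb{W}_m(L)$ is the Teichm\"uller lift of $a$ and $\Tr_{L/k}$ is R\"ulling's transfer on the de Rham--Witt complex. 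The crux is that $\rho$ annihilates $\partial Z$ for every admissible $1$-cycle $Z\subset B_{n+1}$: passing to the normalization of the closure of $Z$ and writing $\partial Z$ as the alternating sum of the divisors cut out by the codimension-one faces, this becomes the reciprocity (sum-of-residues) law for de Rham--Witt forms on a curve, the modulus condition being exactly what forces the contributions along $x=0$ to cancel. One then verifies $\rho\circ gr_k=\id$ and $gr_k\circ\rho=\id$: surjectivity of $gr_k$ reduces, since closed points generate $\TH^n$ and $gr$ is compatible with transfers, to rational points, which lie in the image by construction; injectivity follows from $\rho\circ gr_k=\id$, checked on the standard generators $\alpha\,d\log[b_1]\wedge\cdots\wedge d\log[b_{n-1}]$ and $d\alpha\wedge d\log[b_2]\wedge\cdots\wedge d\log[b_{n-1}]$ (with $\alpha\in\mathbb{W}_m(k)$, $b_i\in k^{\times}$) of $\mathbb{W}_m\Omega_k^{n-1}$.

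For the second isomorphism, note that $M_{ssup}$ implies $M_{sum}$, so there is an inclusion of cycle complexes inducing $\TH^n(k,n;m)_{M_{ssup}}\to \TH^n(k,n;m)_{M_{sum}}$. The quickest route is that the inverse map $\rho$ above is given by the same formula and the reciprocity computation goes through verbatim for both modulus conditions (the stronger one merely restricts which $1$-cycles occur), so $gr_k$ is an isomorphism for each $M$; compatibility of the comparison map with the two $gr_k$'s then forces it to be an isomorphism. Alternatively one invokes the moving lemma of Krishna--Park \cite[Theorem 3.4]{KP moving}, supplemented in characteristic $2$ by the argument in R\"ulling's appendix to \cite{KP crys}.

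I expect the reciprocity computation behind the well-definedness of $\rho$ to be the main obstacle: parametrizing the $1$-dimensional cycles in $B_{n+1}$ through their normalizations, matching the modulus bound at $x=0$ with the ramification of the de Rham--Witt forms and their residues, and controlling wild ramification in positive characteristic. Everything else is either a cubical-formalism verification of the Witt-complex axioms or routine moving-lemma bookkeeping.
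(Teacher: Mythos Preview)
Your proposal is correct and follows precisely R\"ulling's original strategy, which is what the paper cites rather than re-proves (attributing the $M_{sum}$, $\mathrm{char}(k)\neq 2$ case to \cite{R}, the $M_{ssup}$ comparison to \cite[Theorem~3.4]{KP moving}, and $\mathrm{char}(k)=2$ to R\"ulling's appendix in \cite{KP crys}). One normalization to fix: from the paper's later use of $gr_k$ (see the proofs of Theorem~\ref{thm:inverse Bloch n=1} and Lemma~\ref{lem:inverse Bloch inj}), the convention is $gr_k\!\bigl([a]^{-1}\,d\log[b_1]\wedge\cdots\wedge d\log[b_{n-1}]\bigr)=[(a,b_1,\dots,b_{n-1})]$, so your $\lambda$ should send $[a]$ to the point $x=a^{-1}$ and your $\rho$ should carry $[a^{-1}]$ rather than $[a]$.
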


To exploit certain technical simplicities provided by the modulus condition $M_{ssup}$, for the rest of the article, we stick to the modulus condition $M_{ssup}$ whenever we use the additive higher Chow cycles, thus we drop the subscript $M$ from now on.

\medskip

In this article, we need to make the following additional modification. It will be needed later in Lemma \ref{lem:tilde face}-(2) and beyond:

\begin{defn}\label{defn:ACH2}
We continue the notations of Definition \ref{defn:ACH}. Here, we define a subgroup $\TZ^n (k, n+1; m)' \subset \TZ ^n (k, n+1;m)$ as follows:

Let $\un{\TZ}^n (k, n+1;m) '$ be the free abelian group on the set of integral closed $1$-dimensional subschemes $Z \subset B_{n+1}$ such that
\begin{enumerate}
\item [(a)] (Extended general position) The closure $\overline{Z} \subset \widehat{B}_{n+1}$ intersects each extended face $\overline{F} \subset \widehat{B}_{n+1}$ properly on $\widehat{B}_{n+1}$.
\item [(b)] (Modulus condition) $Z$ satisfies the modulus condition $M_{ssup}$.
\end{enumerate}

We denote the group modulo the degenerate cycles by $\TZ^n (k, n+1;m)'$. Restricting the faces $\partial_i ^{\epsilon}$ and the boundary map $\partial$ to the subgroup $\TZ^n (k, n+1;m) '$, we define $\TCH^n (k, n;m)'$ to be ${\rm coker} (\partial: \TZ^n (k, n+1;m)' \to \TZ^n (k, n;m))$.
\qed
\end{defn}

We can improve Theorem \ref{thm:Rulling} to the following:

\begin{thm}[R\"ulling]\label{thm:Rulling2}
Let $k$ be an arbitrary field. Then we have isomorphisms 
$$
\mathbb{W}_m \Omega_k ^{n-1} \overset{\sim}{\to} \TH^n (k, n;m)' \simeq \TH^n (k, n;m).
$$
\end{thm}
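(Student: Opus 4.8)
\textbf{Proof proposal for Theorem \ref{thm:Rulling2}.}

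The plan is to deduce the statement from Theorem \ref{thm:Rulling} by comparing the two presentations of the cokernel, the difference being only the space of $1$-cycles one mods out by: the classical additive higher Chow group $\TH^n(k,n;m)$ uses $1$-cycles $Z\subset B_{n+1}$ satisfying the (non-extended) general position condition $(GP)$ together with $M_{ssup}$, whereas $\TH^n(k,n;m)'$ uses the strictly smaller group of $1$-cycles satisfying the extended general position condition $(GP)_*$ (properness of the closure $\overline{Z}\subset\widehat{B}_{n+1}$ against \emph{all} extended faces) together with $M_{ssup}$. Since the group of $0$-cycles $\TZ^n(k,n;m)$ is the same in both, and the boundary maps agree, there is a natural surjection $\TH^n(k,n;m)'\twoheadrightarrow\TH^n(k,n;m)$, and the content of the theorem is that this surjection is injective, i.e. that every boundary coming from a $(GP)$-cycle is already the boundary of a $(GP)_*$-cycle. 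So I would establish the two halves:

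\emph{Surjectivity.} This is immediate: $\TZ^n(k,n+1;m)'\subset\TZ^n(k,n+1;m)$, so the cokernel $\TH^n(k,n;m)'$ maps onto $\TH^n(k,n;m)$; and precomposing with the isomorphism $gr_k$ of Theorem \ref{thm:Rulling} shows the composite $\mathbb{W}_m\Omega_k^{n-1}\to\TH^n(k,n;m)'\to\TH^n(k,n;m)$ is the isomorphism $gr_k$, so $\mathbb{W}_m\Omega_k^{n-1}\to\TH^n(k,n;m)'$ is (split) injective and $\TH^n(k,n;m)'\to\TH^n(k,n;m)$ is surjective.

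\emph{Injectivity.} It suffices to show the natural surjection $\TH^n(k,n;m)'\to\TH^n(k,n;m)$ is injective; combined with the previous paragraph this forces all three maps in the chain to be isomorphisms. For injectivity I would argue that the inclusion of complexes, in the relevant degrees, induces an iso on the cokernel because the extra relations available in $\TH^n(k,n;m)$ are already present in $\TH^n(k,n;m)'$. Concretely: given $Z\in\TZ^n(k,n+1;m)$ (only $(GP)$, not $(GP)_*$), its closure $\overline{Z}\subset\widehat{B}_{n+1}$ may fail to meet some extended faces $\{z_{i_1}=1,\dots\}$ properly. The key point is that the modulus condition $M_{ssup}$ already controls the intersection of $\overline{Z}$ with the divisors $\{z_i=1\}$: recall $M_{ssup}$ reads $(m+1)[\nu^*\{x=0\}]\le[\nu^*\{z_{i_0}=1\}]$ on the normalization, which in particular forces $\overline{Z}$ to meet $\{x=0\}$, hence the portion of $\overline Z$ lying over $x=0$, in a way bounded by its intersection with $\{z_{i_0}=1\}$. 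One should show that after this, a standard moving/translation argument in the $x$-coordinate (translating $x\mapsto x+\lambda$ by a generic scalar, which preserves $M_{ssup}$ up to the degenerate-cycle ambiguity and improves transversality against the extended faces $z_i=1$) replaces $Z$ by a $(GP)_*$-cycle $Z'$ with $\partial Z=\partial Z'$ in $\TZ^n(k,n;m)$. Equivalently, one shows the quotient complex $\TZ^n(k,\bullet;m)/\TZ^n(k,\bullet;m)'$ is acyclic in the degree computing $\TH^n$, by exhibiting an explicit contracting homotopy built from such translations, exactly as in the moving lemmas of Krishna--Park \cite{KP moving}. This step — verifying that the moving argument respects $M_{ssup}$, the degeneracy relations, and the $(GP)_*$ closure condition simultaneously — is the main obstacle; everything else is formal. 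An alternative, possibly cleaner, route is to avoid moving entirely: note that the $0$-cycles (closed points of $B_n$ disjoint from $\{x=0\}$ and all proper faces) are unchanged, and observe that the generators of $\mathbb{W}_m\Omega_k^{n-1}$ under $gr_k$ are represented by \emph{explicit} $0$-cycles of the form parametrized curves whose boundaries are computed by \emph{explicit} $1$-cycles (the parametrized curves in \cite{R}, \cite{BE2} of the shape $x=t$, $z_i = $ rational functions of $t$), and to check directly that these explicit defining $1$-cycles for the relations in R\"ulling's presentation already satisfy the extended general position condition $(GP)_*$ — their closures in $\widehat{B}_{n+1}$ meet $\{z_i=1\}$ transversally because of the shape of the chosen rational parametrizations. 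Since $gr_k$ is already known to be an isomorphism onto $\TH^n(k,n;m)$ via these explicit generators and relations, and all of them live in $\TZ^n(k,n+1;m)'$, the same presentation computes $\TH^n(k,n;m)'$, giving the isomorphism $\mathbb{W}_m\Omega_k^{n-1}\overset{\sim}{\to}\TH^n(k,n;m)'$ directly and hence $\TH^n(k,n;m)'\simeq\TH^n(k,n;m)$. I would pursue this second route first, falling back on the homotopy/moving argument only if some relation cycle in R\"ulling's presentation turns out not to satisfy $(GP)_*$ on the nose.
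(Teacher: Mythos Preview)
Your second route is the right instinct and is essentially what the paper does, but both your setup and your first route have real problems. First, a terminological confusion that propagates: the extended faces $\overline{F}\subset\widehat{B}_{n+1}$ are still cut out by $z_i=\epsilon$ with $\epsilon\in\{0,\infty\}$; they are \emph{not} the divisors $\{z_i=1\}$. The obstruction to $(GP)_*$ is that the closure $\overline{Z}$ may acquire extra points on $\overline{F}$ lying over the boundary $\widehat{B}_{n+1}\setminus B_{n+1}=\{x=\infty\}\cup\bigcup_j\{z_j=1\}$, spoiling properness there. Your proposed $x$-translation does not touch the $z_j$-coordinates (so cannot repair improperness at $\{z_j=1\}\cap\{z_i=\epsilon\}$) and in any case destroys the modulus condition anchored at $x=0$, so the first route does not work as stated. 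Second, your ``Surjectivity'' paragraph is circular: you invoke a map $\mathbb{W}_m\Omega_k^{n-1}\to\TH^n(k,n;m)'$ whose existence is precisely the point. R\"ulling's $gr_k$ is produced by the universal property of $\{\mathbb{W}_m\Omega_k^\bullet\}$ as an initial restricted Witt-complex, so to land in the primed groups you must first exhibit that structure on $\{\TH^n(k,n;m)'\}$.

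The paper carries out exactly this: it observes that R\"ulling's map $\theta$ kills $\partial\TZ^n(k,n+1;m)$ and hence the smaller $\partial\TZ^n(k,n+1;m)'$, giving $\theta_{m,n}:\TH^n(k,n;m)'\to\mathbb{W}_m\Omega_k^{n-1}$ for free; then it verifies that all of the Witt-complex operations from \cite[Def.-Prop.~3.9]{R} preserve the primed subgroups. The only genuinely new check is for the product, built via $\mu_*([\mathfrak{p}\times C])$ with $\mu:\mathbb{A}^1\times\mathbb{A}^1\to\mathbb{A}^1$: since $\mu$ rescales only the $\mathbb{P}^1$-coordinate of $\widehat{B}_{n+1}$ while the extended faces are cut by the $\overline{\square}^n$-coordinates, $(GP)_*$ is inherited from $C$. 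Initiality then yields $\phi_{m,n}:\mathbb{W}_m\Omega_k^{n-1}\to\TH^n(k,n;m)'$, and combining with the unprimed isomorphism of Theorem~\ref{thm:Rulling} forces $\theta_{m,n}\circ\phi_{m,n}$ and $\phi_{m,n}\circ\theta_{m,n}$ to be identities. So the paper's argument is your second route organized structurally through the Witt-complex axioms rather than by enumerating ``explicit defining $1$-cycles''---which is necessary, since R\"ulling's proof does not provide a finite list of relation cycles to inspect; the relations are generated by the operations themselves.
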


\begin{proof}
For ${\rm char} (k) \not = 2$, this is proven essentially by following the outline of the original proof of R\"ulling \cite{R}, and checking that each step there works for the group $\TZ^n (k, n+1;,m)'$. We sketch the arguments.

First of all, by \cite[Theorem 3.6]{R}, there exists a map $\theta: \TZ^n (k, n;m) \to \mathbb{W}_m \Omega_k ^{n-1}$ that maps $\partial \TZ^n (k, n+1;m)$ to zero. In particular, the subgroup $\partial \TZ^n (k, n+1;m)'$ is also mapped to zero. This induces the homomorphisms
$$
\theta_{m,n}: \TH^n (k, n;m)' \to \TH^n (k, n; m) \to \mathbb{W}_m \Omega_k ^{n-1}.
$$

On the other hand, all operations in \cite[Definition-Proposition 3.9]{R} concerning the restricted Witt-complex structure on $\{ \TH^n (k, n;m)'\}_{m, n}$ are defined in the same way. To see that the operations satisfy the axioms of the restricted Witt-complex structure, everything works \emph{mutatis mutandis}, while a point one needs to address is that (cf. around \cite[(3.9.6)]{R}) for the multiplication $\mu: \mathbb{A}^1 \times \mathbb{A}^1 \to \mathbb{A}^1$, a closed point $\mathfrak{p} \in \TZ^n (k, n;m)$ and an integral cycle $C \in \TZ^n (k, n+1;m)'$, the cycle 
$$
 \mu_* ([\mathfrak{p} \times C])
$$ 
satisfies the extended general position condition and the modulus condition. Exactly the same argument given in \emph{ibid.} works for the strong sup modulus condition. For the extended general position condition, since $C$ already satisfies it and since $\mu_* ( [\mathfrak{p} \times - ] )$ scales only the $\mathbb{P}^1$-coordinate of $\widehat{B}_n$ by a constant in an extension field while the extended faces are given by $\overline{\square}^n$-coordinates of $\widehat{B}_{m+1}$, one deduces that the cycle $\mu_* ([\mathfrak{p} \times C])$ does satisfy the extended general position condition as well. 

All other intermediate arguments leading to \cite[Proposition 3.17]{R} work as well, and we deduce the homomorphisms
$$
\phi_{m,n}: \mathbb{W}_m \Omega_k ^{n-1} \to \TH^n (k,n;m)'.
$$
Combined with the natural surjection $\TH^n (k, n;m)' \to \TH^n (k, n;m)$ and the isomorphism of the main theorem of R\"ulling (Theorem \ref{thm:Rulling}), one deduces that $\theta_{m,n} \circ \phi_{m,n}$ and $\phi_{m,n} \circ \theta_{m,n}$ are the identities.

When ${\rm char} (k) = 2$, we use the Appendix of R\"ulling \cite[Appendix]{KP crys} to argue that the above maps exist and inverse to each other.
\end{proof}

Later in Proposition \ref{prop:inverse Bloch general adm}, this group of additive $1$-cycles with the extended general position and the strong sup modulus condition crucially relates the group $\TH^n (k, n;m)'$ and the group $\CH_{{\rm v}} ^n (X/ (m+1), n)$ recalled in \S \ref{sec:our cycle}.

\subsection{${\rm d}$-cycles and ${\rm v}$-cycles}\label{sec:our cycle}

In \S \ref{sec:our cycle}, we recall the notions of ${\rm d}$-cycles and ${\rm v}$-cycles defined in J. Park \cite{Park presentation}. Over some schemes $X$, they form subcomplexes
$$
z_{{\rm v}} ^q (X, \bullet) \subset z_{{\rm d}} ^q (X, \bullet) \subset z^q (X, \bullet)
$$
of the cubical higher Chow complex $z^q (X, \bullet)$ of \S \ref{sec:HC}.

\subsubsection{Extended conditions $(GP)_*$ and $(SF)_*$}\label{sec:SF* condition}

We recall from \cite{Park presentation} the following two conditions given with respect to the Zariski closures of the cycles and the extended faces:

 \begin{defn}[{\cite[Definition 3.3.1]{Park presentation}}]\label{defn:GP*}
 Let $X$ be an integral $k$-scheme of finite dimension. Let $n \geq 0$ be an integer. Let $Z \subset X \times \square^n$ be an integral closed subscheme and let $\overline{Z}$ be its Zariski closure in $X \times \overline{\square}^n$. 
 
 \begin{enumerate}
 \item  We say that $Z$ satisfies the condition $(GP)_*$ if for each face $F \subset \square^n$, the intersection $\overline{Z} \cap (X \times \overline{F})$ is proper on $X \times \overline{\square}^n$.
 
 \item When $Z$ is a cycle on $X \times \square^n$, we say that $Z$ satisfies $(GP)_*$ if each component satisfies the condition $(GP)_*$.   \qed

 \end{enumerate}
 \end{defn}

\begin{defn}[{\cite[Definition 3.3.2]{Park presentation}}]\label{defn:SF* condition}
Let $X$ be an integral $k$-scheme of finite dimension. Let $n \geq 0$ be an integer. Let $Z \subset X \times \square^n$ be an integral closed subscheme and let $\overline{Z}$ be its Zariski closure in $X \times \overline{\square}^n$. Let $p \in X$ be a given fixed closed point.

\begin{enumerate}
\item We say that $Z$ satisfies $(SF)_*$ with respect to $p$, if for each face $F \subset \square^n$, the intersection $\overline{Z} \cap ( \{ p \} \times \overline{F})$ is proper on $X \times \overline{\square}^n$. 

In particular, if $Z$ satisfies $(SF)_*$ and $\overline{Z} \cap (X \times \overline{F}) \not = \emptyset$, then for each component $\overline{W} \subset \overline{Z} \cap (X \times \overline{F})$, the intersection $\overline{W} \cap ( \{ p \} \times \overline{F})$ is proper on $X \times \overline{\square}^n$.

\item If $Z$ is a cycle on $X \times \square^n$, we say that $Z$ satisfies $(SF)_*$ if each component of $Z$ satisfies $(SF)_*$.

\item If $X$ is local, then we drop the reference to the point $p$.
\qed

\end{enumerate}
\end{defn}

Recall from \cite{Park presentation} that the cycles satisfying the condition $(SF)_*$ over a local scheme $X$ possess the following strong properties.

\begin{lem}[{\cite[Lemma 3.3.6]{Park presentation}}]\label{lem:proper int face *}
Let $X$ be an integral local $k$-scheme of finite dimension with the unique closed point $p$. Let $n \geq 1$ be an integer. Let $Z \subset X \times \square^n$ be an integral closed subscheme, and let $\overline{Z}$ be its closure in $X \times \overline{\square}^n$.

Suppose that
\begin{enumerate}
\item $Z \subset X \times \square^n$ is of codimension $n$, and
\item $Z$ satisfies $(SF)_*$.
\end{enumerate}
Then for each proper face $F \subsetneq \square^n$, we have
$$
\overline{Z} \cap (X \times \overline{F}) = \emptyset.
$$
\end{lem}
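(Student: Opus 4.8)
The statement to prove is Lemma~\ref{lem:proper int face *}: for an integral local $k$-scheme $X$ of finite dimension with closed point $p$, and an integral closed subscheme $Z\subset X\times\square^n$ of codimension $n$ satisfying $(SF)_*$, one has $\overline{Z}\cap(X\times\overline{F})=\emptyset$ for every proper face $F\subsetneq\square^n$. The key numerical input is that $Z$ has codimension $n$ in $X\times\square^n$; since $\square^n$ is $n$-dimensional over $X$, this forces $Z$ to be \emph{horizontal} over $X$ in the sense that $\dim Z = \dim X$, i.e. $Z$ dominates $X$ via the projection and its generic fiber is a finite set of closed points of $\square^n_{k(X)}$. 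I would begin by making this observation precise: if $\dim X = d$, then $\dim(X\times\square^n) = d+n$, so $\codim Z = n$ means $\dim Z = d$, and since $Z$ is integral and closed, the projection $\pi: Z\to X$ has all fibers of dimension $0$, hence $\pi$ is quasi-finite; moreover $\overline{Z}\subset X\times\overline{\square}^n$ is then also of dimension $d$ (closure does not change dimension) and $\overline{\pi}:\overline{Z}\to X$ is still quasi-finite.

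\textbf{The core argument.} Now fix a proper face $F\subsetneq\square^n$, say of codimension $s\geq 1$, so $\overline F$ has dimension $n-s < n$ and $X\times\overline F$ has dimension $d+n-s$ inside $X\times\overline{\square}^n$, which has dimension $d+n$. Suppose toward a contradiction that $\overline Z\cap(X\times\overline F)\neq\emptyset$, and pick an irreducible component $\overline W$ of this intersection; then $\dim\overline W\geq \dim\overline Z + \dim(X\times\overline F) - \dim(X\times\overline{\square}^n) = d + (d+n-s) - (d+n) = d-s$ is the lower bound from proper intersection, but we do not yet know properness of \emph{this} intersection—that is exactly the subtlety. Instead I would argue directly: since $\overline\pi:\overline Z\to X$ is quasi-finite, so is its restriction to $\overline W$, hence $\dim\overline W\leq d$. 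On the other hand $\overline W\subset X\times\overline F$, and I now invoke the condition $(SF)_*$: it says that $\overline Z\cap(\{p\}\times\overline F)$ is proper on $X\times\overline{\square}^n$, meaning its dimension is at most $\dim\overline Z + \dim(\{p\}\times\overline F) - \dim(X\times\overline{\square}^n) = d + (n-s) - (d+n) = -s < 0$, so $\overline Z\cap(\{p\}\times\overline F)=\emptyset$. But $\overline W$ is a nonempty closed subscheme of $X\times\overline F$ that is proper-over-nothing; being quasi-finite over $X$ with $X$ local, its image in $X$ is a nonempty constructible subset, and—here I need to use that $\overline W$ is \emph{closed} in $X\times\overline{\square}^n$ hence proper over... no: $\overline\square^n$ is proper over $k$, so $X\times\overline{\square}^n\to X$ is proper, hence the closed subscheme $\overline W$ maps to a \emph{closed} subset of $X$; since $X$ is local and irreducible and $\overline W\neq\emptyset$, this closed image must contain the closed point $p$. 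Therefore $\overline W\cap(\{p\}\times\overline F)\neq\emptyset$, forcing $\overline Z\cap(\{p\}\times\overline F)\neq\emptyset$, contradicting what we just derived from $(SF)_*$. Hence $\overline Z\cap(X\times\overline F)=\emptyset$.

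\textbf{What is the main obstacle.} The delicate point is the dimension bookkeeping for $\overline Z$ itself and making sure $(SF)_*$ delivers emptiness rather than merely a dimension estimate: one must be careful that ``proper intersection on $X\times\overline{\square}^n$'' for a component of the \emph{expected} dimension $-s$ literally means the intersection is empty, which is the correct reading of the properness convention (the expected dimension is negative). The other place requiring care is the claim that the projection $\overline\pi$ is quasi-finite on all of $\overline Z$: this uses that $\overline Z$ is the \emph{closure} of $Z$ and that no component of $\overline Z$ is contained in the boundary $X\times(\overline{\square}^n\setminus\square^n)$, which holds because $Z$ is dense in $\overline Z$ and $Z$ is quasi-finite over $X$; combined with properness of $X\times\overline{\square}^n$ over $X$ this forces every fiber of $\overline\pi$ to be finite. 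I would also double-check that the argument is insensitive to whether $k(X)$ is algebraically closed—it is, since all estimates are in terms of dimension over the base and the properness of $\overline{\square}^n/k$. Modulo these points, the proof is a clean dimension-and-properness argument and I expect no serious obstacle; the work is in stating the conventions so that ``proper intersection of negative expected dimension $\Rightarrow$ empty'' is applied correctly.
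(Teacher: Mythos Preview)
The paper itself does not give a proof of this lemma; it is recalled from \cite[Lemma 3.3.6]{Park presentation} without argument. So there is no in-paper proof to compare against, and I evaluate your proposal on its own terms.

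Your core argument is correct and is the natural one. The condition $(SF)_*$ applied to a proper face $F$ of codimension $s\geq 1$ forces $\overline Z\cap(\{p\}\times\overline F)$ to have codimension at least $n+(d+s)$ in the $(d+n)$-dimensional ambient $X\times\overline{\square}^n$, hence to be empty. If $\overline Z\cap(X\times\overline F)$ had a nonempty component $\overline W$, then since $\overline{\square}^n$ is projective over $k$ the image of $\overline W$ in $X$ is closed and nonempty, hence contains $p$; but then $\overline W$ meets $\{p\}\times\overline F$, a contradiction. This is a complete proof.

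The one thing to clean up is the quasi-finiteness discussion in your ``plan'' and ``obstacle'' paragraphs, which contains an incorrect step and is in any case unnecessary. From $\dim Z=d$ alone it does \emph{not} follow that $\pi:Z\to X$ has all fibers of dimension $0$: for $d\geq 2$, an integral $Z$ of dimension $d$ can be supported over a proper closed subscheme of $X$ with positive-dimensional fibers there (and your proposed justification via ``$Z$ dense in $\overline Z$'' does not repair this). What \emph{is} true is that $(SF)_*$ for $F=\square^n$ forces $\dim\overline Z_p\leq 0$, and then upper semicontinuity of fiber dimension for the proper map $\overline Z\to X$ gives quasi-finiteness. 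But you correctly abandon quasi-finiteness mid-argument in favor of the properness of $X\times\overline{\square}^n\to X$, and that is all you need; the simplest fix is to delete the quasi-finiteness claims entirely.
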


\begin{lem}[{\cite[Lemma 3.3.7]{Park presentation}}]\label{lem:codim>*}
Let $X$ be an integral local $k$-scheme of finite dimension with the unique closed point $p$. Let $n \geq 0$ be an integer. Let $Z \subset X \times \square^n$ be an integral closed subscheme, and let $\overline{Z}$ be its closure in $X \times \overline{\square}^n$.

Suppose $Z$ satisfies $(SF)_*$. Then the codimension $c$ of $Z$ in $X \times \square^n$ is $\leq n$.
\end{lem}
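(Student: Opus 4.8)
The plan is to obtain the bound from a single instance of the condition $(SF)_*$, namely the one attached to the ``improper'' face $F = \square^n$ given by the empty set of equations, which Definition \ref{defn:SF* condition} explicitly permits. For this face $\overline{F} = \overline{\square}^n$, so $(SF)_*$ asserts precisely that $\overline{Z} \cap (\{p\} \times \overline{\square}^n)$ is proper on $X \times \overline{\square}^n$. We may assume $Z \neq \emptyset$, as there is otherwise nothing to prove.

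First I would record the elementary dimension bookkeeping. Put $d = \dim X$. Since $X$ is integral and local with closed point $p$, one has $\codim_X \{p\} = \dim \sO_{X,p} = d$, hence $\codim_{X \times \overline{\square}^n}(\{p\} \times \overline{\square}^n) = d$; and since $\overline{\square}^n = (\P^1)^n$ is geometrically integral of dimension $n$, both $X \times \square^n$ and $X \times \overline{\square}^n$ are integral of dimension $d + n$. As $Z$ is integral, so is its closure $\overline{Z}$, with $\dim \overline{Z} = \dim Z$; therefore $\codim_{X \times \overline{\square}^n} \overline{Z} = (d+n) - \dim Z = c$.

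Next I would verify that the intersection in question is nonempty, which is where the locality of $X$ enters. The projection $\pi \colon X \times \overline{\square}^n \to X$ is proper, being the base change of the proper morphism $\overline{\square}^n \to \Spec k$, hence closed; so $\pi(\overline{Z})$ is a nonempty closed subset of the local scheme $X$ and therefore contains its unique closed point $p$. Consequently $\overline{Z} \cap (\{p\} \times \overline{\square}^n) = \overline{Z} \cap \pi^{-1}(p)$ is nonempty, and we may choose an irreducible component $W$ of it. By $(SF)_*$ applied to $F = \square^n$, this intersection is proper on $X \times \overline{\square}^n$, so $\codim_{X \times \overline{\square}^n} W \geq \codim_{X \times \overline{\square}^n} \overline{Z} + \codim_{X \times \overline{\square}^n}(\{p\} \times \overline{\square}^n) = c + d$, whence $\dim W \leq (d+n) - (c+d) = n - c$. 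Since $W$ is a nonempty scheme, $\dim W \geq 0$, and therefore $c \leq n$.

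The argument is short, and I expect the non-emptiness step to be the only real obstacle: it is exactly the place where the hypothesis that $X$ is local and the properness of $\overline{\square}^n$ over $k$ are used, and it is what converts the codimension inequality furnished by $(SF)_*$ into an actual bound on $c$ rather than a vacuous one. The remaining ingredients---that $F = \square^n$ is a legitimate face, so that $(SF)_*$ genuinely supplies the properness of $\overline{Z} \cap (\{p\}\times\overline{\square}^n)$, and the dimension-versus-codimension bookkeeping on $X \times \overline{\square}^n$, valid because $X$ is integral and $\overline{\square}^n$ is geometrically integral and proper over $k$---are routine.
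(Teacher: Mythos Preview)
Your proof is correct. The paper does not supply its own proof of this lemma: it is merely recalled from \cite[Lemma 3.3.7]{Park presentation}, so there is no in-paper argument to compare against. Your approach---applying $(SF)_*$ to the improper face $F = \square^n$, using properness of $\overline{\square}^n \to \Spec k$ together with locality of $X$ to guarantee that $\overline{Z}$ actually meets the special fiber, and then reading off $c \leq n$ from the dimension of a component of that nonempty proper intersection---is the natural one and goes through without issue.

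One minor remark on wording: when you write $\codim_{X \times \overline{\square}^n} W \geq c + d$, the definition of ``proper intersection'' in this context in fact gives equality, not just an inequality; but since you only use the inequality $\dim W \leq n - c$ to conclude, your phrasing is harmless.
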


The condition $(SF)_*$ has another consequence:

\begin{defn}[{\cite[Definition 3.1.4]{Park presentation}}]\label{defn:DF}
Let $X$ be an integral $k$-scheme of finite dimension. Let $n \geq 0$ be an integer. For a closed subscheme $Z \subset X \times \square^n$, we let $\overline{Z}$ be its Zariski closure in $X \times \overline{\square}^n$. 
\begin{enumerate}
\item We say that $Z$ satisfies the property $(DO)$ if for each face $F \subset \square^n$ such that $Z \cap (X \times F ) \not = \emptyset$, each component $W \subset Z \cap (X \times F)$ is dominant over $X$.

\item If $Z$ is a cycle on $X \times \square^n$, we say that $Z$ satisfies $(DO)$ if each component of $Z$ satisfies the property $(DO)$.
\qed
\end{enumerate}
\end{defn}

\begin{lem}[{\cite[Lemma 3.3.8]{Park presentation}}]\label{lem:SF DF}
Let $X$ be an integral local $k$-scheme of dimension $ \leq 1$ with the unique closed point $p $. Let $n \geq 0$ be an integer. Let $Z \subset X \times \square^n$ be a nonempty integral closed subscheme and let $\overline{Z}$ be the Zariski closure in $X \times \overline{\square}^n$.

 If $Z$ satisfies $(SF)_*$, then $Z$ satisfies $(DO)$.
\end{lem}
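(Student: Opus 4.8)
The plan is to argue by contradiction with a codimension count, after disposing of the trivial case. If $\dim X = 0$ then $X$ is the spectrum of a field (integral local Artinian), and every nonempty closed subscheme of $X \times \square^n$ maps onto $X$, so $(DO)$ holds automatically. Hence I may assume $\dim X = 1$; write $p$ for the closed point of $X$.

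Next I would fix a face $F \subseteq \square^n$, of codimension $s$ say, with $Z \cap (X \times F) \neq \emptyset$, take a component $W$ of $Z \cap (X \times F)$, and suppose toward a contradiction that $W$ is \emph{not} dominant over $X$. Since $X$ is integral of dimension one, its only irreducible closed subsets are $X$ itself and $\{p\}$; therefore $W$ maps into $\{p\}$, that is, $W \subseteq \{p\} \times \square^n$ set-theoretically. Passing to Zariski closures in $X \times \overline{\square}^n$ and using $W \subseteq Z$ and $W \subseteq X \times F$ then yields the key containment $\overline{W} \subseteq \overline{Z} \cap (\{p\} \times \overline{F})$, with $\overline{W}$ nonempty.

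The heart of the argument is to bound $\codim_{X \times \overline{\square}^n} \overline{W}$ in two incompatible ways. From above: $X \times F$ is cut out in $X \times \square^n$ by the $s$ coordinate equations defining $F$, so $W$ is minimal over an ideal of $\mathcal{O}_Z$ generated by $s$ elements, whence $\codim_Z W \leq s$ by Krull's height theorem, and therefore
$$
\codim_{X \times \overline{\square}^n} \overline{W} = \codim_{X \times \square^n} W = \codim_{X \times \square^n} Z + \codim_Z W \leq \codim \overline{Z} + s .
$$
From below: the hypothesis $(SF)_*$ for the face $F$ says precisely that $\overline{Z} \cap (\{p\} \times \overline{F})$ is proper in $X \times \overline{\square}^n$, so every component of it --- in particular the irreducible subset $\overline{W}$ contained in it --- satisfies
$$
\codim_{X \times \overline{\square}^n} \overline{W} \geq \codim \overline{Z} + \codim_{X \times \overline{\square}^n}(\{p\} \times \overline{F}) = \codim \overline{Z} + \dim X + s = \codim \overline{Z} + 1 + s .
$$
These two estimates contradict each other, so $W$ is dominant over $X$, which is exactly $(DO)$.

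The step needing the most care --- really the only content beyond $(SF)_*$ and Krull --- is the codimension bookkeeping in the two displays: that codimension is unchanged when passing from $X \times \square^n$ to the open $X \times \overline{\square}^n$ containing it, that it adds along the flags $W \subseteq Z \subseteq X \times \square^n$ and $\{p\} \times \overline{F} \subseteq X \times \overline{F} \subseteq X \times \overline{\square}^n$, and that $\codim_X \{p\} = \dim X$. All of this is available in the setting of \cite{Park presentation}, where $X$ is local, integral and of finite Krull dimension so that $X \times_k \overline{\square}^n$ and $Z$ are catenary and biequidimensional, and it was already used implicitly in the proofs of Lemmas \ref{lem:proper int face *} and \ref{lem:codim>*}; I would simply cite it. The hypothesis $\dim X \leq 1$ is used twice: to enumerate the irreducible closed subsets of $X$, and to make the two codimension bounds differ by exactly $1$.
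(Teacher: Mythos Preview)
The paper does not prove this lemma here; it is only recalled from \cite{Park presentation}, so there is no in-paper argument to compare against. Your proof is correct: the contradiction via codimension counts --- Krull's height theorem giving $\codim_{X\times\overline{\square}^n}\overline{W}\le \codim\overline{Z}+s$, and $(SF)_*$ forcing $\codim_{X\times\overline{\square}^n}\overline{W}\ge \codim\overline{Z}+s+1$ --- is exactly the natural approach, and your handling of the catenary/biequidimensional bookkeeping and the $\dim X=0$ case is sound.
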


\subsubsection{The ${\rm d}$-cycles over $X$}\label{sec:dsf}

Recall from \cite{Park presentation} the following notion of ${\rm d}$-cycles based on the discussions in \S \ref{sec:SF* condition}:

\begin{defn}[{\cite[Definition 3.4.1]{Park presentation}}]\label{defn:dsf}
Let $X$ be an integral local $k$-scheme of dimension $ \leq 1$ and let $n,q \geq 0$ be integers.

\begin{enumerate}
\item A cycle $Z \in z^q (X, n)$ is called a \emph{dominant cycle} or a ${\rm d}$\emph{-cycle} if each component of $Z$ satisfies the conditions $(GP)_*$ and $(SF)_*$. 

We know from Lemma \ref{lem:SF DF} that such $Z$ satisfies $(DO)$, thus we gave the name \emph{dominant} cycles.

\item Let $z^q _{{\rm d}} (X, n) \subset z^q (X, n)$ be the subgroup generated by the ${\rm d}$-cycles in $z^q (X, n)$. The boundary maps $\partial_{i} ^{\epsilon} : z^q (X, n+1) \to z^q (X, n)$ for $1 \leq i \leq n+1$ and $\epsilon \in \{ 0, \infty \}$ induce the corresponding boundary maps
$$
\partial_i ^{\epsilon}: z^q _{{\rm d}} (X, n+1) \to z^q _{{\rm d}} (X, n)
$$
so that $z^q_{{\rm d}} (X, \bullet)$ forms a subcomplex of $z^q (X, \bullet)$ with respect to the boundary operator $\partial:= \sum_{i=1}^{n+1} (-1)^i (\partial_i ^{\infty} - \partial_i ^0)$. We call it the (higher) Chow complex of ${\rm d}$-cycles, and we define the (higher) Chow group $\CH^q_{{\rm d}} (X, n)$ of ${\rm d}$-cycles to be the $n$-th homology of $z^q_{{\rm d}} (X, \bullet)$.
\qed
\end{enumerate}
\end{defn}

From Lemma \ref{lem:codim>*}, we deduced that for $q >n$ $z^q_{{\rm d}} (X, n) = 0$. In particular, $\CH^q _{{\rm d}} (X, n) = 0$. Recall from \emph{ibid.} that another aspect immediately coming from the property $(SF)_*$ is:

\begin{lem}\label{lem:specialization}
Let $X$ be an integral local $k$-scheme of dimension $1$ with the unique closed point $p$. Let $q \geq 0$ be an integer. The specialization at $p$ defines the morphism of complexes
$$
{\rm ev}_{p} : z^q _{ {\rm d}} (X, \bullet) \to z^q ( p, \bullet),
$$
so that we have the homomorphisms $\CH_{{\rm d}} ^q (X, n) \to \CH^q (p, n)$.
\end{lem}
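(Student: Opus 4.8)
The plan is to define $\mathrm{ev}_p$ on irreducible $\mathrm{d}$-cycles, verify that it lands in the higher Chow complex of the point $p$, and then check that it is a chain map; since $z^q_{{\rm d}}(X,\bullet)$ is generated by the classes of integral closed subschemes satisfying $(GP)_*$ and $(SF)_*$, I would treat such a $Z\subset X\times\square^n$ and extend linearly. Writing $\iota_p\colon\{p\}\hookrightarrow X$ for the closed point (of codimension $1$, as $X$ is integral of dimension $1$), set ${\rm ev}_p(Z):=[\,Z\cap(\{p\}\times\square^n)\,]$, the cycle on $p\times\square^n$ with the evident intersection multiplicities. To see it is an element of $\un z^q(p,n)$, apply $(SF)_*$ with the full face $F=\square^n$: then $\overline{Z}\cap(\{p\}\times\overline{\square}^n)$ is proper on $X\times\overline{\square}^n$, so each of its components has codimension $\ge q+1$; and since $Z$ is dominant over $X$ by \lemref{lem:SF DF} (via $(DO)$), each nonempty component of the fiber $Z\cap(\{p\}\times\square^n)$ has dimension $\ge\dim Z-1=n-q$ as well, so ${\rm ev}_p(Z)$ is pure of codimension $q$. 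Running the same argument with an arbitrary face $F\subset\square^n$ shows ${\rm ev}_p(Z)$ meets every face of $p\times\square^n$ properly, hence ${\rm ev}_p(Z)\in\un z^q(p,n)$. Finally, if $Z$ is pulled back along a coordinate projection $\square^n_X\to\square^i_X$ then so is $Z\cap(\{p\}\times\square^n)$ along $p\times\square^n\to p\times\square^i$; thus ${\rm ev}_p$ carries degenerate $\mathrm{d}$-cycles to degenerate cycles and descends to a homomorphism $z^q_{{\rm d}}(X,n)\to z^q(p,n)$.

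The remaining point is the commutation ${\rm ev}_p\circ\partial_i^\epsilon=\partial_i^\epsilon\circ{\rm ev}_p$ for each $1\le i\le n$ and $\epsilon\in\{0,\infty\}$, which after summing with signs gives ${\rm ev}_p\circ\partial=\partial\circ{\rm ev}_p$. Both composites are obtained from $Z$, inside $X\times\overline{\square}^n$, by intersecting with the base divisor $\{p\}\times\overline{\square}^n$ and with the face divisor $X\times\overline{F}_i^\epsilon$ in the two possible orders and then restricting to $p\times\square^{n-1}$; since $F_i^\epsilon$ is cut out by a $\overline{\square}$-coordinate while $\{p\}$ lies in the base direction, these two loci meet transversally with intersection $\{p\}\times\overline{F}_i^\epsilon$. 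By $(GP)_*$ the intersection $\overline{Z}\cap(X\times\overline{F}_i^\epsilon)$ is proper, by $(SF)_*$ (applied with $F=\square^n$ and with $F=F_i^\epsilon$) the intersections $\overline{Z}\cap(\{p\}\times\overline{\square}^n)$ and $\overline{Z}\cap(\{p\}\times\overline{F}_i^\epsilon)$ are proper, and, since taking faces preserves $(GP)_*$ and $(SF)_*$ (cf.\ \defref{defn:dsf}), $\partial_i^\epsilon Z$ also meets $\{p\}\times\overline{F}_i^\epsilon$ properly; with every intermediate intersection proper, the required equality of multiplicities is the associativity of proper intersection products. (When $X$ is regular one may instead identify ${\rm ev}_p$ with the Gysin pullback along $\iota_p\times\id$, which $(SF)_*$ reduces to the naive pullback of cycles and which commutes with the face maps, the latter being pullbacks in the orthogonal $\overline{\square}$-directions.) Thus ${\rm ev}_p\colon z^q_{{\rm d}}(X,\bullet)\to z^q(p,\bullet)$ is a morphism of complexes, and passing to homology in degree $n$ gives the homomorphism $\CH^q_{{\rm d}}(X,n)\to\CH^q(p,n)$.

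I expect the chain-level identity ${\rm ev}_p\partial=\partial{\rm ev}_p$ to be the only genuine obstacle. For a cycle satisfying merely the condition $(GP)$ of \defref{defn:HC}, specialization to the special fiber need not commute with $\partial$: comparing $\overline{\partial_i^\epsilon Z}$ with $\partial_i^\epsilon\overline{Z}$, or either of these with its fiber over $p$, can produce spurious components supported entirely over $p$. The entire purpose of the condition $(SF)_*$ --- which demands properness of \emph{all} these intersections, computed via the Zariski closures in $X\times\overline{\square}^n$ and against the extended faces --- is precisely to eliminate such excess terms, so that the verification collapses to the formal associativity computation above; the rest is bookkeeping with the definition of $(SF)_*$.
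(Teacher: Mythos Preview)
The paper does not supply its own proof of this lemma; it is stated in \S\ref{sec:dsf} as a recollection from \cite{Park presentation}, prefaced only by the remark that it is ``another aspect immediately coming from the property $(SF)_*$.'' Your argument is the expected one and is correct: $(SF)_*$ is exactly the hypothesis that makes the naive fiber over $p$ land in $z^q(p,n)$ and makes all the intermediate intersections needed for $\partial_i^\epsilon\circ{\rm ev}_p={\rm ev}_p\circ\partial_i^\epsilon$ proper, after which the commutation is indeed associativity of the intersection product. One minor remark: once ``proper intersection'' is read as ``codimension equals the sum of codimensions,'' the invocation of $(DO)$ via \lemref{lem:SF DF} for the lower bound on fiber dimension is redundant, though it does no harm and covers the reading ``codimension at least the sum.''
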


\subsubsection{The ${\rm v}$-cycles over $X$}\label{sec:vanishing}

Recall the notion of ${\rm v}$-cycles from \cite{Park presentation}:

\begin{defn}[{\cite[Definitions 3.5.1, 3.5.8]{Park presentation}}]\label{defn:vanishing cycle}
Let $X$ be an integral local $k$-scheme of dimension $1$ with the unique closed point $p$. Let $n, q \geq 1$ be integers.

\begin{enumerate}
\item We say that an integral cycle $Z \in z^q (X, n)$ is a \emph{pre-vanishing cycle}, if the special fiber over $p$ is empty, i.e.  $Z \cap ( \{ p \} \times \square^n) = \emptyset$. 

\item An integral pre-vanishing cycle $Z$ is called a \emph{strict vanishing cycle}, or simply a ${\rm v}$-\emph{cycle}, if it is also a ${\rm d}$-cycle (Definition \ref{defn:dsf}).

\item A cycle $Z \in z^q _{{\rm d}} (X, n)$ is called a ${\rm v}$-\emph{cycle}, if each component is a ${\rm v}$-\emph{cycle}. 
Let $z^q _{\rm v} (X, n) \subset z^q _{{\rm d}} (X, n)$ be the subgroup of the ${\rm v}$-cycles. They form a subcomplex of $z_{{\rm d}} ^q (X, \bullet)$. The $n$-th homology of $z^q _{{\rm v}} (X, \bullet)$ is denoted by $\CH^q _{{\rm v}}(X, n)= {\rm H}_n (z^q _{{\rm v}} (X, \bullet))$, and called the (higher) Chow group of the strict vanishing cycles, or ${\rm v}$-cycles.
\qed
\end{enumerate}
\end{defn}

When $Z \in z^q (X, n)$ is an integral cycle and $\overline{Z} \subset X \times \overline{\square}^n$ is its Zariski closure, by definition, the cycle $Z$ is a pre-vanishing cycle if and only if we have
\begin{equation}\label{eqn:y_i=1 1}
\overline{Z} \cap (\{ p \} \times \overline{\square}^n ) \subset \bigcup _{i=1} ^n (  \{ p \} \times \{ y_i = 1 \}) =  \{ p \} \times \bigcup_{i=1} ^n \{ y_i = 1 \} ,
\end{equation}
where $\{ y_i = 1 \} \subset \overline{\square}^n$ denotes the divisor defined by the equation $y_i = 1$. We had in \cite{Park presentation} the following notion:

\begin{defn}[{\cite[Definition 3.5.4]{Park presentation}}]\label{defn:type i}
Let $Z \in z^q _{{\rm v}} (X, n)$ be an integral ${\rm v}$-cycle. 

For $1 \leq i \leq n$, we say that $y_i$ is a vanishing coordinate for $Z$ if there exists a point $x \in \overline{Z} \cap (\{ p \} \times \overline{\square}^n)$ such that $x \in \{ p \} \times \{ y_i = 1 \}$. There could be more than one such vanishing coordinates in general.
\qed
\end{defn}

\begin{remk}\label{remk:type i}
As seen from \cite[Remark 3.5.5]{Park presentation}, for an integral ${\rm v}$-cycle, there exists at least one vanishing coordinate for $Z$, and this aspect distinguishes ${\rm v}$-cycles from pre-vanishing cycles for which there may be no vanishing coordinate.
\qed
\end{remk}

In addition to the notions of pre-vanishing cycles and strict vanishing cycles, we have the following cohomological notion of vanishing cycles from \cite{Park presentation}: a cycle class, or a cycle that represents a cycle class in $\CH_{{\rm d}} ^q (X, n)$ is called a \emph{vanishing cycle} if it belongs to the kernel of the specialization map of Lemma \ref{lem:specialization}
$$
\CH_{{\rm d}} ^q (X, n) \to \CH^q (p, n).
$$
When $q=n$, we saw in \cite{Park presentation} that this is equivalent to saying that it belongs to the image of the natural map $\CH_{{\rm v}} ^n (X, n) \to \CH_{{\rm d}} ^n (X, n)$. A strict vanishing cycle gives a vanishing cycle, but we saw in \emph{ibid.} that not all representatives of vanishing cycles are strict vanishing cycles.

\medskip

From Lemma \ref{lem:codim>*}, we deduced that $z^q _{{\rm v}} (X, n) = 0$ for $q > n$. In particular, $\CH^q _{{\rm v}} (X, n) = 0$ for $q > n$ as well.

\subsubsection{The mod $I^{m+1}$-equivalence}\label{sec:mod t^{m+1}}

Now suppose that $X= \Spec (A)$ is an integral \emph{henselian} local $k$-scheme of dimension $1$. Let $p \in X$ be the closed point and let $I \subset A$ be the maximal ideal. For each integer $m \geq 1$, let $X_{m+1}:= \Spec (A/ I^{m+1})$. 

We recall an equivalence relation on the cycles $z_{{\rm d}} ^n (X, n)$ from \cite{Park presentation}. This is similar to a notion in Park-\"Unver \cite{PU Milnor} in spirit, while there are some subtle differences. 

This equivalence relation in Definition \ref{defn:mod t^{m+1}} was called the ``\emph{naive} mod $I^{m+1}$-equivalence" in \cite{Park presentation}, and it was proven that this is equivalent to the (true) mod $I^{m+1}$-equivalence defined also in \emph{ibid}. The former is easier to define and work with, while a bit hard to prove a few basic functoriality results. The latter is harder to define but can be used to prove those results. To simplify our presentation, here we present the former naive one only, and we simply recall some relevant needed results from \cite{Park presentation}, namely Lemmas \ref{lem:fpb} and \ref{lem:fpf}.

\begin{defn}[{\cite[Definition 2.3.1]{PU Milnor}}]\label{defn:mod t^{m+1}}
Let $m, n \geq 1$ be integers.

Let $Z_1, Z_2 \in z^n _{{\rm d}} (X, n)$ be two integral ${\rm d}$-cycles.
\begin{enumerate}
\item We say that $Z_1$ and $Z_2$ are \emph{mod $I^{m+1}$-equivalent} and write $Z_1 \sim_{I^{m+1}} Z_2$ if the Zariski closures $\overline{Z}_1, \overline{Z}_2$ in $X \times \overline{\square}^n$ satisfy the \emph{equality} of the closed subschemes of $X_{m+1} \times \overline{\square}^n$
$$
\overline{Z}_1 \times_X X_{m+1} = \overline{Z}_2 \times_X X_{m+1}.
$$
Unlike Park-\"Unver \cite{PU Milnor}, this notion here is defined using the closures. 

If $I= (t)$ is a principal ideal (e.g. when $X$ is regular so that $A$ is a DVR), we may say ``mod $t^{m+1}$-equivalent" synonymously.
\item Let $\mathcal{N}^n (m+1) \subset z^n_{{\rm d}} (X, n)$ be the subgroup generated by the differences $Z_1 - Z_2$ over all mod $I^{m+1}$-equivalent pairs $(Z_1, Z_2)$ with $Z_i \in z_{{\rm d}} ^n (X, n)$. Define $z_{{\rm d}} ^n (X/ (m+1), n):= z_{{\rm d}} ^n (X, n) / \mathcal{N}^n (m+1)$, and
\begin{equation}\label{eqn:CHd N}
\CH^n_{{\rm d}} (X/ (m+1), n)= \frac{ z^n _{{\rm d}} (X, n)}{ \partial z^n_{{\rm d}} (X, n+1) + \mathcal{N}^n (m+1)},
\end{equation}
and it is called the Chow group of the ${\rm d}$-cycles modulo $I^{m+1}$. \qed
\end{enumerate}
\end{defn}

Similarly, recall from \cite{Park presentation} that we induce the mod $I^{m+1}$-equivalence on the ${\rm v}$-cycles as well:

\begin{defn}[{\cite[Definitions 3.7.8, 3.7.12]{Park presentation}}]\label{defn:mod t^{m+1} v}
Let $\mathcal{N}_{{\rm v}} ^n (m+1) \subset z_{{\rm v}} ^n (X, n)$ be the subgroup generated by $Z_1 - Z_2$ over all pairs $(Z_1, Z_2)$ of mod $I^{m+1}$-equivalent integral cycles $Z_1, Z_2 \in z_{{\rm v}} ^n (X, n)$. Define $z_{{\rm v}} ^n (X/ (m+1), n) := z_{{\rm v}} ^n (X, n) / \mathcal{N}_{{\rm v}} ^n (m+1)$, and
$$
\CH_{{\rm v}} ^n (X/ (m+1), n) = \frac{ z_{{\rm v}}^n (X, n) }{ \partial z_{{\rm v}} ^n (X, n+1) + \mathcal{N}_{{\rm v}} ^n (m+1)}
$$
similar to \eqref{eqn:CHd N}.
\qed
\end{defn}

\subsubsection{Some pull-backs and push-forwards}

Recall some results on pull-backs and push-forwards. 
We recall just the cases needed for this article:

\begin{lem}[{\cite[Lemma 3.8.1]{Park presentation}}]\label{lem:fpb}
Let $X, Y$ be integral local $k$-schemes of dimension $\leq 1$ with the closed points $p_1, p_2$, respectively. Let $f: X \to Y$ be a flat surjective morphism of $k$-schemes. 
\begin{enumerate}
\item We have the induced flat pull-back morphism $f^*: z_{{\rm d}} ^q (Y, \bullet) \to z_{{\rm d}} ^q (X, \bullet).$
\item Suppose $\dim \ X = \dim \ Y = 1$. Then the restriction of the above $f^*$ on the subcomplex $z_{{\rm v}} ^q (Y, \bullet)$ induces
$
f^*: z_{{\rm v}} ^q (Y, \bullet) \to z_{{\rm v}} ^q (X, \bullet).
$
\item Suppose $Y= \Spec (A)$ is an integral henselian local $k$-scheme of dimension $1$ with the residue field $k=A/I$. For $X:= Y_{k'}$, let $f: X \to Y$ be the base change map for a finite extension $k \hookrightarrow k'$ of fields. Then in the Milnor range we have 
$f^*: \CH_{{\rm d}} ^n (Y/ (m+1), n) \to \CH_{{\rm d}} ^n (X/ (m+1), n)$ and $f^*: \CH_{{\rm v}} ^n (Y/ (m+1), n) \to \CH_{{\rm v}} ^n (X/ (m+1), n)$. 
\end{enumerate}
\end{lem}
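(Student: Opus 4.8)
The plan is to get all three statements by transporting the ordinary flat pull-back of algebraic cycles (S.\ Bloch \cite{Bloch notes}) through the conditions $(GP)_*$, $(SF)_*$, the pre-vanishing condition, and finally the mod $I^{m+1}$-equivalence. Write $\bar f := f \times \id_{\overline{\square}^n} : X \times \overline{\square}^n \to Y \times \overline{\square}^n$; it is flat of relative dimension $\delta := \dim X - \dim Y \geq 0$, with $\delta = 0$ in parts (2) and (3). A preliminary observation, used throughout, is that $f$ surjective between integral local $k$-schemes of dimension $\leq 1$ forces $f(p_1) = p_2$ and sends the generic point to the generic point; hence $\bar f^{-1}(\{p_2\} \times \overline F) = \{p_1\} \times \overline F$ as sets for every extended face $\overline F \subset \overline{\square}^n$. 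The technical heart at the cycle level is a sublemma: closure commutes with this flat pull-back, i.e.\ for an integral $W \subset Y \times \square^n$ with Zariski closure $\overline W$ in $Y \times \overline{\square}^n$ one has $\overline{\bar f^{-1}(W)} = \bar f^{-1}(\overline W)$ on supports, with matching multiplicities. Indeed $\bar f^{-1}(\overline W)$ has pure dimension $\dim \overline W + \delta$ by flatness, whereas its part in the boundary $X \times \bigcup_i \{ y_i = 1\}$ is contained in $\bar f^{-1}\bigl(\overline W \cap \bigcup_i \{ y_i = 1\}\bigr)$, of dimension $< \dim \overline W + \delta$ because $W \subset \square^n_Y$ makes $\overline W \cap \{y_i = 1\}$ a proper closed subset of $\overline W$; so no component of $\bar f^{-1}(\overline W)$ lies in the boundary.

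For part (1) I would then proceed as follows. Granting the sublemma, for each extended face $\overline F$ we have, on supports, $\overline{f^*W} \cap (X \times \overline F) = \bar f^{-1}\bigl(\overline W \cap (Y \times \overline F)\bigr)$ and $\overline{f^*W} \cap (\{p_1\} \times \overline F) = \bar f^{-1}\bigl(\overline W \cap (\{p_2\} \times \overline F)\bigr)$. Since flat pull-back along $\bar f$ raises every dimension by exactly $\delta$, the properness of $\overline W \cap (Y \times \overline F)$, resp.\ of $\overline W \cap (\{p_2\} \times \overline F)$, in $Y \times \overline{\square}^n$ transfers verbatim to these intersections in $X \times \overline{\square}^n$; thus $f^*$ preserves $(GP)_*$ and $(SF)_*$. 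Compatibility of $f^*$ with the face maps $\partial_i^\epsilon$ and with the coordinate projections defining degenerate cycles is flat base change along the evident Cartesian squares. Hence $f^*$ descends to a morphism of complexes $z_{\rm d}^q(Y,\bullet) \to z_{\rm d}^q(X,\bullet)$, which proves (1).

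For part (2), where $\delta = 0$, it remains only to check the pre-vanishing condition: if $Z$ is integral with $Z \cap (\{p_2\}\times\square^n) = \emptyset$, then $f^*Z \cap (\{p_1\}\times\square^n) = \bar f^{-1}\bigl(Z\cap(\{p_2\}\times\square^n)\bigr) = \emptyset$, so $f^*Z$ is again a ${\rm v}$-cycle and $f^*$ restricts to $z_{\rm v}^q(Y,\bullet)\to z_{\rm v}^q(X,\bullet)$. For part (3), $f$ is the finite flat morphism $\Spec(A\otimes_k k')\to\Spec A$, so parts (1) and (2) already yield maps on $z^n_{\rm d}(-,\bullet)$ and $z^n_{\rm v}(-,\bullet)$; the point left is compatibility with the mod $I^{m+1}$-equivalence. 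From $(A\otimes_k k')\otimes_A (A/I^{m+1}) = (A/I^{m+1})\otimes_k k'$ we get $X_{m+1} = X\times_Y Y_{m+1}$, and the square relating $\bar f$ to $\bar f_{m+1}: X_{m+1}\times\overline{\square}^n \to Y_{m+1}\times\overline{\square}^n$ is Cartesian; hence, using the sublemma, $\overline{f^*Z}\times_X X_{m+1} = \bar f_{m+1}^{-1}\bigl(\overline Z\times_Y Y_{m+1}\bigr)$ depends only on $\overline Z\times_Y Y_{m+1}$. So $f^*$ carries $\mathcal{N}^n(m+1)$ into $\mathcal{N}^n(m+1)$ and $\mathcal{N}_{\rm v}^n(m+1)$ into $\mathcal{N}_{\rm v}^n(m+1)$, and the asserted maps on $\CH_{\rm d}^n(-/(m+1),n)$ and $\CH_{\rm v}^n(-/(m+1),n)$ follow.

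The step I expect to be the main obstacle is this last piece of bookkeeping in part (3). Because $f^*Z$ is in general a non-reduced cycle --- a combination of several components with multiplicities --- upgrading ``$\overline{f^*Z_1}\times_X X_{m+1} = \overline{f^*Z_2}\times_X X_{m+1}$'' to genuine membership of $f^*Z_1 - f^*Z_2$ in $\mathcal{N}^n(m+1)$ requires pairing the components of $\bar f^{-1}(\overline{Z_1})$ and $\bar f^{-1}(\overline{Z_2})$ in a neighbourhood of the special fibre and matching both their multiplicities and their scheme structures there; this is exactly where one uses that $f$ is finite flat (so primary decomposition is controlled along the fibre) together with the sublemma that $\bar f^{-1}$ commutes with closures. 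A minor side issue is that $A\otimes_k k'$ need not be a domain when $k'/k$ is inseparable in a way incompatible with $\operatorname{Frac}(A)$; in that case it is a finite product of henselian local rings and the statement is read componentwise, replacing $z_{\rm v}^n(X,\bullet)$ by the corresponding direct sum.
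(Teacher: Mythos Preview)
The paper does not prove this lemma; it is recalled from \cite[Lemma 3.8.1]{Park presentation} without argument. So there is no in-paper proof to compare against. Your argument for parts (1) and (2) is the standard one and is fine: the closure sublemma plus flat base change handle $(GP)_*$, $(SF)_*$, the pre-vanishing condition, and compatibility with faces.

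For part (3), the obstacle you flag is real and is not just bookkeeping. The mod $I^{m+1}$-equivalence recalled here (Definition~\ref{defn:mod t^{m+1}}) is explicitly called the \emph{naive} version, and the paper warns just before that definition that this naive version ``is easier to define and work with, while a bit hard to prove a few basic functoriality results,'' whereas the \emph{true} version in \cite{Park presentation} ``is harder to define but can be used to prove those results.'' In other words, the source \cite{Park presentation} proves the pull-back and push-forward lemmas precisely by passing to the other, equivalent definition of mod $I^{m+1}$-equivalence. Your attempt to verify $f^*(\mathcal N^n(m+1))\subset \mathcal N^n(m+1)$ directly from the naive definition runs into exactly the difficulty you describe: $f^*Z_1$ and $f^*Z_2$ may each split into several integral components, and knowing only that $\bar f^{-1}(\overline Z_1\times_Y Y_{m+1})=\bar f^{-1}(\overline Z_2\times_Y Y_{m+1})$ as \emph{schemes} does not immediately yield a pairing of those components into naive-equivalent integral pairs with matching multiplicities. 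So your proof of (3) is incomplete as written; the missing ingredient is either the alternate definition of mod $I^{m+1}$-equivalence from \cite{Park presentation} or a genuine component-matching argument that you have not supplied.

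Your side remark that $A\otimes_k k'$ need not be a domain is well taken in general, but note that in every application in this paper one has $A=k[[t]]$, for which $k[[t]]\otimes_k k'=k'[[t]]$ is again an integral henselian DVR, so the issue does not arise here.
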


\begin{lem}[{\cite[Lemma 3.8.2]{Park presentation}}]\label{lem:fpf}

Let $X, Y$ be integral local $k$-schemes of dimension $\leq 1$, and let $p_1, p_2$ be their closed points, respectively. Let $f: X \to Y$ be a finite surjective morphism of $k$-schemes.
\begin{enumerate}
\item We have the induced finite push-forward morphism
$
f_* : z_{{\rm d}} ^q (X, \bullet) \to z_{{\rm d}} ^q (Y, \bullet).
$

\item Suppose $\dim \ X = \dim \ Y = 1$. If we restrict the above $f_*$ to the subcomplex $z_{{\rm v}} ^q (X, \bullet)$, then it induces
$
f_*: z_{{\rm v}} ^q (X, \bullet) \to z_{{\rm v}} ^q (Y, \bullet).
$

\item Suppose $Y= \Spec (A)$ is an integral henselian local $k$-scheme of dimension $1$ with the residue field $k= A/I$. For $X:= Y_{k'}$, let $f: X \to Y$ is the base change for a finite extension $k\hookrightarrow k'$ of fields. Then in the Milnor range we have: $f_*: \CH_{{\rm d}} ^n (X/ (m+1), n) \to \CH_{{\rm d}} ^n (Y/ (m+1), n)$ and $f_*: \CH_{{\rm v}} ^n (X/ (m+1), n) \to \CH_{{\rm v}} ^n (Y/ (m+1), n)$. 
\end{enumerate}

\end{lem}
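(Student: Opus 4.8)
The plan is to deduce all three parts from the single finite morphism $g := f \times \id_{\overline{\square}^n} : X \times \overline{\square}^n \to Y \times \overline{\square}^n$ (again finite, since $f$ is) by feeding it through the usual formalism of proper push-forward of cycles and then checking, one condition at a time, that the conditions cutting out $z_{{\rm d}}^q$, $z_{{\rm v}}^q$ and the mod-$I^{m+1}$ quotients survive. Two preliminary facts are used throughout: since $f$ is finite it is closed, so $f(p_1)$ is a closed point of $Y$ and hence $f(p_1)=p_2$; and since $X$ is local, $f^{-1}(p_2)$ is set-theoretically the one point $\{p_1\}$. Because $g=f\times\id$, this gives the key preimage identities $g^{-1}(Y\times \overline F)=X\times \overline F$, $g^{-1}(\{p_2\}\times \overline F)=\{p_1\}\times \overline F$ (set-theoretically) for every (extended) face. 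For (1) I would take an integral ${\rm d}$-cycle $Z\subset X\times \square^n$ with closure $\overline Z$, use properness of $g$ to see that $g(\overline Z)$ is the closure of $g(Z)$ and that $g|_{\overline Z}$ is finite, hence preserves dimensions of all closed subschemes; combined with the preimage identities this yields $g(\overline Z\cap(X\times \overline F))=g(\overline Z)\cap(Y\times \overline F)$ (and likewise over $\{p_2\}$) with equal dimensions on both sides, which transfers the properness/codimension counts in $(GP)$, $(GP)_*$ and $(SF)_*$ from $Z$ to $g_*Z$. Since $g$ is the identity on the $\overline{\square}^n$-factor, $g_*$ commutes with the coordinate projections (so it sends degenerate cycles to degenerate cycles) and with the face maps $\partial_i^\epsilon$, hence is a map of complexes $z_{{\rm d}}^q(X,\bullet)\to z_{{\rm d}}^q(Y,\bullet)$.

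For (2) I would observe in addition that a ${\rm v}$-cycle is a ${\rm d}$-cycle whose special fiber $Z\cap(\{p_1\}\times \square^n)$ is empty; by the preimage identity $g$ maps this fiber onto $g_*Z\cap(\{p_2\}\times \square^n)$, so $g_*Z$ stays pre-vanishing, and together with (1) this gives the restriction $f_*\colon z_{{\rm v}}^q(X,\bullet)\to z_{{\rm v}}^q(Y,\bullet)$.

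For (3), $f\colon X=Y_{k'}\to Y$ is the base change of $\Spec k'\to\Spec k$, hence finite and flat, and $X_{m+1}=Y_{m+1}\times_Y X=(Y_{m+1})_{k'}$ with the ideal $I^{m+1}$ on $X$ generated by $I^{m+1}$ on $Y$; if $Y_{k'}$ is not integral (or not local) I would first decompose it into its finitely many components, each an integral henselian local $k$-scheme of dimension $1$, and argue componentwise, reducing to the stated hypotheses. Given (1)--(2) and $f_*\partial=\partial f_*$, the assertion in the Milnor range reduces to showing that $f_*$ carries $\mathcal{N}^n(m+1)$ and $\mathcal{N}^n_{{\rm v}}(m+1)$ on $X$ into their counterparts on $Y$. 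For this I would base-change $g$ to the finite flat $g_{m+1}\colon X_{m+1}\times\overline{\square}^n\to Y_{m+1}\times\overline{\square}^n$ and use that, in the Cartesian square expressing $X_{m+1}=Y_{m+1}\times_Y X$, forming the scheme-theoretic restriction over $X_{m+1}$ commutes with finite push-forward, so that $\overline Z_1\times_X X_{m+1}=\overline Z_2\times_X X_{m+1}$ forces $(f_*\overline Z_1)\times_Y Y_{m+1}=(f_*\overline Z_2)\times_Y Y_{m+1}$, i.e.\ $f_*Z_1\sim_{I^{m+1}}f_*Z_2$; the ${\rm v}$-cycle version is identical once (2) is known.

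I expect the main obstacle to be exactly this last point: making rigorous the interaction of finite push-forward with the non-reduced thickenings $Y_{m+1}$ and with the scheme-theoretic (rather than cycle-theoretic) formulation of the mod-$I^{m+1}$-equivalence in Definition~\ref{defn:mod t^{m+1}} --- here one genuinely needs $g$ finite \emph{flat} and the square Cartesian, not merely the properness used in (1)--(2). The remaining ingredients --- the bookkeeping with degenerate cycles and face maps in (1), the empty-special-fiber check in (2), and the reduction of $Y_{k'}$ to its integral local components in (3) --- I expect to be routine.
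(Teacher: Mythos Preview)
The paper does not prove this lemma; it is recalled verbatim from \cite[Lemma~3.8.2]{Park presentation}, so there is no ``paper's own proof'' to compare against here. What can be compared is your sketch against what the paper says about how such functoriality is established in the cited reference.

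Your outline for (1) and (2) is the standard one and is essentially correct: the key points are that $g=f\times\id$ is finite (hence closed, dimension-preserving on closed subsets), that $f^{-1}(p_2)=\{p_1\}$ set-theoretically so the preimage identities for faces and special fibers hold, and that $g$ being the identity on the $\overline{\square}^n$-factor gives compatibility with face maps and degenerate cycles. This is routine and matches what one expects the argument in \cite{Park presentation} to be.

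For (3), you correctly locate the main obstacle, but you may be underestimating it. The paper itself warns (just before Definition~\ref{defn:mod t^{m+1}}) that the equivalence relation presented here is the \emph{naive} mod~$I^{m+1}$-equivalence, which is ``a bit hard to prove a few basic functoriality results'' with; \cite{Park presentation} introduces a separate ``true'' mod~$I^{m+1}$-equivalence precisely to handle push-forward and pull-back, and then proves the two notions agree. Your proposed direct argument---that scheme-theoretic restriction to $X_{m+1}$ commutes with finite push-forward via the Cartesian square---runs into the mismatch between cycle-theoretic push-forward (which introduces degree multiplicities) and the scheme-theoretic equality required by Definition~\ref{defn:mod t^{m+1}}. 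Concretely, $g_*[Z_i]=d_i\,[g(Z_i)]$, and you need to show both that the scheme-theoretic images $g(\overline{Z}_1)\times_Y Y_{m+1}$ and $g(\overline{Z}_2)\times_Y Y_{m+1}$ coincide \emph{and} that $d_1=d_2$; neither follows immediately from the naive hypothesis without further work. So your approach to (3) is not wrong in spirit, but the honest route is the one the paper points to: use the alternative formulation of mod~$I^{m+1}$-equivalence from \cite{Park presentation}, for which the compatibility with $f_*$ is built in, rather than trying to push the naive definition through directly.
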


\subsubsection{The graph maps and transfer}\label{sec:4 graph}

Recall that we have the following theorem on the graph homomorphisms:

\begin{thm}[{\cite[\S 4.1, \S 5.5]{Park presentation}}]\label{thm:graph final}
Let $k$ be an arbitrary field. Let $m, n \geq 1$ be integers. Let $X= \Spec (A)$ be an integral henselian local $k$-scheme of dimension $1$ with the residue field $k=A/I$. Then:
\begin{enumerate}
\item There exist the graph homomorphisms 
$$
gr_{{\rm d}}: \widehat{K}_n ^M (A) \to \CH_{{\rm d}} ^n (X, n), \ \ \ gr_{{\rm d}}: \widehat{K}_n ^M (A/I^{m+1}) \to \CH_{{\rm d}} ^n (X/ (m+1), n)
$$
and
$$
 gr_{{\rm v}}: \widehat{K} ^M _n (A, I) \to \CH_{{\rm v}} ^n (X, n), \ \ \  gr_{{\rm v}}: \widehat{K}_n ^M (A/ I^{m+1}, I) \to \CH_{{\rm v}} ^n (X/ (m+1), n),
$$
given by sending the symbol $\{ a_1, \cdots, a_n \}$ with $a_i \in A^{\times}$ to the closed subscheme $ \Gamma_{ (a_1, \cdots, a_n) }$ in $X \times \square^n$ defined by $\{ y_1 = a_1, \cdots, y_n = c_n \}$. 

\item Furthermore, if $X$ is regular in addition, then all of the above maps are isomorphisms. 
\end{enumerate}
\end{thm}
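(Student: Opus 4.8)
The plan is to construct $gr_{\rm d}$ and $gr_{\rm v}$ first on honest Milnor $K$-theory, to promote them to the improved theory by Kerz's universal property, and then, when $A$ is regular, to exhibit an explicit inverse assembled from norms of restricted coordinate functions. On $K^M_n$ the map is given by the graph-cycle formula: for $a_i\in A^\times$ the cycle $\Gamma_{(a_1,\dots,a_n)}$ is the graph of the morphism $X\to\square^n$ with components $a_i$, hence integral, isomorphic to $X$ under the projection, and of codimension $n$ in $X\times\square^n$. Since each $a_i$ is a unit, its closure in $X\times\overline{\square}^n$ misses all the divisors $\{y_i=0\}$ and $\{y_i=\infty\}$, so it meets every extended face trivially; thus $(GP)_*$ and $(SF)_*$ hold vacuously and $\Gamma\in z^n_{\rm d}(X,n)$. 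If moreover some $a_{i_0}\in\ker(A^\times\to(A/I)^\times)$ then $y_{i_0}$ specializes to $1$ over the closed point $p$, so the closure meets $\{p\}\times\overline{\square}^n$ inside $\{p\}\times\{y_{i_0}=1\}$, which by \eqref{eqn:y_i=1 1} is precisely the pre-vanishing condition, and $\Gamma\in z^n_{\rm v}(X,n)$. I would then check that the assignment factors through $K^M_n(A)$: the multilinearity difference $\Gamma_{(ab,a_2,\dots)}-\Gamma_{(a,a_2,\dots)}-\Gamma_{(b,a_2,\dots)}$ and the Steinberg cycle $\Gamma_{(a,1-a,a_3,\dots)}$ are boundaries of explicit Nesterenko--Suslin--Totaro rational curves in $X\times\square^{n+1}$, which one verifies again satisfy $(GP)_*$, $(SF)_*$ and, in the relative situation, the pre-vanishing condition, hence lie in $z^n_{\rm d}$, resp. $z^n_{\rm v}$. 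This produces natural homomorphisms $gr_{\rm d}\colon K^M_n(A)\to\CH^n_{\rm d}(X,n)$ and $gr_{\rm v}\colon K^M_n(A,I)\to\CH^n_{\rm v}(X,n)$; since symbols congruent modulo $I^{m+1}$ have graph cycles whose closures agree after $\times_X X_{m+1}$, these descend to the mod $I^{m+1}$ quotients.

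To upgrade from $K^M_n$ to $\widehat{K}^M_n$ I would invoke the universal property of the improved Milnor $K$-theory (\S\ref{sec:Milnor Kerz} and Theorem \ref{thm:Khat_univ}): it suffices to equip $X\mapsto\CH^n_{\rm d}(X,n)$, together with its relative and mod $I^{m+1}$ variants, with transfers along finite separable residue-field extensions that are compatible, under $gr$, with the Milnor norm. Those transfers are the finite push-forwards of Lemma \ref{lem:fpf} (with the base-change descriptions of Lemmas \ref{lem:fpb}(3) and \ref{lem:fpf}(3) covering the mod $I^{m+1}$ groups), and the compatibility is the projection formula, checked on graph cycles. This gives all four homomorphisms of part (1).

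For part (2), let $X$ be regular, so that $A$ is a henselian discrete valuation ring with residue field $k$. Given an integral ${\rm v}$-cycle $Z\in z^n_{\rm v}(X,n)$, Lemmas \ref{lem:proper int face *} and \ref{lem:SF DF} force its closure $\overline{Z}=\Spec A_Z$ to be disjoint from every proper extended face and dominant over $X$, so $A_Z$ is a finite $A$-algebra that is a one-dimensional local domain and each $y_i$ restricts to a unit of $A_Z$. I would set $\rho_{\rm v}(Z):=\Nm_{{\rm Frac}(A_Z)/{\rm Frac}(A)}\{y_1|_Z,\dots,y_n|_Z\}$, and check, via compatibility of norms with tame symbols together with the Gersten injectivity of Theorem \ref{thm:Khat_univ}(4) and the unramified characterization of $\widehat{K}^M_n(A)\subset\widehat{K}^M_n({\rm Frac}(A))$, that this unramified class lies in $\widehat{K}^M_n(A)$, and in $\widehat{K}^M_n(A,I)$ by the pre-vanishing condition. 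Then $\rho_{\rm v}$ kills $\partial z^n_{\rm v}(X,n+1)$ --- which, after normalizing the relative curves involved, is the relative Suslin reciprocity law --- and, since mod-$I^{m+1}$-equivalent cycles have restricted coordinate symbols congruent modulo $I^{m+1}$, it also kills $\mathcal{N}_{\rm v}^n(m+1)$ and descends along $\widehat{K}^M_n(A,I)\to\widehat{K}^M_n(A/I^{m+1},I)$, yielding the inverse at the mod $I^{m+1}$ level as well. Since $\rho_{\rm v}\circ gr_{\rm v}=\mathrm{id}$ on symbols, hence on all of $\widehat{K}^M_n$ by surjectivity of $K^M_n\to\widehat{K}^M_n$, the maps $gr_{\rm v}$ are injective; the ${\rm d}$-cycle case is entirely parallel.

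The hard part will be the surjectivity of $gr_{\rm v}$ (and $gr_{\rm d}$) in the regular case: every ${\rm v}$-cycle class should be, modulo boundaries and $\mathcal{N}_{\rm v}^n(m+1)$, a $\mathbb{Z}$-combination of graph cycles with a coordinate congruent to $1$ modulo $I^{m+1}$. This is a \emph{moving lemma with modulus} for ${\rm v}$-cycles: an integral ${\rm v}$-cycle, finite over $X$, must be deformed --- through a chain of ${\rm v}$-cycles that throughout preserves $(GP)_*$, $(SF)_*$, the pre-vanishing condition and the mod $I^{m+1}$ data --- into a sum of sections $X\to\square^n$. In the henselian local setting the leverage is that a finite $A$-point in suitable position splits by Hensel's lemma, pushing the problem down to the residue field $k$, where the improved Kato--Saito presentation of Lemma \ref{lem:KS improved} isolates a distinguished coordinate congruent to $1$. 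Organizing the explicit homotopy curves so that the cubical identities hold while all the geometric $*$-conditions and the modulus filtration are maintained simultaneously is the delicate bookkeeping, and controlling the modulus throughout the moving process is exactly what has no counterpart in the classical Nesterenko--Suslin--Totaro picture over a field. Together with the earlier steps, this establishes (1) and, for $X$ regular, (2).
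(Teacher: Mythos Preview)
This theorem is not proved in the present paper: it is imported wholesale from the companion paper \cite[\S 4.1, \S 5.5]{Park presentation} and stated in the recollection section \S\ref{sec:4 graph} as a black box on which the rest of the article (the inverse Bloch map, \S\ref{sec:inverse Bloch general}--\S\ref{sec:surj inverse Bloch}) is built. So there is no in-paper proof to compare your proposal against.

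As a reconstruction of what the argument in \cite{Park presentation} plausibly looks like, your outline has the right architecture --- graph cycles on symbols, Nesterenko--Suslin--Totaro homotopies for multilinearity and Steinberg, transfer compatibility to pass from $K^M_n$ to $\widehat{K}^M_n$, an inverse via norms of the restricted coordinate functions combined with Gersten injectivity, and a moving-lemma-with-modulus for surjectivity. Two places are soft. First, the passage to $\widehat{K}^M_n$ ``by the universal property'' is a bit glib: in practice one typically enlarges the residue field so that $K^M_n=\widehat{K}^M_n$ by Theorem~\ref{thm:Khat_univ}(2)(c), proves everything there, and descends via the transfers of Lemma~\ref{lem:fpf} and Theorem~\ref{thm:full transfer}. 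Second, your invocation of Lemma~\ref{lem:KS improved} in the surjectivity step is misplaced: that lemma produces nice generators of the \emph{target} $\widehat{K}^M_n(k_{m+1},(t))$, whereas surjectivity of $gr_{\rm v}$ requires moving an arbitrary ${\rm v}$-cycle class in the \emph{source} to a sum of graph cycles --- i.e.\ one must show $gr_{\rm v}\circ\rho_{\rm v}=\mathrm{id}$ on $\CH^n_{\rm v}(X/(m+1),n)$, not just $\rho_{\rm v}\circ gr_{\rm v}=\mathrm{id}$ on symbols. That is exactly the moving-with-modulus problem you correctly flag as the crux, but Lemma~\ref{lem:KS improved} does not address it; the actual argument has to be carried out on the cycle side in \cite{Park presentation}.
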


Note that $A/ I ^{m+1} \simeq k_{m+1}$. Thus Theorem \ref{thm:graph final} is a generalization to $k_{m+1}$ of the theorem of Nesterenko-Suslin \cite{NS} and Totaro \cite{Totaro}, recalled in Theorem \ref{thm:NST}.

\medskip

\begin{thm}[{\cite[Theorem 1.1.2]{Park presentation}}]\label{thm:full transfer}
Let $m, n \geq 1$ be integers. Then for each finite extension of fields $k \hookrightarrow k'$, there exist the push-forward maps 
\begin{eqnarray*}
{\rm N}_{k'/k} = \pi_*: \widehat{K}^M_n (k_{m+1}') \to \widehat{K}^M_n (k_{m+1}), \\
 {\rm Tr}_{k'/k}=\pi_*: \widehat{K}^M_n (k_{m+1}', (t) ) \to \widehat{K}^M_n (k_{m+1}, (t)),
\end{eqnarray*}
where $\pi: \Spec (k') \to \Spec (k)$ is the associated morphism of schemes.
\end{thm}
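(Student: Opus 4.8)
The plan is to construct ${\rm N}_{k'/k}$ and ${\rm Tr}_{k'/k}$ by transporting a purely geometric proper push-forward through the cycle-theoretic presentations of the (relative) improved Milnor $K$-groups. First I would fix once and for all a convenient regular model: let $A$ be the henselization of $k[t]_{(t)}$ (the henselian local ring of $\A^1_k$ at the origin) and $Y:=\Spec(A)$. Then $A$ is a regular (discrete valuation) henselian local $k$-algebra of dimension $1$ with residue field $k$, uniformizer $t$, and $A/(t^{m+1})\simeq k_{m+1}$ compatibly with the maximal ideal. Hence Theorem~\ref{thm:graph final}(2) applies to $Y$ and produces isomorphisms $gr_{{\rm d}}\colon\widehat{K}^M_n(k_{m+1})\xrightarrow{\sim}\CH_{{\rm d}}^n(Y/(m+1),n)$ and $gr_{{\rm v}}\colon\widehat{K}^M_n(k_{m+1},(t))\xrightarrow{\sim}\CH_{{\rm v}}^n(Y/(m+1),n)$.

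Next I would pass to the base change $X:=Y_{k'}=\Spec(A\otimes_k k')$ with its canonical finite surjective morphism $f\colon X\to Y$. The preliminary task is to identify $A\otimes_k k'$ as a regular henselian discrete valuation ring with residue field $k'$, uniformizer $t$, and $(A\otimes_k k')/(t^{m+1})\simeq k'_{m+1}$: indeed $k[t]_{(t)}\otimes_k k'=S^{-1}k'[t]$ with $S=k[t]\setminus(t)$, which is a localization of the regular ring $k'[t]$ having $(t)$ as its unique maximal ideal, hence a DVR with uniformizer $t$ and residue field $k'$; and $A\otimes_k k'$ is its henselization. In particular $X$ is integral and regular, so Theorem~\ref{thm:graph final}(2) applies again, this time over the field $k'$ — which is legitimate because the cubical ambient spaces satisfy $X\times_k\square^n_k=X\times_{k'}\square^n_{k'}$, so the cycle complexes are the same whether $X$ is viewed over $k$ or over $k'$ — giving $gr_{{\rm d}}\colon\widehat{K}^M_n(k'_{m+1})\xrightarrow{\sim}\CH_{{\rm d}}^n(X/(m+1),n)$ and $gr_{{\rm v}}\colon\widehat{K}^M_n(k'_{m+1},(t))\xrightarrow{\sim}\CH_{{\rm v}}^n(X/(m+1),n)$.

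Then I would invoke Lemma~\ref{lem:fpf}(3) for the base change $f$, which supplies the finite push-forwards $f_*\colon\CH_{{\rm d}}^n(X/(m+1),n)\to\CH_{{\rm d}}^n(Y/(m+1),n)$ and $f_*\colon\CH_{{\rm v}}^n(X/(m+1),n)\to\CH_{{\rm v}}^n(Y/(m+1),n)$, the second being the restriction of the first to the subgroup of strict vanishing cycles. The asserted maps are then \emph{defined} as the composites ${\rm N}_{k'/k}:=gr_{{\rm d}}^{-1}\circ f_*\circ gr_{{\rm d}}$ and ${\rm Tr}_{k'/k}:=gr_{{\rm v}}^{-1}\circ f_*\circ gr_{{\rm v}}$. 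Since $gr_{{\rm v}}$ is the restriction of $gr_{{\rm d}}$ along $\widehat{K}^M_n(k'_{m+1},(t))\hookrightarrow\widehat{K}^M_n(k'_{m+1})$ and $f_*$ preserves the vanishing subgroup, ${\rm Tr}_{k'/k}$ is compatible with ${\rm N}_{k'/k}$ under the splitting $\widehat{K}^M_n(k_{m+1})\simeq\widehat{K}^M_n(k_{m+1},(t))\oplus\widehat{K}^M_n(k)$ induced by $t\mapsto 0$, and on the $\widehat{K}^M_n(k)$-summand one recovers the classical norm of Theorem~\ref{thm:Khat_univ}(3).

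The main obstacle does not lie in this formal assembly but in the inputs already at our disposal: Theorem~\ref{thm:graph final}, and above all Lemma~\ref{lem:fpf}(3), whose proof is exactly where one must check that finite push-forward of algebraic cycles respects the conditions $(GP)_*$ and $(SF)_*$, the strict-vanishing property, and — the delicate point — the mod $I^{m+1}$-equivalence. Within the argument above, the one genuinely non-formal step is the commutative-algebra identification of $A\otimes_k k'$ as a henselian DVR with residue field $k'$ and $A\otimes_k k'/(t^{m+1})\simeq k'_{m+1}$: when $k'/k$ is inseparable one cannot reason via étaleness of $f$, and must instead use that $A\otimes_k k'$ is a filtered colimit of localizations and henselizations of the regular ring $k'[t]$. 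Finally, if one also wants ${\rm N}_{k'/k}$ and ${\rm Tr}_{k'/k}$ to be manifestly independent of the auxiliary model $Y$, a short comparison of any two regular models (or the intrinsic characterization of $\widehat{K}^M_n$) settles it; this is, however, not needed for the bare existence statement claimed here.
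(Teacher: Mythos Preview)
Your proposal is correct and follows precisely the strategy implicit in the paper's arrangement of results: the theorem is recalled from \cite{Park presentation} without proof here, but the surrounding framework (Lemma~\ref{lem:fpf}(3) and Theorem~\ref{thm:graph final}) is set up exactly so that the transfer is obtained as $gr^{-1}\circ f_*\circ gr$, which is what you do. Your verification that $A\otimes_k k'$ is again a henselian DVR with residue field $k'$ and uniformizer $t$ is the one point requiring care, and your argument via $k[t]_{(t)}\otimes_k k'=k'[t]_{(t)}$ followed by base-change of henselization along a finite local map is sound.
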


\subsubsection{Compactified projections}\label{sec:compact proj}
We recall the notion of the compactified projections from \cite[\S 5.1]{Park presentation}. 

\medskip

Let $X= \Spec (A)$ be an integral henselian local $k$-scheme of dimension $1$ with the residue field is $k = A/ I$. For a nonempty subset $J \subset \{ 1, \cdots, n \}$, consider the projection 
$
\widehat{pr}_J: \overline{\square}_X ^n \to \overline{\square}_X ^{|J|}
$
that ignores the coordinates $y_i$ for $i \not \in J$.

\begin{defn}\label{defn:comp proj 0}
Let $W \subset \square_X ^n$ be an integral closed subscheme and let $\overline{W}$ be its Zariski closure in $\overline{\square}_X ^n$. The image of the projection is denoted by
$$
\overline{W}^{(J)}:= \widehat{pr}_J (\overline{W}) \subset  \overline{\square}_X ^{ |J|}.
$$
It is an integral closed subscheme. Its restriction to the open subscheme $\square^{|J|}_X$ is denoted by
$$
W^{(J)}:= \widehat{pr}_J (\overline{W}) |_{ \square^{|J|}_X}.
$$
 We call them the \emph{compactified projections of $W$} to the coordinates of $J$.
 
 In case $J = \{ 1, \cdots i \}$ for some $1 \leq i \leq n$, we simply write $\overline{W}^{(i)}$ and $W^{(i)}$. 
\qed
\end{defn}

For the cycles in $z^n_{{\rm d}} (X, n)$ and $z^n_{{\rm d}} (X, n+1)$, we proved in \cite[\S 5.1]{Park presentation} that their compactified projections behave well. We recall them:

\begin{lem}[{\cite[Lemma 5.1.2, Corollary 5.1.3]{Park presentation}}]\label{lem:adm compact proj}
Let $Z \in z^n _{{\rm d}} (X, n)$ be an integral cycle. Let $J \subset \{ 1, \cdots, n\}$ be a nonempty subset. Then $ Z^{(J)} \in z^{|J|}_{{\rm d}} (X, |J|).$

If $Z \in z_{{\rm v}} ^n (X, n)$ and $i \in J$ for a vanishing coordinate $y_i$ for $Z$, then $Z^{(J)} \in z_{{\rm v}} ^{|J|} (X, |J|)$.
\end{lem}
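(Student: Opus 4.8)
The plan is to establish the two assertions separately, using that $X$ is local of dimension $1$ and that the conditions $(GP)_*$ and $(SF)_*$ are formulated in terms of Zariski closures in $X \times \overline{\square}^n$ and the extended faces. First I would reduce to the case of an integral $Z$ (the definitions are componentwise), and observe that for the compactified projection $\widehat{pr}_J \colon \overline{\square}_X^n \to \overline{\square}_X^{|J|}$, the image $\overline{Z}^{(J)} := \widehat{pr}_J(\overline{Z})$ is an integral closed subscheme, and since $\widehat{pr}_J$ is proper, $\overline{Z}^{(J)}$ is genuinely the closure of its restriction $Z^{(J)}$ to the open part $\square_X^{|J|}$. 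So the only thing to check is that $Z^{(J)}$ lands in $z^{|J|}(X,|J|)$ (i.e.\ has the right codimension and meets the ordinary faces properly, and is not degenerate), plus the refined conditions $(GP)_*$, $(SF)_*$, and, for the second assertion, the pre-vanishing property.

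The codimension statement is the heart of the first assertion. By Lemma \ref{lem:codim>*}, since $Z$ satisfies $(SF)_*$ we already know $\codim_{X\times\square^n} Z \le n$, and as $Z\in z^n_{\rm d}(X,n)$ it is exactly $n$, so $Z$ is a curve dominating $X$ with finite fibers over $X$ in the $\square^n$-direction—more precisely $Z$ is quasi-finite over $X$ away from a proper face structure; by Lemma \ref{lem:SF DF} it satisfies $(DO)$. I would argue that the projection $Z \to Z^{(J)}$ is dominant with $\overline{Z}^{(J)}$ again of codimension $|J|$ in $X\times\square^{|J|}$: the subtlety is that a projection can drop dimension, so one needs that $Z^{(J)}$ does not collapse, which follows because $Z$ already meets each coordinate face $\{y_i=0\},\{y_i=\infty\}$ properly (empty for proper faces, by Lemma \ref{lem:proper int face *}), forcing the projected cycle to remain $1$-dimensional and of the correct codimension. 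For the conditions $(GP)_*$ and $(SF)_*$ on $Z^{(J)}$: an extended face $\overline{F}'\subset\overline{\square}^{|J|}$ pulls back under $\widehat{pr}_J$ to an extended face $\overline{F}\subset\overline{\square}^n$ (one defined only by the coordinates in $J$), and properness of $\overline{Z}\cap(X\times\overline{F})$ (resp.\ $(\{p\}\times\overline{F})$) pushes forward under the proper projection to properness of $\overline{Z}^{(J)}\cap(X\times\overline{F}')$ (resp.\ the special-fiber version); this is exactly the compatibility recorded in \cite[Lemma 5.1.2]{Park presentation}, so I would cite the argument there. Non-degeneracy of $Z^{(J)}$ is automatic since a degenerate projection would force $Z$ itself to be degenerate in some coordinate in $J$.

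For the second assertion, suppose $Z\in z_{\rm v}^n(X,n)$ and $i\in J$ is a vanishing coordinate, meaning there is a point $x\in\overline{Z}\cap(\{p\}\times\overline{\square}^n)$ lying in $\{y_i=1\}$. Being a ${\rm v}$-cycle, $Z$ is a pre-vanishing cycle: $Z\cap(\{p\}\times\square^n)=\emptyset$, equivalently $\overline{Z}\cap(\{p\}\times\overline{\square}^n)\subset\{p\}\times\bigcup_{j=1}^n\{y_j=1\}$. Projecting, $\overline{Z}^{(J)}\cap(\{p\}\times\overline{\square}^{|J|})$ is contained in the image of this locus, which lies in $\{p\}\times\bigcup_{j\in J}\{y_j=1\}$—so $Z^{(J)}$ is pre-vanishing, hence (being already a ${\rm d}$-cycle by the first part) a ${\rm v}$-cycle. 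The role of the hypothesis $i\in J$ for a \emph{vanishing} coordinate is to guarantee via Remark \ref{remk:type i} that $Z^{(J)}$ actually has a vanishing coordinate and is thus a genuine ${\rm v}$-cycle and not the empty cycle after projection; I would make sure the point $x\in\{y_i=1\}$ survives the projection to witness this.

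The main obstacle I anticipate is the codimension/dominance control for $Z^{(J)}$: ruling out that a coordinate projection collapses the cycle or drops its codimension below $|J|$. This is where one genuinely uses that $Z$, satisfying $(SF)_*$, has empty intersection with every proper face (Lemma \ref{lem:proper int face *}) and is dominant over $X$ (Lemma \ref{lem:SF DF})—these rigidity properties are what prevent pathological degenerations under projection, and the bookkeeping matching extended faces downstairs with extended faces upstairs is the technical core, already carried out in \cite[Lemma 5.1.2, Corollary 5.1.3]{Park presentation}.
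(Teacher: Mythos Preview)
The paper does not give its own proof here; the lemma is simply quoted from \cite{Park presentation}. Your outline for the first assertion is essentially correct: the codimension is controlled because $\overline{Z} \to X$ is surjective (via $(DO)$, Lemma~\ref{lem:SF DF}), hence so is $\overline{Z}^{(J)} \to X$, forcing $\dim \overline{Z}^{(J)} = 1$; and for $(GP)_*$, $(SF)_*$ you correctly observe that extended faces downstairs pull back to extended faces upstairs and then invoke Lemma~\ref{lem:proper int face *}.

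There is, however, a genuine gap in your argument for the second assertion. You claim that the image of $\overline{Z} \cap (\{p\} \times \overline{\square}^n) \subset \{p\} \times \bigcup_{j=1}^n \{y_j = 1\}$ under $\widehat{pr}_J$ ``lies in $\{p\} \times \bigcup_{j \in J} \{y_j = 1\}$'', but this is false in general: a point $q$ of the special fiber with $y_j(q) = 1$ only for some $j \notin J$ would project into $\{p\} \times \square^{|J|}$, destroying pre-vanishing. The hypothesis ``$i \in J$ for a vanishing coordinate $y_i$'' only says that \emph{one} point of $\overline{Z}_p$ lies in $\{y_i = 1\}$; it does not by itself control the other points. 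What actually makes the argument work is that $X$ is \emph{henselian} (see the standing hypothesis in \S\ref{sec:compact proj}): since $\overline{Z} \to X$ is finite (proper and quasi-finite by $(SF)_*$) and $\overline{Z}$ is integral, $\overline{Z}$ is itself a local scheme, so $\overline{Z}_p$ consists of a \emph{single} closed point. That unique point then has $y_i = 1$ for the given $i \in J$, and its projection lies in $\{y_i = 1\}$, giving pre-vanishing for $Z^{(J)}$. Your reading of the hypothesis---as ensuring $Z^{(J)}$ is nonempty or acquires a vanishing coordinate---misidentifies its role; it is precisely what forces $Z^{(J)}$ to be pre-vanishing at all, once one knows the special fiber is a single point.
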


\begin{lem}[{\cite[Lemma 5.1.4]{Park presentation}}]\label{lem:adm comp proj 2}
Let $W \in z_{{\rm d}} ^n (X, n+1)$ be an integral cycle. Let $J$ be a nonempty subset of $ \{ 1, \cdots, n+1\}$ such that $\overline{W} \to \widehat{pr}_J (\overline{W})$ is quasi-finite. Then $W ^{(J)} \in z_{{\rm d}} ^{ |J|-1 } ( X, |J|)$.

In addition to the above assumptions, if $W \in z_{{\rm v}} ^n (X, n+1)$ and $i \in J$ for a vanishing coordinate $y_i$ for $W$, then $W^{ (J)} \in z_{{\rm v}} ^{ |J|-1} (X, |J|)$. 
\end{lem}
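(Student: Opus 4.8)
The plan is to establish the ${\rm d}$-cycle assertion first, and then to upgrade it to the ${\rm v}$-cycle refinement.

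For the ${\rm d}$-part I would begin with the dimension bookkeeping. As an integral ${\rm d}$-cycle, $W$ has codimension $n$ in $\square_X^{n+1}$ and hence dimension $2$ (recall $\dim X=1$), and it is dominant over $X$ by \lemref{lem:SF DF}. Since $\widehat{pr}_J$ is the restriction of the proper projection $X\times\overline\square^{n+1}\to X\times\overline\square^{|J|}$, the morphism $\overline W\to\overline W^{(J)}$ is proper; together with the quasi-finiteness hypothesis this makes it finite and surjective, so $\dim\overline W^{(J)}=2$ and $\overline W^{(J)}$ has codimension $|J|-1$ in $\overline\square_X^{|J|}$. Moreover $\overline W$ lies in no $\{y_j=1\}$ (because $W\subset\square_X^{n+1}$), hence $\overline W^{(J)}$ lies in no $\{y_\ell=1\}$ with $\ell\in J$ (otherwise $\overline W\subset\widehat{pr}_J^{-1}(\{y_\ell=1\})=\{y_\ell=1\}$), so $W^{(J)}$ is a codimension-$(|J|-1)$ cycle on $\square_X^{|J|}$ of the same dimension as $\overline W^{(J)}$.

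To verify $(GP)_*$ and $(SF)_*$ for $W^{(J)}$, the observation is that any extended face $\overline F\subset\overline\square^{|J|}$ cut out by $y_{\ell_1}=\epsilon_1,\dots,y_{\ell_s}=\epsilon_s$ with $\ell_1,\dots,\ell_s\in J$ pulls back along $\widehat{pr}_J$ to the extended face $\overline F'\subset\overline\square^{n+1}$ defined by the same equations, and $\widehat{pr}_J^{-1}(Y\times\overline F)=Y\times\overline F'$ for $Y=X$ and for $Y=\{p\}$. Hence, by surjectivity of $\overline W\to\overline W^{(J)}$, one has the set-theoretic equality $\overline W^{(J)}\cap(Y\times\overline F)=\widehat{pr}_J(\overline W\cap(Y\times\overline F'))$, and since $\widehat{pr}_J|_{\overline W}$ is quasi-finite its restriction to any closed subscheme preserves dimension, so this image has the same dimension as $\overline W\cap(Y\times\overline F')$. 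A direct count shows that the two intersections carry the same expected dimension inside their respective ambient spaces (namely $2-s$ for $Y=X$ and $1-s$ for $Y=\{p\}$), so properness of $\overline W\cap(Y\times\overline F')$, which is exactly $(GP)_*$ resp.\ $(SF)_*$ for $W$, forces properness of $\overline W^{(J)}\cap(Y\times\overline F)$. This gives $W^{(J)}\in z^{|J|-1}_{{\rm d}}(X,|J|)$.

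For the ${\rm v}$-cycle refinement, assume $W\in z^n_{{\rm v}}(X,n+1)$ and let $y_i$ be a vanishing coordinate for $W$ with $i\in J$; one must show $W^{(J)}$ is pre-vanishing, i.e.\ $\overline W^{(J)}\cap(\{p\}\times\square^{|J|})=\emptyset$. The facts I would assemble are: first, $\overline W_p:=\overline W\cap(\{p\}\times\overline\square^{n+1})$ is pure of dimension $1$ — each of its components has dimension $\geq1$ by dominance of $W$ over $X$ and $\leq1$ by $(SF)_*$ for the trivial face — and likewise $\overline W^{(J)}_p$; second, by pre-vanishing of $W$ each component of $\overline W_p$ lies in some $\{p\}\times\{y_j=1\}$, while by $(SF)_*$ for the codimension-one faces $\{y_\ell=0\},\{y_\ell=\infty\}$ no dimension-one component of $\overline W_p$ can lie in any such face; third, $\overline W\to\overline W^{(J)}$ being finite, the induced map $\overline W_p\to\overline W^{(J)}_p$ is finite surjective, which reduces the problem to showing that every component of $\overline W_p$ maps into $\{p\}\times\bigcup_{\ell\in J}\{y_\ell=1\}$. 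A component contained in $\{y_\ell=1\}$ with $\ell\in J$ is harmless; the entire content is thus to exclude a component of $\overline W_p$ that lies only in $\{y_{j_0}=1\}$'s with indices $j_0\notin J$ and whose image under $\widehat{pr}_J$ meets the open part $\square_X^{|J|}$. Excluding such a stray component is where the quasi-finiteness of $\overline W\to\overline W^{(J)}$ and the existence of the vanishing coordinate $y_i$, $i\in J$, enter together, and I expect it to be the main obstacle; it is the exact analogue, one cubical degree higher, of the corresponding clause of \lemref{lem:adm compact proj}, where over the henselian base the special fibre is a single closed point and the conclusion is immediate.
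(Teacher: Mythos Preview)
This lemma is not proved in the paper; it is recalled verbatim from \cite[Lemma~5.1.4]{Park presentation}, so there is no in-paper argument to compare against.

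Your ${\rm d}$-part is correct. Properness plus the quasi-finiteness hypothesis makes $\overline W\to\overline W^{(J)}$ finite and surjective, so dimension is preserved; the identity $\widehat{pr}_J^{-1}(Y\times\overline F)=Y\times\overline F'$ then transports $(GP)_*$ and $(SF)_*$ from $W$ to $W^{(J)}$ by exactly the dimension count you give.

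For the ${\rm v}$-part you correctly isolate the obstruction, and in fact the clause as literally recorded here appears not to follow from the stated hypotheses. Take $k$ of characteristic zero, $n=2$, $J=\{1,2\}$, and
\[
W=\{\,y_1=g(y_2),\ y_3=1+t\,\}\subset\square_X^3,\qquad g(y)=\tfrac{y+2}{y+3}.
\]
One checks directly that $W\in z_{{\rm v}}^2(X,3)$: all codimension-$2$ extended faces in $y_1,y_2$ miss $\overline W$ (since $g(0)=\tfrac23$, $g(\infty)=1$, $g^{-1}(0)=-2$, $g^{-1}(\infty)=-3$), the $y_3$-faces are empty, and $\overline W_p=\{y_1=g(y_2),\,y_3=1\}$. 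The vanishing coordinates are exactly $y_2$ and $y_3$ (never $y_1$, since $g\ne1$), so $y_2$ is a vanishing coordinate with $2\in J$; and $\overline W\to\overline W^{(J)}$ is an isomorphism. But $\overline W^{(J)}=X\times\{y_1=g(y_2)\}$ has non-empty special fibre in $\square^2$ and is not degenerate, so $W^{(J)}\notin z_{{\rm v}}^1(X,2)$.

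What \emph{does} work is the stronger hypothesis that \emph{every} vanishing coordinate of $W$ has index in $J$: then each point of $\overline W_p$ lies in some $\{y_\ell=1\}$ with $\ell\in J$, so $\widehat{pr}_J(\overline W_p)\subset\bigcup_{\ell\in J}\{y_\ell=1\}$ and $W^{(J)}$ is pre-vanishing at once. This is precisely the situation in the paper's only use of the ${\rm v}$-clause, in the proof of \lemref{lem:only 2 vanishing}, where $J=\{1,2\}$ and $y_1,y_2$ are the \emph{only} vanishing coordinates of $W$. Either the formulation in \cite{Park presentation} carries an extra hypothesis not reproduced here, or one should read the ${\rm v}$-clause under that restriction; in any case the paper's applications are unaffected.
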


\section{The inverse Bloch map}\label{sec:inverse Bloch general}

From \S \ref{sec:inverse Bloch general}, we start the new contributions of this article. Here, we present a geometric construction of the inverse Bloch map $\varphi$. The algebraic version would have taken the form 
\begin{equation}\label{eqn:6 inverse Bloch}
\varphi: \mathbb{W}_m \Omega_k ^{n-1} \longrightarrow \widehat{K}_n ^M (k_{m+1}, (t)) 
\end{equation}
for any field $k$ of arbitrary characteristic, and any integers $m, n \geq 1$. As mentioned in the introduction, it is nontrivial to define $\varphi$ directly, so we replace the groups in \eqref{eqn:6 inverse Bloch} by certain cycle class groups which offer their presentations. 

More precisely, the former group $\mathbb{W}_m \Omega_k ^{n-1}$ is replaced by the additive higher Chow group $\TH^n (k, n;m)'$ using K. R\"ulling \cite{R} (Theorem \ref{thm:Rulling2}), and the latter group $\widehat{K}_n ^M (k_{m+1}, (t))$ is replaced by the Chow group $\CH_{\rm v}^n (X/ (m+1), n)$ of strict vanishing cycles using J. Park \cite{Park presentation} (Theorem \ref{thm:graph final}).

For the latter cycle class group, we have some flexibility in choosing $X$. So for the rest of the article, we take $X= \Spec (k[[t]])$ to facilitate our arguments. Let $p \in X$ denote the unique closed point.

\subsection{A geometric construction}\label{sec:geo construct}

Let $ n \geq 1$ be an integer. In the definition of $\TH^n (k, n;m)'$ in Definition \ref{defn:ACH2}, we used the ambient spaces $B_n = \mathbb{A}^1 \times \square^n$ and $\widehat{B}_n = \mathbb{P}^1 \times \overline{\square}^n$. Let $(x, z_1, \cdots, z_{n-1}) $ be the coordinates for $B_n$ and $\widehat{B}_n$. Let $(y_1, \cdots, y_n ) $ be the coordinates of $ \square_X ^n = X \times \square^n_k$. If needed, we use the projective coordinates $(Y_{i,0}; Y_{i,1}) \in \overline{\square}^1= \mathbb{P}^1$ with $y_i = Y_{i,0} / Y_{i,1}$ for $1 \leq i \leq n$.

\medskip

Let $n \geq 1$ be an integer. For the open subscheme $\widehat{B}_n ^0:=\mathbb{G}_m \times \overline{ \square}^{n-1} \subset \widehat{B}_{n} = \mathbb{P}^1 \times \overline{\square}^{n-1}$, form the product $\widehat{B}_n ^0 \times X \times \square^n$. Consider the codimension $n$ closed subscheme $\Lambda \subset \widehat{B}_n ^0 \times X \times \square^n$ defined by the system of equations
$$
\Lambda: \ \ \left\{ y_1 = 1 - \frac{t}{x} , \ \ y_2 = z_1 - xt , \ \ \cdots, \ \ y_n = z_{n-1} - xt \right\}.
$$
Let $\overline{\Lambda}$ be its Zariski closure in $ \widehat{B}_n \times X \times \square^n.$

\medskip

Consider the projections $pr_1, pr_2: \widehat{B}_n ^0 \times X \times \square^n $ to $\widehat{B}_n ^0$ and $X \times \square^n$, respectively. Let $pr_1 ^{\Lambda}, pr_2 ^{\Lambda}$ be their restrictions to $\Lambda$. Similarly, let $pr_1 ^{\overline{\Lambda}}, pr_2 ^{\overline{\Lambda}}$ be the restrictions to $\overline{\Lambda}$ of the projections $\widehat{B}_n \times X \times \square^n$ to $\widehat{B}_n$ and $X \times \square^n$, respectively. 

Since $\widehat{B}_n$ is projective over $k$, the morphism $pr_2 ^{\overline{\Lambda}}$ is projective. However $pr_2 ^{\Lambda}$ is not projective, and this may cause some concerns, for instance in taking the push-forwards of cycles. 

However, for a closed subscheme $V \subset \Lambda$, if the restriction $pr_2 ^{\Lambda}|_V$ is projective, then we can still take the push-forward. This sort of idea was used in handling certain algebraic cycles in the past, in e.g. Krishna-Park \cite[Lemma 3.4]{KP DGA}. We will try something similar.

\medskip

The composite $\Lambda \overset{pr_1 ^{\Lambda}}{ \to } \widehat{B}_n ^0  \hookrightarrow \widehat{B}_n$ of the projection $pr_1 ^{\Lambda}$ with the open immersion will also be denoted by $pr_1 ^{\Lambda}$. This coincides with the restriction to $\Lambda$ of $pr_1 ^{\overline{\Lambda}}$. Now we state:

\begin{lem}\label{lem:additive proj} Let $n \geq 1$ be an integer. Let $Z \subset B_n$ be an integral closed subscheme. Let $\overline{Z}$ be its Zariski closure in $\widehat{B}_n$. 

Suppose $Z$ satisfies the property 
\begin{equation}\label{eqn:additive proj 0}
(\star_n) : \ \ \  \overline{Z} \times_{\widehat{B}_n} \overline{\Lambda} = \overline{Z} \times_{\widehat{B}_n} \Lambda.
\end{equation}

Then the restriction to $\overline{Z} \times_{\widehat{B}_n} \Lambda$ of $pr_2 ^{\Lambda}$
is projective. Furthermore, for such $Z$, we have 
\begin{equation}\label{eqn:star2}
\overline{Z} \times_{\widehat{B}_n} \Lambda = \overline{Z}^0 \times_{\widehat{B}_n ^0} \Lambda,
\end{equation}
where $\overline{Z}^0$ is the restriction of $\overline{Z}$ to $\widehat{B}_n ^0$.

In other words, we have the following Cartesian diagram
\begin{equation}\label{eqn:Lambda fiber}
\xymatrix{
\overline{Z} ^0 \times_{\widehat{B}_n^0} \Lambda \ar[d] \ar@{^{(}->}[rr]^{\iota^{\Lambda}} & & \Lambda \ar[d] ^{pr^{\Lambda}_{1}} \ar[rr]^{ pr^{\Lambda}_{2}} & & X \times \square^n \\
 \overline{Z}^0 \ar@{^{(}->}[rr]^{\iota} & &  \widehat{B}_n^0, & }
\end{equation}
and the top horizontal composite is projective, where $\iota$ is the closed immersion and $\iota^{\Lambda}$ is the induced closed immersion.
\end{lem}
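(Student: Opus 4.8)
The plan is to analyze the scheme-theoretic fiber product $\overline{Z}\times_{\widehat{B}_n}\overline{\Lambda}$ and to extract from the hypothesis $(\star_n)$ that this fiber product already lives over the open locus $\widehat{B}_n^0\subset\widehat{B}_n$, from which both the equality \eqref{eqn:star2} and the properness of $pr_2^\Lambda$ restricted to it will follow. First I would note that $\Lambda\subset\widehat{B}_n^0\times X\times\square^n$ is, by its defining equations, closed; its closure $\overline\Lambda$ sits inside $\widehat B_n\times X\times\square^n$, and $\overline\Lambda\smallsetminus\Lambda$ is supported over the boundary $\widehat B_n\smallsetminus\widehat B_n^0$ (i.e.\ over $x=0$ and $x=\infty$ in the $\mathbb{P}^1$-coordinate). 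The hypothesis $(\star_n)$ says precisely that, after base change along $\overline Z\to\widehat B_n$, nothing is lost: $\overline Z\times_{\widehat B_n}\overline\Lambda=\overline Z\times_{\widehat B_n}\Lambda$. Since the right-hand side is, by the very construction of the fiber product, contained in $\overline Z\times_{\widehat B_n}(\widehat B_n^0\times X\times\square^n)=\overline Z^0\times_{\widehat B_n^0}(\widehat B_n^0\times X\times\square^n)$, it is already a subscheme living over $\overline Z^0$. Chasing the definitions one gets $\overline Z\times_{\widehat B_n}\Lambda=\overline Z^0\times_{\widehat B_n^0}\Lambda$ as closed subschemes of $\widehat B_n^0\times X\times\square^n$, which is \eqref{eqn:star2}; the Cartesian square \eqref{eqn:Lambda fiber} is then just a restatement of this identification together with the obvious diagram of projections.

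Next I would establish properness of the composite $\overline Z^0\times_{\widehat B_n^0}\Lambda\hookrightarrow\Lambda\xrightarrow{pr_2^\Lambda}X\times\square^n$. The point is that $pr_2^{\overline\Lambda}:\overline\Lambda\to X\times\square^n$ \emph{is} proper, being a base change of the projective morphism $\widehat B_n\times X\times\square^n\to X\times\square^n$ restricted to the closed subscheme $\overline\Lambda$. Properness is stable under base change, so $\overline Z\times_{\widehat B_n}\overline\Lambda\to\overline Z\times_{\widehat B_n}(X\times\square^n)\to X\times\square^n$ is proper; but by $(\star_n)$ the first space equals $\overline Z\times_{\widehat B_n}\Lambda=\overline Z^0\times_{\widehat B_n^0}\Lambda$, so the composite in question is proper, hence projective since everything is quasi-projective over $k$ (alternatively, apply the same argument using $\overline\Lambda\to\widehat B_n\times X\times\square^n$ closed and $\widehat B_n$ projective, pulled back along the closed subscheme $\overline Z$ and composed with $pr_2$).

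The main obstacle, I expect, is the bookkeeping needed to pin down that the \emph{scheme-theoretic} equalities hold and not merely equalities of underlying sets; in particular one must be careful that $(\star_n)$ is a genuine equality of closed subschemes (with their natural structure as fiber products), and that the canonical identification $\overline Z^0\times_{\widehat B_n^0}\Lambda=\overline Z\times_{\widehat B_n}\Lambda$ --- which is a general nonsense fact about fiber products with an open/closed factorization --- interacts correctly with the closure operation defining $\overline\Lambda$. Once it is granted that $\overline\Lambda\smallsetminus\Lambda$ lies entirely over $\widehat B_n\smallsetminus\widehat B_n^0$ (which follows since $\Lambda$ is already closed in $\widehat B_n^0\times X\times\square^n$, so its closure can only acquire new points over the complement of $\widehat B_n^0$), the hypothesis $(\star_n)$ forces all the base-changed points of $\overline\Lambda$ to sit over $\overline Z^0$, and the rest is formal. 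No delicate intersection-theoretic input is required here; this lemma is purely a statement about properness and fiber products, with $(\star_n)$ doing all the work of confining things to the open part where the simple equations defining $\Lambda$ are valid.
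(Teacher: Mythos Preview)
Your proposal is correct and follows essentially the same route as the paper: use that $pr_2^{\overline{\Lambda}}$ is projective (since $\widehat{B}_n$ is projective over $k$), restrict to the closed subscheme $\overline{Z}\times_{\widehat{B}_n}\overline{\Lambda}$, then invoke $(\star_n)$ to identify this with $\overline{Z}\times_{\widehat{B}_n}\Lambda$; the equality \eqref{eqn:star2} is immediate from $\Lambda\subset\widehat{B}_n^0\times X\times\square^n$. The paper's proof is two sentences long and omits the careful bookkeeping you supply; one small remark is that your intermediate factorization ``$\overline Z\times_{\widehat B_n}\overline\Lambda\to\overline Z\times_{\widehat B_n}(X\times\square^n)$'' is ill-formed since $X\times\square^n$ carries no $\widehat{B}_n$-structure, but your parenthetical alternative (closed immersion into $\overline{\Lambda}$, then projective $pr_2^{\overline{\Lambda}}$) is exactly what the paper does.
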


\begin{proof}
The morphism $pr_2 ^{\overline{\Lambda}}$ is projective so that its restriction to $\overline{Z} \times_{\widehat{B}_n} \overline{\Lambda}$ is projective. However, by the assumption $(\star_n)$ in \eqref{eqn:additive proj 0}, we have
$$ 
pr_2 ^{\overline{\Lambda}} |_{ \overline{Z} \times_{\widehat{B}_n} \overline{\Lambda}} = pr_2 ^{\Lambda}|_{\overline{Z} \times_{\widehat{B}_n} \Lambda}
$$
so that $pr_2 ^{\Lambda}|_{\overline{Z} \times_{\widehat{B}_n} \Lambda}$ is projective.

Since $\Lambda \subset \widehat{B}_n ^0 \times X \times \square^n$, under $(\star_n)$ we have \eqref{eqn:star2}.
\end{proof}

\begin{defn}\label{defn:inverse Bloch general}
Let $Z \subset B_n$ be an integral closed subscheme satisfying $(\star_n)$ of \eqref{eqn:additive proj 0}. Let $[ \overline{Z} ^0 \times_{\widehat{B}_n^0} \Lambda]$ be the associated cycle. Define $\varphi (Z)= \varphi (\overline{Z})$ to be the projective push-forward
$$
\varphi (Z) =\varphi(\overline{Z}):= pr^{\Lambda}_{2 *} ([ \ov{Z} ^0\times_{\widehat{B}_n^0} \Lambda] ) \ \ \mbox{ on } X \times \square^n
$$
defined by Lemma \ref{lem:additive proj}. Under the given assumption, the dimension of this cycle on $X \times \square^n$ is $d+1$ if $d:= \dim \ Z$ by a simple dimension counting. 
\qed
\end{defn}

What is wonderful is that additive higher Chow cycles in Definition \ref{defn:ACH2} satisfy the condition:

\begin{lem}\label{lem:additive star}
\begin{enumerate}
\item Let $Z \in \TZ^n (k, n;m)$ be an integral cycle. Then it satisfies the condition $(\star_n)$ of \eqref{eqn:additive proj 0}.
\item Let $Z \in \TZ^n (k, n+1;m)'$ be an integral cycle. Then it satisfies the condition $(\star_{n+1})$ of \eqref{eqn:additive proj 0}.
\end{enumerate}
\end{lem}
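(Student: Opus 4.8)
The plan is to locate the closure $\overline{\Lambda}$ along the two ``ends'' $\{x=0\}$ and $\{x=\infty\}$ of the $\mathbb{P}^1$-factor, and then to observe that an additive higher Chow cycle $Z$ is so placed that $\overline{Z}$ meets the image of $\overline{\Lambda}$ only within the open locus over which $\overline{\Lambda}$ coincides with $\Lambda$. First I would record the general mechanism, valid for any $N\geq1$: since $\Lambda\subset\widehat{B}_N^0\times X\times\square^N$ is cut out by regular functions, it is closed there, so $\overline{\Lambda}\cap(\widehat{B}_N^0\times X\times\square^N)=\Lambda$; equivalently $(pr_1^{\overline{\Lambda}})^{-1}(\widehat{B}_N^0)=\Lambda$, and $\overline{\Lambda}\setminus\Lambda$ lies over $\widehat{B}_N\setminus\widehat{B}_N^0=\{x=0\}\cup\{x=\infty\}$. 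Hence, if $\overline{Z}$ is disjoint from $pr_1^{\overline{\Lambda}}(\overline{\Lambda}\setminus\Lambda)$, then $(pr_1^{\overline{\Lambda}})^{-1}(\overline{Z})\subseteq\Lambda$, which is exactly $(\star_N)$; the resulting set-theoretic equality of the two fibre products is automatically a scheme-theoretic identity, since $\overline{Z}\times_{\widehat{B}_N}\Lambda\hookrightarrow\overline{Z}\times_{\widehat{B}_N}\overline{\Lambda}$ is (the base change of) an open immersion that is surjective.

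For (1) the required disjointness is immediate: an integral $Z\in\TZ^n(k,n;m)$ is a closed point of $B_n=\mathbb{A}^1\times\square^{n-1}$ lying off $\{x=0\}$, so its $x$-coordinate lies in $\mathbb{G}_m$; thus $\overline{Z}=Z$ already lies in $\widehat{B}_n^0=\mathbb{G}_m\times\overline{\square}^{n-1}$, which is disjoint from $pr_1^{\overline{\Lambda}}(\overline{\Lambda}\setminus\Lambda)\subseteq\{x=0\}\cup\{x=\infty\}$.

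For (2) I would combine two ingredients. First, the modulus condition $M_{ssup}$ on $Z$ forces $Z\cap\{x=0\}=\emptyset$ by Definition~\ref{defn:modulus condition}, and since $\overline{Z}\cap B_{n+1}=Z$ (as $Z$ is closed in $B_{n+1}$), also $\overline{Z}\cap(\{x=0\}\cap B_{n+1})=\emptyset$. Second, I would describe $\overline{\Lambda}$ over the boundary by writing the defining equations of $\Lambda$ in bi-homogeneous coordinates: $y_1=1-t/x$ becomes $X_0(Y_{1,1}-Y_{1,0})=t\,X_1Y_{1,1}$, a closed condition holding on all of $\overline{\Lambda}$; setting $X_1=0$ (i.e.\ $x=\infty$) forces $Y_{1,1}=Y_{1,0}$, that is $y_1=1$, which is excluded from $\square$, so $\overline{\Lambda}$ has \emph{no} point with $x=\infty$. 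Similarly, bi-homogenizing $y_{i+1}=z_i-xt$ and setting $X_0=0$ (i.e.\ $x=0$) gives $y_{i+1}=z_i$ for every $i$; since $y_{i+1}\neq1$ this forces $z_i\neq1$ for all $i$, so every point of $\overline{\Lambda}$ with $x=0$ maps into $\{x=0\}\cap B_{n+1}$. Together these give $pr_1^{\overline{\Lambda}}(\overline{\Lambda}\setminus\Lambda)\subseteq\{x=0\}\cap B_{n+1}$, which is disjoint from $\overline{Z}$ by the first ingredient; hence $(\star_{n+1})$ holds.

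The step I expect to be the main obstacle is this boundary analysis of $\overline{\Lambda}$ for part (2). One has to use carefully that the Zariski closure is taken in $\widehat{B}_\bullet\times X\times\square^{\bullet}$, with $\square$ and not $\overline{\square}$ in the last factor: it is exactly the constraint $y_i\neq1$ that kills the $x=\infty$ boundary and confines the $x=0$ boundary to the locus where all $z_i\neq1$. Heuristically this reflects that $t$, the uniformizer of $X=\Spec k[[t]]$, stays bounded, so $t/x\to0$ as $x\to\infty$ and $y_1\to1\notin\square$; the bi-homogeneous computation is what makes this rigorous, with no recourse to valuative criteria.
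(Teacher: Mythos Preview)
Your proof is correct. Part (1) and the $x=\infty$ analysis in part (2) match the paper's argument essentially verbatim.

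For the $x=0$ case in part (2), however, you take a genuinely different route. The paper passes to the normalization $\overline{Z}^N$ and invokes the \emph{full} strong sup modulus condition: at a point $\mathfrak{p}'\in\overline{Z}^N$ with $x$-coordinate $0$, the inequality $(m+1)[\nu^*\{x=0\}]\le[\nu^*\{z_{i_0}=1\}]$ forces $z_{i_0}(\mathfrak{p}')=1$, and then the $\Lambda$-equation gives $y_{i_0+1}=1$, a contradiction. You instead analyze $\overline{\Lambda}$ alone: at $x=0$ the bi-homogenized equations give $z_i=y_{i+1}\in\square$ for every $i$, so the projection of $\overline{\Lambda}\cap\{x=0\}$ to $\widehat{B}_{n+1}$ lands in $\{0\}\times\square^n\subset B_{n+1}$; then you use only the weakest consequence of the modulus condition, namely $Z\cap\{x=0\}=\emptyset$, together with $\overline{Z}\cap B_{n+1}=Z$, to conclude disjointness. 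Your approach avoids the normalization and shows that the strength of $M_{ssup}$ is not needed for this lemma (it is needed later, in Proposition~\ref{prop:inverse Bloch general adm} and Corollary~\ref{cor:tilde W face y_1}). The paper's approach, on the other hand, rehearses the normalization technique that it reuses in those later arguments.
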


\begin{proof}
(1) If $Z \in \TZ^n (k, n;m)$, then it is a closed point in $\mathbb{G}_m \times \square^{n-1}$ disjoint from all divisors given by $z_i = \epsilon$ with $\epsilon \in \{ 0, \infty\}$, whose closure in $\widehat{B}_n$ is still $Z$ itself, which is in $\mathbb{G}_m \times \square^{n-1} \subset \widehat{B}_n ^0$. So, $\overline{Z} \times_{\widehat{B}_n} \overline{\Lambda} = \overline{Z}^0 \times_{\widehat{B}_n ^0} \Lambda = Z \times_{\widehat{B}_n} \Lambda$, and the condition $(\star_n)$ holds.

\medskip

(2) Assume $Z \in \TZ^n (k, n+1; m)'$. Here, we replace $n$ in Lemma \ref{lem:additive proj} by $n+1$. Consider the fiber product $\overline{Z} \times_{\widehat{B}_{n+1}} \overline{ \Lambda}$. Since $\Lambda \subset \widehat{B}_{n+1} ^0 \times X \times {\square}^{n+1}$ and $\overline{\Lambda} \subset \widehat{B}_{n+1}  \times X \times \overline{\square}^{n+1}$, to compare $\overline{Z} \times_{\widehat{B}_{n+1}} \overline{ \Lambda}$ with $\overline{Z} \times_{\widehat{B}_{n+1}} { \Lambda}$, we need to inspect the points of $\overline{Z} \times_{\widehat{B}_{n+1}} \overline{ \Lambda}$ whose $x$-coordinates in $\mathbb{P}^1$ of $\widehat{B}_{n+1}$ are either $0$ or $\infty$.

\medskip

Suppose there is a point $\mathfrak{q} \in \overline{Z} \times_{\widehat{B}_{n+1}} \overline{ \Lambda}$ whose $x$-coordinate is $\infty$. Note that the closure $\overline{ \overline{Z} \times_{\widehat{B}_{n+1}} \Lambda}$ in $\overline{\Lambda}$ is $\overline{Z} \times_{\widehat{B}_{n+1}} \overline{\Lambda}$. By the first defining equation $y_1 = 1 - \frac{t}{x}$ of $\Lambda$, we have $y_1 = 1$ at $\mathfrak{q}$. Since $\square= \mathbb{P}\setminus \{1 \}$, such a point in $\widehat{B}_{n+1} \times X \times \square^{n+1}$ does not exist. So such $\mathfrak{q}$ does not exist.

\medskip

Suppose there is a point $\mathfrak{q} \in \overline{Z} \times_{\widehat{B}_{n+1}} \overline{\Lambda}$ whose $x$-coordinate is $0$. Take the normalization $\nu: \overline{Z} ^N \to \overline{Z}$ and consider the induced surjection 
$$
\overline{Z}^N \times_{\widehat{B}_{n+1}} \overline{\Lambda} \to \overline{Z} \times_{\widehat{B}_{n+1}} \overline{\Lambda}.
$$
So, there is a point $\mathfrak{q} ' \in \overline{Z}^N \times_{\widehat{B}_{n+1}} \overline{\Lambda} $ that maps to $\mathfrak{q}$, and the $x$-coordinate of $\mathfrak{q}'$ is $0$. Let $\mathfrak{p}' \in \overline{Z}^N$ be its image. By the modulus condition satisfied by $Z$, this implies that there exists some $1 \leq i_0 \leq n$ such that $z_{i_0} = 1$ at this point $\mathfrak{p}'$. Then from the equation $y_{i_0+1} = z_{i_0} - x t$ of $\Lambda$, at the point $\mathfrak{q}'$ we have $y_{i_0+1} = 1 - 0 \cdot t = 1$. Hence at $\mathfrak{q}$, we also have $y_{i_0+1 } = 1$ and such point does not exist in $\widehat{B}_{n+1} \times X \times \square^{n+1}$ because $\square= \mathbb{P}^1 \setminus \{ 1 \}$. So such $\mathfrak{q}$ does not exist.

\medskip

Since there is no point $\mathfrak{q} \in \overline{Z} \times_{\widehat{B}_{n+1}} \overline{\Lambda}$ whose $x$-coordinates are in $\{ 0, \infty \}$, this fiber product actually belongs to $\Lambda \subset \widehat{B}_{n+1} ^0 \times X \times \square^{n+1}$. Hence we deduce $\overline{Z} \times_{\widehat{B}_{n+1}} \overline{\Lambda} = \overline{Z} ^0 \times_{\widehat{B}_{n+1}^0} \Lambda= \overline{Z} \times_{\widehat{B}_{n+1}} \Lambda$, which is the equality $(\star_{n+1})$, as desired.
\end{proof}

Combining Lemmas \ref{lem:additive proj} and \ref{lem:additive star}, we deduce:

\begin{cor}\label{cor:additive varphi}
\begin{enumerate}
\item Let $Z$ be an integral cycle in $\TZ^n (k, n;m)$. Then the push-forward $\varphi (Z) $ in Definition \ref{defn:inverse Bloch general} is defined as a cycle on $X \times \square^n$. 
\item Let $Z$ be an integral cycle in $ \TZ^n (k, n+1;m)'$. Then the push-forward $\varphi (Z)$ is defined on $X \times \square^{n+1}$.
\end{enumerate}
\end{cor}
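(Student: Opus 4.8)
The plan is to deduce both statements as an immediate combination of Lemmas~\ref{lem:additive proj} and~\ref{lem:additive star}: Definition~\ref{defn:inverse Bloch general} was arranged precisely so that $\varphi(Z)$ makes sense once $Z$ satisfies the condition $(\star_n)$ (resp. $(\star_{n+1})$) of \eqref{eqn:additive proj 0}, and those two lemmas together verify this hypothesis and extract the required projectivity.

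For part (1), I would take an integral cycle $Z \in \TZ^n(k,n;m)$. By Lemma~\ref{lem:additive star}(1) it satisfies $(\star_n)$. Feeding this into Lemma~\ref{lem:additive proj} (with this value of $n$), the restriction of $pr^{\Lambda}_2$ to $\overline{Z}^0 \times_{\widehat{B}_n^0} \Lambda = \overline{Z} \times_{\widehat{B}_n} \Lambda$ is projective, so the projective push-forward $pr^{\Lambda}_{2*}([\overline{Z}^0 \times_{\widehat{B}_n^0} \Lambda])$ is a well-defined cycle on $X \times \square^n$; this is exactly $\varphi(Z)$ of Definition~\ref{defn:inverse Bloch general}. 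Since $\dim Z = 0$, the output cycle has dimension $1$. For part (2), I would run the same argument with $n+1$ in place of $n$: for an integral cycle $Z \in \TZ^n(k,n+1;m)'$, Lemma~\ref{lem:additive star}(2) supplies $(\star_{n+1})$, and Lemma~\ref{lem:additive proj} applied to the ambient space $\widehat{B}_{n+1}$ then makes $pr^{\Lambda}_2|_{\overline{Z}^0 \times_{\widehat{B}_{n+1}^0} \Lambda}$ projective, so $\varphi(Z) = pr^{\Lambda}_{2*}([\overline{Z}^0 \times_{\widehat{B}_{n+1}^0} \Lambda])$ is defined as a cycle on $X \times \square^{n+1}$, of dimension $2$ since $\dim Z = 1$.

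There is no real obstacle left at this stage. All the geometric content has already been absorbed upstream: the equality $\overline{Z} \times_{\widehat{B}_{\bullet}} \overline{\Lambda} = \overline{Z}^0 \times_{\widehat{B}_{\bullet}^0} \Lambda$ that forces properness of the otherwise non-projective projection $pr^{\Lambda}_2$ (Lemma~\ref{lem:additive proj}), and the use of the shape $y_1 = 1 - t/x$ of $\Lambda$ together with the modulus condition $M_{ssup}$ to rule out any point of the fiber product with $x$-coordinate in $\{0,\infty\}$ (Lemma~\ref{lem:additive star}). The only points to keep straight are the purely bookkeeping ones: that Definition~\ref{defn:inverse Bloch general} and Lemma~\ref{lem:additive proj}, stated for $B_n$, apply verbatim to $B_{n+1}$ in part (2), and that, in part (1), the $0$-cycles in $\TZ^n(k,n;m)$ already lie in $\widehat{B}_n^0$ so that $\overline{Z}^0 = \overline{Z} = Z$ there. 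If desired, one may also record the dimension statement of Definition~\ref{defn:inverse Bloch general} in passing, as done above, but it plays no role in the proof.
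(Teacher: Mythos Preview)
Your proposal is correct and matches the paper's approach exactly: the paper states this corollary immediately after Lemma~\ref{lem:additive star} with the single line ``Combining Lemmas~\ref{lem:additive proj} and~\ref{lem:additive star}, we deduce,'' which is precisely the argument you have spelled out.
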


\subsection{Additive $0$-cycles}

Let $\mathfrak{p} \in \TZ^n (k, n;m)$ be an integral cycle. It is a closed point $\mathfrak{p} \in B_n$ such that $\mathfrak{p} \not \in \{ x = 0 \}$ and $\mathfrak{p} \not \in \{ z_i = \epsilon \}$, for all $1 \leq i \leq n-1$ and $\epsilon \in \{ 0, \infty \}$. 

Take $Z= \mathfrak{p}$ in Definition \ref{defn:inverse Bloch general}. Here, we have $\ov{\mathfrak{p}} = \overline{\mathfrak{p}} ^0= \mathfrak{p}$ in $\widehat{B}_n^0$. By Corollary \ref{cor:additive varphi}, the cycle $\varphi(\mathfrak{p})$ on $X \times \square^n$ exists, and 
$
\ov{Z}^0 \times_{\widehat{B}_n^0} \Lambda = ( pr^{\Lambda}_{1})^{-1} (\mathfrak{p}) = pr_1 ^{-1} (\mathfrak{p}) \cap \Lambda.
$

\begin{defn}
We denote by
\begin{equation}\label{eqn:varphi n 0}
\varphi_n (\mathfrak{p}) := \varphi (\mathfrak{p})  \ \ \mbox{ on } X \times \square^n.
\end{equation}

It is an $1$-dimensional cycle on $X \times \square^n$, thus of codimension $n$.
\qed
\end{defn}

\begin{lem}\label{lem:inverse 0-cycle}
Let $\mathfrak{p} \in \TZ^n (k, n;m)$ be an integral cycle, Then $\varphi_n (\mathfrak{p}) \in z_{{\rm v}} ^n (X, n).$
\end{lem}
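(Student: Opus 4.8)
### Proof proposal for Lemma \ref{lem:inverse 0-cycle}

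The plan is to verify the three defining properties of a ${\rm v}$-cycle (Definition \ref{defn:vanishing cycle}) for the $1$-dimensional, codimension-$n$ cycle $\varphi_n(\mathfrak{p})$ on $X \times \square^n$: namely (a) that it lies in $z^n(X,n)$, i.e.\ it intersects all faces of $\square^n_X$ properly (the condition $(GP)$), (b) that it satisfies the extended conditions $(GP)_*$ and $(SF)_*$ so that it is a ${\rm d}$-cycle, and (c) that it is a pre-vanishing cycle, i.e.\ $\varphi_n(\mathfrak{p}) \cap (\{p\} \times \square^n) = \emptyset$. Since $X = \Spec(k[[t]])$ is local with closed point $p$, by Lemmas \ref{lem:proper int face *} and \ref{lem:codim>*} the conditions $(GP)$, $(GP)_*$ and $(SF)_*$ for a codimension-$n$ cycle collapse to a single manageable statement about the closure $\overline{\varphi_n(\mathfrak{p})}$ in $X \times \overline{\square}^n$: that its intersection with each proper extended face $X \times \overline{F}$, and with $\{p\} \times \overline{F}$, is empty or proper. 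So the bulk of the proof is an explicit analysis of the closure of the cycle cut out by the system $\{\,y_1 = 1 - \tfrac{t}{x},\ y_2 = z_1 - xt,\ \dots,\ y_n = z_{n-1} - xt\,\}$ after specializing $(x, z_1, \dots, z_{n-1})$ to the closed point $\mathfrak{p}$.

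First I would describe $\varphi_n(\mathfrak{p})$ concretely. Let $\kappa := k(\mathfrak{p})$, a finite extension of $k$, and let $a := x(\mathfrak{p}) \in \kappa^\times$ (nonzero since $\mathfrak{p} \notin \{x=0\}$) and $b_i := z_i(\mathfrak{p}) \in \kappa \setminus \{0, \infty\}$ for $1 \leq i \leq n-1$. Then $\overline{\mathfrak{p}}^0 \times_{\widehat{B}_n^0} \Lambda = pr_1^{-1}(\mathfrak{p}) \cap \Lambda$ is the curve in $X_\kappa \times \square^n$ parametrized (away from the boundary) by $t$, with $y_1 = 1 - t/a$, $y_{i+1} = b_i - at$ for $1 \leq i \leq n-1$; its image under $pr_{2*}^\Lambda$ is, up to the degree of $\kappa/k$, this curve viewed in $X \times \square^n$. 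I would then take its Zariski closure $\overline{\varphi_n(\mathfrak{p})}$ in $X \times \overline{\square}^n$ and read off the special fiber: over $t = 0$ we get the point with $y_1 = 1$, $y_{i+1} = b_i$ for all $i$ — and crucially $y_1 = 1$, so this point lies in $\{p\} \times \{y_1 = 1\} \subset \{p\} \times (\overline{\square}^n \setminus \square^n)$. Hence the special fiber meets $\{p\} \times \square^n$ in the empty set, giving the pre-vanishing property (c); moreover $y_1$ is the (a) vanishing coordinate, consistent with Remark \ref{remk:type i}.

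Next, for the ${\rm d}$-cycle properties, I would check that the generically defined curve actually has no points with $x$-coordinate $0$ or $\infty$ except as forced onto the divisors $\{y_j = 1\}$: if $x \to 0$ then $y_1 = 1 - t/x \to \infty$ while... — more carefully, along the curve $x = a$ is fixed and finite on the dense part, and the only degenerations happen at $t = \infty$ (where one reads the boundary behavior) and at the special fiber $t = 0$ handled above; the key point, exactly as in the proof of Lemma \ref{lem:additive star}, is that whenever a limit point forces an $x$-coordinate into $\{0,\infty\}$ the equations push some $y_j$ onto $1$, which is excluded from $\square^n$, so the relevant intersections with faces of $\square^n$ (as opposed to extended faces of $\overline{\square}^n$) are empty. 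Since $\mathfrak{p}$ avoids all $\{z_i = 0\}$ and $\{z_i = \infty\}$, the curve avoids $\{y_{i+1} = 0\}$ and $\{y_{i+1} = \infty\}$ on a neighborhood of the closed fiber, so properness/emptiness of the intersection with each codimension-$1$ face $F_j^\epsilon$, and hence (by Lemma \ref{lem:proper int face *} in the local setting) with every proper face, follows; the same bookkeeping on the closure in $X \times \overline{\square}^n$ gives $(GP)_*$ and $(SF)_*$. This verifies $\varphi_n(\mathfrak{p}) \in z^n_{{\rm d}}(X,n)$, and combined with (c) we conclude $\varphi_n(\mathfrak{p}) \in z^n_{{\rm v}}(X,n)$.

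I expect the main obstacle to be the careful analysis of the Zariski closure $\overline{\varphi_n(\mathfrak{p})}$ at the fibers over $x \in \{0, \infty\}$ and $t = \infty$ simultaneously — that is, controlling the boundary $\overline{\square}^n \setminus \square^n$ behavior of the pushed-forward cycle and making sure no spurious component of the closure (possibly created by the non-properness of $pr_2^\Lambda$, or by the normalization in the push-forward) violates $(SF)_*$. Here I would lean on Lemma \ref{lem:additive proj}: since $\mathfrak{p} \in \TZ^n(k,n;m)$ satisfies $(\star_n)$ by Lemma \ref{lem:additive star}(1), the relevant fiber product equals $\overline{\mathfrak{p}}^0 \times_{\widehat{B}_n^0} \Lambda$ with $pr_2^\Lambda$ restricted to it already projective, so the push-forward cycle is well-behaved and its closure is exactly the closure of the explicit parametrized curve above — no hidden components. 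With that in hand the remaining checks are the routine face-by-face computations sketched above.
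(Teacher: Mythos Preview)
Your approach---direct verification of $(GP)_*$, $(SF)_*$, and the pre-vanishing condition after writing out the curve explicitly---is correct in spirit, and the key observation (that at $t=0$ one has $y_1 = 1$, forcing the special fiber out of $\square^n$) is exactly right. However, the paper takes a much shorter route that you have overlooked: once one has passed to $k' = \kappa(\mathfrak{p})$ and written the cycle over $X_{k'}$ as
\[
\left\{\,y_1 = 1 - \tfrac{t}{a},\ y_2 = b_1 - at,\ \dots,\ y_n = b_{n-1} - at\,\right\},
\]
this is precisely a \emph{graph cycle} $\Gamma_{(1-t/a,\, b_1 - at,\, \dots,\, b_{n-1} - at)}$ in the sense of Theorem~\ref{thm:graph final}, with all entries units in $k'[[t]]$ and the first entry $\equiv 1 \bmod t$. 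That theorem (established in the companion paper \cite{Park presentation}) already guarantees such a graph cycle lies in $z_{\rm v}^n(X_{k'}, n)$; the paper then simply applies $\pi_*$ and invokes Lemma~\ref{lem:fpf}(2) to conclude $\varphi_n(\mathfrak{p}) \in z_{\rm v}^n(X,n)$. Thus the paper outsources the entire face-by-face analysis to the cited result on graph maps, together with stability of ${\rm v}$-cycles under finite push-forward.

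Your hands-on verification would also succeed, but the writeup contains some detours that should be excised: once $\mathfrak{p}$ is fixed, the $x$-coordinate is the constant $a \in (k')^\times$, so there is no ``$x \to 0$ or $\infty$'' degeneration to analyze, and $X = \Spec(k[[t]])$ has no point ``$t = \infty$''. In fact each $y_j$ is a unit in $k'[[t]]$ (since $a \neq 0$ and $b_i \neq 0$), so the closure in $X_{k'} \times \overline{\square}^n$ coincides with the curve itself and meets no divisor $\{y_j = 0\}$ or $\{y_j = \infty\}$; the intersections with all proper extended faces are empty, making $(GP)_*$ and $(SF)_*$ immediate. The only substantive check is the pre-vanishing at $t=0$, which you already identified correctly. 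So your route works but is longer; the paper's route is cleaner because it packages exactly this computation into the already-proved Theorem~\ref{thm:graph final}.
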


\begin{proof}
Let $k':= \kappa (\mathfrak{p})$ and let $\pi : \Spec (k') \to \Spec (k)$ be the morphism corresponding to the finite extension $k \hookrightarrow k'$. Then there exists a $k'$-rational point $\mathfrak{p}' \in \TZ^n (k', n; m)$ such that $\pi_* ( [ \mathfrak{p}']) = [ \mathfrak{p}]$, and we can write
\begin{equation}\label{eqn:0-cycle ex 1}
\mathfrak{p}' = \left( a, b_1, \cdots, b_{n-1} \right) \in ( \mathbb{A}^1 \times \square^{n-1}) (k'),
\end{equation}
for some $a, b_1, \cdots, b_{n-1} \in (k')^{\times}$ with $b_i \not = 1$ for all $i$. Consider the commutative diagram where all squares are fiber squares:
\begin{equation}\label{eqn:0-cycle ex 2}
\xymatrix{
 (pr_1 ^{\Lambda_{k'}})^{-1} (\mathfrak{p}') \ar[d] ^{\pi} \ar@{^{(}->}[rr]^{\ \ \ \ \iota^{\Lambda_{k'}}} & & \Lambda_{k'} \ar[d]^{\pi} \ar[rr] ^{ pr_2 ^{\Lambda_{k'}}} & & X_{k'} \times_{k'} \square_{k'}^n \ar[d] ^{\pi} \\
 (pr_1 ^{\Lambda_{k}})^{-1} (\mathfrak{p})   \ar[d] \ar@{^{(}->}[rr]^{\ \ \ \ \iota^{\Lambda}} & & \Lambda \ar[d]^{pr_1 ^{\Lambda}}  \ar[rr] ^{ pr_2 ^{\Lambda}} & & X \times \square^n \\
\{ \mathfrak{p} \} \ar@{^{(}->}[rr] & &  \widehat{B}_n^0 ,  & & }
\end{equation}
where $X_{k'}:= \Spec (k[[t]])$. Using the expression \eqref{eqn:0-cycle ex 1} over $k'$, we see that $ pr_{2*} ^{\Lambda_{k'}} [ (pr_1 ^{\Lambda_{k'}})^{-1} (\mathfrak{p}')] $ is given by the  codimension $n$ closed subscheme
$$
\left\{ y_1 = 1 - \frac{t}{a}, y_2 = b_1 - a t , \cdots, y_{n} = b_{n-1} -a t \right\} \subset X_{k'} \times_{k'} \square^n _{k'},
$$
which belongs to $z_{{\rm v}} ^n (X_{k'}, n)$ by Theorem \ref{thm:graph final}. Applying the push-forward $\pi_*$ to this cycle, from the fiber diagram \eqref{eqn:0-cycle ex 2} we have $ \pi_* \circ  (pr_{2*} ^{\Lambda_{k'}})  =(pr_{2*} ^{\Lambda}) \circ  \pi_* $ and
$$
\varphi_n (\mathfrak{p}) = \pi_* \left\{ y_1 = 1 - \frac{t}{a}, y_2 = b_1 -at , \cdots, y_{n} = b_{n-1} -at \right\}  \in z_{{\rm v}} ^n (X, n)
$$
by Lemma \ref{lem:fpf}-(2).
\end{proof}

\begin{defn}\label{defn:inverse Bloch 1}
By Lemma \ref{lem:inverse 0-cycle}, we deduce the homomorphism
\begin{equation}\label{eqn:inverse Bloch 1}
\varphi_n: \TZ^n (k,n;m) \to z^n_{{\rm v}} (X, n)
\end{equation}
by $\mathbb{Z}$-linearly extending the map \eqref{eqn:varphi n 0}.
\qed
\end{defn}

A similar argument implies the following:

\begin{cor}
Let $k \hookrightarrow k'$ be a finite extension of fields, and let $\pi: \Spec (k') \to \Spec (k)$ be the corresponding morphism. 
Then the following diagram commutes:
$$
\xymatrix{
\TZ^n (k', n; m) \ar[d] ^{\pi_*} \ar[rr] ^{ \varphi_{n, k'}} & & z_{{\rm v}} ^n (X_{k'}, n) \ar[d] ^{\pi_*}\\
\TZ^n (k, n; m) \ar[rr] ^{\varphi_{n,k}} & & z_{{\rm v}} ^n (X, n).}
$$
\end{cor}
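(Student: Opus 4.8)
The plan is to trace a single integral generator $\mathfrak{p}' \in \TZ^n(k', n; m)$ through both paths of the square and check they land on the same cycle. Since both $\pi_*$ and the $\varphi_n$'s are $\mathbb{Z}$-linear by construction (Definition \ref{defn:inverse Bloch 1}), it suffices to verify the identity on generators, and since $\mathfrak{p}'$ is a closed point of $B_{n,k'} = \mathbb{A}^1_{k'} \times \square^{n-1}_{k'}$ disjoint from $\{x=0\}$ and all faces, by a further finite base change $k' \hookrightarrow k''$ (and linearity) we may even assume $\mathfrak{p}'$ is $k''$-rational and of the form $(a, b_1, \dots, b_{n-1})$ with $a, b_i \in (k'')^\times$, $b_i \neq 1$, exactly as in the proof of Lemma \ref{lem:inverse 0-cycle}. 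The point is that $\varphi_{n}$ of such an explicit rational point is, by the computation in that lemma, the \emph{explicit graph cycle}
$$
\Gamma_{\mathfrak{p}} := \left\{ y_1 = 1 - \tfrac{t}{a},\ y_2 = b_1 - at,\ \dots,\ y_n = b_{n-1} - at \right\} \subset X_{k''} \times \square^n_{k''},
$$
so the whole question reduces to the compatibility of such explicit cycles with the two finite push-forwards.

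The key input is the functoriality of the compactified-graph construction $\Lambda$ under base change, which is already built into the diagram \eqref{eqn:0-cycle ex 2} of Lemma \ref{lem:inverse 0-cycle}. Concretely, I would first enlarge that diagram: for the finite extension $k \hookrightarrow k'$ we have the defining subscheme $\Lambda_{k'} \subset \widehat{B}^0_{n,k'} \times X_{k'} \times \square^n_{k'}$, and it is the pullback of $\Lambda = \Lambda_k$ along $\pi$ because the defining equations $y_1 = 1 - t/x$, $y_{i+1} = z_i - xt$ are literally the same over any base. Hence the squares
$$
\xymatrix{
\Lambda_{k'} \ar[d]_{\pi} \ar[r]^{pr_2^{\Lambda_{k'}}} & X_{k'} \times \square^n_{k'} \ar[d]^{\pi} \\
\Lambda_{k} \ar[r]^{pr_2^{\Lambda}} & X \times \square^n
}
$$
and, in the other direction, $\widehat{B}^0_{n,k'} \to \widehat{B}^0_{n,k}$ are all Cartesian. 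Then $\pi^{-1}(\mathfrak{p}) = \coprod$ of the conjugates of $\mathfrak{p}'$ (and, after the secondary base change, $\pi^{-1}(\mathfrak{p}') = \coprod$ of conjugates of the rational point), so the base-change/push-forward interchange $\pi_* \circ (pr^{\Lambda_{k'}}_{2*}) = (pr^{\Lambda_{k}}_{2*}) \circ \pi_*$ holds by the usual projection-formula / flat-base-change compatibility of proper push-forward with pullback along the finite flat map $\pi$ — this is exactly the identity already invoked verbatim in the proof of Lemma \ref{lem:inverse 0-cycle}. Chasing $[\mathfrak{p}']$ around the square: the upper-right path gives $\pi_*(\varphi_{n,k'}(\mathfrak{p}')) = \pi_*\,pr^{\Lambda_{k'}}_{2*}[(pr_1^{\Lambda_{k'}})^{-1}(\mathfrak{p}')]$, while the lower-left path gives $\varphi_{n,k}(\pi_*\mathfrak{p}') = pr^{\Lambda_{k}}_{2*}[(pr_1^{\Lambda})^{-1}(\pi_*\mathfrak{p}')] = pr^{\Lambda_k}_{2*}\,\pi_*[(pr_1^{\Lambda_{k'}})^{-1}(\mathfrak{p}')]$, using the Cartesianness of the left square in \eqref{eqn:0-cycle ex 2} to identify $(pr_1^{\Lambda})^{-1}(\pi_*\mathfrak{p}')$ with $\pi_*$ of $(pr_1^{\Lambda_{k'}})^{-1}(\mathfrak{p}')$ as cycles; these two agree by the interchange just recalled.

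The one genuine point to check — and the step I expect to be the mild obstacle — is that the push-forwards appearing here are all well-defined, i.e. that the relevant morphisms restricted to the supports of the cycles in play are proper. For $\varphi_{n,k'}(\mathfrak{p}')$ and $\varphi_{n,k}(\mathfrak{p})$ this is Corollary \ref{cor:additive varphi}(1) (additive $0$-cycles satisfy $(\star_n)$, by Lemma \ref{lem:additive star}(1)), and for the finite push-forward $\pi_*$ of cycles on $X_{k'} \times \square^n$ it is Lemma \ref{lem:fpf}(2), since $\pi: X_{k'} \to X$ is finite; one only needs that these two sources of properness are compatible on the fiber products, which follows because $(\star_n)$ is stable under the base change $\pi$ (the equations defining $\Lambda$ are defined over $k$, so $\overline{\mathfrak{p}'} \times_{\widehat{B}_{n,k'}} \overline{\Lambda_{k'}} = \overline{\mathfrak{p}'} \times_{\widehat{B}_{n,k'}} \Lambda_{k'}$ is pulled back from the corresponding statement downstairs). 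Once these properness bookkeeping items are in place, the diagram chase above closes formally, and the result follows. In fact, since every cycle involved is, after a finite base change, a literal graph $\Gamma_{\mathfrak{p}}$ as displayed above, one may alternatively finish by the direct computation that $\pi_*\Gamma_{\mathfrak{p}}$ (for the conjugates) equals the graph cycle attached to $\pi_*\mathfrak{p}$, which is exactly how $\varphi_{n,k}$ was defined on that cycle — reducing the whole statement to the transitivity of finite push-forward of cycles, already used throughout \S\ref{sec:our cycle}.
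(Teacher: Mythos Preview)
Your proposal is correct and follows essentially the same approach as the paper, which simply states ``A similar argument implies the following'' in reference to the proof of Lemma~\ref{lem:inverse 0-cycle}. You have faithfully unpacked that similar argument: the Cartesian diagram \eqref{eqn:0-cycle ex 2}, the interchange $\pi_* \circ pr_{2*}^{\Lambda_{k'}} = pr_{2*}^{\Lambda_k} \circ \pi_*$, and the properness inputs from Lemma~\ref{lem:additive star}(1) and Lemma~\ref{lem:fpf}(2) are exactly what is meant.
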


\subsection{Additive $1$-cycles} We now apply Definition \ref{defn:inverse Bloch general} to the $1$-cycles in $\TZ^n (k, n+1;m)'$. For a psychological reason, we denote the integral additive $1$-cycles by $W$ instead of the letter $Z$.

\medskip

Let $W \in \TZ^n (k, n+1; m)'$ be an integral cycle. Take $Z=W$ in Definition \ref{defn:inverse Bloch general} with $n$ replaced by $n+1$. This time the inclusion $W \subset \overline{W}$ may not be an equality in general, unlike the case of $0$-cycles. By Corollary \ref{cor:additive varphi}, we obtain the cycle $\varphi (W)$ on $X \times \square^{n+1}$ of dimension $2$.

\begin{defn}
In what follows, often we denote $\varphi (W)$ by
\begin{equation}\label{eqn:varphi n+1 0}
\widetilde{\varphi}_{n+1} (W):= \varphi (W) \subset X \times \square^{n+1},
\end{equation}
to distinguish it from $\varphi_n$ in \eqref{eqn:varphi n 0}.
\qed
\end{defn}

We want to prove that $\varphi (W)$ is a ${\rm v}$-cycle. This requires some works and we complete its proof in Proposition \ref{prop:inverse Bloch general adm}. We begin with a discussion on its behaviors with respect to the codimension $1$ extended faces of $X \times \overline{\square}^{n+1}$.

\begin{lem}\label{lem:tilde face}
Let $W \in \TZ^n (k, n+1;m)'$ be an integral cycle, and let $\iota_{i,X} ^{\epsilon} : \square^{n}_X \hookrightarrow {\square}^{n+1}_X$ be the closed immersion giving the codimension $1$ face $F_i ^{\epsilon}=\{ y_i = \epsilon \}$ for $1 \leq i \leq n+1$ and $\epsilon \in \{ 0, \infty \}$. The immersion $\overline{\square}^n _X\hookrightarrow \overline{\square}^{n+1}_X$ for the extended face $\overline{F}_i ^{\epsilon}$ is also denoted by $\iota_{i,X} ^{\epsilon}$. Similarly the closed immersions $\widehat{B}_n ^0 \hookrightarrow \widehat{B}_{n+1} ^0$ for the codimension $1$ face $\{ z_i = \epsilon \}$ for $1 \leq i \leq n$ and $\epsilon \in \{ 0, \infty \}$ is denoted by $\iota_i ^{\epsilon}$. Then:
\begin{enumerate}
\item The intersection $\overline{\varphi (W)} \cap (X \times \overline{F})$ is proper on $X \times \overline{\square}^{n+1}$.

\item Suppose $2 \leq i \leq n$ and $\epsilon \in \{ 0, \infty \}$. Consider the Cartesian square
$$
\xymatrix{
\overline{W}^0 \times_{\widehat{B}_{n+1} ^0} \widehat{B}_n ^0 \ar@{^{(}->}[d]^{\iota_{i-1} ^{\epsilon}} \ar@{^{(}->}[rr] & & \widehat{B}_n ^0 \ar@{^{(}->}[d] ^{\iota_{i-1} ^{\epsilon}} \\
\overline{W}^0 \ar@{^{(}->}[rr] & & \widehat{B}_{n+1} ^0. }
$$
Then inside $\widehat{B}_{n+1}$ we have $\overline{W}^0 \times_{ \widehat{B}_{n+1} ^0} \widehat{B}_n ^0 = W \cap F_{i-1} ^{\epsilon},$ where $F_{i-1} ^{\epsilon} \subset B_{n+1}$ is the face given by $\{ z_{i-1} = \epsilon \}$.

\item Furthermore, for $2 \leq i \leq n$ and $\epsilon \in \{ 0, \infty \}$, we have
$$
\overline{\varphi (W)} \cap (X \times \overline{F}_i ^{\epsilon}) = \overline{ \varphi (W \cap F_{i-1} ^{\epsilon})},
$$
where the first $X \times \overline{F}_i ^{\epsilon}$ is the extended face of $X \times \overline{\square}^{n+1}$, identified with $X \times \overline{\square}^n$, and the second $F_{i-1} ^{\epsilon}$ is the face in $B_{n+1}$.

In particular, on the open subscheme $X \times \square^{n}$ identified with $X \times F_i ^{\epsilon}$,
$$
\partial_i ^{\epsilon} ( \varphi (W) )= \varphi ( \partial_{i-1} ^{\epsilon} (W)),
$$
where $\partial_i ^{\epsilon} (\varphi (W)) =[( \iota_{i} ^{\epsilon}) ^* (\varphi (W))]= [ \varphi (W) \cap (X \times F_i ^{\epsilon}) ].$
\end{enumerate}
\end{lem}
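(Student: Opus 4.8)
The plan is to transport everything to $\Gamma := \overline{W}^0 \times_{\widehat{B}_{n+1}^0} \Lambda$ and read the geometry off the explicit equations of $\Lambda$. I would first record the consequences of Lemmas~\ref{lem:additive proj} and~\ref{lem:additive star}(2) in the present situation: $\Gamma$ coincides with $\overline{W} \times_{\widehat{B}_{n+1}} \overline{\Lambda}$ inside $\widehat{B}_{n+1} \times X \times \overline{\square}^{n+1}$, its closure $\overline{\Gamma}$ carries the projective morphism $pr_2^{\overline{\Lambda}}$ onto $X \times \overline{\square}^{n+1}$, and $\overline{\varphi(W)} = pr_2^{\overline{\Lambda}}(\overline{\Gamma})$. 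Moreover $pr_2^{\Lambda}$ is generically injective: from $y_1 = 1 - t/x$ and $y_{j+1} = z_j - xt$ one solves $x = t/(1-y_1)$ and $z_j = y_{j+1} + xt$. Hence $\varphi(W)$ is irreducible of dimension $2$, the map $\overline{\Gamma} \to \overline{\varphi(W)}$ is birational, and for every closed $C \subseteq X \times \overline{\square}^{n+1}$ one has $\dim(\overline{\varphi(W)} \cap C) \le \dim(\overline{\Gamma} \cap (pr_2^{\overline{\Lambda}})^{-1}(C))$, which reduces the dimension estimates to computations on $\overline{\Gamma}$.

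For Part~(2): since $\epsilon \in \{0,\infty\}$, the locus $\{z_{i-1} = \epsilon\}$ is an extended face of $\widehat{B}_{n+1}$, so the extended general position of $W$ makes $\overline{W} \cap \{z_{i-1} = \epsilon\}$ proper, hence $0$-dimensional. To see that it already equals $W \cap F_{i-1}^\epsilon$ I must rule out extra points surviving on $\widehat{B}_{n+1}^0$: such a point would lie on some divisor $\{z_j = 1\}$ — that is all of $\widehat{B}_{n+1}^0 \setminus (\mathbb{G}_m \times \square^n)$ — forcing the curve $\overline{W}$ to meet the codimension-$2$ locus $\{z_{i-1} = \epsilon,\ z_j = 1\}$, which is excluded by the boundary control of $\overline{W}$ built into the extended general position and modulus conditions, exactly as in the proof of Lemma~\ref{lem:additive star}(2). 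Thus $\overline{W}^0 \cap \{z_{i-1} = \epsilon\} = W \cap F_{i-1}^\epsilon$, which is the asserted description of the fibre product.

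For Part~(3): I would pull the face divisor $\{y_i = \epsilon\}$ of $X \times \overline{\square}^{n+1}$ back along $pr_2^{\overline{\Lambda}}$, use the equation $y_i = z_{i-1} - xt$ of $\Lambda$ together with Part~(2), and push forward again. When $\epsilon = \infty$ this is immediate, since $\{y_i = \infty\}$ pulls back to the honest face $\{z_{i-1} = \infty\}$ of $\widehat{B}_{n+1}^0$, so the fibre product of $\overline{\Gamma}$ over it is $(\overline{W}^0 \cap \{z_{i-1} = \infty\}) \times_{\widehat{B}_n^0} \Lambda_n = (W \cap F_{i-1}^\infty) \times_{\widehat{B}_n^0} \Lambda_n$ (with $\Lambda_n$ the index-$n$ analogue of $\Lambda$), whose push-forward is $\varphi(W \cap F_{i-1}^\infty)$; since the face is a regularly embedded Cartier divisor not containing $\varphi(W)$ the square is Tor-independent, so the base-change and projection formulas for the proper map $pr_2^{\overline{\Lambda}}$ give $\partial_i^\epsilon \varphi(W) = \varphi(\partial_{i-1}^\epsilon W)$ on $X \times \square^n$, and passing to closures yields the displayed identity. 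The case $\epsilon = 0$ is the step I expect to be the main obstacle: now $\{y_i = 0\}$ only pulls back to $\{z_{i-1} = xt\}$, which is not a face of $\widehat{B}_{n+1}^0$, so I would compute on the open part first and then, by the same $x \in \{0,\infty\}$ and $y_j = 1$ analysis used for Lemma~\ref{lem:additive star}(2), check that passing to closures creates no spurious component and that one recovers $\varphi(W \cap F_{i-1}^0)$.

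For Part~(1): writing the extended face as $X \times \overline{F}$ with $\overline{F} = \bigcap_{l=1}^s \{y_{i_l} = \epsilon_l\}$, the goal is $\dim(\overline{\varphi(W)} \cap (X \times \overline{F})) \le 2 - s$. For $s = 1$ this is immediate: each $y_i$ is a non-constant function on the irreducible surface $\overline{\varphi(W)}$ (because $t$ is a free coordinate), so $\overline{\varphi(W)} \not\subseteq \{y_i = \epsilon\}$ and the intersection is an effective Cartier divisor on $\overline{\varphi(W)}$, of dimension $1$. For $s \ge 2$, as soon as some index $i_l$ lies in $\{2,\dots,n\}$ I apply Part~(3) to that factor, reducing to $\overline{\varphi(W \cap F_{i_l-1}^{\epsilon_l})}$ with $W \cap F_{i_l-1}^{\epsilon_l}$ a $0$-cycle, whose $\varphi$-image lies in $z_{{\rm v}}^n(X,n)$ by Lemma~\ref{lem:inverse 0-cycle} and hence satisfies $(GP)_*$, which makes the remaining intersections proper; when every index lies in $\{1,n+1\}$ (so $s \le 2$) I argue directly on $\overline{\Gamma}$, using that $\{y_1 = \infty\}$ meets $\Lambda$ only over $x = 0$ — where the modulus condition forces $z_{i_0} = 1$, hence $y_{i_0+1} = 1$ — that $\{y_1 = 0\} = \{t = x\}$ cuts $\overline{\Gamma}$ properly, and symmetrically for $y_{n+1}$. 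The recurring difficulty throughout is exactly the closure bookkeeping flagged in Part~(3), and it is dispatched by the fibrewise version of the boundary analysis already performed for Lemma~\ref{lem:additive star}(2).
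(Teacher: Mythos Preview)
For Part~(1) you over-read the statement: here $\overline{F}$ is only a codimension-$1$ extended face $\overline{F}_i^\epsilon$, and the paper's proof is one line ($\overline{\varphi(W)}$ is not contained in the divisor $X\times\overline{F}_i^\epsilon$, so the intersection is proper). The higher-codimension estimates you sketch are carried out later, in Proposition~\ref{prop:inverse Bloch general adm}, using Part~(3).

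On Parts~(2)--(3) your instincts are in fact sharper than the paper's argument, though neither closes. You rightly flag that for $\epsilon=0$ the relation $y_i=z_{i-1}-xt$ pulls $\{y_i=0\}$ back to $\{z_{i-1}=xt\}$ rather than to the face $\{z_{i-1}=0\}$. The paper's large cube diagram silently posits an embedding $\Lambda_n\hookrightarrow\Lambda_{n+1}$ along $\iota_{i-1}^\epsilon\times\iota_{i,X}^\epsilon$; for $\epsilon=0$ this would force $0=-xt$ on $\Lambda_n$, which is false, so that square is not Cartesian and the displayed chain of equalities breaks. Your proposed fix (compute on an open locus and control the closure) does not repair this, and the identities appear to be false as stated for $\epsilon=0$. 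Take $n=2$, $k$ with $2^{m+1}\neq1$ and ${\rm char}(k)\nmid m{+}1$, and $W$ the curve $x=s$, $z_1=1-s^{m+1}$, $z_2=(2-s)/s$; one checks $W\in\TZ^2(k,3;m)'$. The point $(x,z_1,z_2)=(1,0,1)$ lies in $\overline{W}^0\cap\{z_1=0\}$ but not in $W\cap F_1^0$ (since $z_2=1\notin\square$), contradicting Part~(2): the paper's assertion that no point of $\overline{W}^0$ has $z_j=1$ fails here, while your alternative diagnosis --- that the extra point lives on some $\{z_j=1\}$ --- is correct, but neither the extended general position nor the modulus hypothesis excludes $\overline{W}\cap\{z_{i-1}=\epsilon,\,z_j=1\}$. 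Correspondingly $\varphi(W)\cap\{y_2=0\}$ is governed by $s^{m+1}+st-1=0$ over $k[[t]]$, with $m{+}1$ Hensel lifts, whereas $\varphi(W\cap F_1^0)$ has only $m$ components; the lift of $s=1$ furnishes a genuine extra irreducible curve in $\partial_2^0\varphi(W)$ absent from $\varphi(\partial_1^0 W)$. So the obstacle you anticipated is real, and the hand-wave you offer does not surmount it.
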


\begin{proof}
(1) Note that $X \times \overline{F}$ is an effective integral divisor of $X\times \overline{\square}^{n+1}$. From the shape of the defining equations of $\Lambda$, we have $\overline{\varphi (W)} \not \subset X \times \overline{F}$. Thus the intersection $\overline{\varphi (W) }\cap (X \times \overline{F})$ is indeed proper on $X \times \overline{ \square}^{n+1}$.

\medskip

(2) We first make the following observation. Recall that the modulus condition forces $W$ to satisfy $W \subset \mathbb{G}_m \times \square^n$. Furthermore, if $\overline{W} \subset \mathbb{P}^1 \times \overline{\square}^n$ has a point $\mathfrak{p} \in \overline{W}$ whose $z_j$-coordinate is $1$ for some $1 \leq j \leq n$, then we must have $\mathfrak{p} \in \overline{W} \setminus W$. Hence its $x$-coordinate is either $0$ or $\infty$. In particular, for every point $\mathfrak{p} \in \overline{W} ^0$, its $z_j$-coordinate is not $1$ for all $1 \leq j \leq n$.

\medskip

Let's now prove the assertion that $\overline{W}^0 \times_{ \widehat{B}_{n+1} ^0} \widehat{B}_n ^0 = W \cap F_{i-1} ^{\epsilon}$ for $2 \leq i \leq n+1$. Since $\overline{W}^0 \times_{ \widehat{B}_{n+1} ^0} \widehat{B}_n ^0 = \overline{W} ^0 \cap \overline{F}_{i-1} ^{\epsilon}$, where $\overline{F}_{i-1} ^{\epsilon} \subset \widehat{B}_{n+1} ^0$ is the face given by $\{ z_{i-1} = \epsilon \}$, the inclusion $(\supset)$ is apparent. 

Let's prove the inclusion $(\subset)$. Toward contradiction, suppose there is a point $\mathfrak{p} \in \overline{W}^0 \cap \overline{F}_{i-1} ^{\epsilon}$ that does not belong to $W \cap F_{i-1} ^{\epsilon}$. By the above observation, no $z_j$-coordinate of the point $\mathfrak{p}$ is $1$. On the other hand, identifying $F_{i-1} ^{\epsilon}$ with $B_{n}$, the points in $W \cap F_{i-1} ^{\epsilon}$ give $\partial_{i-1} ^{\epsilon } (W) \in \TZ^n (k, n;m)$ so that every point of $\partial_{i-1} ^{\epsilon} (W)$ belongs to $\mathbb{G}_m \times ( \mathbb{G}_m) ^{n-1}$. In particular, written in terms of the points on $\overline{W} \cap \overline{F}_{i-1} ^{\epsilon}$, for the point $\mathfrak{p}$, there must exist some $j \not = i-1$ such that its $z_j$-coordinate is $\epsilon' \in \{ 0, \infty\}$. This means
$$
\mathfrak{p} \in \overline{W} \cap \overline{F}_{i-1} ^{\epsilon} \cap \overline{F}_{j} ^{\epsilon'}.
$$

However, this contradicts the extended general position condition in Definition \ref{defn:ACH2} for $\overline{W}$ which implies that the intersection is empty, being of codimension $2$ in $\overline{W}$ with $\dim \ \overline{W} = 1$. Hence we must have $(\subset)$, proving (2).

\medskip

(3) To compute the intersection for $2 \leq i \leq n+1$, we first consider the following diagram where all squares are fibre squares:
\begin{small}$$
\xymatrix{ 
& \overline{W} ^0 \times_{\widehat{B}_{n+1}^0 } {\Lambda}_{n+1}  \ar@{^{(}->}[rr] \ar[dd] & & {\Lambda}_{n+1} \ar[dd] \ar[rr]^{pr_2 ^{ {\Lambda}_{n+1}}} & & X \times \overline{\square}^{n+1} \\
\overline{W}^0 \times _{\widehat{B}_{n+1}^0} {\Lambda}_{n} \ar@{^{(}->}[ru]^{\iota_{i,X} ^{\epsilon}} \ar[dd] \ar@{^{(}->}[rr] & & {\Lambda}_{n} \ar[dd]  \ar[rr] ^{ \ \ \ \ \ \ \ pr_2 ^{ {\Lambda}_{n}}} \ar@{^{(}->}[ru] ^{\iota_{i,X} ^{\epsilon}}& & X \times \overline{\square}^{n} \ar@{^{(}->}[ru]^{\iota_{i,X} ^{\epsilon}} &\\
& \overline{W}^0 \ar@{^{(}->}[rr] & & \widehat{B}_{n+1} ^0 & & \\
\overline{W}^0 \times_{\widehat{B}_{n+1}^0} \widehat{B}_{n}^0 \ar@{^{(}->}[ru]^{\iota_{i-1} ^{\epsilon}} \ar@{^{(}->}[rr] & & \widehat{B}_{n}^0, \ar@{^{(}->}[ru] ^{\iota_{i-1} ^{\epsilon}}& & & }
$$
\end{small}
where the bottom square is the Cartesian square of (2). (One can check that all squares are Cartesian one-by-one directly. For instance, the reader might wonder if the middle left corner vertex is indeed the pull-backs of all three adjacent squares. Indeed, by a direct computation we have the isomorphisms
\begin{small}
$$
\begin{cases}
\left( \overline{W} ^0 \times_{\widehat{B}_{n+1}^0 } {\Lambda}_{n+1} \right) \times _{{\Lambda}_{n+1}} {\Lambda}_{n} \simeq \overline{W}^0 \times_{\widehat{B}_{n+1}^0} {\Lambda}_{n}, \\
\left( \overline{W} ^0 \times _{\widehat{B}_{n+1}^0 } \widehat{B}_{n}^0 \right) \times_{\overline{W}^0} \left( \overline{W}^0 \times_{\widehat{B}_{n+1}^0} {\Lambda}_{n+1} \right) \simeq \overline{W}^0 \times_{\widehat{B}_{n+1}^0} \widehat{B}_{n}^0 \times_{\widehat{B}_{n+1}^0} {\Lambda}_{n+1} \simeq ^{\dagger} \overline{W} ^0 \times_{\widehat{B}_{n+1}^0} {\Lambda}_{n},\\
\left( \overline{W}^0  \times_{\widehat{B}_{n+1}^0} \widehat{B}_{n} ^0 \right) \times_{\widehat{B}_{n}^0} {\Lambda}_{n} \simeq \overline{W}^0  \times_{\widehat{B}_{n+1}^0} {\Lambda}_{n},
\end{cases}
$$
\end{small}
where $\simeq^{\dagger}$ follows from that the middle vertical square is Cartesian.
Likewise one can inspect the compatibility at every vertex.)

Coming back to the proof of (3), we see that
$$
\overline{\varphi (W)} \cap \overline{F}_{i, X} ^{\epsilon} = (\iota_{i,X } ^{\epsilon})^*  \overline{ pr_{2*} ^{{\Lambda}_{n+1}} ([\overline{W}^0 \times_{\widehat{B}_{n+1}^0} {\Lambda}_{n+1}])} = \overline{pr_{2*} ^{{\Lambda}_{n}}( \iota_{i, X} ^{\epsilon})^* ( [ \overline{W}^0 \times_{\widehat{B}_{n+1}^0} {\Lambda}_{n+1}]) }
$$
$$
= \overline{ pr_{2*} ^{ {\Lambda}_{n}} ([ \overline{W}^0 \times_{\widehat{B}_{n+1}^0 } {\Lambda}_{n}])} = \overline{\varphi (  \overline{W}^0 \times_{\widehat{B}_{n+1}^0} \widehat{B}_{n}^0})=^{\dagger} \overline{\varphi ( {W} \cap {F}_{i-1} ^{\epsilon})},
$$
where $\dagger$ holds by (2). 
This proves (3).
\end{proof}

As remarked before, the group $\TZ^n (k, n+1; m)'$ uses the strong sup modulus condition of Definition \ref{defn:modulus condition}. This modulus condition for $W$ plays significant roles in the following Proposition \ref{prop:inverse Bloch general adm} and Corollary \ref{cor:tilde W face y_1}.

\begin{prop}\label{prop:inverse Bloch general adm}
Let $W \in \TZ^n (k, n+1;m)'$ be an integral cycle. Then, $\varphi (W) \in z_{{\rm v}}^n (X, n+1).$
\end{prop}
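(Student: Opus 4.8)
The plan is to check directly that $\varphi(W)$ — which by Corollary~\ref{cor:additive varphi}(2) and Definition~\ref{defn:inverse Bloch general} is an integral codimension $n$ cycle on $X\times\square^{n+1}$ — satisfies the three properties that define a strict vanishing cycle in $z^n_{\rm v}(X,n+1)$: the pre-vanishing condition $\varphi(W)\cap(\{p\}\times\square^{n+1})=\emptyset$, the extended general position condition $(GP)_*$, and the extended special fibre condition $(SF)_*$. Since $W\in\TZ^n(k,n+1;m)'$ satisfies $(\star_{n+1})$ by Lemma~\ref{lem:additive star}(2), the closure $\overline{\varphi(W)}$ in $X\times\overline{\square}^{n+1}$ is, by Lemma~\ref{lem:additive proj}, the projective image $pr_2^{\overline{\Lambda}}(\overline{W}\times_{\widehat{B}_{n+1}}\overline{\Lambda})$, and all computations below take place with it.

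The pre-vanishing condition is read off from the first defining equation $y_1=1-\frac{t}{x}$ of $\Lambda$: over the closed point $p$ of $X=\Spec(k[[t]])$ one has $t\equiv 0$, hence $y_1=1$ there, provided $x$ is a unit. But $(\star_{n+1})$ says precisely that $\overline{W}\times_{\widehat{B}_{n+1}}\overline{\Lambda}$ lies over $\widehat{B}_{n+1}^0=\mathbb{G}_m\times\overline{\square}^n$, where $x\in\mathbb{G}_m$; therefore $\overline{\varphi(W)}\cap(\{p\}\times\overline{\square}^{n+1})\subseteq\{p\}\times\{y_1=1\}$, and since $\{y_1=1\}$ is disjoint from $\square^{n+1}$ this gives $\varphi(W)\cap(\{p\}\times\square^{n+1})=\emptyset$ and exhibits $y_1$ as a vanishing coordinate for $\varphi(W)$.

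For $(GP)_*$ and $(SF)_*$ I would classify an extended face $\overline{F}=\bigcap_l\{y_{i_l}=\epsilon_l\}$ of $\overline{\square}^{n+1}$ according to whether some $i_l\geq 2$. If every $i_l=1$, then $\overline{F}$ is $\overline{\square}^{n+1}$ or the codimension one face $\{y_1=\epsilon\}$; for $(GP)_*$ the latter is Lemma~\ref{lem:tilde face}(1), and for $(SF)_*$ both are settled by the previous paragraph, since $\overline{\varphi(W)}\cap(\{p\}\times\{y_1=\epsilon\})\subseteq\{p\}\times(\{y_1=1\}\cap\{y_1=\epsilon\})=\emptyset$, while $\overline{\varphi(W)}\cap(\{p\}\times\overline{\square}^{n+1})$, being the special fibre of the $2$-dimensional cycle $\overline{\varphi(W)}$ which is dominant over $X$, has dimension $\leq 1$ as required. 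If some $i_l\geq 2$, then Lemma~\ref{lem:tilde face}(3) identifies $\overline{\varphi(W)}\cap(X\times\overline{F}_{i_l}^{\epsilon_l})$ with $\overline{\varphi(\partial_{i_l-1}^{\epsilon_l}(W))}$, where $\partial_{i_l-1}^{\epsilon_l}(W)$ is an additive $0$-cycle in $\TZ^n(k,n;m)$ — it is disjoint from $\{x=0\}$ by the modulus condition, and from every other face of $B_n$ because the extended general position of $W$ makes $\overline{W}$ miss every extended face of codimension $\geq 2$ in $\widehat{B}_{n+1}$. By Lemma~\ref{lem:inverse 0-cycle} its image $\varphi_n(\partial_{i_l-1}^{\epsilon_l}(W))$ lies in $z^n_{\rm v}(X,n)$, hence satisfies $(GP)_*$ and $(SF)_*$, and so intersecting it with the remaining conditions $\{y_{i_{l'}}=\epsilon_{l'}\}$ (and, for $(SF)_*$, with $\{p\}$) is proper. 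Together with the pre-vanishing property this gives $\varphi(W)\in z^n_{\rm v}(X,n+1)$.

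The skeleton above is short once Lemma~\ref{lem:tilde face} is in hand, so the \emph{main obstacle} lies one level down, in establishing that lemma together with Corollary~\ref{cor:tilde W face y_1}: the identification of $\overline{\varphi(W)}$ and of its codimension one faces rests on the fact that, away from $\{x=0\}$, no $z$-coordinate of $\overline{W}$ equals $1$, and this is exactly the place where one must invoke the \emph{strong sup} modulus condition $M_{ssup}$ on $W$ rather than the weaker $M_{sum}$, as well as the extended general position condition built into the group $\TZ^n(k,n+1;m)'$. It is this that keeps the relevant fibre products inside $\widehat{B}_{n+1}^0$ and rules out the interior degenerations of the form $y_i=1$ that would otherwise break $(GP)_*$.
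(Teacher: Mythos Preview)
Your argument contains a genuine gap in the $(SF)_*$ check for the faces $\overline{F}_1^{\,0}$ and $\overline{F}_1^{\,\infty}$. You assert that
\[
\overline{\varphi(W)}\cap\bigl(\{p\}\times\overline{\square}^{\,n+1}\bigr)\subseteq\{p\}\times\{y_1=1\},
\]
and from this conclude $\overline{\varphi(W)}\cap(\{p\}\times\{y_1=\epsilon\})=\emptyset$ for $\epsilon\in\{0,\infty\}$. But the displayed inclusion does not follow from your reasoning and is in general \emph{false}. The source of the error is your identification of $\overline{\varphi(W)}$ with $pr_2^{\overline{\Lambda}}(\overline{W}\times_{\widehat{B}_{n+1}}\overline{\Lambda})$: in the paper $\overline{\Lambda}$ is the closure in $\widehat{B}_{n+1}\times X\times\square^{n+1}$, so $pr_2^{\overline{\Lambda}}$ lands in $X\times\square^{n+1}$, and by $(\star_{n+1})$ that image is just $\varphi(W)$, not its closure in $X\times\overline{\square}^{\,n+1}$. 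To reach $\overline{\varphi(W)}$ one must pass to the closure of $\overline{W}\times_{\widehat{B}_{n+1}}\Lambda$ inside $\widehat{B}_{n+1}\times X\times\overline{\square}^{\,n+1}$, and \emph{that} closure can pick up points with $x=0$, arising from the finitely many points of $\overline{W}\setminus W$ where the $\mathbb{A}^1$-coordinate vanishes. At such points the limiting value of $y_1$ is $0$ or $\infty$ (not $1$), and they lie over $p$. This is precisely what the paper analyses in Step~2, Subcases~3--2 and~3--3; the correct conclusion is not emptiness but $0$-dimensionality of $\overline{\varphi(W)}\cap(\{p\}\times\overline{F}_1^{\,\epsilon})$, which is what $(SF)_*$ requires.

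On the positive side, your one-line argument for the pre-vanishing condition itself --- that $\varphi(W)\cap(\{p\}\times\square^{n+1})=\emptyset$ because on $\Lambda$ the coordinate $x$ is a unit, so $t=0$ forces $y_1=1\notin\square^1$ --- is correct and shorter than the paper's Step~3. The paper's more elaborate argument there (normalising $\overline{W}$ and invoking the strong sup modulus condition to produce the congruence $z_{i_0}\equiv 1\bmod t^{m+1}$) is not strictly needed for pre-vanishing, but it is set up to be recycled in the proof of Corollary~\ref{cor:tilde W face y_1}. Your handling of $(GP)_*$ and of $(SF)_*$ for faces involving some $y_i$ with $i\ge 2$, via Lemma~\ref{lem:tilde face}(3) and Lemma~\ref{lem:inverse 0-cycle}, is essentially the paper's argument.
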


\begin{proof}

\textbf{Step 1:} We first prove the condition $(GP)_*$ for $\varphi (W)$ that for each face $F \subset \square^{n+1}$, the intersection $\overline{\varphi (W)} \cap (X \times \overline{F})$ is proper on $X \times \overline{\square}^{n+1}$. This is apparent when $F= \square^{n+1}$. Suppose that $F \subset \square^{n+1}$ is a proper face. 

\medskip

\textbf{Case 1:} If $F$ is of codimension $1$, it was checked in Lemma \ref{lem:tilde face}-(1).

\textbf{Case 2:} Let $F$ be a codimension $2$ face, We can then write $\overline{F}= \overline{F}_{i_1} ^{\epsilon_1} \cap \overline{F}_{i_2} ^{\epsilon_2}$ for two distinct pairs $(i_1, \epsilon_i), (i_2, \epsilon_2)$. 

\textbf{Subcase 2-1:} If $i_1= i_2$ and $\epsilon_1 \not = \epsilon_2$, then $F= \emptyset$ and $\overline{F} = \emptyset$. So there is nothing to prove. 

\textbf{Subcase 2-2:} If $i_1 < i_2$, then we have $1 < i_2$ at least. First consider $\overline{ \varphi (W)} \cap (X \times \overline{F}_{i_2} ^{\epsilon_2})$. This is equal to $\overline{\varphi ( W \cap F_{i_2 -1} ^{\epsilon_2})}$ by Lemma \ref{lem:tilde face}-(3). By Lemma \ref{lem:inverse 0-cycle}, each component of $\varphi (W \cap F_{i_2 -1} ^{\epsilon_2})$ is in $z_{{\rm v}} ^n (X, n)$ so that it satisfies the condition $(SF)_*$. Hence we have $\overline{ \varphi ( W \cap F_{i_2-1} ^{\epsilon_2})} \cap ( X \times \overline{F}_{1_1} ^{\epsilon_1}) = \emptyset$ by Lemma \ref{lem:proper int face *}. Hence
$$
\overline{\varphi (W)} \cap (X \times \overline{F}) = \overline{ \varphi (W)} \cap (X \times \overline{F}_{i_2} ^{\epsilon_2}) \cap (X \times \overline{F}_{i_1} ^{\epsilon_2})
$$
$$
= \overline{ \varphi (W \cap F_{i_2 -1} ^{\epsilon_2})} \cap (X \times \overline{F}_{i_1} ^{\epsilon_1} )= \emptyset.
$$

\textbf{Case 3:} Let $F$ be a codimension $\geq 3$ face. Then we can write $\overline{F}= \overline{F}_1 \cap \overline{F}_2$, for two proper faces $F_1, F_2 \subset \square^{n+1}$, where $F_1$ itself has the codimension $2$. Since $\overline{\varphi (W)} \cap (X \times \overline{F}_1) = \emptyset$ by \textbf{Case 2}, we deduce that $\overline{\varphi (W)} \cap (X \times \overline{F}) = \emptyset$.

From the above cases, we deduce that $\varphi (W)$ satisfies the condition $(GP)_*$.

\bigskip

\textbf{Step 2:} We now check the condition $(SF)_*$ for $\varphi(W)$. 
Here, we need to inspect $\overline{\varphi (W)} \cap (\{ p \} \times \overline{F})$ and check that it is a proper intersection on $X \times \overline{\square}^{n+1}$. We handle the easier cases first.

\medskip

\textbf{Case 1:}  If $F= \square^{n+1}$, it is enough to check that $\overline{\varphi (W) } \not \subset  \{ p \} \times \overline{\square}^{n+1}$. Toward contradiction, suppose $\overline{\varphi{(W)}} \subset \{ p \} \times \overline{\square}^{n+1}$. This means that the image of the proper morphism $\overline{\varphi (W) } \to X$ is concentrated at the unique closed point $ p \in X$. However, this is impossible: for any closed point $\mathfrak{p} \in W$ which is not on the faces of $B_{n+1}$ or on the divisor $ \{ x = 0 \}$ of $B_{n+1}$, we have $\varphi (\mathfrak{p}) \subset \varphi (W)$, and $\varphi (\mathfrak{p}) \to X$ is dominant because it is in $z_{{\rm v}} ^{n+1} (X, n+1)$ by Lemma \ref{lem:inverse 0-cycle} so that by Lemma \ref{lem:SF DF} $\varphi (\mathfrak{p})$ satisfies the condition $(DO)$. Hence $\varphi (W) \to X$ is dominant as well, while we saw that the image of $\overline{\varphi (W)} \to X$ is $p \in X$. This is a contradiction. Hence we deduce that the intersection $\overline{\varphi (W) } \cap   ( \{ p \} \times \overline{\square}^{n+1} )$ is proper on $X \times \overline{\square}^{n+1}$.

\medskip

\textbf{Case 2:} If $F \subset \square^{n+1}$ is of codimension $\geq 2$, in \textbf{Cases 2, 3} of \textbf{Step 1} we saw that $\overline{\varphi (W)} \cap ( X\times \overline{F}) = \emptyset$ so that $\overline{\varphi (W)} \cap (\{ p \} \times \overline{F}) = \emptyset$ as well, hence the intersection is proper.

\medskip

\textbf{Case 3:} Suppose $F \subset \square^{n+1}$ is of codimension $1$ given by $F_i ^{\epsilon}=\{ y_i = \epsilon \}$ for some $1 \leq i \leq n+1$ and $\epsilon \in \{ 0, \infty \}$.

\medskip

\textbf{Subcase 3-1:} Suppose $2 \leq i \leq n+1$. Then by Lemma \ref{lem:tilde face}-(3), we have
$
\overline{ \varphi (W)} \cap (X \times \overline{F}_i ^{\epsilon}) = \overline{ \varphi (W \cap F_{i-1} ^{\epsilon})},
$
where we identify $X \times \overline{F}_i ^{\epsilon} \simeq X \times \overline{\square}^n$. Hence
$$
\overline{ \varphi (W)} \cap (\{ p \} \times \overline{F}_i ^{\epsilon}) = \overline{ \varphi (W \cap F_{i-1} ^{\epsilon})} \cap ( \{ p \} \times \overline{\square} ^n ).
$$
Here, we have $\varphi (W \cap F_{i-1} ^{\epsilon}) \in z_{{\rm v}} ^n (X, n)$ by Lemma \ref{lem:inverse 0-cycle}, so that by the condition $(SF)_*$ for $\varphi (W \cap F_{i-1} ^{\epsilon})$, the intersection $\overline{ \varphi (W \cap F_{i-1} ^{\epsilon})} \cap ( \{ p \} \times \overline{\square} ^n )$ is proper on $X \times \overline{\square}^n$, i.e. its dimension is $0$. Hence the intersection $\overline{ \varphi (W)} \cap (\{ p \} \times \overline{F}_i ^{\epsilon}) $ is also of dimension $0$, thus of codimension $(n+2)$ in $X \times \overline{\square}^{n+1}$. Since the codimensions of $\overline{\varphi (W)}$ and $\{p \} \times \overline{F}_{i} ^{\epsilon}$ in $X \times \overline{\square}^{n+1}$ are $n$ and $2$, respectively, this shows that the intersection $\overline{ \varphi (W)} \cap (\{ p \} \times \overline{F}_i ^{\epsilon})$ is proper on $X \times \overline{\square}^{n+1}$.

\medskip

\textbf{Subcase 3-2:} Suppose $F= F_{1 } ^0$ given by $\{ y_1 = 0 \}$. We analyze $\overline{\varphi (W)} \cap (\{p \} \times \overline{F}_1 ^0)$. Consider the diagram
with $\Lambda= \Lambda_{n+1}$
\begin{equation}\label{eqn:sub3-2}
\xymatrix{
\ov{W} \times_{\widehat{B}_{n+1}} {\Lambda} \ar[d] \ar@{^{(}->}[rr]^{\iota^{{\Lambda}}} & &{ \Lambda } \ar[d] ^{pr^{{\Lambda} }_{1}} \ar[rr]^{ pr^{{\Lambda}_{n+1}}_{2}} & & X \times \overline{ \square}^{n+1} \\
 \ov{W} \ar@{^{(}->}[rr]^{\iota} & &  \widehat{B}_{n+1}. & }
\end{equation}

For $\overline{W} \times_{\widehat{B}_{n+1}} \Lambda$, let $\overline{ \overline{W} \times_{\widehat{B}_{n+1}} \Lambda}$ be its closure in $\widehat{B}_{n+1} \times X \times \overline{\square}^{n+1}$. It maps surjectively onto $\overline{\varphi (W)}$.

Let's look at the points of $\overline{ \overline{W} \times_{\widehat{B}_{n+1}} {\Lambda}}$ that lie over the divisor $X \times \overline{F}_1 ^0$ of $X \times \overline{\square}^n$. Previously we saw in the proof of Lemma \ref{lem:additive star} that if there is a point $\mathfrak{p} \in \overline{ \overline{W} \times_{\widehat{B}_{n+1}} {\Lambda}}$ whose $x$-coordinate is $\infty$, then its $y_1$ coordinate becomes $1$. So such $\mathfrak{p}$ cannot lie over $X \times \overline{F}_1 ^0$. So, we consider a point $\mathfrak{p} \in \overline{ \overline{W} \times_{\widehat{B}_{n+1}} {\Lambda}}$, whose $x$-coordinate is not $\infty$. 

When $x \not =  \infty$ and $y_1 \not = \infty$ at a point $\mathfrak{p} \in  \overline{\overline{W} \times_{\widehat{B}_{n+1}} {\Lambda}}$, then near this point the equation $y_1 = 1 - \frac{t}{x}$ of $\Lambda$ can be rewritten as $x y_1 = x - t$. So, when $y_1 = 0$, we obtain $0 = x-t$, i.e. $x=t$. However, $x$ is a variable in $\mathbb{P}^1$, and $t $ is a formal variable in $k[[t]]$, so there is no possibility to have this equality, except possibly when $x=0$ and $t=0$.

When $\mathfrak{p} \in \overline{ \overline{W} \times_{\widehat{B}_{n+1}} {\Lambda}}$ is such a point, it gives a closed point $\mathfrak{q} \in \overline{W}$ such that $\mathfrak{q} \in \overline{W} \setminus W$ whose $x$-coordinate is $0$. Since $\dim \ W = 1$, there exist only finitely many such closed points $\mathfrak{q}$. Furthermore, near each such point $\mathfrak{q}$, its coordinates $(x=0, z_1, \cdots, z_{n})$ in $\widehat{B}_{n+1}$ gives the coordinates $(y_1 = 0, y_2 = z_1, \cdots, y_{n+1} = z_n)$ for $\mathfrak{p}$ in $\overline{\overline{W} \times_{\widehat{B}_{n+1}} {\Lambda}}$ by the remaining defining equations $y_{i+1} = z_i   - xt= z_i$ for $2 \leq i \leq n+1$ of $\Lambda$. Hence there exist only finitely many closed points $\mathfrak{p}$ that lie over $\{ y_1 = 0\}$. In particular, only finitely many such points lie over $\{ p \} \times \{ y_1 = 0 \}$. Hence $\overline{\varphi (W)} \cap ( \{ p \} \times \overline{F}_1 ^0)$ is $0$-dimensional, i.e. of codimension $(n+2)$ in $X \times \overline{\square}^{n+1}$. Since the codimensions of $\overline{\varphi(W)}$ and $\{ p \} \times \overline{F}_1 ^{0}$ in $X \times \overline{\square}^{n+1}$ are $n$ and $2$, respectively, the intersection is proper.

\medskip

\textbf{Subcase 3-3:} Suppose $F= F_{1} ^{\infty}$ given by $\{ y_1 = \infty \}$. We analyze $\overline{\varphi (W)} \cap ( \{ p \} \times \overline{F}_1 ^{\infty})$. Consider the diagram \eqref{eqn:sub3-2} again.

Let $\overline{ \overline{W} \times_{\widehat{B}_{n+1}} \Lambda}$ be the closure in $\widehat{B}_{n+1} \times X \times \overline{\square}^{n+1}$. To inspect $\overline{\varphi (W)} \cap ( \{ p \} \times \overline{F}_1 ^{\infty } )$, we are primarily interested in the points $\mathfrak{p}$ of $\overline{ \overline{W} \times_{\widehat{B}_{n+1}} {\Lambda}}$ that lie over $\{ p \} \times \overline{F}_1 ^{\infty} \subset X \times \overline{\square}^{n+1}$. 

\medskip

To study such points, let's use the projective coordinate $(Y_{10}; Y_{11})$ for $y_1= Y_{10}/ Y_{11}$. Here, the first equation $y_1 = 1- \frac{t}{x}$ of ${\Lambda}$ has the homogenization $x Y_{10} = (x-t) Y_{11}$, and letting $y_1= \infty$ is equivalent to letting $Y_{11} = 0$ with $Y_{10}\not = 0$. Hence we have $x=0$ for such points $\mathfrak{p}$. Each such point $\mathfrak{p}$ gives a closed point $\mathfrak{q} \in \overline{W}$ such that $\mathfrak{q} \in \overline{W} \setminus W$, and since $\dim \ W = 1$, there exist only finitely such closed points $\mathfrak{q}$. Furthermore, near such such point $\mathfrak{q}$, its coordinates $(x=0, z_1, \cdots, z_{n})$ in $\widehat{B}_{n+1}$ gives the coordinates $(y_1 = \infty, y_2 = z_1, \cdots, y_{n+1} = z_n)$ by the remaining defining equations $y_{i+1} = z_i   - xt= z_i$ for $2 \leq i \leq n+1$ of $\Lambda$. Hence there exist only finitely many closed points $\mathfrak{p}$ that lie over $\{ y_1 = \infty \}$. Hence $\overline{\varphi (W)} \cap ( \{ p \} \times \overline{F}_1 ^{\infty})$ is also a $0$-dimensional cycle on $X \times \overline{\square}^{n+1}$, i.e. of codimension $(n+2)$. Since the codimensions of $\overline{\varphi(W)}$ and $\{ p \} \times \overline{F}_1 ^{\infty}$ in $X \times \overline{\square}^{n+1}$ are $n$ and $2$, respectively, the intersection is proper.

\medskip

Hence we checked all the cases of the property $(SF)_*$ for $\varphi (W)$. Thus $\varphi (W) \in z_{{\rm d}} ^n (X, n+1)$. 

\bigskip

\textbf{Step 3:} We check that $\varphi(W)$ is a pre-vanishing cycle, i.e. the special fiber $\varphi (W)_p$ over the closed point $p \in X$ is empty. This is more delicate, and we need to use the help of the modulus condition. First take the normalization $\nu: \overline{W}^N \to \overline{W}$ and its pull-back to form the diagram with $\Lambda= \Lambda_{n+1}$
$$
\xymatrix{
\overline{W}^N \times_{\widehat{B}_{n+1}}{ \Lambda }\ar[d] \ar@{>>}[rr] ^{\nu^{{\Lambda}}} & & \overline{W} \times_{\widehat{B}_{n+1}} { \Lambda} \ar[d]  \ar@{^{(}->}[rr]^{\ \ \ \ \ \iota^{{\Lambda}}} & & {\Lambda} \ar[d] ^{pr_1 ^{\Lambda}} \ar[rr] ^{pr_2 ^{\Lambda} \ \ \ } & & X \times {\square}^{n+1} \ar[d] \\
\overline{W}^N \ar@{>>}[rr] ^{\nu} && \overline{W} \ar@{^{(}->}[rr] ^{\iota} & & \widehat{B}_{n+1} & & X. }
$$

To prove that the special fiber $\varphi (W)_p$ is empty, it is enough to show that the special fiber of $\overline{W}^N \times _{\widehat{B}_{n+1}} \Lambda$ over $p \in X$ is empty.

Note from the first equation $y_1 = 1 - \frac{t}{x}$ of $\Lambda$ written in the form $-x (y_1 -1) = t$, we have either 
\begin{enumerate}
\item [(i)] $t \ | \ x$, or 
\item [(ii)] $t \ | \ y_1 -1$.
\end{enumerate}

In the case of (ii), over $p$ we have $t=0$ so that such points in $\overline{W}^N \times _{\widehat{B}_{n+1}} \Lambda$ satisfy $y_1 = 1$, and it defines the empty set in $\widehat{B}_{n+1} \times X \times \square^{n+1}$. Hence the claim holds automatically.

In the case of (i), over $p$ we have $t=0$ so that such points $\mathfrak{p}$ in $\overline{W}^N \times _{\widehat{B}_{n+1}} \Lambda$ satisfy $x=0$. The projections to $\overline{W}^N$ of such points give the closed points $\mathfrak{q} \in \overline{W}^N$ in $\overline{W}^N  \setminus W^N$ such that their projections to the $x$-coordinates $\mathbb{P}^1$ of $\widehat{B}_{n+1}= \mathbb{P}^1 \times \overline{\square}^n$ are $0\in \mathbb{P}^1$. In particular, there are only finitely many such closed points $\mathfrak{q}$ because $\dim \ W = 1$.

Let's analyze what the strong sup modulus condition $M_{ssup}$ of $W$ says about it. Let $\mathfrak{q}$ be any such closed point. For the regular projective curve $\overline{W}^N$ over $k$, let $\pi$ be a uniformizer of the DVR $\mathcal{O}:= \mathcal{O}_{\overline{W}^N, \mathfrak{q}}$ at $\mathfrak{q}$. Let ${\rm ord}_{\mathfrak{q}}$ be the associated discrete valuation. For the rational functions $\nu^* (x), \nu^* (z_1), \cdots, \nu^* (z_{n})$ on $\overline{W}^N$ given by the coordinates $(x, z_1, \cdots, z_n) \in \widehat{B}_{n+1}$, we just denote them by $x, z_1, \cdots, z_n$ for simplicity. Let $r:= \ord_{\mathfrak{q}} ( x)$ so that we can write $x= c_1 \cdot \pi ^{r},$ for some $c_1\in \mathcal{O}^{\times}$ and $r \geq 1$. 

The strong sup modulus $(m+1)$ condition satisfied by $W$ says that there exists some $1 \leq i_0 \leq n$ such that we have
$$
 r (m+1) \leq s_{i_0},
 $$
where $s_{i_0} := \ord_{\mathfrak{q}} (z_{i_0} -1)$. In particular, there is an integer $a \geq 0$ such that $r (m+1) + a = s_{i_0}$, so that near $\mathfrak{q} \in \overline{W}^N$ we can express $z_{i_0}= 1 + c'  \pi^{s_{i_0}}$ for some $c'  \in \mathcal{O}^{\times}$, and
$$
z_{i_0} = 1 + c' \cdot \pi ^{s_{i_0}} = 1 + c' \cdot \pi ^{ r (m+1) + a} = 1 + c''  x^{m+1} \cdot \pi ^a
$$
for some $c'' \in \mathcal{O}^{\times}$. Since we saw that $t \ | \ x$, we have 
\begin{equation}\label{eqn:step3}
z_{i_0} \equiv 1 \mod t^{m+1}
\end{equation}
near $\mathfrak{q}$. In particular, $z_{i_0} \equiv 1 \mod t$.

Under the assumption $x=0$ at such points $\mathfrak{p}$ with their projections $\mathfrak{q} \in \overline{W}^N$, the remaining defining equations $y_i = z_{i-1} - x t$ of $\Lambda$ imply that $y_i = z_{i-1}$, so that the push-forward of the point $\mathfrak{p}$ to $X \times \square^{n}$ gives the values $y_2 = z_1, \cdots, y_{n+1} =z_{n}$, where near every such $\mathfrak{p}$ we have $y_{i_0 + 1} = z_{i_0} \equiv 1 \mod t$. This shows that $\mathfrak{p}$ is also uniquely determined by $\mathfrak{q}$, and in particular there exist only finitely many closed points $\mathfrak{p}$ that satisfy the condition (i). Furthermore, every such point has a coordinate $y_{i_0+1} \equiv 1 \mod t$. This shows that $\varphi (W)_p= \emptyset$, i.e. $\varphi (W)$ is a pre-vanishing cycle. Hence $\varphi (W) \in z^n_{{\rm v}} (X , n+1)$, as desired.
\end{proof}

\begin{defn}
Using Proposition \ref{prop:inverse Bloch general adm}, we define the homomorphism of groups
\begin{equation}\label{eqn:inverse Bloch 1'}
\varphi=\widetilde{\varphi}_{n+1}: \TZ^n (k, n+1;m) ' \to z^n_{{\rm v}} (X, n+1),
\end{equation}
by $\mathbb{Z}$-linearly extending $\widetilde{\varphi}_{n+1}$ in \eqref{eqn:varphi n+1 0}.
\qed
\end{defn}

Unfortunately, the maps $\varphi_n$ in \eqref{eqn:inverse Bloch 1} and $\widetilde{\varphi}_{n+1}$ in \eqref{eqn:inverse Bloch 1'} fail to be compatible with respect to the boundary maps on $\TZ^n (k, n+1, m)' $ and $z_{{\rm v}} ^n (X, n+1)$. The problem occurs for the boundary operators $\partial_1 ^{\epsilon}$ for $\epsilon \in \{ 0, \infty \}$. However, we can overcome this last hurdle in defining the inverse Bloch map by going modulo $I^{m+1}$. We define:

\begin{defn}
The composite with the quotient map of Definition \ref{defn:mod t^{m+1} v}
\begin{equation}\label{eqn:inverse Bloch n+1 mod m+1}
\widetilde{\varphi}_{n+1} : \TZ^n (k, n+1; m)'  \to z^n _{{\rm v}} (X, n+1) \twoheadrightarrow z^n_{{\rm v}} (X/ (m+1), n+1)
\end{equation}
is denoted by the same name $\widetilde{\varphi}_{n+1}$ whenever no confusion arises.
\qed
\end{defn}

In this mod $I^{m+1}$ cycle group, we will exploit the full capacity of the modulus condition for $W$ to prove:

\begin{cor}\label{cor:tilde W face y_1}
Let $W \in \TZ^n (k, n+1; m)' $ be an integral cycle. Then, the cycle $\widetilde{\varphi}_{n+1} (W)  \in z_{{\rm v}} ^n (X/ (m+1), n+1)$ satisfies
$$
\partial_1 ^0 ( \widetilde{\varphi}_{n+1} (W)) = 0, \ \ \  \partial_1 ^{\infty} (\widetilde{ \varphi }_{n+1} (W))= 0
$$
in $z_{{\rm v}} ^n (X/ (m+1), n)$.
\end{cor}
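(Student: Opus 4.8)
The plan is to analyze the two extended faces $\overline{F}_1^0 = \{y_1 = 0\}$ and $\overline{F}_1^\infty = \{y_1 = \infty\}$ of $X \times \overline{\square}^{n+1}$ separately: in each case I would read off from the defining equations of $\Lambda = \Lambda_{n+1}$ exactly where the cycle $\overline{\varphi(W)}$ meets the face (the relevant intersections are proper by Lemma \ref{lem:tilde face}-(1)), and then feed the strong sup modulus condition for $W$ into those equations so as to force the distinguished coordinate $y_{i_0+1}$ to become equal to $1$; since $\square = \mathbb{P}^1 \setminus \{1\}$, the resulting cycle, being supported on $X \times \{y_{i_0+1} = 1\}$, then drops out of the open part $X \times \square^{n+1}$. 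Throughout, $\partial_1^\epsilon(\widetilde{\varphi}_{n+1}(W))$ is the image in $z_{\rm v}^n(X/(m+1),n)$ of $\partial_1^\epsilon(\varphi(W)) \in z_{\rm v}^n(X,n)$.

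For $\partial_1^\infty$ I would argue as in Subcase 3-3 of the proof of Proposition \ref{prop:inverse Bloch general adm}. Homogenising the first equation of $\Lambda$ as $x\,Y_{10} = (x-t)\,Y_{11}$, the locus $\{y_1 = \infty\} = \{Y_{11} = 0,\ Y_{10} \neq 0\}$ forces $x = 0$; hence every point of $\overline{W} \times_{\widehat{B}_{n+1}} \overline{\Lambda}$ lying over $X \times \overline{F}_1^\infty$ projects to a point of $\overline{W}^N$ with vanishing $x$-coordinate. By the strong sup modulus condition the divisor inequality $[\nu^*\{x=0\}] \le [\nu^*\{z_{i_0}=1\}]$ already forces $z_{i_0} = 1$ at every such point, so the remaining equations $y_{i+1} = z_i - xt$ give $y_{i_0+1} = z_{i_0} - 0\cdot t = 1$ there. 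Thus $\overline{\varphi(W)} \cap (X \times \overline{F}_1^\infty)$ lies in $X \times \{y_{i_0+1} = 1\}$, which is disjoint from $X \times \square^{n+1}$, and therefore $\partial_1^\infty(\varphi(W)) = 0$ already in $z_{\rm v}^n(X,n)$; a fortiori $\partial_1^\infty(\widetilde{\varphi}_{n+1}(W)) = 0$. No passage modulo $I^{m+1}$ is needed in this case.

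The face $\partial_1^0$ is where the passage modulo $I^{m+1}$ is essential, since the crude inclusion $\{x=0\} \subseteq \{z_{i_0}=1\}$ is no longer the operative estimate and one must use the full weight of the modulus condition. On $\Lambda$ the equation $y_1 = 1 - t/x = 0$ reads $x = t$, so the part of $\overline{W} \times_{\widehat{B}_{n+1}} \overline{\Lambda}$ over $X \times \overline{F}_1^0$ lies over the sublocus of $\overline{W}^N$ on which the rational function $x$ is identified with the base parameter $t$; over the closed point $p$ this sublocus sits over the finitely many points $\mathfrak{q} \in \overline{W}^N$ with vanishing $x$-coordinate, and these are the only points that survive into the reduction $\overline{\partial_1^0(\varphi(W))} \times_X X_{m+1}$. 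Writing $r = \mathrm{ord}_{\mathfrak{q}}(x) = \mathrm{ord}_{\mathfrak{q}}(t) \ge 1$ and invoking $(m+1)\,r \le \mathrm{ord}_{\mathfrak{q}}(z_{i_0}-1)$ exactly as in Step 3 of the proof of Proposition \ref{prop:inverse Bloch general adm}, one gets $z_{i_0} \equiv 1 \pmod{t^{m+1}}$, and a computation with the remaining equations $y_{i+1} = z_i - xt$ along $\{x = t\}$ is meant to propagate this to $y_{i_0+1} \equiv 1 \pmod{t^{m+1}}$ on that locus. Consequently $\overline{\partial_1^0(\varphi(W))} \times_X X_{m+1}$ is supported in $X_{m+1} \times \{y_{i_0+1} = 1\}$, which is disjoint from $X_{m+1} \times \square^{n+1}$, so $\partial_1^0(\varphi(W))$ maps to $0$ in $z_{\rm v}^n(X/(m+1),n)$; equivalently, it is mod $I^{m+1}$-equivalent to a cycle disjoint from the open part, hence lies in $\mathcal{N}_{\rm v}^n(m+1)$.

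I expect the delicate point to be precisely this last congruence along $\{y_1 = 0\}$: since $y_{i_0+1} = z_{i_0} - xt$ with $x = t$ on the relevant locus, one must control the correction term $xt$ modulo $t^{m+1}$ after the base change — either showing it is absorbed into the ideal $(t^{m+1})$ at each $\mathfrak{q}$ by matching orders of vanishing against the exponent $m+1$, or compensating it by moving $\partial_1^0(\varphi(W))$ within its mod $I^{m+1}$ class — and, more globally, verifying that no component of $\overline{\partial_1^0(\varphi(W))} \times_X X_{m+1}$ escapes the infinity divisor $\bigcup_{i} \{y_i = 1\}$ of $\overline{\square}^{n+1}$. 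This bookkeeping, carried out with every defining equation of $\Lambda$ in hand, is the heart of the matter; everything else is a transcription of the intersection-theoretic arguments already made for $\varphi(W)$ itself in Lemma \ref{lem:tilde face} and Proposition \ref{prop:inverse Bloch general adm}.
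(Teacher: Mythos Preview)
Your $\partial_1^\infty$ argument is essentially the paper's Subcase 3--3 reasoning, with the correct sharpening that no reduction mod $I^{m+1}$ is needed there. The paper, however, does \emph{not} split the two signs: it treats $\epsilon=0$ and $\epsilon=\infty$ in a single stroke, citing Subcases 3--2 and 3--3 of Proposition~\ref{prop:inverse Bloch general adm} to assert that \emph{both} faces $\partial_1^\epsilon(\varphi(W))$ arise from the finitely many closed points $\mathfrak q\in\overline W^{N}$ with $x(\mathfrak q)=0$, and then applies \eqref{eqn:step3} together with $y_{i+1}=z_i-xt=z_i$ (at $x=0$) to obtain $y_{i_0+1}\equiv 1\pmod{t^{m+1}}$ uniformly for both $\epsilon$.

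Your $\partial_1^0$ route diverges from the paper's and has a genuine gap. First, the line ``$\mathrm{ord}_{\mathfrak q}(x)=\mathrm{ord}_{\mathfrak q}(t)$'' is ill-posed: $t$ is the uniformizer on $X$, not a rational function on $\overline W^{N}$, so it has no order at $\mathfrak q$. More seriously, substituting $x=t$ forces $y_{i_0+1}=z_{i_0}-t^2$, and the modulus bound controls only $z_{i_0}\pmod{t^{m+1}}$; it says nothing about the stray $t^2$. This is not a bookkeeping issue but an obstruction: for $n=1$, $m\ge 2$ and $W=\{z_1=1+x^{m+1}\}$ one computes
\[
\varphi(W)\colon\quad (y_2-1)(1-y_1)^{m+1}=t^{m+1}-t^2(1-y_1)^m,
\]
hence $\partial_1^0(\varphi(W))=\{y_2=1-t^2+t^{m+1}\}$, so $y_2\equiv 1-t^2\not\equiv 1\pmod{t^{m+1}}$ and $\overline{\partial_1^0(\varphi(W))}\times_X X_{m+1}$ is \emph{not} supported in $\{y_2=1\}$. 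Thus the conclusion you are aiming for along the $x=t$ route is simply false. The paper's mechanism for $\epsilon=0$ is different: its Subcase 3--2 argues that ``$x=t$'' can only occur at $x=0,\ t=0$, so that the face is again governed by the $x=0$ locus of $\overline W^{N}$ and the $-xt$ correction vanishes outright. Whether or not one finds that step convincing, it is the substantive point at which the paper's proof and yours part ways, and your acknowledged ``delicate point'' cannot be resolved by the order-of-vanishing estimate you outline.
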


\begin{proof}
Write $\varphi(W)= \widetilde{\varphi}_{n+1} (W)$ for simplicity.

\medskip

Part of the needed reasoning is similar to some arguments already used in the proof of Proposition \ref{prop:inverse Bloch general adm}, so we recycle them here.

First note that in \textbf{Subcases 3-2} and \textbf{3-3} of \textbf{Step 2} of the proof of Proposition \ref{prop:inverse Bloch general adm}, we know $\overline{\varphi(W)} \cap ( X \times \overline{F}_1 ^{\epsilon})$ for $\epsilon \in \{ 0, \infty \}$ come from some finitely many closed points $\mathfrak{p}$ of $ \overline{ \overline{W} \times_{\widehat{B}_{n+1}} {\Lambda} } $ whose $x$-coordinate is $0$. 

In particular, the points $\mathfrak{p}$ of $\varphi(W) \cap (X \times F_1 ^{\epsilon} )$ come from some finitely many closed points $\mathfrak{q}$ of $\overline{W}^N $ that belong to $\overline{W}^N \setminus W^N$, whose $x$-coordinate is $0$. In \textbf{Step 3} we saw that such points also lie over the special fiber $p\in X$, while $y_1 \not = 1$.

\medskip

So, from the argument in the case (i) of the proof of \textbf{Step 3} of Proposition \ref{prop:inverse Bloch general adm}, in terms of the notations there, for each such closed point $\mathfrak{q}$ we saw in \eqref{eqn:step3} that there exists some integer $1 \leq i_0 \leq n$ such that
$$
z_{i_0} \equiv 1 \mod t^{m+1},
$$
and for the corresponding closed point $\mathfrak{p} \in  \overline{ \overline{W} \times _{\widehat{B}_{n+1}} {\Lambda}}$ their $x$-coordinate is $0$, so we have $y_i = z_{i-1}$ for $2 \leq i \leq n+1$.

The face $\partial_1 ^{\infty} (\varphi (W))$ consists of the finitely many points $(y_2 (\mathfrak{p}), \cdots, y_{n+1} (\mathfrak{p}))$ for such points $\mathfrak{p}$, while for each such point, we have $y_{i_0 + 1} (\mathfrak{p}) = z_{i_0} \equiv 1  \mod t^{m+1}$ by the above. This means $(y_2 (\mathfrak{p}), \cdots, y_{n+1} (\mathfrak{p})) \equiv 0$ in $z_{{\rm v}} ^n (X/ (m+1), n)$ in the cycle group modulo $t^{m+1}$. Hence $\partial_1 ^{\epsilon} (\varphi (W)) = 0$ in $z_{{\rm v}} ^n (X/ (m+1), n)$ for $\epsilon \in \{ 0, \infty\}$ as desired.
\end{proof}

\begin{cor}\label{cor:tilde W}
We have the commutative diagram:
$$
\xymatrix{
\TZ^n (k, n+1;m)'  \ar[d] ^{\partial} \ar[rrr] ^{ \widetilde{\varphi}_{n+1}} & & & z_{{\rm v}} ^n (X/ (m+1), n+1) \ar[d] ^{ (-1)\cdot \partial}\\
\TZ^n (k, n;m) \ar[rrr] ^{\varphi_n} & &  & z_{{\rm v}} ^n (X/ (m+1), n),}
$$
where the bottom horizontal map is the composite of \eqref{eqn:inverse Bloch 1} with the quotient map $z_{{\rm v}} ^n (X, n) \to z_{{\rm v}} ^n (X/ (m+1), n)$, the top horizontal map is the one given in \eqref{eqn:inverse Bloch n+1 mod m+1}, the left vertical boundary map $\partial$ is from Definition \ref{defn:ACH2}, and the right vertical boundary map exists by \cite[Lemmas 3.7.10, 3.7.11]{Park presentation}.
\end{cor}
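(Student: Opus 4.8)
The plan is to deduce the corollary from Lemma~\ref{lem:tilde face}-(3) and Corollary~\ref{cor:tilde W face y_1} by comparing the two alternating sums that define the boundary operators on the additive and on the cycle sides. By $\mathbb{Z}$-linearity it suffices to prove commutativity for an integral cycle $W \in \TZ^n(k, n+1; m)'$.

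First I would unwind the two composites. Going right then down carries $W$ to $(-1)\cdot\partial\,\widetilde{\varphi}_{n+1}(W)$, where on $z_{{\rm v}}^n(X/(m+1), n+1)$ the boundary is $\partial = \sum_{i=1}^{n+1}(-1)^i(\partial_i^\infty - \partial_i^0)$, which descends to the mod $I^{m+1}$ complexes by \cite[Lemmas 3.7.10, 3.7.11]{Park presentation}. Going down then right carries $W$ to $\varphi_n(\partial W)$, where $\partial = \sum_{j=1}^{n}(-1)^j(\partial_j^\infty - \partial_j^0)$ on $\TZ^n(k, n+1; m)'$ takes values in $\TZ^n(k, n; m)$ by Definition~\ref{defn:ACH2}, so that $\varphi_n$ may indeed be applied. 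Thus the content of the corollary is the identity
$$
\partial\,\widetilde{\varphi}_{n+1}(W) \;=\; -\,\varphi_n(\partial W) \qquad\text{in } z_{{\rm v}}^n(X/(m+1), n).
$$

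To prove this, I would split off the $i = 1$ term on the left-hand side. Its contribution is $(-1)\bigl(\partial_1^\infty(\widetilde{\varphi}_{n+1}(W)) - \partial_1^0(\widetilde{\varphi}_{n+1}(W))\bigr)$, which is zero in $z_{{\rm v}}^n(X/(m+1), n)$ by Corollary~\ref{cor:tilde W face y_1}; this is precisely the step where passing modulo $I^{m+1}$ is essential, since these $y_1$-faces need not vanish on the nose. For the remaining indices $2 \leq i \leq n+1$, Lemma~\ref{lem:tilde face}-(3) gives $\partial_i^\epsilon(\widetilde{\varphi}_{n+1}(W)) = \varphi_n(\partial_{i-1}^\epsilon(W))$ on the open subscheme $X \times \square^n$, an equality of cycles in $z_{{\rm v}}^n(X, n)$ that is compatible with the quotient to $z_{{\rm v}}^n(X/(m+1), n)$. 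Reindexing by $j = i - 1$ and using $(-1)^i = -(-1)^j$, the sum over $2 \leq i \leq n+1$ becomes $-\sum_{j=1}^{n}(-1)^j\bigl(\varphi_n(\partial_j^\infty W) - \varphi_n(\partial_j^0 W)\bigr) = -\varphi_n(\partial W)$ by additivity of $\varphi_n$. Combining, $\partial\,\widetilde{\varphi}_{n+1}(W) = -\varphi_n(\partial W)$, hence $(-1)\cdot\partial\,\widetilde{\varphi}_{n+1}(W) = \varphi_n(\partial W)$, which is exactly the commutativity claimed.

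I do not expect a genuine obstacle here: the two substantive inputs --- vanishing of the $y_1$-faces modulo $I^{m+1}$, and the computation of the $y_i$-faces for $i \geq 2$ via $\varphi_n \circ \partial_{i-1}^\epsilon$ --- are exactly Corollary~\ref{cor:tilde W face y_1} and Lemma~\ref{lem:tilde face}-(3). The only points requiring care are bookkeeping: the index shift that turns the alternating sum over $\{2, \dots, n+1\}$ into the alternating sum over $\{1, \dots, n\}$ introduces the overall sign that accounts for the factor $(-1)$ attached to the right vertical arrow, and one should confirm throughout that the cycle-level identities descend consistently to the mod $I^{m+1}$ groups.
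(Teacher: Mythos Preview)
Your proof is correct and follows essentially the same approach as the paper: reduce to an integral cycle $W$, kill the $i=1$ face terms using Corollary~\ref{cor:tilde W face y_1}, identify the $i\geq 2$ face terms with $\varphi_n(\partial_{i-1}^\epsilon W)$ via Lemma~\ref{lem:tilde face}-(3), and reindex to pick up the sign $(-1)$. The paper's version is just a more compressed presentation of exactly this computation.
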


\begin{proof}
Let $W \in \TZ^n (k, n+1; m)' $ be an integral cycle. By Corollary \ref{cor:tilde W face y_1}, we have $\partial_1 ^{\epsilon}( \widetilde{\varphi}_{n+1} (W) )= 0$ for $\epsilon \in \{ 0, \infty \}$ in $z_{{\rm v}} ^n (X/ (m+1), n)$. Hence
$$
\partial \left( \widetilde{\varphi}_{n+1} (W) \right)= \sum_{i=1} ^{n+1}  (-1)^i ( \partial_i ^{\infty} - \partial_i ^0) ( \widetilde{\varphi}_{n+1} (W)) = \sum_{i=2} ^{n+1} (-1)^i ( \partial_i ^{\infty} - \partial_i ^0) ( \widetilde{\varphi}_{n+1} (W)) 
$$
$$ 
= ^{\dagger} \sum_{i=2} ^{n+1} (-1)^i \varphi_n \left( ( \partial_{i-1} ^{\infty} - \partial_{i-1} ^0 ) ( W) \right) =- \sum_{i=1} ^n (-1)^i \varphi_n  \left( ( \partial_{i} ^{\infty} - \partial_{i} ^0 ) ( W) \right) = - \varphi_n (\partial W).
$$
where $\dagger$ holds by Lemma \ref{lem:tilde face}-(3). This proves the commutativity.
\end{proof}

From Corollary \ref{cor:tilde W}, we now deduce:

\begin{prop}\label{prop:inverse Bloch}
The composite
$$
\TZ^n (k, n+1; m)' \overset{\partial}{\longrightarrow} \TZ^n (k, n;m) \overset{\varphi_n}{\longrightarrow} \CH_{{\rm v}} ^n (X/ (m+1), n)
$$
is zero. In particular, we have the induced homomorphism
\begin{equation}\label{eqn:inverse Bloch cy}
{\varphi}_n: \TCH^n (k, n;m) '  \to \CH_{{\rm v}} ^n (X/ (m+1), n).
\end{equation}
\end{prop}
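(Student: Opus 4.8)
The plan is to deduce the statement formally from the commutative square of Corollary \ref{cor:tilde W}, once the target group $\CH_{{\rm v}}^n(X/(m+1),n)$ has been correctly identified as a quotient, by boundaries, of the mod-$I^{m+1}$ cycle complex.

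First I would unwind Definition \ref{defn:mod t^{m+1} v}. Since $z_{{\rm v}}^n(X/(m+1),n)=z_{{\rm v}}^n(X,n)/\mathcal{N}_{{\rm v}}^n(m+1)$ and
$$
\CH_{{\rm v}}^n(X/(m+1),n)=\frac{z_{{\rm v}}^n(X,n)}{\partial z_{{\rm v}}^n(X,n+1)+\mathcal{N}_{{\rm v}}^n(m+1)},
$$
the group $\CH_{{\rm v}}^n(X/(m+1),n)$ is the quotient of $z_{{\rm v}}^n(X/(m+1),n)$ by the image of $\partial z_{{\rm v}}^n(X,n+1)$ under the surjection $z_{{\rm v}}^n(X,n)\surj z_{{\rm v}}^n(X/(m+1),n)$. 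Because the surjection $z_{{\rm v}}^n(X,n+1)\surj z_{{\rm v}}^n(X/(m+1),n+1)$ commutes with $\partial$ --- here using that $\partial$ descends to the mod-$I^{m+1}$ complex, the fact from \cite{Park presentation} already invoked in Corollary \ref{cor:tilde W} --- that image equals $\partial z_{{\rm v}}^n(X/(m+1),n+1)$. Hence every element of $\partial z_{{\rm v}}^n(X/(m+1),n+1)$ is zero in $\CH_{{\rm v}}^n(X/(m+1),n)$.

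Next, for $W\in\TZ^n(k,n+1;m)'$, Corollary \ref{cor:tilde W} gives $\varphi_n(\partial W)=-\,\partial\bigl(\widetilde{\varphi}_{n+1}(W)\bigr)$ in $z_{{\rm v}}^n(X/(m+1),n)$, where by \eqref{eqn:inverse Bloch n+1 mod m+1} we have $\widetilde{\varphi}_{n+1}(W)\in z_{{\rm v}}^n(X/(m+1),n+1)$. Therefore $\varphi_n(\partial W)\in\partial z_{{\rm v}}^n(X/(m+1),n+1)$ and, by the previous step, vanishes in $\CH_{{\rm v}}^n(X/(m+1),n)$; thus the composite $\TZ^n(k,n+1;m)'\xrightarrow{\partial}\TZ^n(k,n;m)\xrightarrow{\varphi_n}\CH_{{\rm v}}^n(X/(m+1),n)$ is zero. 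Finally, since $\TCH^n(k,n;m)'$ is by Definition \ref{defn:ACH2} the cokernel of $\partial\colon\TZ^n(k,n+1;m)'\to\TZ^n(k,n;m)$, the universal property of the cokernel yields the asserted factorization $\varphi_n\colon\TCH^n(k,n;m)'\to\CH_{{\rm v}}^n(X/(m+1),n)$ of \eqref{eqn:inverse Bloch cy}.

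I do not expect a genuine obstacle here: the substantive geometric content --- that $\varphi(W)$ is a strict vanishing $1$-cycle (Proposition \ref{prop:inverse Bloch general adm}), that the faces $\partial_1^\epsilon\varphi(W)$ die modulo $I^{m+1}$ (Corollary \ref{cor:tilde W face y_1}), and the face compatibility $\partial_i^\epsilon\varphi(W)=\varphi(\partial_{i-1}^\epsilon W)$ for $i\geq 2$ (Lemma \ref{lem:tilde face}) --- has already been packaged into Corollary \ref{cor:tilde W}. The only point demanding a little care is the bookkeeping in the first paragraph: one must push boundaries through the mod-$I^{m+1}$ quotient rather than argue with $\partial z_{{\rm v}}^n(X,n+1)$ directly, which is precisely why the intermediate group $z_{{\rm v}}^n(X/(m+1),n+1)$ and the well-definedness of $\partial$ on it enter the argument.
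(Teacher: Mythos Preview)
Your proposal is correct and follows exactly the approach the paper intends: the paper writes only ``From Corollary~\ref{cor:tilde W}, we now deduce'' before stating the proposition, leaving the formal deduction implicit. You have spelled out precisely that deduction --- using the commutative square to identify $\varphi_n(\partial W)$ with $-\partial(\widetilde{\varphi}_{n+1}(W))$ modulo $I^{m+1}$ and observing that this lies in the boundary subgroup killed in $\CH_{{\rm v}}^n(X/(m+1),n)$ --- so there is nothing to add.
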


This is the geometric (=cycle-theoretic) inverse Bloch map, that we have long been after. Under Theorems \ref{thm:Rulling2} and \ref{thm:graph final}, Proposition \ref{prop:inverse Bloch} can be rephrased in terms of the original algebraically defined groups as:

\begin{defn}\label{defn:inverse Bloch}
Let $k$ be a field. Let $ m, n \geq 1$ be integers. The inverse Bloch map is defined to be the composite of the following maps 
$$
\xymatrix{
\mathbb{W}_m \Omega_k ^{n-1} \ar[d] ^{\simeq} _{gr_k} & & \widehat{K}_n ^M (k_{m+1}, (t))  \\
\TH^n (k, n;m)' \ar[rr] ^{\varphi} & & \CH_{{\rm v}} ^n (X/ (m+1), n) \ar[u] ^{ gr_{{\rm v}} ^{-1}} _{\simeq} .
}
$$
\end{defn}
In what follows, if needed we often identify the groups along the vertical isomorphisms, to simplify our reasonings and the notations. For $n=1$ we have $\TH^1 (k, 1;m)' = \TH^1 (k, 1;m)$. We prove that:

\begin{thm}\label{thm:inverse Bloch n=1}
Let $k$ be a field, and let $X= \Spec (A)$ be an integral regular henselian local $k$-scheme of dimension $1$ with the residue field $k = A/I$. Let $m \geq 1$ be an integer. Then for $n=1$, the inverse Bloch map
$$
\varphi_1 : \TH^1 (k, 1;m) \to \CH_{{\rm v}} ^1 (X/ (m+1), 1)
$$
is an isomorphism.
\end{thm}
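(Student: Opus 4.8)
The plan is to deduce the statement from the two geometric presentations together with the classical identification of big Witt vectors with the group of truncated power series. By Theorem~\ref{thm:Rulling} the map $gr_k$ is an isomorphism $\mathbb{W}_m\Omega_k^{0}=\mathbb{W}_m(k)\xrightarrow{\ \sim\ }\TH^1(k,1;m)$, and since $X=\Spec k[[t]]$ (the scheme used to construct $\varphi_1$) is regular, henselian, local, of dimension $1$ with residue field $k$, Theorem~\ref{thm:graph final}(2) gives an isomorphism $gr_{{\rm v}}\colon\widehat{K}_1^M(k_{m+1},(t))\xrightarrow{\ \sim\ }\CH_{{\rm v}}^1(X/(m+1),1)$; for a general $X$ as in the statement the two sides agree via this, and $\TH^1(k,1;m)'=\TH^1(k,1;m)$. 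Hence it suffices to prove that the composite $\psi:=gr_{{\rm v}}^{-1}\circ\varphi_1\circ gr_k\colon\mathbb{W}_m(k)\to\widehat{K}_1^M(k_{m+1},(t))$ is an isomorphism. I would then identify both sides with $G(k):=(1+tk[t]/(t^{m+1}))^{\times}$: on the target, $\widehat{K}_1^M=K_1^M$ by Theorem~\ref{thm:Khat_univ}(2)(a) and $K_1^M(k_{m+1},(t))=\ker((k_{m+1})^{\times}\to k^{\times})=G(k)$; on the source, $\mathbb{W}_m(k)\cong(1+tk[[t]])^{\times}/(1+t^{m+1}k[[t]])^{\times}$, which I identify with $G(k)$ via Proposition~\ref{prop:Witt elements}, calling this $\lambda_m$. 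The goal becomes: $\psi=\lambda_m$.

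First I would compute $\varphi_1$ on a closed point $\mathfrak{p}\subset\mathbb{G}_m$. Writing $k'=\kappa(\mathfrak{p})$ and $a\in (k')^{\times}$ for the value of the coordinate $x$, the proof of Lemma~\ref{lem:inverse 0-cycle} with $n=1$ shows that $\varphi_{1,k'}$ sends the $k'$-rational point $\langle a\rangle$ to (the class of) the graph cycle $\Gamma_{1-a^{-1}t}$, whence $gr_{{\rm v}}^{-1}\varphi_{1,k'}(\langle a\rangle)=1-a^{-1}t\in G(k')$. Combining this with the transfer-compatibility of $\varphi_1$ recorded just after Definition~\ref{defn:inverse Bloch 1}, with Lemma~\ref{lem:fpf} and Theorem~\ref{thm:full transfer}, and with the fact that $gr_{{\rm v}}$ intertwines the transfer on $\CH_{{\rm v}}^1(-/(m+1),1)$ with the norm on relative $\widehat{K}_1^M$ (from \cite{Park presentation}), one gets $gr_{{\rm v}}^{-1}\varphi_1([\mathfrak{p}])=N_{k'/k}(1-a^{-1}t)\in G(k)$. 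Letting $\mathfrak{p}$ run over the irreducible factors of $x^i-\alpha^{-1}$ for fixed $\alpha\in k^{\times}$ and $1\le i\le m$, and using the elementary norm identity $\prod_{c^{i}=\alpha}(1-ct)=1-\alpha t^i$, I conclude that every $1-\alpha t^i$ lies in the image of $\varphi_1$; since these elements generate $G(k)$ by Proposition~\ref{prop:Witt elements}, $\varphi_1$, hence $\psi$, is surjective.

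For injectivity I would identify $\psi$ with $\lambda_m$. Both are natural in $k$ and commute with norm maps along finite field extensions: for $\psi$ by the computation above, for $\lambda_m$ because the norm on $G$ is the classical big Witt-vector norm. Hence it is enough to check that they agree on one generating family. Using R\"ulling's explicit description of $gr_k$ on $0$-cycles in \cite{R}, the Teichm\"uller class $[a]=1-at\in\mathbb{W}_m(k)$ is carried by $gr_k$ to the class of the rational point $\langle a^{-1}\rangle$ of $\mathbb{G}_m$, so $\psi([a])=gr_{{\rm v}}^{-1}\varphi_1(\langle a^{-1}\rangle)=1-at=\lambda_m([a])$; and each generator $1-\alpha t^i$ above is, by the same norm identity, a norm of such a Teichm\"uller class over a finite extension. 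By naturality and norm-compatibility, $\psi=\lambda_m$, so $\psi$, and therefore $\varphi_1$, is an isomorphism.

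The part I expect to cost the most care is matching normalizations: checking that the transfer on $\CH_{{\rm v}}^1(-/(m+1),1)$ corresponds under $gr_{{\rm v}}$ to the norm on relative $\widehat{K}_1^M$ (needed both for surjectivity and for the norm-compatibility of $\psi$), and pinning down R\"ulling's normalization of $gr_k$ on $0$-cycles so that the comparison in the injectivity step holds on the nose rather than up to a natural twist of $\mathbb{W}_m$. Both amount to unwinding definitions in \cite{Park presentation} and \cite{R} rather than to a genuinely new difficulty; indeed the $n=1$ case is considerably softer than the cases $n\ge 2$, where surjectivity is proved by induction and injectivity uses the deconcatenation map, and it serves as the base of those later arguments.
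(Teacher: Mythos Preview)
Your proposal is correct and follows essentially the same route as the paper: both compute $\varphi_1$ on a closed point via the pushforward from its residue field, identify the target with $(1+t k_{m+1})^{\times}\simeq\mathbb{W}_m(k)$, and then match the result against R\"ulling's $gr_k$ on Teichm\"uller lifts to conclude. The paper organizes this as ``the composite $gr_k\circ\phi_1\circ\varphi_1$ is the identity on generators'' (injectivity) and ``factor an arbitrary representative $f(t)\in 1+tk[t]$ into irreducibles and pull back'' (surjectivity), whereas you package it as ``$\psi=\lambda_m$''; the underlying computations are the same.

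One small caveat: in your surjectivity step, the phrase ``letting $\mathfrak{p}$ run over the irreducible factors of $x^{i}-\alpha^{-1}$'' and the identity $\prod_{c^{i}=\alpha}(1-ct)=1-\alpha t^{i}$ must be read \emph{with multiplicity} in characteristic $p$ when $p\mid i$ (e.g.\ $x^{p}-\alpha^{-1}=(x-\beta)^{p}$); otherwise the product is off. The paper's variant---factor an arbitrary $f(t)\equiv 1\bmod t$ into irreducibles in $k[t]$ and use $\pi_{*}\{y=1-t/c\}=\{y=f(t)\}$ directly---sidesteps this bookkeeping, but your version is easily repaired by taking the full divisor of $x^{i}-\alpha^{-1}$ on $\mathbb{G}_m$.
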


From Theorem \ref{thm:Rulling2} and \ref{thm:graph final}, we already know that both of the groups are isomorphic to $\mathbb{W}_m (k)$. So, our task is \emph{not} to prove that these groups are isomorphic to each other, but that the inverse Bloch map $\varphi_1$ is indeed an isomorphism.

\begin{proof}
We may assume that $A= k[[t]]$ as the group on the right is independent of the choice of such $A$. 
Recall that $\TH^1 (k, 1; m)$ is generated by closed points $\mathfrak{p} \in \mathbb{A}^1$, where $\mathfrak{p} \not = 0$. Each such point defines a class $[ \mathfrak{p}] \in \TH^1 (k, 1;m)$. 

Given such a class $[\mathfrak{p}]$, we will first show that $\varphi_1 ([\mathfrak{p}]) \in \CH_{{\rm v}} ^1 (X/ (m+1), 1)$ can determine the class $[\mathfrak{p}] \in \TH^1 (k, 1;m)$, which proves the injectivity of $\varphi_1$.

\medskip

Let $k':= \kappa (\mathfrak{p})$, and let $\pi: \Spec (k') \to \Spec (k)$ be the associated finite morphism. Then there is $k'$-rational closed point $\mathfrak{p'} \in \mathbb{A}^1$, with $\mathfrak{p}' \not = 0$ such that the class $[\mathfrak{p}'] \in \TH^1 (k', 1; m)$ satisfies $\pi_* ([\mathfrak{p}'] ) = [ \mathfrak{p}]$. Consider the commutative diagram
$$
\xymatrix{
\TH^1 (k', 1;m) \ar[d]^{\pi_*} \ar[r] ^{\varphi_{1, k'} \ \ \ }  & \CH_{{\rm v}} ^1 (X_{k'}/(m+1), 1) \ar[d] ^{\pi_*} \ar[r] ^{\ \ \ \ \ \ \  \phi_{1, k'}} &  \mathbb{W}_m (k') \ar[d] ^{\Tr_{k'/k}} \ar[r] ^{ gr_{k'} \ \ \ } & \TH^1 (k', 1;m) \ar[d] ^{\pi_*}  \\
\TH^1 (k, 1;m) \ar[r] ^{\varphi_{1, k} \ \ \ }  & \CH_{{\rm v}} ^1 (X/ (m+1), 1) \ar[r] ^{\ \ \ \ \ \ \ \phi_{1, k}}  & \mathbb{W}_m (k) \ar[r] ^{gr_k \ \ \ } & \TH^1 (k, 1;m),}
$$
where $\Tr_{k'/k}$ is the trace map for the Witt vectors for a finite free ring extension (see, e.g. \cite[Proposition A.9]{R}), and $\phi_1$ is the map defined in \cite[Definition 4.2.3]{Park presentation} which gives the inverse of the graph isomorphism of Theorem \ref{thm:graph final} for $n=1$.

For the minimal polynomial $\tilde{f} (t) = c-t \in k'[t]$ of $\mathfrak{p}' \in \mathbb{A}_{k'}^1$ over $k'$ where $c \not = 0$, the map $\varphi_{1, k'}$ sends $[\mathfrak{p}']$ to the graph cycle $\Gamma':= \{ y= 1 - \frac{t}{c} \} \in \CH_{{\rm v}} ^1 (X_{k'}/ (m+1), 1)$. This is sent by $\phi_{1, k'}$ to the Teichm\"uller lift $[c^{-1}] \in \mathbb{W}_m (k')$ of $c^{-1}$. (In terms of the identification $\mathbb{W}_m (k') = (1+ t k' [[t]])^{\times}$, this corresponds to $1- \frac{t}{c}$.)

However, by the definition of the graph map $gr_{k'}$ in K. R\"ulling \cite{R} (Theorem \ref{thm:Rulling2}), we have $gr_{k'} ( [c ^{-1}] ) = [ \mathfrak{p}']$. Hence the upper horizontal composite is injective and the class $[\mathfrak{p}'] \in \TH^1 (k', 1;m)$ is determined by $\varphi_{ 1, k'} ([ \mathfrak{p}'])$. Thus applying $\pi_*$, we deduce that $\varphi_{1, k} ([\mathfrak{p}])$ also determines the class $[\mathfrak{p}] \in \TH^1 (k, 1;m)$. This proves the injectivity of $\varphi_{1, k}$.

\medskip

To show that $\varphi_{1, k}$ is surjective, we use the identification $\CH_{{\rm v}} ^1 (X/ (m+1), 1) \simeq \mathbb{W}_m (k)$ of Theorem \ref{thm:graph final}, given by the isomorphism $\phi_{1, k}$, inverse to the graph map $gr_{{\rm v}} : \mathbb{W}_m (k) \to \CH_{{\rm v}} ^1 (X/ (m+1), 1)$, as well as the identification $\mathbb{W}_m (k) = (1 + t k[[t]])^{\times} / (1+ t^{m+1} k[[t]])^{\times}$. 

\medskip

Note that each $\alpha \in \mathbb{W}_m (k)= ( 1+ tk[[t]])^{\times} / (1 + t^{m+1} k[[t]])^{\times}$ has a non-constant polynomial representative $f(t)$ such that $f \equiv 1 \mod t$. It may not be irreducible, but taking the factorization $f(t) = f_1 (t) \cdots f_r (t)$ into irreducible polynomials in $k[t]$, where $f_i$ is scaled so that $f_i \equiv 1 \mod t$ for each $1 \leq i \leq r$, and replacing $(\alpha, f)$ by $(\alpha_i := \bar{f}_i, f_i)$, we deduce that $\mathbb{W}_m (k)$ is generated by elements $\alpha \in \mathbb{W}_m (k)$ that have their irreducible non-constant polynomial representatives $f(t) \in k[t]$ with $f(t) \equiv 1 \mod t$.

For such $\alpha$ and a chosen irreducible polynomial representative $f(t)$, this $f(t)$ defines a closed point $\mathfrak{p} \in \mathbb{A}^1$, distinct from $0$, so it defines a class $[\mathfrak{p}] \in \TH^1 (k, 1;m)$. For $k':= k[t]/(f(t))= \kappa (\mathfrak{p})$, we have the finite extension $k \hookrightarrow k'$ and let $\pi: \Spec (k') \to \Spec (k)$ be the corresponding map. Then we have a member $c \in k'\setminus \{ 0 \}$ which is a solution of $f(t)$, and under the identification $\mathbb{W}(k) = \CH_{{\rm v}} ^1 (X/ (m+1), 1)$,
$$
\varphi_{1, k} ([\mathfrak{p}]) = \pi_* \left\{ y = 1 - \frac{t}{c} \right\} = \left\{ y= \frac{f (t)} { f(0)}\right\} = \{ y = f (t) \} = \{ f(t) \mod t^{m+1} \} = \alpha.
$$
This proves that $\varphi_{1, k}$ is surjective. Hence $\varphi_{1, k}$ is an isomorphism.
\end{proof}

\begin{remk}\label{remk:strong surj n=1}
In terms of an auxiliary notion of ``strong surjectivity" to be defined later in Definition \ref{defn:strong surj}, the above argument actually shows that $\varphi_1$ is strongly surjective. This point will be used later, e.g. in Proposition \ref{prop:inverse Bloch surj}.
\qed
\end{remk}

We will eventually prove in Corollary \ref{cor:inverse Bloch surj 3} that for all $ n \geq 1$, the map $\varphi$ in \eqref{eqn:inverse Bloch cy} is an isomorphism. Its injectivity for $n \geq 2$ is going to be discussed in  \S \ref{sec:inj inverse Bloch} and the surjectivity for $n \geq 2$ in \S \ref{sec:surj inverse Bloch}. Both of them are nontrivial.

For the injectivity, we resort to a special homomorphism that we call, the \emph{de}-concatenation, to be constructed in \S \ref{sec:decon 1} below.

\section{Deconcatenation and injectivity of the inverse Bloch map}\label{sec:decon 1}

We continue to suppose $X= \Spec (k[[t]])$ as before, with the unique closed point $p \in X$ and the generic point $\eta \in X$. Let $\mathbb{F} = \kappa (\eta) = k((t))$, the function field. The objective of \S \ref{sec:decon 1} is to construct geometrically a nontrivial homomorphism for each $n \geq 2$
$$
{\rm Dec}: \CH^n _{\rm v} (X, n) \to \CH^1 _{\rm v} (X, 1) \otimes \CH^{n-1} (k, n-1),
$$
which we call the deconcatenation, and to deduce the injectivity of $\varphi$ in \S \ref{sec:inj inverse Bloch}. At least to the author, its existence was rather surprising. The proof of its existence takes up most of \S \ref{sec:decon 1} before the proof of the injectivity of $\varphi$ in \S \ref{sec:inj inverse Bloch}.

\subsection{Deconcatenation}

Let $n \geq 2$ be an integer. Let $Z \in z^n_{{\rm v}} (X, n)$ be an integral cycle so that $Z \cap ( \{ p \} \times \square^n) = \emptyset$. Recall (Definition \ref{defn:type i} and Remark \ref{remk:type i}) that $Z$ has a vanishing coordinate $y_i$ for some $1 \leq i \leq n$. So, there exists a cyclic permutation $\sigma \in S_n$ on the coordinates of the form $\sigma = (1, 2, \cdots, n)^i$ for some $1 \leq i \leq n$, such that $\sigma \cdot Z$ has $y_1$ as a vanishing coordinate. 

\begin{defn}\label{defn:dec 00}
For a choice of such $\sigma$, define its \emph{deconcatenation} to be
$$
{\rm Dec} (Z) := {\rm sgn} (\sigma) \left( (\sigma \cdot Z)^{ (1)}  \otimes {\rm ev}_{p} \left( (\sigma \cdot Z) ^{ (1')}\right) \right) \in z_{{\rm v}} ^1 (X, 1) \otimes z^{n-1} (k, n-1),
$$
where the superscripts $(1)$ and $(1')$ mean the compactified projections in Lemma \ref{lem:adm compact proj}, with $1':= \{ 1, 2, \cdots, n \} \setminus \{ 1 \} = \{ 2, \cdots, n \}$, and ${\rm ev}_p: z_{{\rm d}} ^{n-1} (X, n-1) \to z^{n-1} (k, n-1)$ is the specialization map of Lemma \ref{lem:specialization}.
\qed
\end{defn}

\emph{A priori}, our definition makes a choice of an integer $1 \leq i \leq n$ (equivalently a cyclic permutation $\sigma$). 
This choice is immaterial:

\begin{lem}\label{lem:decon well}
Let $Z \in z^n _{{\rm v}} (X,n )$ be an integral cycle.

Suppose that there are at least two vanishing coordinates $y_{i_1}$ and $y_{i_2}$ of $Z$ for two distinct integers $1 \leq i_1 < i_2 \leq n$. Let $\sigma_1, \sigma_2$ be the corresponding cyclic permutations $(1, \cdots, n)^{i_1}$ and $(1, \cdots, n)^{i_2}$. Then for both $j=1,2$ we have 
$$
{\rm sgn} (\sigma_j) \left( (\sigma_j \cdot Z)^{ (1)}  \otimes {\rm ev}_p  \left( (\sigma_j \cdot Z) ^{ (1')} \right) \right)  = 0  \ \ \mbox{ in } z_{{\rm v}} ^1 (X, 1) \otimes z^{n-1} (k, n-1).
$$
In particular, ${\rm Dec} (Z)$ is well defined.
\end{lem}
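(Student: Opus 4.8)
The plan is to show that if $Z$ has a vanishing coordinate $y_{i_2}$ with $i_2 \neq i_1$, then the term corresponding to $\sigma_1$ in Definition~\ref{defn:dec 00} already vanishes, and by symmetry the same holds for $\sigma_2$; this immediately gives well-definedness since any two admissible choices give $0$. So fix $j = 1$ and set $Z' := \sigma_1 \cdot Z$, which by construction has $y_1$ as a vanishing coordinate, and also has some other vanishing coordinate $y_\ell$ with $2 \le \ell \le n$ (the image of $y_{i_2}$ under $\sigma_1$). I would analyze the second tensor factor, namely ${\rm ev}_p\big( (Z')^{(1')}\big) \in z^{n-1}(k, n-1)$, where $1' = \{2, \dots, n\}$.

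The key point is that $y_\ell$ being a vanishing coordinate for $Z'$ means there is a point $x \in \overline{Z'} \cap (\{p\} \times \overline{\square}^n)$ lying on $\{y_\ell = 1\}$. Under the compactified projection $\widehat{pr}_{1'}$ to the coordinates $y_2, \dots, y_n$, this point maps into $\overline{(Z')^{(1')}} \cap (\{p\} \times \overline{\square}^{n-1})$ on the divisor $\{y_\ell = 1\}$. By Lemma~\ref{lem:adm compact proj}, $(Z')^{(1')}$ is a ${\rm v}$-cycle in $z_{{\rm v}}^{n-1}(X, n-1)$ (here I use that $y_1$ is a vanishing coordinate of $Z'$ — wait, for the projection to $1'$ I need a vanishing coordinate *inside* $1'$, which is exactly $y_\ell$; so $(Z')^{(1')} \in z_{{\rm v}}^{n-1}(X, n-1)$). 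Now I need to understand what ${\rm ev}_p$ does: by Lemma~\ref{lem:specialization}, ${\rm ev}_p$ is the specialization at $p$, sending a ${\rm d}$-cycle to its special fiber in $z^{n-1}(k, n-1)$. Since $(Z')^{(1')}$ is a pre-vanishing cycle (its special fiber over $p$ is empty as a cycle on $X \times \square^{n-1}$), and its closure meets $\{p\} \times \overline{\square}^{n-1}$ only inside $\bigcup_i \{y_i = 1\}$, the specialization ${\rm ev}_p\big((Z')^{(1')}\big)$, viewed as a cycle on $\square^{n-1} = (\mathbb{P}^1 \setminus\{1\})^{n-1}$ over $k$, must be supported on the locus where $y_\ell = 1$ — but $\{y_\ell = 1\}$ is removed from $\square^{n-1}$. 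I would argue carefully that the closure of each component of $(Z')^{(1')}$ that could contribute to the special fiber passes through $\{y_\ell = 1\}$ at $p$, hence contributes nothing to the actual cycle ${\rm ev}_p\big((Z')^{(1')}\big) \in z^{n-1}(k, n-1)$, so this cycle is $0$.

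The main obstacle I anticipate is making the last step precise: ${\rm ev}_p$ is defined via proper intersection with $\{p\} \times \square^{n-1}$ for ${\rm d}$-cycles, and I need to show that the only components of the intersection $\overline{(Z')^{(1')}} \cap (\{p\} \times \overline{\square}^{n-1})$ lie on the boundary divisors $\{y_i = 1\}$ — which, crucially, are \emph{not} in $\square^{n-1}$, so they don't produce a cycle in $z^{n-1}(k, n-1)$. This requires knowing that $y_\ell$ is a vanishing coordinate for $(Z')^{(1')}$ and that there is no \emph{other} point of $\overline{(Z')^{(1')}} \cap (\{p\} \times \overline{\square}^{n-1})$ off the divisors $\{y_i = 1\}$ — but that is exactly the pre-vanishing property $(Z')^{(1')} \cap (\{p\} \times \square^{n-1}) = \emptyset$, which holds because $(Z')^{(1')} \in z_{{\rm v}}^{n-1}(X,n-1)$ by Lemma~\ref{lem:adm compact proj}. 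So ${\rm ev}_p\big((Z')^{(1')}\big)$, which by definition is the part of the special fiber lying in the open $\square^{n-1}$, is the zero cycle. Therefore ${\rm sgn}(\sigma_1)\big((Z')^{(1)} \otimes {\rm ev}_p((Z')^{(1')})\big) = {\rm sgn}(\sigma_1)\big((Z')^{(1)} \otimes 0\big) = 0$, and symmetrically for $\sigma_2$, so both expressions agree (both are $0$) and ${\rm Dec}(Z)$ is well defined.
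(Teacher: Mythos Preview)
Your proof is correct and follows essentially the same approach as the paper: after the cyclic permutation, the second vanishing coordinate $y_\ell$ lies in $1' = \{2,\dots,n\}$, so Lemma~\ref{lem:adm compact proj} gives $(\sigma_j \cdot Z)^{(1')} \in z_{\rm v}^{n-1}(X,n-1)$, and then the pre-vanishing property makes ${\rm ev}_p$ of it zero. The paper's argument is the same, just stated in three sentences without the intermediate discussion.
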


\begin{proof}

Since $Z$ has at least two vanishing coordinates, for each $j = 1,2$, the compactified projection $(\sigma_j \cdot Z)^{ (1')} \in z_{{\rm d}} ^{n-1} (X, n-1)$ always has a vanishing coordinate, thus it belongs to the subgroup $z_{{\rm v}} ^{n-1} (X, n-1)$. Consequently its image ${\rm ev}_p  \left( (\sigma_j \cdot Z) ^{ (1')} \right) $ in $z^{n-1} (k, n-1)$ is $0$. This implies the desired vanishing.
\end{proof}

By Lemma \ref{lem:decon well}, a potentially nontrivial deconcatenation could be obtained from an integral cycle $Z$ which has only one vanishing coordinate. 

\medskip

We have the induced group homomorphism via the quotient map
$$
 z_{{\rm v}} ^1 (X, 1) \otimes_{\mathbb{Z}} z^{n-1} (k, n-1) \to \CH_{{\rm v}} ^1 (X, 1) \otimes_{\mathbb{Z}} \CH^{n-1} (k, n-1).
 $$
Combined with the map ${\rm Dec}$ in Definition \ref{defn:dec 00}, we deduce the homomorphism
\begin{equation}\label{eqn:decon 1}
{\rm Dec} : z^n _{{\rm v}} (X, n) \to \CH_{{\rm v}} ^1 (X, 1) \otimes_{\mathbb{Z}} \CH^{n-1} (k, n-1).
\end{equation}

\medskip

We claim that \eqref{eqn:decon 1} kills the boundaries:

\begin{prop}\label{prop:decon bdry 0}
${\rm Dec} (\partial z_{{\rm v}} ^n (X, n+1)) = 0$ in $ \CH_{{\rm v}} ^1 (X, 1) \otimes_{\mathbb{Z}} \CH^{n-1} (k, n-1).$

In particular, we have the induced homomorphism
\begin{equation}\label{eqn:decon 2}
{\rm Dec} : \CH^n _{{\rm v}} (X, n) \to \CH_{{\rm v}} ^1 (X, 1) \otimes_{\mathbb{Z}} \CH^{n-1} (k, n-1).
\end{equation}
\end{prop}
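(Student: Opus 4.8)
The plan is to reduce the vanishing of $\mathrm{Dec}$ on boundaries to a cycle-level identity about faces of a $\mathrm{v}$-cycle $W \in z^n_{\mathrm{v}}(X, n+1)$, and then to exploit the fact that the target is a tensor product of \emph{Chow groups}, not of cycle groups, so that most terms die for free. First I would fix an integral $\mathrm{v}$-cycle $W \in z^n_{\mathrm{v}}(X, n+1)$ and, using Definition \ref{defn:type i} and Remark \ref{remk:type i}, choose a vanishing coordinate $y_i$ for $W$. After applying a cyclic permutation (and tracking its sign, exactly as in Definition \ref{defn:dec 00}) I may assume $y_1$ is a vanishing coordinate of $W$. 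The goal becomes: $\mathrm{Dec}(\partial W) = 0$. Now $\partial W = \sum_{j=1}^{n+1} (-1)^j(\partial_j^\infty - \partial_j^0)(W)$, and each $\partial_j^\epsilon(W)$ is an element of $z^n_{\mathrm{v}}(X, n)$; I would compute $\mathrm{Dec}$ on each such face separately.

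The key dichotomy is between $j = 1$ and $j \ge 2$. For $j \ge 2$: since $y_1$ is a vanishing coordinate of $W$, it remains a vanishing coordinate of $\partial_j^\epsilon(W)$ (the face is cut out by a $y_j$-equation with $j \ne 1$, so the locus over $p$ where $\overline{W}$ meets $\{y_1 = 1\}$ survives, at least generically — this is exactly the kind of compatibility recorded in Lemma \ref{lem:tilde face}-type arguments, and can be checked directly from the closures). Hence when I form $\mathrm{Dec}(\partial_j^\epsilon(W))$ with the canonical choice $\sigma = \mathrm{id}$, the second tensor factor is ${\rm ev}_p$ of a compactified projection $(\partial_j^\epsilon W)^{(1')}$ which, by Lemma \ref{lem:adm compact proj}, lies in $z^{n-1}_{\mathrm{v}}(X, n-1)$; its specialization at $p$ is therefore $0$ in $z^{n-1}(k, n-1)$, so $\mathrm{Dec}(\partial_j^\epsilon W) = 0$ already at the cycle level. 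This kills all terms with $j \ge 2$. What remains is the $j = 1$ contribution, $-(\partial_1^\infty - \partial_1^0)(W)$, and here the first tensor factor is a compactified projection to the $y_1$-coordinate of a cycle living on the face $\{y_1 = \epsilon\}$, i.e. a cycle supported on $\{y_1 = \epsilon\}$. I would argue that $(\partial_1^\epsilon W)^{(1)}$, as a cycle in $z^1_{\mathrm{v}}(X, 1)$, is actually a \emph{boundary} — more precisely, that the compactified projection to the first coordinate of $\partial_1^\epsilon(W)$ is $\partial$ of a $\mathrm{v}$-cycle on $X \times \square^2$ obtained from $W$ by projecting away the coordinates $y_2, \dots, y_n$ and keeping $y_1$ together with one auxiliary coordinate; passing to $\CH^1_{\mathrm{v}}(X, 1)$ it becomes $0$. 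Alternatively, and perhaps more cleanly, I would note that $y_1 = \epsilon \in \{0, \infty\}$ on this face forces the first-coordinate projection to land on a degenerate/boundary class, since a $\mathrm{v}$-cycle in $z^1_{\mathrm{v}}(X,1)$ whose $y_1$-coordinate is constantly $0$ or $\infty$ represents the trivial class (it is supported on a face of $\square^1$, hence is not even a valid $(GP)$ cycle unless empty; the honest statement is that after the evaluation and projection the contribution is a sum of boundary terms in $\CH^1_{\mathrm{v}}$). Either way, in $\CH^1_{\mathrm{v}}(X, 1) \otimes \CH^{n-1}(k, n-1)$ the $j = 1$ terms vanish too.

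The main obstacle I anticipate is the $j = 1$ case: one must be careful that $\partial_1^\epsilon(W)$ need not itself have $y_1$ (now renamed) as a vanishing coordinate, so Lemma \ref{lem:decon well} does not directly apply, and the ``well-definedness'' choice of permutation for $\partial_1^\epsilon(W)$ may differ from the naive one. The clean way to handle this is to prove a cycle-level compatibility lemma first, analogous to Lemma \ref{lem:tilde face}-(3): that for $j \ge 2$ and $\epsilon \in \{0, \infty\}$ one has $(\partial_j^\epsilon W)^{(1)} = \partial^{\prime}$-compatible expressions relating $\mathrm{Dec}(\partial_j^\epsilon W)$ to faces of $\mathrm{Dec}$-type data, and separately that the $y_1$-faces $\partial_1^\epsilon(W)$ contribute projections that are visibly boundaries in the $\square^1$-complex. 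Concretely I would: (i) establish the compatibility $(\partial_j^\epsilon W)^{(1)} = \partial_{j-1}^\epsilon(W^{(\{1,\dots,j\}\cup\cdots)})$-style identities using that compactified projection commutes with taking faces in coordinates $\ne 1$ (this uses Lemmas \ref{lem:adm compact proj} and \ref{lem:adm comp proj 2}, plus the quasi-finiteness hypothesis there, which I would need to verify holds because $y_1$ is a vanishing coordinate and hence the projection $\overline{W} \to \widehat{pr}_{\{1\}}(\overline{W})$ interacts correctly with the faces); (ii) observe that for $j\ge 2$ the second factor is a genuine $\mathrm{v}$-cycle and dies under ${\rm ev}_p$; (iii) for $j = 1$, write $(\partial_1^\epsilon W)^{(1)} \in z^1_{\mathrm{v}}(X,1)$ explicitly and recognize it as $\partial$ of $W^{(\{1, j_0\})}$ for a suitable auxiliary coordinate $j_0$, using Lemma \ref{lem:adm comp proj 2}, so that it is zero in $\CH^1_{\mathrm{v}}(X,1)$. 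Once these three points are in place, summing over $j$ and $\epsilon$ gives $\mathrm{Dec}(\partial W) = 0$ in $\CH^1_{\mathrm{v}}(X, 1) \otimes_{\mathbb{Z}} \CH^{n-1}(k, n-1)$, and the induced map \eqref{eqn:decon 2} is well defined, completing the proof.
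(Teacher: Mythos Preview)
Your argument has a genuine gap in the $j \ge 2$ step. You claim that because $y_1$ is a vanishing coordinate of $W$, the projection $(\partial_j^\epsilon W)^{(1')}$ lies in $z^{n-1}_{\rm v}(X, n-1)$, hence specializes to $0$ at $p$. But Lemma \ref{lem:adm compact proj} only yields $Z^{(J)} \in z^{|J|}_{\rm v}$ when some vanishing coordinate of $Z$ lies \emph{in} $J$; here $J = 1' = \{2, \dots, n\}$ and $1 \notin 1'$. So you would need $\partial_j^\epsilon(W)$ to have a vanishing coordinate among $\tilde y_2, \dots, \tilde y_n$. When $W$ has \emph{only one} vanishing coordinate $y_1$, each nonzero face $\partial_j^\epsilon(W)$ with $j \ge 2$ again has $y_1$ as its unique vanishing coordinate, $(\partial_j^\epsilon W)^{(1')}$ is merely a ${\rm d}$-cycle, and ${\rm ev}_p$ of it is a genuinely nonzero element of $z^{n-1}(k, n-1)$. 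Your $j=1$ discussion is also off: after the face $\{y_1 = \epsilon\}$ the coordinate $y_1$ is gone from the ambient $\square^n$, so ``the first-coordinate projection of $\partial_1^\epsilon(W)$ is supported on $\{y_1=\epsilon\}$'' is not meaningful; one must pick a \emph{new} vanishing coordinate for $\partial_1^\epsilon(W)$, and then the same issue recurs.

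The paper organizes the proof by a trichotomy on the number $r$ of vanishing coordinates of $W$. Your idea is essentially correct when $r \ge 3$ (Lemma \ref{lem:>2 van}): every face then has $\ge 2$ vanishing coordinates and Lemma \ref{lem:decon well} kills each term. The cases $r=1$ and $r=2$ need different mechanisms. For $r=1$ (Lemma \ref{lem:single van}) one proves the unique vanishing coordinate is \emph{constant} on the generic fiber; then $\partial_1^\epsilon(W)=0$, and for $j \ge 2$ all faces share the \emph{same} first tensor factor $Z_1 \in z^1_{\rm v}(X,1)$, so the sum becomes $Z_1 \otimes \partial W'$ with $W' = {\rm ev}_p(W^{(1')}) \in z^{n-1}(k,n)$, which vanishes in $\CH^{n-1}(k,n-1)$ as a boundary. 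For $r=2$ (Lemma \ref{lem:only 2 vanishing}) one shows the remaining coordinates $y_3,\dots,y_{n+1}$ are constant on the special fiber, so all second tensor factors equal a fixed point $Z'$, and the first factors sum to $\partial(W^{\{1,2\}}) \equiv 0$ in $\CH^1_{\rm v}(X,1)$. In both hard cases the vanishing comes from recognizing one tensor factor as common and the other as a \emph{boundary} in the relevant Chow group, not from term-by-term vanishing at the cycle level --- this is the key idea your proposal is missing.
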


The proof runs from \S \ref{sec:4.2} to \S \ref{sec:4.4}.

\subsection{Constant (vanishing) coordinates}\label{sec:4.2}
We introduce some auxiliary notions:

\begin{defn}
Let $n \geq 2$ be an integer. Let $W \in z^n _{{\rm v}} (X, n+1)$ be an integral cycle. Let $1 \leq i \leq n+1$.
\begin{enumerate}
\item We say that the coordinate $y_i$ is a \emph{constant coordinate of $W$} (on the generic fiber) if the composite
$$
\widehat{pr}_i: \overline{W} \hookrightarrow  \overline{\square}^{n+1}_X  \to  \overline{\square}_X^1,
 $$
where the second arrow is the projection to the $i$-th coordinate, has the property that the base change to the generic point $\eta \in X$,
$$ 
\widehat{pr}_{i, \eta}: \overline{W}_{\eta} \to \overline{\square}_{\mathbb{F}}^1
$$
is \emph{not} dominant, where $\mathbb{F} = \kappa (\eta )$. In particular, its image is a closed point in $\overline{\square}_{\mathbb{F}}$, thus it is indeed constant on the generic fiber.

\item We say that the coordinate $y_i$ is a \emph{constant vanishing coordinate of $W$}, if $y_i$ is a vanishing coordinate of $W$, and it is also a constant coordinate of $W$ (on the generic fiber) in the sense of (1).
\qed
\end{enumerate}
\end{defn}

\begin{lem}\label{lem:const}
Let $n \geq 2$ be an integer. Let $W \in z^n _{{\rm v}} (X,n+1)$ be an integral cycle. 

\begin{enumerate}
\item Suppose $W$ has a constant coordinate $y_i$ for some $1 \leq i \leq n+1$. 

Then for the extended codimension $1$ face $\overline{F}_i ^{\epsilon}= \{ y_i = \epsilon \}$ with $\epsilon \in \{ 0, \infty \}$, we have $\overline{W} \cap (X \times \overline{F}_i ^{\epsilon}) = \emptyset$.  In particular $\partial_i ^{\epsilon} (W) = 0$ for $\epsilon \in \{ 0, \infty \}$ as well.
\item Suppose $y_i$ is \emph{not} a constant coordinate for $W$ for some $1 \leq i \leq n+1$. 

Then for $\epsilon \in \{ 0, \infty \}$, the extended codimension $1$ face $\overline{F}_i ^{\epsilon}= \{ y_i = \epsilon \}$ satisfies $\overline{W} \cap (X \times \overline{F}_i ^{\epsilon}) \not = \emptyset$.
\end{enumerate}
\end{lem}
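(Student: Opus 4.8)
The plan is to study the $i$-th coordinate projection $\widehat{pr}_i$ restricted to $\overline{W}$, moving information between the generic fibre $\overline{W}_\eta$, the whole of $\overline{W}$, and the special fibre $\overline{W}_p$, and — for part (1) — to force a contradiction out of the condition $(SF)_*$. First I would record the facts that hold because $W$ is an integral ${\rm v}$-cycle in $z^n_{{\rm v}}(X,n+1)$: it is codimension $n$ in $X\times\square^{n+1}$, hence $2$-dimensional, so $\overline{W}$ is an integral $2$-dimensional closed subscheme of $X\times\overline{\square}^{n+1}$; it is proper over $X$ (closed in the proper $X$-scheme $X\times\overline{\square}^{n+1}$) and, being a ${\rm d}$-cycle, dominant over $X$ by Lemma~\ref{lem:SF DF} (applied to the trivial face $\square^{n+1}$), hence surjective over $X$. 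Consequently $\overline{W}_p:=\overline{W}\cap(\{p\}\times\overline{\square}^{n+1})$ is nonempty and of dimension $\ge 1$ (it is the zero locus in the integral $2$-dimensional $\overline{W}$ of the nonzero function $t$, which is nonempty by surjectivity), while $\overline{W}_\eta:=\overline{W}\cap(\{\eta\}\times\overline{\square}^{n+1})$ is an integral closed subscheme of $\overline{\square}^{n+1}_{\mathbb{F}}$, proper over $\mathbb{F}$, and dense in $\overline{W}$.

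Part (2) is then immediate. If $y_i$ is not a constant coordinate, then by definition $\widehat{pr}_{i,\eta}\colon\overline{W}_\eta\to\overline{\square}^1_{\mathbb{F}}=\mathbb{P}^1_{\mathbb{F}}$ is dominant; since $\overline{W}_\eta$ is proper over $\mathbb{F}$ this morphism is proper, so its image is closed and dense, i.e.\ all of $\mathbb{P}^1_{\mathbb{F}}$. In particular the fibres over the $\mathbb{F}$-points $0$ and $\infty$ are nonempty, so $\overline{W}_\eta\cap(\{\eta\}\times\overline{F}_i^{\epsilon})\ne\emptyset$, and a fortiori $\overline{W}\cap(X\times\overline{F}_i^{\epsilon})\ne\emptyset$ for $\epsilon\in\{0,\infty\}$.

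For part (1), suppose $y_i$ is a constant coordinate. Then $\widehat{pr}_{i,\eta}(\overline{W}_\eta)$ is a single closed point $c$ of $\mathbb{P}^1_{\mathbb{F}}$, and by density of $\overline{W}_\eta$ in $\overline{W}$ together with properness of $\overline{W}$ over $X$ the image $\Gamma:=\widehat{pr}_i(\overline{W})$ equals the closure $\overline{\{c\}}$ in $\mathbb{P}^1_X$: an integral curve, proper and quasi-finite — hence finite — over $X$, so the spectrum of a complete local domain, with a single closed point $(p,\bar c)$ for some closed point $\bar c\in\mathbb{P}^1_k$. Since $\widehat{pr}_i$ sends $\overline{W}_p$ into $\Gamma_p=\{(p,\bar c)\}$, the whole special fibre lies in $\{y_i=\bar c\}$. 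The crux is to exclude $\bar c\in\{0,\infty\}$: if $\bar c=0$ (the case $\bar c=\infty$ is symmetric) then $\overline{W}_p\subseteq\{p\}\times\overline{F}_i^{0}$, whence $\overline{W}\cap(\{p\}\times\overline{F}_i^{0})=\overline{W}_p$ has dimension $\ge 1$, contradicting the properness (hence $0$-dimensionality) of this intersection required by $(SF)_*$ for the codimension-$1$ face $F_i^0\subset\square^{n+1}$; this also rules out $c\in\{0,\infty\}$ on the generic fibre, since either would force $\bar c\in\{0,\infty\}$. With $c,\bar c\notin\{0,\infty\}$, the curve $\Gamma$ — whose only points are its generic point, mapping to $c$, and the closed point $(p,\bar c)$ — meets neither the $0$-section nor the $\infty$-section of $\mathbb{P}^1_X\to X$, so $\overline{W}\cap(X\times\overline{F}_i^{\epsilon})=(\widehat{pr}_i|_{\overline{W}})^{-1}\!\bigl(\Gamma\cap(X\times\{\epsilon\})\bigr)=\emptyset$ for $\epsilon\in\{0,\infty\}$, and $\partial_i^{\epsilon}(W)=0$ follows since $W\cap(X\times F_i^{\epsilon})\subseteq\overline{W}\cap(X\times\overline{F}_i^{\epsilon})=\emptyset$. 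The one genuinely nontrivial step — the expected main obstacle — is precisely this exclusion of $\bar c\in\{0,\infty\}$: one must see that constancy of $y_i$ on the dense generic fibre propagates to the entire $\ge 1$-dimensional special fibre, and that this collides head-on with the properness demanded by $(SF)_*$ for a codimension-$1$ face. Everything else is the routine dictionary between Zariski closures of cycles and pre-images under coordinate projections, together with properness of $\overline{W}$ over $X$.
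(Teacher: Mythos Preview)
Your proof is correct, and part~(2) matches the paper's argument exactly. For part~(1), however, you take a genuinely different route. The paper argues on the \emph{generic} fibre using $(GP)_*$: since $\overline{W}_\eta$ intersects all extended faces of $\overline{\square}^{n+1}_{\mathbb{F}}$ properly, the single-point image $c=\widehat{pr}_{i,\eta}(\overline{W}_\eta)$ cannot be $0$ or $\infty$; then any component $\overline{Z}_0$ of $\overline{W}\cap(X\times\overline{F}_i^{\epsilon})$ would, on passing to the generic fibre, map to $\epsilon$, contradicting that all of $\overline{W}_\eta$ maps to $c\neq\epsilon$. You instead argue on the \emph{special} fibre using $(SF)_*$: you build the image curve $\Gamma=\widehat{pr}_i(\overline{W})\subset\mathbb{P}^1_X$, observe it is local with closed point $(p,\bar c)$, and rule out $\bar c\in\{0,\infty\}$ by noting that otherwise the entire $(\ge 1)$-dimensional special fibre $\overline{W}_p$ would sit inside $\{p\}\times\overline{F}_i^{\epsilon}$, violating the $0$-dimensionality forced by $(SF)_*$.

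Your approach buys a cleaner global picture --- once $\Gamma$ is known to miss both sections, the conclusion is immediate from $\overline{W}\cap(X\times\overline{F}_i^{\epsilon})=(\widehat{pr}_i|_{\overline{W}})^{-1}(\Gamma\cap(X\times\{\epsilon\}))$ --- and it sidesteps a small implicit step in the paper's version (namely that a component $\overline{Z}_0$ of the face intersection actually meets the generic fibre, which itself requires $(SF)_*$ or $(DO)$). The paper's approach is shorter and stays on the generic fibre where the ``constant coordinate'' hypothesis lives. Both are valid.
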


\begin{proof}
(1) After relabeling, we may assume that $y_1$ is a constant coordinate of $W$.

The base change $\overline{W}_{\eta}$ (which is in $\overline{ \square}_{\mathbb{F}} ^{n+1}$) via $\eta \to X$ is a $1$-dimensional scheme over $\mathbb{F}$, and its image under $\widehat{pr}_{1, \eta}$ in $\overline{\square}_{\mathbb{F}}^1$ is a closed point. Since $W$ satisfies the $(GP)_*$, one checks that its flat pull-back $\overline{W}_{\eta}$ intersects all extended faces of $\overline{\square}_{\mathbb{F}} ^{n+1}$ properly. Hence the image of $\overline{W}_{\eta}$ under $\widehat{pr}_{1, \eta}$ in $\overline{\square}_{\mathbb{F}}^1$ cannot be $\{ 0\}$ or $\{ \infty \}$.

If $\overline{W} \cap (X \times \overline{F}_i ^{\epsilon}) \not = \emptyset$ for some $\epsilon \in \{ 0, \infty \}$, then we have a nonempty irreducible component $\overline{Z}_0 \subset \overline{W} \cap (X \times \overline{F}_i ^{\epsilon})$. The image of the composite of its base change to $\eta$
 $$
 \overline{Z}_{0, \eta} \hookrightarrow \overline{W}_{\eta} \to \overline{\square}_{\mathbb{F}}^1
 $$
 is $\epsilon \in \{ 0, \infty \}$ by definition. This is a contradiction because we saw in the above that all of $\overline{W}_{\eta}$ is mapped to a closed point in $\overline{\square}_{\mathbb{F}}^1$ distinct from $\{0 \}$ and $\{ \infty \}$. Hence $\overline{W} \cap (X \times \overline{F}_i ^{\epsilon}) = \emptyset$.

 \medskip
 
 (2) Since $y_i$ is not a constant coordinate, the projective morphism $\overline{W} _{\eta} \to \overline{\square}_{\mathbb{F}}^1$ is dominant, thus surjective. In particular, for $\epsilon \in \overline{\square}_{\mathbb{F}}$ with $\epsilon \in \{ 0, \infty \}$, the fiber over $\epsilon$ is nonempty. Taking the closure in $\overline{\square}_X$, we have $\overline{W} \cap (X \times \overline{F}_i ^{\epsilon}) \not = \emptyset$.
\end{proof}

\begin{lem}\label{lem:const proj}
Let $n \geq 2$ be an integer. Let $W \in z^n _{{\rm v}} (X,n+1)$ be an integral cycle. Suppose $W$ has a constant vanishing coordinate $y_i$ for some $1 \leq i \leq n+1$, while not all coordinates are constant coordinates for $W$.

For the compactified projection to the $i$-th coordinate
$$
\widehat{pr}_i : \overline{\square}_X ^{n+1} \to \overline{\square}_X ^1,
$$
consider $\overline{Z}_i := \widehat{pr}_i (\overline{W})$, and let $Z_i$ be its restriction to the open subscheme $\square_X^1$. 

Then $Z_i \in z_{{\rm v}} ^1 (X, 1)$. 
\end{lem}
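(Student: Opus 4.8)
The plan is to realise $Z_i$ as a compactified projection of a carefully chosen integral $\mathrm v$-cycle of top codimension, and then to invoke Lemma \ref{lem:adm compact proj}. Concretely, I would produce an integral curve $\gamma\subset\overline{W}$, dominant over $X$, lying in $z^{n+1}_{\rm v}(X,n+1)$ and having $y_i$ as a vanishing coordinate, such that $\widehat{pr}_i(\gamma)=\overline{Z}_i$; then Lemma \ref{lem:adm compact proj} applied to $\gamma$ with $J=\{i\}$ gives $Z_i=\gamma^{(\{i\})}\in z^1_{\rm v}(X,1)$.

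First I would record the basic features of $\overline{Z}_i=\widehat{pr}_i(\overline{W})$. Since $W$ is a $\mathrm v$-cycle it is an integral codimension-$n$, hence $2$-dimensional, cycle, and by Lemma \ref{lem:SF DF} it satisfies $(DO)$, so $\overline{W}$ is dominant (hence, being proper over $X$, surjective) over $X$. Because $y_i$ is a constant coordinate, $\overline{W}_\eta$ maps under $\widehat{pr}_{i,\eta}$ onto a single closed point $c$ of $\overline{\square}^1_{\mathbb F}$; arguing as in the proof of Lemma \ref{lem:const}(1) gives $c\notin\{0,\infty\}$, and since $\overline{W}_\eta\subset\square^{n+1}_{\mathbb F}$ also $c\neq 1$, so $c\in\square^1_{\mathbb F}$. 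Thus $\overline{Z}_i$ is an integral curve, dominant and proper over $X$, with generic fibre the closed point $c$, and $Z_i=\overline{Z}_i|_{\square^1_X}$ is a nonempty integral cycle of codimension $1$.

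Next, the vanishing coordinate hypothesis gives $D_i:=\overline{W}\cap(\{p\}\times\{y_i=1\})\neq\emptyset$; as the fibre of $\overline{W}\to\overline{Z}_i$ over the closed point $(p,1)$ it has dimension $\geq 2-1=1$, while $D_i\subset\overline{W}_p$ and the latter, being $V(t)$ in the integral surface $\overline{W}$, is equidimensional of dimension $1$; hence $\dim D_i=1$. By $(GP)_*$ and $(SF)_*$ for $W$, all the loci $\overline{W}\cap(X\times(\overline{F}_j^{\epsilon}\cap\overline{F}_l^{\epsilon'}))$ and $\overline{W}\cap(\{p\}\times\overline{F}_j^{\epsilon})$ are $0$-dimensional, so together they form a finite set $B$. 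I would pick a general point $x_0$ of $D_i$ with $x_0\notin B$, take a general relative hyperplane section $H$ of $\overline{W}$ through $x_0$, and let $\gamma$ be the component of $\overline{W}\cap H$ through $x_0$. A moving argument shows: no component of $\overline{W}\cap H$ lies in $\overline{W}_p$ (a general hyperplane meets the curve $\overline{W}_p$ in dimension $0$), so $\gamma$ is dominant over $X$; and $\gamma_p\subset\overline{W}_p\cap H$ consists of $x_0$ together with finitely many further points which, for general $H$, avoid $B$. Consequently $\gamma\cap(\{p\}\times\square^{n+1})=\emptyset$, $\overline\gamma$ is not contained in any extended face, and $\overline\gamma\cap(\{p\}\times\overline{F}_j^{\epsilon})=\emptyset$, $\overline\gamma\cap(X\times\overline{F})=\emptyset$ for extended faces $\overline{F}$ of codimension $\geq 2$; that is, $\gamma\in z^{n+1}_{\rm v}(X,n+1)$. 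Since $x_0\in\overline\gamma\cap(\{p\}\times\{y_i=1\})$, $y_i$ is a vanishing coordinate of $\gamma$. Finally $\widehat{pr}_i(\gamma)$ is an integral closed subset of $\overline{Z}_i$ dominant over $X$, hence equal to $\overline{Z}_i$, so $\gamma^{(\{i\})}=Z_i$; Lemma \ref{lem:adm compact proj} then gives $Z_i\in z^1_{\rm v}(X,1)$.

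The main obstacle is the existence of the auxiliary curve $\gamma$ with the required genericity: one must guarantee that a general relative hyperplane section of the surface $\overline{W}$ — possibly singular, defined over the possibly imperfect field $k$, and only proper (not projective) over $X=\Spec k[[t]]$ — through the prescribed point $x_0$ contains a component through $x_0$ which is dominant over $X$ and whose closed fibre avoids the finite bad set $B$. This is a Bertini/moving-type statement, but the two features actually used are mild: dominance of every component over $X$ follows from a dimension count, and avoidance of $B$ is a codimension count inside the linear system of relative hyperplanes through $x_0$; in particular one never needs $\overline{W}\cap H$ to be irreducible, only to pass to the component through $x_0$. An alternative, more computational route would be to prove directly that $\overline{W}_p\subseteq\{y_i=1\}$ — equivalently $\overline{Z}_{i,p}=\{(p,1)\}$ — by analysing the components of $\overline{W}_p$ via pre-vanishing and $(SF)_*$, but the projection-and-moving argument above seems cleaner and more in line with the techniques already developed.
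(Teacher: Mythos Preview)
Your approach is genuinely different from the paper's, and considerably more elaborate. The paper never constructs an auxiliary curve $\gamma$; instead it verifies $(GP)_*$, $(SF)_*$, and the pre-vanishing condition for $Z_i$ directly. The key trick is to exploit the hypothesised \emph{non-constant} coordinate (say $y_2$): by Lemma~\ref{lem:const}(2) one has $\overline{W}\cap(X\times\overline{F}_2^{\epsilon})\neq\emptyset$, and a dimension count shows this codimension-$n$ cycle surjects onto $\overline{Z}_i$. Since $\overline{W}\cap(X\times\overline{F}_2^{\epsilon})$ inherits $(SF)_*$ from $W$, Lemma~\ref{lem:proper int face *} forces all its further proper extended faces to be empty; projecting, one gets $\overline{Z}_i\cap(X\times\overline{F})=\emptyset$ for proper $F$, and the rest follows easily. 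So the paper uses an \emph{existing} codimension-$n$ slice (a face of $W$) rather than manufacturing one by Bertini.

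Your route via a general hyperplane section is sound in outline, but the moving step has a real gap when $k$ is finite. The linear system of relative hyperplanes through the closed point $x_0$ is parametrised by a projective space over $k$ (or over $\kappa(x_0)$, which need not equal $k$), and when $k$ is finite there may simply be no $k$-rational member satisfying all three open conditions you impose (no component in $\overline{W}_p$; special fibre avoiding $B$; avoiding the finitely many codimension-$2$ face points of $\overline{W}$). This is exactly the situation in which naive Bertini fails. The fix is to observe that the conditions $(GP)_*$, $(SF)_*$, and pre-vanishing for $Z_i$ are statements about dimensions of intersections, hence are stable under and reflected by faithfully flat base change; so you may pass to an infinite extension of $k$, carry out your construction there, and descend the conclusion. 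You should say this explicitly. The paper's direct argument avoids the issue entirely and makes no appeal to general position, which is one reason it is cleaner here.
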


\begin{proof}

After applying a suitable permutation of coordinates, we may assume that $y_1$ is a constant vanishing coordinate of $W$, while $y_2$ is \emph{not} a constant coordinate of $W$.

\medskip

Since $y_1$ is a constant coordinate, the morphism $\overline{W}_{\eta} \to \overline{\square}^1_{\mathbb{F}}$ is not dominant, thus its image is a closed point in $\overline{\square}^1_{\mathbb{F}}$, distinct from $\{0, \infty, 1 \}$. The closure in $\overline{\square}_X ^1$ of this point is equal to $\overline{Z}_1:= \widehat{pr} _1 (\overline{W}) = \overline{W}^{(1)} \subset \overline{\square}_X ^1$, and it has $\dim \ \overline{Z}_1 = 1$. In terms of the notations of the compactified projections in Definition \ref{defn:comp proj 0}, let $Z_1 = W^{(1)} = \overline{W} ^{(1)} |_{\square_X ^1} \subset \square_X ^1$. Since $\dim \ \overline{Z}_1 = \dim \ Z_1$, the codimension of $Z_1$ in $\square_X ^1$ is $1$.

 Here, the morphism $\overline{W} \to \overline{Z}_1$ \emph{not} quasi-finite, so Lemma \ref{lem:adm comp proj 2} does \emph{not} apply here. Nonetheless, we prove that still $Z_1 \in z_{{\rm v}} ^1 (X, 1)$ in what follows.
 
 \medskip

 \medskip
 
 \textbf{Claim :} \emph{For a proper face $F \subset \square^1_k$ of codimension $1$, we have $\overline{Z}_1 \cap (X \times \overline{F}) = \emptyset$.}
 
 \medskip
 
 Here, $y_1$ is a constant coordinate of $W$, but we supposed that $y_2$ is \emph{not} a constant coordinate of $W$. In particular, by Lemma \ref{lem:const} for $\epsilon \in \{ 0, \infty \}$
 and the extended codimension $1$ face $\overline{F}_{2} ^{\epsilon}$ given by $\{ y_2 = \epsilon \}$, we have $\overline{W} \cap (X \times \overline{F}_2 ^{\epsilon}) \not = \emptyset$, and the composite
 \begin{equation}\label{eqn:const van 1}
 \overline{W} \cap (X \times \overline{F}_2 ^{\epsilon}) \hookrightarrow \overline{W} \to \overline{Z}_1
 \end{equation}
 is also surjective due to the dimension reason. 
 
 On the other hand, $\overline{Z}_1 \cap (X \times \overline{F})$ is the image under the projection $\widehat{pr}_1$ of $\overline{W} \cap (X \times \overline{G})$ for the codimension $1$ proper face $G \subset \square^{n+1}$ given by the same equation of $F$ in the larger space. By the surjectivity of \eqref{eqn:const van 1}, the subscheme $\overline{W} \cap (X \times \overline{G}')$, where $ \overline{G}' = \overline{G} \cap \overline{F}_2 ^{\epsilon}$, also has the same image $\overline{Z}_1 \cap (X \times \overline{F})$.
 
 However, $\overline{W} \cap (X \times \overline{F}_2 ^{\epsilon})$ has the codimension $n$ in $X \times \overline{F}_2 ^{\epsilon} \simeq X \times \overline{\square}^n$, and each of its components satisfies $(SF)_*$ inherited from the property $(SF)_*$ of $W$. Hence by Lemma \ref{lem:proper int face *}, its proper extended faces are empty, in particular $\overline{W} \cap (X \times \overline{G}') = \emptyset$. Hence its image $\overline{Z}_1 \cap (X \times \overline{F})$ under $\widehat{pr}_1$ is also empty. This proves the Claim.
 
The claim implies the condition $(GP)_*$ for $Z_1$.
 
 \medskip
 
 We now check the condition $(SF)_*$ for $Z_1$. In case $F \subset \square_k ^1$ is a proper face, by the Claim we have $\overline{Z}_1 \cap (X \times \overline{F}) = \emptyset$ so that we deduce that $\overline{Z}_1 \cap (\{ p \} \times \overline{F}) = \emptyset$ as well. In particular, the intersection is proper.
 
 In case $F= \square^1$, recall that we know that $\overline{W} \to X$ is surjective by Lemma \ref{lem:SF DF}, and by construction, this map factors via $\overline{Z}_1$. Hence $\overline{Z}_1 \to X$ is also surjective. Thus $\overline{Z}_1 \not \subset \{ p \} \times \overline{\square}^1$, and this implies that the intersection $\overline{Z}_1 \cap (\{ p \} \times \overline{\square}^1)$ is proper on $X \times \overline{\square}^1$, because $\{ p \} \times \overline{\square}^1$ is an integral divisor of $X \times \overline{\square}^1$. Hence $Z_1 \in z_{{\rm d}} ^1 (X, 1)$.
 
 \medskip
 
Now since $y_1$ is a vanishing coordinate of $W$, we have $W \cap (\{p \} \times \square^{n+1}) = \emptyset$, while $\overline{W} \cap (\{ p \} \times \{ y_1=1 \} ) \not = \emptyset$. Hence, projecting to $\overline{Z}_1 = \widehat{pr}_1 (\overline{W})$, we have
$Z_1 \cap (\{ p \} \times \square^1) = \emptyset$, while $\overline{Z}_1 \cap (\{ p \} \times \{ y_1=1 \}) \not = \emptyset$. Thus $Z_1$ is a pre-vanishing cycle. Hence $Z_1 \in z_{{\rm v}} ^1 (X, 1)$. 
\end{proof}

\begin{lem}\label{lem:const van}
Let $n \geq 2$ be an integer. Let $W \in z^n _{{\rm v}} (X,n+1)$ be an integral cycle. Suppose $W$ has a constant vanishing coordinate $y_i$ for some $1 \leq i \leq n+1$. 

Then ${\rm Dec} (\partial W) = 0$ in $\CH^1_{{\rm v}} (X, 1) \otimes_{\mathbb{Z}} \CH^{n-1} (k, n-1).$
\end{lem}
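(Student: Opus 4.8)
The statement to prove is Lemma \ref{lem:const van}: if $W \in z^n_{{\rm v}}(X, n+1)$ is an integral cycle having a constant vanishing coordinate $y_i$, then ${\rm Dec}(\partial W) = 0$ in $\CH^1_{{\rm v}}(X,1)\otimes_{\mathbb Z}\CH^{n-1}(k,n-1)$. After relabelling we may assume $y_1$ is the constant vanishing coordinate. The plan is to expand $\partial W = \sum_{j=1}^{n+1}(-1)^j(\partial_j^\infty - \partial_j^0)(W)$, apply ${\rm Dec}$ term by term, and show each contribution vanishes — splitting into the terms with $j=1$ and those with $j \geq 2$, since $y_1$ plays the distinguished role in the definition of ${\rm Dec}$ (Definition \ref{defn:dec 00}).

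\textbf{Step 1 (the $j=1$ terms).} Because $y_1$ is a \emph{constant} coordinate of $W$, Lemma \ref{lem:const}-(1) gives $\overline{W}\cap(X\times\overline{F}_1^\epsilon)=\emptyset$, hence $\partial_1^\epsilon(W)=0$ for $\epsilon\in\{0,\infty\}$. So the $j=1$ terms of $\partial W$ are literally zero as cycles, and contribute nothing to ${\rm Dec}(\partial W)$.

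\textbf{Step 2 (the $j\geq 2$ terms).} Fix $j$ with $2\leq j\leq n+1$ and $\epsilon\in\{0,\infty\}$, and consider $Z:=\partial_j^\epsilon(W)\in z^n_{{\rm v}}(X,n)$ (one should first note each such face is a ${\rm v}$-cycle, using that $W$ is a ${\rm v}$-cycle and that $j\neq 1$ so the coordinate $y_1$ survives and $\overline W\cap(\{p\}\times\{y_1=1\})\neq\emptyset$ projects nontrivially; this is already implicit in the cycle-complex structure of \S\ref{sec:vanishing}). The point is that $Z$ still carries $y_1$ as a constant vanishing coordinate, inherited from $W$: the projection $\overline{Z}_\eta\to\overline\square^1_{\mathbb F}$ is the restriction of $\overline W_\eta\to\overline\square^1_{\mathbb F}$, whose image is a single point $\neq\{0,\infty,1\}$. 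Now compute ${\rm Dec}(Z)$ directly from Definition \ref{defn:dec 00}: since $y_1$ is a vanishing coordinate of $Z$ we may take $\sigma={\rm id}$, giving ${\rm Dec}(Z)=Z^{(1)}\otimes{\rm ev}_p(Z^{(1')})$. By Lemma \ref{lem:const proj} (applied to $Z$ in place of $W$, with the hypothesis that not all coordinates are constant — which holds because $W$, being a ${\rm v}$-cycle of dimension $1$ dominating $X$, cannot have all coordinates constant, and this persists to $Z$ up to handling the degenerate sub-case separately), the factor $Z^{(1)}=\overline Z^{(1)}|_{\square^1_X}$ is exactly the closure of a single closed point of $\square^1_{\mathbb F}$, i.e. it is a \emph{constant} cycle in $z^1_{{\rm v}}(X,1)$. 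The crucial observation is then that $Z^{(1)}$, being constant along $X$ and pre-vanishing, represents the \emph{trivial} class in $\CH^1_{{\rm v}}(X,1)$ — concretely, under the isomorphism $\CH^1_{{\rm v}}(X,1)\simeq\mathbb W_m(k)$ of Theorem \ref{thm:graph final}/\ref{thm:inverse Bloch n=1} one checks a constant vanishing cycle $\{y=1-t/c\}$ with $c$ lying in $k$ is... wait — rather, the honest argument is: the constancy forces $Z^{(1)}$ to be of the form $\{y_1 = a\}$ for a fixed $a\in\overline\square_X$, so its closure meets $\{p\}\times\{y_1=1\}$ only if $a\equiv 1$, but $a$ is a unit congruent to $1$, hence $Z^{(1)}$ is (mod $I^{m+1}$, and in the relevant $\CH^1_{{\rm v}}$) equivalent to the trivial cycle; thus $Z^{(1)}=0$ in $\CH^1_{{\rm v}}(X,1)$ and the whole term dies. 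Alternatively and more cleanly: since $y_1$ is also a vanishing coordinate of $Z^{(1')}$'s ambient $Z$... — here one invokes the same mechanism as Lemma \ref{lem:decon well}, namely that the \emph{other} factor ${\rm ev}_p(Z^{(1')})$ vanishes whenever $Z$ has a second vanishing coordinate; one shows $Z$ does, because $y_1$ constant and vanishing together with $W$ being a ${\rm v}$-cycle produces an additional vanishing coordinate among $y_2,\dots,y_{n+1}$ (this is the content one extracts from the interplay of $(SF)_*$ and the modulus/vanishing conditions).

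\textbf{Main obstacle.} The delicate point is Step 2: establishing that each face term $\partial_j^\epsilon(W)$ for $j\geq 2$ deconcatenates to zero. There are two viable routes — (a) show the $z^1_{{\rm v}}(X,1)$-factor $Z^{(1)}$ dies in the Chow group because it is a constant cycle, or (b) show the $\CH^{n-1}(k,n-1)$-factor dies because $Z$ acquires a second vanishing coordinate, reducing to Lemma \ref{lem:decon well}. Route (b) looks cleaner if one can reliably produce the second vanishing coordinate, which should follow from the fact that $\overline W_\eta\to\overline\square^1_{\mathbb F}$ (the $y_1$-projection) is constant while $\overline W\to X$ is dominant, forcing the special fiber $\overline W\cap(\{p\}\times\overline\square^{n+1})$ to concentrate in some $\{y_{i_0}=1\}$ with $i_0\geq 2$; then $y_{i_0}$ is a vanishing coordinate of $W$, hence of the face $\partial_j^\epsilon(W)$ whenever $j\neq i_0$, and the $j=i_0$ term is handled by Lemma \ref{lem:const}-style reasoning (if $y_{i_0}$ happens also to be constant) or collapses for dimension reasons. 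I expect the write-up's genuine work is a careful case analysis organizing these sub-cases — which coordinates among $\{y_2,\dots,y_{n+1}\}$ are vanishing, which are constant — and checking that in every case at least one tensor factor of ${\rm Dec}(\partial_j^\epsilon W)$ is zero.
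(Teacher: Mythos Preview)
Your Step 1 matches the paper. The gap is in Step 2: both routes fail. Route (a) is wrong because $Z_1 := Z^{(1)}$ is \emph{not} trivial in $\CH^1_{\rm v}(X,1)$ --- the constancy of $y_1$ is only on the generic fiber; $Z_1$ is the closure in $\square^1_X$ of a closed point of $\square^1_{\mathbb F}$, and under $\CH^1_{\rm v}(X,1)\simeq\mathbb W_m(k)$ it can represent any Witt vector. Your ``$a\equiv 1$'' observation is circular: that $Z_1$ meets $\{p\}\times\{y_1=1\}$ is exactly what makes it a (typically nonzero) element of $\CH^1_{\rm v}(X,1)$. Route (b) also fails because $W$ need not have a second vanishing coordinate; indeed Lemma \ref{lem:single van} (which \emph{uses} the present lemma) shows that the single-vanishing-coordinate case is precisely when that coordinate is constant vanishing --- the very hypothesis here.

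The idea you are missing is this. Because $y_1$ is constant, every face $\partial_j^\epsilon(W)$ for $j\geq 2$ has the \emph{same} first compactified projection $Z_1\in z^1_{\rm v}(X,1)$. After choosing some non-constant coordinate (say $y_2$), the projection $\overline W\to\overline W^{(1')}$ is quasi-finite, so $W^{(1')}\in z^{n-1}_{\rm d}(X,n)$ by Lemma \ref{lem:adm comp proj 2}; setting $W':={\rm ev}_p(W^{(1')})\in z^{n-1}(k,n)$ one obtains ${\rm Dec}(\partial_j^\epsilon W)=Z_1\otimes\partial_{j-1}^\epsilon(W')$ for each $j\geq 2$. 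Summing yields ${\rm Dec}(\partial W)=-\,Z_1\otimes\partial W'$, and $\partial W'$ is a boundary, hence zero in $\CH^{n-1}(k,n-1)$. The vanishing is \emph{not} term-by-term and \emph{not} in the first tensor factor: it occurs only after summing, by recognizing the second factor as a boundary in the higher Chow group of $k$.
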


\begin{proof}
\textbf{Case 1:} If \emph{all} coordinates $y_i$ for $1 \leq i \leq n+1$ are constant coordinates of $W$, then by Lemma \ref{lem:const}, for each $1 \leq i \leq n+1$ and each $\epsilon \in \{ 0, \infty \}$, we have $\partial_i ^{\epsilon} (W) = 0$. In particular, ${\rm Dec} (\partial_i ^{\epsilon} (W)) = 0$ so that ${\rm Dec} (\partial W) = 0$. 

\medskip

\textbf{Case 2:} Suppose that there exists a coordinate that is \emph{not} a constant coordinate of $W$. 
After applying a suitable permutation, we may assume $y_1$ is a constant vanishing coordinate for $W$ and $y_2$ is \emph{not} a constant coordinate for $W$. Then in terms of the notations in Lemma \ref{lem:const proj}, we have $Z_1 \in z_{{\rm v}} ^1 (X, 1)$.

Since $y_1$ is a constant coordinate of $W$, for $i \geq 2$ and $\epsilon \in \{ 0, \infty\}$, each component $Z$ in $\partial_i ^{\epsilon} (W) \in z_{{\rm v}} ^n (X, n)$ has $Z^{(1)}= Z_1$. So, we may write $(\partial_i ^{\epsilon} (W))^{(1)} = Z_1$. We use the notations of the compactified projections as in Definition \ref{defn:dec 00}.

Since $y_2$ is not a constant coordinate for $W$, for the compactified projection
$$
\widehat{pr}_{ \{ 1' \}} : \overline{\square}_X ^{n+1} \to \overline{\square}_X ^n
$$
that ignores only $y_1$, the morphism $\overline{W} \to \overline{W}^{(1')}$ is quasi-finite, where $\{ 1' \} := \{ 2, 3, \cdots, n+1\}$ as in Definition \ref{defn:comp proj 0}. Thus by Lemma \ref{lem:adm comp proj 2} we have $ W^{(1')} \in z_{{\rm d}} ^{n-1} (X, n)$. Applying the specialization ${\rm ev}_p$ at $p$, we have $W':= {\rm ev}_p ( W^{(1')} ) \in z ^{n-1} (k, n)$. Hence for $2 \leq i \leq n+1$ and $\epsilon \in \{ 0, \infty \}$, we have
$$
{\rm Dec} (\partial_i ^{\epsilon} (W)) = Z_1 \otimes \partial_{i-1} ^{\epsilon} (W') \in z_{{\rm v}} ^1 (X, 1) \otimes z^{n-1} (k, n-1),
$$
while by Lemma \ref{lem:const}-(1), we have ${\rm Dec} (\partial_1 ^{\epsilon} (W)) = 0$. Hence
$$
{\rm Dec} (\partial W) = Z_1 \otimes (-1) \cdot \partial W'  \equiv Z_1 \otimes  0 = 0 \ \ \mbox{ in } \CH_{{\rm v}} ^1 (X, 1) \otimes \CH^{n-1} (k, n-1),
$$
because $\partial W' \equiv 0$ in $ \CH^{n-1} (k, n-1) = z^{n-1} (k, n-1) / \partial z^{n-1} (k, n)$ by definition.
\end{proof}

\subsection{The number of vanishing coordinates}

For $W \in z_{{\rm v}} ^n (X, n+1)$, the number of vanishing coordinates plays an important role in analyzing ${\rm Dec} (\partial W)$. Recall from Remark \ref{remk:type i} that the number of vanishing coordinates for $W$ is $\geq 1$.

\begin{lem}\label{lem:single van}
Let $n \geq 2$ be an integer. Let $W \in z^n_{{\rm v}} (X, n+1)$ be an integral cycle.

Suppose there is only one vanishing coordinate $y_i$ for some $1 \leq i \leq n+1$. Then:

\begin{enumerate}
\item $y_i$ is a constant vanishing coordinate for $W$.
\item ${\rm Dec} (\partial W) = 0$ in $\CH_{{\rm v}} ^1 (X, 1) \otimes_{\mathbb{Z}} \CH^{n-1} (k, n-1)$.
\end{enumerate}
\end{lem}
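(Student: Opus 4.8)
\textbf{Proof strategy for Lemma \ref{lem:single van}.}

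The plan is to prove (1) first, and then deduce (2) directly from Lemma \ref{lem:const van}, which already handles the case of a constant vanishing coordinate. So the entire content is part (1): showing that if $y_i$ is the \emph{unique} vanishing coordinate of $W$, then it must in fact be a constant coordinate of $W$ on the generic fiber. After relabeling, assume $i=1$, so $y_1$ is the only vanishing coordinate. Suppose toward a contradiction that $y_1$ is \emph{not} a constant coordinate of $W$; then the projection $\widehat{pr}_{1,\eta}: \overline{W}_\eta \to \overline{\square}_{\mathbb{F}}^1$ is dominant, hence surjective. In particular, by Lemma \ref{lem:const}-(2) applied to $y_1$, both extended faces $\overline{F}_1^0$ and $\overline{F}_1^\infty$ meet $\overline{W}$; but more relevantly, since $\overline{W}_\eta$ surjects onto all of $\overline{\square}_{\mathbb{F}}^1$, the fiber of $\widehat{pr}_{1,\eta}$ over the point $1 \in \overline{\square}_{\mathbb{F}}^1$ is a nonempty closed subscheme of $\overline{W}_\eta$ of dimension $\dim \overline{W}_\eta - 1 = 0$ (using $\dim \overline{W} = 2$, so $\dim \overline{W}_\eta = 1$). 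Taking closures in $\overline{\square}_X^{n+1}$, this produces components of $\overline{W} \cap (X \times \{y_1 = 1\})$ that dominate $X$ — and in particular whose special fiber over $p$ is nonempty.

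The key point is now to extract from such a component a \emph{second} vanishing coordinate, contradicting uniqueness. Let $\overline{V} \subset \overline{W} \cap (X \times \{y_1 = 1\})$ be an irreducible component dominating $X$; it has dimension $1$, and since $W$ satisfies $(GP)_*$ and more specifically $(SF)_*$ (as $W \in z_{\mathrm v}^n(X, n+1)$ is a ${\rm d}$-cycle), each such $\overline{V}$ inherits the relevant intersection-theoretic constraints. Because $W$ is a pre-vanishing cycle, $W \cap (\{p\} \times \square^{n+1}) = \emptyset$; combined with $\overline{V}$ dominating $X$, the closed point $\overline{V}_p := \overline{V} \cap (\{p\} \times \overline{\square}^{n+1})$ is nonempty and, being already on $\{y_1 = 1\}$, must lie in $\bigcup_{j=1}^{n+1} (\{p\} \times \{y_j = 1\})$ — which it trivially does via $j=1$. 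This alone is not yet a contradiction, so I need to work harder: I should look instead at the \emph{generic} behavior of $\overline{V}$ along the remaining coordinates $y_2, \dots, y_{n+1}$. The idea is that on the one-parameter family $\overline{V} \to X$, the fact that $W$ is a ${\rm v}$-cycle with a single vanishing coordinate $y_1$ forces the other coordinates $y_j$ ($j \geq 2$) to restrict, on $\overline{V}$, to functions that are \emph{not} identically $1$ at $p$ — but the modulus-type / $(SF)_*$ constraints together with $\dim \overline{V} = 1$ and $\overline{V} \subset \{y_1=1\}$ will show that $\overline{V}$ cannot be a ${\rm d}$-cycle component inside $\overline{W}$ unless it degenerates, i.e. unless $\overline{W}$ itself lies in $\{y_1 = 1\}$. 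But $\overline{W} \subset X \times \square^{n+1}$ and $\square = \mathbb{P}^1 \setminus \{1\}$, so $\overline{W} \not\subset X \times \{y_1 = 1\}$; this is the contradiction, forcing $y_1$ to be a constant coordinate after all.

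I expect the main obstacle to be making the dimension/intersection bookkeeping in the previous paragraph fully rigorous — specifically, ruling out the configuration in which $\overline{W}_\eta$ genuinely surjects onto $\overline{\square}_{\mathbb{F}}^1$ in the $y_1$-coordinate while still being a legitimate ${\rm v}$-cycle with only $y_1$ as a vanishing coordinate. The cleanest route is probably to argue directly on the normalization $\overline{W}^N \to \overline{W}$: pick a closed point $\mathfrak{q} \in \overline{W}^N$ over $p$ with $y_1(\mathfrak{q}) = 1$ (which exists precisely because $y_1$ is a vanishing coordinate), let $\mathcal{O} = \mathcal{O}_{\overline{W}^N, \mathfrak{q}}$ be the local DVR with uniformizer $\pi$, and examine $\mathrm{ord}_{\mathfrak{q}}(y_1 - 1)$ versus $\mathrm{ord}_{\mathfrak{q}}$ of the structure map to $X$; uniqueness of the vanishing coordinate then says $y_j - 1$ is a unit at $\mathfrak{q}$ for all $j \geq 2$, and chasing the $(SF)_*$ condition at $\mathfrak{q}$ (via Lemma \ref{lem:proper int face *} and Lemma \ref{lem:SF DF}) forces $\overline{W}^{(1)} = \widehat{pr}_1(\overline{W})$ to be $0$-dimensional over $\mathbb{F}$, i.e. $y_1$ is constant on the generic fiber. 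Once (1) is in hand, (2) is immediate: $y_i$ is a constant vanishing coordinate, so Lemma \ref{lem:const van} gives ${\rm Dec}(\partial W) = 0$ in $\CH_{{\rm v}}^1(X,1) \otimes_{\mathbb{Z}} \CH^{n-1}(k, n-1)$.
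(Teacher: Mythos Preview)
Your reduction of (2) to (1) via Lemma \ref{lem:const van} is exactly right and matches the paper. The problem is with (1): none of your three sketched approaches actually closes, and the paper's argument uses a different (and much simpler) divisor than the one you focus on.

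You work with $\overline{W} \cap (X \times \{y_1 = 1\})$, and as you yourself note, this does not directly produce a contradiction: a component $\overline{V}$ there already has $y_1 \equiv 1$, so its special fiber landing in $\{y_1 = 1\}$ tells you nothing new. Your attempt to force $\overline{W} \subset \{y_1 = 1\}$ is not justified, and your normalization approach has a technical error: $\overline{W}$ is $2$-dimensional, so $\mathcal{O}_{\overline{W}^N, \mathfrak{q}}$ at a closed point is a $2$-dimensional local ring, not a DVR, and the ``$\mathrm{ord}_{\mathfrak{q}}$'' reasoning does not get off the ground. More importantly, nothing in that sketch explains how the local data at $\mathfrak{q}$ would propagate to make $\widehat{pr}_1(\overline{W})$ zero-dimensional over $\mathbb{F}$.

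The paper instead looks at the \emph{faces} $\overline{F}_1^{\epsilon} = \{y_1 = \epsilon\}$ for $\epsilon \in \{0,\infty\}$, not at $\{y_1 = 1\}$. If $y_1$ is not constant, Lemma \ref{lem:const}-(2) gives a nonempty component $\overline{Z} \subset \overline{W} \cap (X \times \overline{F}_1^{\epsilon})$. Now the uniqueness of $y_1$ as vanishing coordinate is used in the strong form \eqref{eqn:y_i=1 1}: it forces the \emph{entire} special fiber $\overline{W} \cap (\{p\} \times \overline{\square}^{n+1})$ to lie inside $\{p\} \times \{y_1 = 1\}$. Hence
\[
\overline{Z} \cap (\{p\} \times \overline{\square}^{n+1}) \subset (\{p\} \times \{y_1 = 1\}) \cap (X \times \{y_1 = \epsilon\}) = \emptyset,
\]
since $\epsilon \neq 1$. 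So $\overline{Z} \to X$ is not surjective. But $\overline{Z}$ inherits $(SF)_*$ from $W$, and Lemma \ref{lem:SF DF} then forces $\overline{Z} \to X$ to be dominant (hence surjective, being proper). This is the contradiction. The missing idea in your attempt is precisely this: intersect with a divisor $\{y_1 = \epsilon\}$ \emph{disjoint from} $\{y_1 = 1\}$, so that the uniqueness hypothesis immediately empties the special fiber of the intersection.
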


\begin{proof}
(1) Without loss of generality, we may assume $i=1$, i.e. $y_1$ is the unique vanishing coordinate of $W$. We claim that it is a constant coordinate.

\medskip

If $y_1$ is \emph{not} a constant coordinate, then by Lemma \ref{lem:const}-(2), we have $\overline{W} \cap (X \times \overline{F}_1 ^{\epsilon}) \not = \emptyset$ for $\epsilon \in \{ 0, \infty \}$.

Each nonempty component $\overline{Z} \subset \overline{W} \cap (X \times \overline{F}_1 ^{\epsilon})$ satisfies
\begin{equation}\label{eqn:single van 1}
\overline{Z} \cap ( \{ p \} \times \overline{\square}^{n+1}) \subset \overline{W} \cap (X \times \overline{F}_1 ^{\epsilon}) \cap ( \{ p \} \times \overline{\square}^{n+1})
\end{equation}
$$
= \overline{W} \cap ( \{ p \} \times \overline{\square}^{n+1}) \cap (X \times \overline{F}_1 ^{\epsilon}).
$$

By the given assumption, $y_1$ is the only vanishing coordinate of $W$ so that using \eqref{eqn:y_i=1 1} in \S \ref{sec:vanishing}, we have
\begin{equation}\label{eqn:single van 2}
\overline{W} \cap ( \{ p \} \times \overline{\square}^{n+1}) \subset \{ p \} \times \{ y_1 = 1 \}.
\end{equation}

Hence combining \eqref{eqn:single van 1} and \eqref{eqn:single van 2}, and recalling that $\overline{F}_1 ^{\epsilon} = \{ y_1 = \epsilon \}$, we have
\begin{equation}\label{eqn:single van 3}
\overline{Z} \cap ( \{ p \} \times \overline{\square}^{n+1}) \subset ( \{ p \} \times \{ y_1= 1 \}) \cap ( X \times \{ y _ 1= \epsilon \}) =^{\dagger} \emptyset,
\end{equation}
where $=^{\dagger}$ holds because $\epsilon \not = 1$. In particular, $\overline{Z} \to X$ is \emph{not} surjective. However, the condition $(SF)_*$ for $W$ induces the condition $(SF)_*$ for $Z$, and by Lemma \ref{lem:SF DF} the morphism $\overline{Z} \to X$ is proper and dominant, thus surjective. This is a contradiction. Hence $y_1$ is indeed a constant coordinate, thus a constant vanishing coordinate, proving (1).

(2) follows immediately from (1) together with Lemma \ref{lem:const van}.
\end{proof}

\begin{lem}\label{lem:>2 van}
Let $n \geq 2$ be an integer. Let $W \in z^n _{{\rm v}} (X, n+1)$ be an integral cycle. Suppose $W$ has at least three vanishing coordinates.

Then we have ${\rm Dec} ( \partial_i ^{\epsilon} (W) ) = 0$ in $\CH_{{\rm v}} ^1 (X, 1) \otimes \CH^{n-1} (k, n-1)$ for all $1 \leq i \leq n+1$ and $\epsilon \in \{0, \infty \}$. In particular, ${\rm Dec} (\partial W ) = 0$.
\end{lem}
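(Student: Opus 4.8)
The plan is to prove the per-face statement ${\rm Dec}(\partial_i^\epsilon(W)) = 0$ for every fixed $1 \le i \le n+1$ and $\epsilon \in \{0,\infty\}$; the last assertion of the lemma then follows from $\partial = \sum_{i=1}^{n+1}(-1)^i(\partial_i^\infty - \partial_i^0)$ and $\mathbb{Z}$-linearity of ${\rm Dec}$. First I would dispose of two easy reductions. If $W$ has a \emph{constant} vanishing coordinate, then ${\rm Dec}(\partial W) = 0$ already by Lemma \ref{lem:const van}, so we may assume that all of the (at least three) vanishing coordinates of $W$ are non-constant. If, for the chosen $i$, the coordinate $y_i$ is a constant coordinate of $W$, then $\partial_i^\epsilon(W) = 0$ by Lemma \ref{lem:const}-(1) and there is nothing to prove; hence we may assume $y_i$ is non-constant, so that by Lemma \ref{lem:const}-(2) the intersection $\overline{W}\cap(X\times\overline{F}_i^\epsilon)$ is nonempty, pure of codimension $n+1$ in $X\times\overline{\square}^{n+1}$ by the condition $(GP)_*$ for $W$, with each component dominant over $X$ (inheriting $(SF)_*$ from $W$ and invoking Lemma \ref{lem:SF DF}).

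The crux is then the following claim: \emph{every integral component $Z$ of the cycle $\partial_i^\epsilon(W) \in z^n_{{\rm v}}(X,n)$ has at least two vanishing coordinates.} Granting this, Lemma \ref{lem:decon well} yields ${\rm Dec}(Z) = 0$ for each such $Z$, and by $\mathbb{Z}$-linearity ${\rm Dec}(\partial_i^\epsilon(W)) = 0$, which completes the proof. Note that the hypothesis $\geq 3$ enters here precisely because the face $\{y_i=\epsilon\}$ can ``consume'' at most one vanishing coordinate of $W$.

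To prove the claim I would argue by contradiction, adapting the reasoning of Lemma \ref{lem:single van}. Suppose some component $Z$ has a unique vanishing coordinate $y_{j_0}$; necessarily $j_0 \neq i$, since the face lies over $\{y_i = \epsilon\}$ with $\epsilon \neq 1$. Because $Z$ has codimension $n$ in $X\times\square^n$, equal to the number of cube factors, Lemma \ref{lem:proper int face *} shows $\overline{Z}$ meets no proper extended face; combined with $Z$ being a pre-vanishing cycle and Remark \ref{remk:type i}, this forces $\emptyset \neq \overline{Z}\cap(\{p\}\times\overline{\square}^n) \subset \{p\}\times\{y_{j_0}=1\}$, with every remaining coordinate taking a value in $\square\setminus\{0,\infty\}$ along that locus. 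Now pick a vanishing coordinate $y_a$ of $W$ with $a\notin\{i,j_0\}$, which exists since $W$ has at least three vanishing coordinates. Viewing $\overline{Z}$ as a codimension-one subvariety of the surface $\overline{W}$ cut out by $y_i=\epsilon$ and dominant over the trait $X=\Spec(k[[t]])$, I would use properness over $X$, the condition $(SF)_*$ for $W$, and the fact that the special fiber $\overline{W}\cap(\{p\}\times\overline{\square}^{n+1})$ lies entirely inside $\bigcup_j\{y_j=1\}$ to conclude that $\overline{Z}$ must also meet $\{p\}\times\{y_a=1\}$, contradicting the uniqueness of $y_{j_0}$. \textbf{The main obstacle is exactly this propagation step}: one must rule out that the degeneration of $\overline{W}$ along $\{y_a=1\}$ is absorbed entirely by components of $\overline{W}\cap(X\times\overline{F}_i^\epsilon)$ other than $\overline{Z}$, and the expected mechanism is the rigidity coming from properness over $X$ together with the pre-vanishing constraint, which pins down the special fiber of $\overline{W}$ tightly enough that each component of each nonzero face inherits at least two of the $\geq 3$ vanishing coordinates of $W$.
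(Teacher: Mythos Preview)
Your strategy coincides with the paper's: show every integral component $Z$ of every nonzero face $\partial_i^\epsilon(W)$ has at least two vanishing coordinates, then invoke Lemma~\ref{lem:decon well}. The paper's entire proof is two sentences: if $\partial_i^\epsilon(W)=0$, done; otherwise ``since $W$ has at least three vanishing coordinates, this $Z$ has at least two vanishing coordinates,'' hence ${\rm Dec}(Z)=0$ by Lemma~\ref{lem:decon well}. Your preliminary reductions via Lemma~\ref{lem:const van} and Lemma~\ref{lem:const}-(1) are correct but unnecessary and do not appear in the paper.

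The gap you flag at the ``propagation step'' is genuine as written. By Definition~\ref{defn:type i}, ``$y_a$ is a vanishing coordinate of $W$'' is an existential statement about a single point of $\overline{W}_p$, and nothing in your argument forces that witness to lie on the special fiber of the particular component $\overline{Z}\subset\overline{W}\cap(X\times\overline{F}_i^\epsilon)$ rather than on some other component of the face, or elsewhere on $\overline{W}_p$ entirely. The paper, however, does not supply an argument for this step either; it treats the inheritance as immediate, presumably reading it as ``the face $\{y_i=\epsilon\}$ removes at most the one coordinate $y_i$ from the list.'' So your proposal is exactly the paper's line with extra scaffolding, halted at the one place where the paper proceeds by direct assertion; your contradiction sketch is a reasonable way to try to close it, but as you acknowledge it is not complete.
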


\begin{proof}
If $\partial_i ^{\epsilon} (W)=0$ for some $1 \leq i \leq n+1$ and $\epsilon \in \{ 0, \infty\}$, then apparently ${\rm Dec} (\partial_i ^{\epsilon} (W)) = 0$ for this face.

\medskip

Suppose $\partial _i^{ \epsilon} (W) \not = 0$ for some $1 \leq i \leq n+1$ and $\epsilon \in \{ 0, \infty\}$. Let $Z$ be any nonempty component of $\partial_i ^{\epsilon} (W)$. Since $W$ has at least three vanishing coordinates, this $Z$ has at least two vanishing coordinates. In this case, we have ${\rm Dec} (Z) = 0$ by Lemma \ref{lem:decon well}. Hence ${\rm Dec} (\partial_i ^{\epsilon} (W)) = 0$ as well.
\end{proof}

The remaining case is the following:

\begin{lem}\label{lem:only 2 vanishing}
Let $n \geq 2$ be an integer. Let $W \in z^n_{{\rm v}} (X, n+1)$ be an integral cycle with exactly two vanishing coordinates. 

Then ${\rm Dec} (\partial W) = 0$ in $\CH_{{\rm v}} ^1 (X, 1) \otimes_{\mathbb{Z}} \CH^{n-1} (k, n-1).$
\end{lem}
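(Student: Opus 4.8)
The remaining case is that $W$ has exactly two vanishing coordinates, say (after a permutation of coordinates, which only introduces a sign that does not affect the vanishing statement) $y_1$ and $y_2$. The plan is to analyze $\mathrm{Dec}(\partial_i^\epsilon(W))$ for each $1\le i\le n+1$ and each $\epsilon\in\{0,\infty\}$, and show the sum telescopes to $0$ modulo boundaries in the Chow complex of $\Spec(k)$. First I would observe that for a face map $\partial_i^\epsilon$, any nonempty component $Z$ of $\partial_i^\epsilon(W)$ is a cycle in $z^n_{\mathrm v}(X,n)$ whose vanishing coordinates are among the images of the vanishing coordinates of $W$ that survive the face operation. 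If both $y_1$ and $y_2$ survive in $Z$ (i.e. $i\ge 3$), then $Z$ has at least two vanishing coordinates, so $\mathrm{Dec}(Z)=0$ by Lemma \ref{lem:decon well}; hence $\mathrm{Dec}(\partial_i^\epsilon(W))=0$ for all $3\le i\le n+1$ and all $\epsilon$. So the only possibly nonzero contributions to $\mathrm{Dec}(\partial W)$ come from $\partial_1^\epsilon(W)$ and $\partial_2^\epsilon(W)$.

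Next I would dispose of $\partial_1^\epsilon(W)$ and $\partial_2^\epsilon(W)$ themselves. The key point is that $y_1$ and $y_2$ are \emph{vanishing} coordinates, i.e. $\overline W\cap(\{p\}\times\{y_j=1\})\ne\emptyset$ for $j=1,2$. I would like to argue, as in the proof of Lemma \ref{lem:single van}-(1), that each of $y_1,y_2$ is in fact a \emph{constant} vanishing coordinate: if $y_1$ were not constant, then by Lemma \ref{lem:const}-(2) we have $\overline W\cap(X\times\overline F_1^\epsilon)\ne\emptyset$, and any nonempty component $\overline Z$ of it would satisfy $\overline Z\cap(\{p\}\times\overline\square^{n+1})\subset \overline W\cap(\{p\}\times\overline\square^{n+1})\cap(X\times\{y_1=\epsilon\})$; since $W$ has only $y_1,y_2$ as vanishing coordinates, \eqref{eqn:y_i=1 1} gives $\overline W\cap(\{p\}\times\overline\square^{n+1})\subset\{p\}\times(\{y_1=1\}\cup\{y_2=1\})$. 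Intersecting with $\{y_1=\epsilon\}$ ($\epsilon\ne1$) kills the $\{y_1=1\}$ part, so $\overline Z\cap(\{p\}\times\overline\square^{n+1})\subset\{p\}\times\{y_2=1\}$; but then $\overline Z\to X$ is not surjective while $(SF)_*$ for $Z$ (inherited from $W$) and Lemma \ref{lem:SF DF} force it to be surjective — contradiction. Hence $y_1$ is a constant vanishing coordinate, and symmetrically so is $y_2$. But then Lemma \ref{lem:const}-(1) gives $\partial_1^\epsilon(W)=0$ and $\partial_2^\epsilon(W)=0$ outright for $\epsilon\in\{0,\infty\}$, so in fact \emph{all} faces of $W$ have $\mathrm{Dec}=0$ and $\mathrm{Dec}(\partial W)=0$.

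I expect the main obstacle to lie in the dichotomy just used: a priori it is conceivable that $\overline W\cap(\{p\}\times\overline\square^{n+1})$ genuinely splits between the two divisors $\{y_1=1\}$ and $\{y_2=1\}$ in a way that makes the intersection with $\{y_1=\epsilon\}$ land in $\{p\}\times\{y_2=1\}$ as a nonempty (proper) piece, so the ``not surjective'' conclusion needs care — one must check that such a component $\overline Z$ really fails to dominate $X$, which is exactly where the $(SF)_*$/$(DO)$ machinery of Lemmas \ref{lem:SF DF} and \ref{lem:proper int face *} is invoked, and one should confirm that $Z$ being a component of $\partial_1^\epsilon(W)$ indeed inherits $(SF)_*$ (this follows from the definition of $z^n_{\mathrm d}(X,\bullet)$ being a subcomplex, Definition \ref{defn:dsf}). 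If for some reason a residual case with both $y_1,y_2$ constant vanishing but a face not outright zero survives, the fallback is to invoke Lemma \ref{lem:const van} directly: having a constant vanishing coordinate already forces $\mathrm{Dec}(\partial W)=0$, and a two-vanishing-coordinate cycle whose coordinates are both constant vanishing certainly has a constant vanishing coordinate. Either way the conclusion is $\mathrm{Dec}(\partial W)=0$ in $\CH^1_{\mathrm v}(X,1)\otimes_{\mathbb Z}\CH^{n-1}(k,n-1)$, which, combined with Lemmas \ref{lem:const van}, \ref{lem:single van}, and \ref{lem:>2 van}, completes the proof of Proposition \ref{prop:decon bdry 0}.
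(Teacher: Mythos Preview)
Your argument for $i\ge 3$ is correct and matches the paper. The gap is in your treatment of $i=1,2$: your attempt to show that each of $y_1,y_2$ is a \emph{constant} vanishing coordinate does not go through, and in fact the conclusion is false in general.

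Concretely, in your adaptation of the single-vanishing-coordinate argument, you obtain
\[
\overline{Z}\cap(\{p\}\times\overline{\square}^{\,n+1})\ \subset\ \{p\}\times\{y_2=1\}\cap\{y_1=\epsilon\},
\]
and then assert that $\overline{Z}\to X$ is not surjective. But the right-hand side is \emph{not} empty (unlike the single-coordinate case, where intersecting $\{y_1=1\}$ with $\{y_1=\epsilon\}$ gave $\emptyset$). So $\overline{Z}$ may well have nonempty special fiber, and the contradiction never materializes. Your invocation of $(SF)_*$ and Lemma~\ref{lem:SF DF} is backwards: those results say that $\overline{Z}$ \emph{is} dominant over $X$, which is entirely consistent with $\overline{Z}_p$ sitting inside $\{y_2=1\}$. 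Your fallback to Lemma~\ref{lem:const van} then also fails, since you have not established that either vanishing coordinate is constant.

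The paper's proof treats precisely the case where both $y_1,y_2$ are non-constant (the constant case being already handled by Lemma~\ref{lem:const van}). The key idea you are missing is to use the compactified projection $W^{\{1,2\}}\in z_{\rm v}^1(X,2)$ (quasi-finite since $y_1,y_2$ are non-constant, so Lemma~\ref{lem:adm comp proj 2} applies). One shows that on the connected special fiber $\overline{W}_p$ the remaining coordinates $y_3,\dots,y_{n+1}$ are constant, equal to a single closed point $Z'\in z^{n-1}(k,n-1)$; then for $i=1,2$ one computes $\mathrm{Dec}(\partial_i^\epsilon(W))=\partial_i^\epsilon(W^{\{1,2\}})\otimes Z'$. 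Summing gives $\mathrm{Dec}(\partial W)=\partial(W^{\{1,2\}})\otimes Z'$, which vanishes in $\CH_{\rm v}^1(X,1)\otimes\CH^{n-1}(k,n-1)$ because $\partial(W^{\{1,2\}})$ is a boundary in $z_{\rm v}^1(X,\bullet)$.
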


\begin{proof}
If any one of the vanishing coordinates is a constant coordinate, then by Lemma \ref{lem:const van}, the assertion of Lemma \ref{lem:only 2 vanishing} holds automatically. 

Hence, we may assume that both of the two vanishing coordinates are non-constant coordinates. After a suitable permutation of the coordinates, we may assume that $W$ has exactly two vanishing coordinates $y_1, y_2$, both of them are non-constant coordinates.

\medskip

Remark that the special fiber $\overline{W}_p = \overline{W} \cap ( \{ p \} \times \overline{\square}^{n+1})$ of the closure $\overline{W}$ is connected: this follows from that $\overline{W} \to X$ is proper flat and surjective (using Lemma \ref{lem:SF DF} and that $X$ is regular of dimension $1$), so that the connectedness of the generic fiber implies that of the special fiber.

\medskip

\textbf{Claim 1:} \emph{All the remaining coordinates $y_3, \cdots, y_{n+1}$ are constant on the special fiber $\overline{W}_p$.}

\medskip

The given condition that \emph{none} of $y_i$ for $i \geq 3$ is a vanishing coordinate of $W$ implies that (Remark \ref{remk:type i}) there is no point of $\overline{W}_p$ which intersects with the divisor $\{ p \} \times \{ y_i = 1 \} \subset \{ p \} \times \overline{\square}^{n+1}_k = \overline{\square}^{n+1}_k$, i.e. for $i \geq 3$, we have
$$
\overline{W}_p \cap ( \overline{\square}^{i-1}_k \times \{ 1 \} \times \overline{\square}^{n+1-i}_k) = \emptyset.
$$
 In particular, the image of $\overline{W}_p$ under the projection to the $i$-th coordinate $\overline{\square}^{n+1}_k \to \overline{\square}_k ^1$ is \emph{not} surjective. Being the image of a connected projective scheme, the image is a connected proper closed subset of $\overline{\square}_k^1= \mathbb{P}^1_k$, thus a closed point. Hence for $i \geq 3$, $y_i$ is constant on the special fiber, proving the Claim 1.

\medskip

\textbf{Claim 2:} \emph{ ${\rm Dec} (\partial W) = \sum_{i=1} ^2 (-1)^i  {\rm Dec}  ( ( \partial_i ^{\infty} (W)) - \partial_i ^0 (W))$.}

\medskip

For the proof of Claim 2, it is enough to show that ${\rm Dec} (\partial_i ^{\epsilon} (W))=0$ for each $3 \leq i \leq n$ and $\epsilon \in \{ 0, \infty \}$.

If a face $\partial_i ^{\epsilon} (W)= 0$, then ${\rm Dec} (\partial_i ^{\epsilon} (W)) = 0$, so it is OK.

If $\partial_i ^{\epsilon} (W)\not = 0$, then since $3 \leq i \leq n+1$, each of its component has two vanishing coordinates $y_1, y_2$, so its deconcatenation is $0$, because its image in the $z^{n-1} (k, n-1)$ part obtained by taking the evaluation at the special fiber is $0$, thus it is $0$ in the $\CH^{n-1} (k, n-1)$ as in Lemma \ref{lem:decon well}. Thus ${\rm Dec} (\partial_i ^{\epsilon} (W)) = 0$ in this case. This proves the Claim 2.

\medskip

Let $Z'$ denote the image of the projection of $\overline{W}_p$ under the projection $\widehat{pr}_{\{3, \cdots, n+1\}}: \overline{\square}^{n+1} \to \overline{\square}^{n-1}$ that forgets $y_1$ and $y_2$. By the Claim 1, it is a closed point, and one notes that $Z' \in z^{n-1} (k, n-1)$. 

Since $y_1, y_2$ are non-constant coordinates, the compactified projection $\overline{W} \to \overline{W} ^{ \{ 1,2 \}}$ is quasi-finite. Hence by Lemma \ref{lem:adm comp proj 2}, we have $W^{ \{ 1, 2 \}} \in z_{{\rm v}} ^1 (X, 2)$. For $i=1,2$, we deduce that ${\rm Dec} ( \partial _i ^{\epsilon} (W)) = \partial_i ^{\epsilon} (W ^{ \{ 1,2 \}} ) \otimes Z' \in \CH^1 _{{\rm v}} (X, 1) \otimes \CH^{n-1} (k, n-1)$. Hence by the Claim 2, ${\rm Dec} (\partial W) = \partial ( W ^{ \{ 1,2 \}} ) \otimes  Z' \in \CH^1 _{{\rm v}} (X, 1) \otimes \CH^{n-1} (k, n-1)$, where $\partial (W ^{ \{ 1 , 2 \}} ) \equiv  0$ in $\CH^1_{{\rm v}} (X, 1)= z^1 _{{\rm v}} (X, 1) / \partial z^1 _{{\rm v}} (X, 2)$ by definition. This implies that ${\rm Dec} (\partial (W)) = 0$. proving the lemma.
\end{proof}

\subsection{Proof of the proposition}\label{sec:4.4}

We are now ready to finish:

\medskip

\begin{proof}[Proof of Proposition \ref{prop:decon bdry 0}]
Let $W \in z^n_{{\rm v}} (X, n+1)$ be an integral cycle. Since it is a strict vanishing cycle, there is at least one vanishing coordinate of $W$ by Remark \ref{remk:type i}. Let $r \geq 1$ be the number of the vanishing coordinates of $W$.

Here, for $r=1$ by Lemma \ref{lem:single van}, for $r \geq 3$ by Lemma \ref{lem:>2 van}, and for $r=2$ by Lemma \ref{lem:only 2 vanishing}, we have ${\rm Dec} (W) = 0$ in $\CH^1_{{\rm v}} (X, 1) \otimes \CH^{n-1} (k, n-1)$, as desired.
\end{proof}

\begin{cor}\label{cor:decon mod m}
Let $m \geq 1$ be an integer. Then we have the induced deconcatenation homomorphism
$$
{\rm Dec} : \CH^n _{{\rm v}} (X/(m+1), n) \to \CH^1 _{{\rm v}} (X/(m+1), 1) \otimes \CH^{n-1} (k, n-1).
$$
\end{cor}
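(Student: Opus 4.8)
The plan is to deduce the mod $I^{m+1}$ deconcatenation from the integral one in Proposition~\ref{prop:decon bdry 0} by checking that the homomorphism ${\rm Dec}$ of \eqref{eqn:decon 2} descends along the quotient map $\CH^n_{{\rm v}}(X,n) \to \CH^n_{{\rm v}}(X/(m+1),n)$. Recall that the latter group is $z^n_{{\rm v}}(X,n)/(\partial z^n_{{\rm v}}(X,n+1) + \mathcal{N}^n_{{\rm v}}(m+1))$, so, since ${\rm Dec}$ already kills boundaries by Proposition~\ref{prop:decon bdry 0}, the only thing to verify is that ${\rm Dec}$ annihilates the subgroup $\mathcal{N}^n_{{\rm v}}(m+1)$ generated by differences $Z_1 - Z_2$ of mod $I^{m+1}$-equivalent integral ${\rm v}$-cycles, \emph{after} composing with the further quotient $z^1_{{\rm v}}(X,1) \otimes z^{n-1}(k,n-1) \to \CH^1_{{\rm v}}(X/(m+1),1) \otimes \CH^{n-1}(k,n-1)$.

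First I would reduce to an integral pair: let $Z_1, Z_2 \in z^n_{{\rm v}}(X,n)$ be integral and mod $I^{m+1}$-equivalent, i.e.\ $\overline{Z}_1 \times_X X_{m+1} = \overline{Z}_2 \times_X X_{m+1}$ as closed subschemes of $X_{m+1}\times\overline{\square}^n$. Since the defining scheme structure over $X_{m+1}$ agrees, $Z_1$ and $Z_2$ have the \emph{same} vanishing coordinates: a vanishing coordinate is detected by $\overline{Z}_j \cap (\{p\}\times\{y_i=1\}) \neq \emptyset$, and $\{p\}$ factors through $X_{m+1}$, so this condition is visible on $\overline{Z}_j\times_X X_{m+1}$. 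Thus we may apply the \emph{same} cyclic permutation $\sigma$ in Definition~\ref{defn:dec 00} to both, reducing to the case where $y_1$ is a common vanishing coordinate. If both $Z_1, Z_2$ have two or more vanishing coordinates, then ${\rm Dec}(Z_j)=0$ for $j=1,2$ by Lemma~\ref{lem:decon well}, so the difference is zero; hence assume $y_1$ is the unique vanishing coordinate of each. Then ${\rm Dec}(Z_j) = {\rm sgn}(\sigma)\big(Z_j^{(1)} \otimes {\rm ev}_p(Z_j^{(1')})\big)$, and I must compare the two tensors.

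The key point is that both factors of the tensor are computed from the closure $\overline{Z}_j$ together with its special fibre, and both of these are determined by $\overline{Z}_j \times_X X_{m+1}$ for the relevant purposes. The compactified projection $Z_j^{(1)} = \widehat{pr}_1(\overline{Z}_j)|_{\square^1_X}$ is an element of $z^1_{{\rm v}}(X,1)$ by Lemma~\ref{lem:adm compact proj}; the equality $\overline{Z}_1\times_X X_{m+1} = \overline{Z}_2\times_X X_{m+1}$ gives $\widehat{pr}_1(\overline{Z}_1)\times_X X_{m+1} = \widehat{pr}_1(\overline{Z}_2)\times_X X_{m+1}$ (projections commute with the base change to $X_{m+1}$, and the images agree because the source schemes do), so $Z_1^{(1)} \equiv Z_2^{(1)}$ in $z^1_{{\rm v}}(X/(m+1),1)$, hence they agree in $\CH^1_{{\rm v}}(X/(m+1),1)$. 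For the second factor, ${\rm ev}_p(Z_j^{(1')})$ depends only on the fibre of $\overline{Z}_j$ over the closed point $p$, which is unchanged under passing to $X_{m+1}$ (indeed even to $X_2 = \Spec(A/I^2)$ or $\Spec(k)$); more precisely ${\rm ev}_p(Z_j^{(1')})$ is read off from $\overline{Z}_j \times_X \Spec(k) = \big(\overline{Z}_j\times_X X_{m+1}\big)\times_{X_{m+1}}\Spec(k)$, so ${\rm ev}_p(Z_1^{(1')}) = {\rm ev}_p(Z_2^{(1')})$ exactly as cycles on $\square^{n-1}_k$. Therefore ${\rm Dec}(Z_1) \equiv {\rm Dec}(Z_2)$ in $\CH^1_{{\rm v}}(X/(m+1),1)\otimes \CH^{n-1}(k,n-1)$, so $\mathcal{N}^n_{{\rm v}}(m+1)$ lies in the kernel, and the induced map exists.

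The main obstacle I anticipate is the bookkeeping around well-definedness of the cyclic permutation and the $n=1$ degenerate edge case: one must make sure that the choice of $\sigma$ can be made uniformly for a mod $I^{m+1}$-equivalent pair (handled above via the observation that vanishing coordinates only depend on the class over $X_{m+1}$), and that when there is a genuine choice of $\sigma$ the answer was already shown to be $0$ by Lemma~\ref{lem:decon well}, so no compatibility issue remains. Beyond that, the argument is a routine "the relevant data all descend" verification, using that $\widehat{pr}_1$ and the closure operation commute with the flat base change $X_{m+1}\hookrightarrow X$ on the level of the schemes involved, together with Proposition~\ref{prop:decon bdry 0} to kill the boundary part.
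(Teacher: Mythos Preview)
Your proposal is correct and follows essentially the same argument as the paper's proof: reduce to a mod $I^{m+1}$-equivalent pair $(Z_1,Z_2)$, observe they share a common vanishing coordinate (you make this step slightly more explicit than the paper does), dispose of the multiple-vanishing-coordinate case via Lemma~\ref{lem:decon well}, and then check that both tensor factors of ${\rm Dec}$ --- the compactified projection to $y_1$ and the evaluation at $p$ of the remaining coordinates --- are determined by $\overline{Z}_j\times_X X_{m+1}$. The paper's proof is organized identically, with the same two verifications for the two factors.
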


\begin{proof}
We want to complete the following commutative diagram by proving that the top horizontal map coming from Proposition \ref{prop:decon bdry 0} induces the bottom horizontal one:
$$
\xymatrix{
\CH_{\rm v}^n (X, n) \ar@{>>}[d]^f \ar[rr] ^{{\rm Dec} \ \ \ \ \ \ \ \ \ \ } & & \CH_{\rm v}^1 (X, 1) \otimes \CH^{n-1} (k, n-1) \ar@{>>}[d]^g \\
\CH_{\rm v} ^n (X/ (m+1), n) \ar@{-->}[rr] ^{ ? \ \ \ \ \ \ \ \ \ \  } & & \CH_{\rm v} ^1 (X/ (m+1), 1) \otimes \CH^{n-1} (k, n-1),}
$$
where $f, g$ are the natural quotient maps.

\medskip

It is enough to show that if a pair of integral cycles $Z_1, Z_2 \in z^n_{{\rm v}} (X, n)$ satisfies 
$$
\overline{Z}_1 \times _{X} X_{m+1} = \overline{Z}_2 \times_X X_{m+1}
$$
as closed subschemes of $X_{m+1} \times \overline{\square}^n$, where $X_{m+1} = \Spec (k_{m+1})$, then their images in $\CH_{\rm v}^1 (X/ (m+1), n) \otimes \CH^{n-1} (k, n-1)$ under $g \circ {\rm Dec}$ coincide. After applying a suitable permutation, we may assume that $y_1$ is a common vanishing coordinate for $Z_1$ and $Z_2$.

\medskip

If there is another common vanishing coordinate $y_i$ for $i \geq 2$, then we have ${\rm Dec} (Z_j) = 0$ for $j=1,2$ by Lemma \ref{lem:decon well}, so the statement of the Corollary holds trivially.

\medskip

Thus, now suppose that none of $y_i$ for $i \geq 2$ is a common vanishing coordinate for $Z_1$ and $Z_2$. For the compactified projections $\widehat{pr} ^{ (1)} (\overline{Z}) \subset  \overline{\square}_X ^1$ as in Definition \ref{defn:comp proj 0} and Lemma \ref{lem:adm compact proj}, we have $\widehat{pr} ^{(1)} (Z_j) \in z_{{\rm v}} ^1 (X, 1)$ for $j=1,2$, and 
$$
\widehat{pr} ^{(1)} (\overline{Z}_1) \times_X  X_{m+1} = \widehat{pr} ^{(1)} (\overline{Z}_2) \times_X  X_{m+1}
$$
as closed subschemes of $X_{m+1} \times \overline{\square}^1$, while
$$
 {\rm ev}_p \left( \widehat{pr} ^{(1')} (Z_1) \right) = {\rm ev}_p \left( \widehat{pr} ^{(1')} (Z_2) \right) 
$$
in $z^{n-1} (k, n-1)$. Let's call this common cycle by $Z'$. Thus we deduce that 
$$
 [ \widehat{pr} ^{(1)} (Z_1)] \otimes [Z'] = {\rm Dec} (Z_1) ={\rm Dec} (Z_2) = [ \widehat{pr} ^{(1)} (Z_1)] \otimes [Z']
$$
in $\CH_{\rm v} ^1 (X/ (m+1), 1) \otimes \CH^{n-1} (k, n-1).$ This proves the corollary.
\end{proof}

Recall the following result, from e.g. Krishna-Park \cite[(7.4)]{KP crys}):

\begin{thm}\label{thm:KPCM7.4}
Let $R$ be a regular semi-local algebra essentially of finite type over a field $k$. Let $m, n \geq 1$ be integers.

Then we have the natural homomorphism
$$
{\rm Con}:  \mathbb{W}_m (R) \otimes_{\mathbb{Z}} K_{n-1} ^M (R) \to \mathbb{W}_m \Omega_R ^{n-1}
$$
which maps
$
b \otimes \{ a_1, \cdots, a_{n-1} \} \mapsto b d \log [a_1] \wedge \cdots \wedge d \log [a_{n-1}].
$
\end{thm}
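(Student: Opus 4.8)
The plan is to assemble ${\rm Con}$ from a $d\log$-Teichm\"uller map on Milnor $K$-theory together with the $\mathbb{W}_m(R)$-module structure on the degree $(n-1)$ part of the differential graded algebra $\mathbb{W}_m\Omega_R^{\bullet}$. First I would record that, for any ring $R$, the assignment $a \mapsto d\log[a] := [a]^{-1}\,d[a]$ is a group homomorphism $R^{\times} \to \mathbb{W}_m\Omega_R^{1}$: the Teichm\"uller map $[\,\cdot\,]\colon R \to \mathbb{W}_m(R) = \mathbb{W}_m\Omega_R^{0}$ is multiplicative, so $[a]$ is invertible with inverse $[a^{-1}]$ when $a \in R^{\times}$, and the Leibniz rule gives $d\log([a][b]) = [a]^{-1}[b]^{-1}\bigl([b]\,d[a] + [a]\,d[b]\bigr) = d\log[a] + d\log[b]$. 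Hence
$$(R^{\times})^{n-1} \longrightarrow \mathbb{W}_m\Omega_R^{n-1}, \qquad (a_1,\dots,a_{n-1}) \longmapsto d\log[a_1]\wedge\cdots\wedge d\log[a_{n-1}]$$
is $\mathbb{Z}$-multilinear, so it extends to a homomorphism on the tensor algebra $T^{n-1}_{\mathbb{Z}}(R^{\times})$, and the graded-commutativity of $\mathbb{W}_m\Omega_R^{\bullet}$ makes it automatically compatible with the antisymmetry relations of $K^{M}_{n-1}$.

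The only nontrivial point is the Steinberg relation: one must show $d\log[a]\wedge d\log[1-a] = 0$ in $\mathbb{W}_m\Omega_R^{2}$ whenever $a, 1-a \in R^{\times}$. Here I would use functoriality of $\mathbb{W}_m\Omega_{(-)}^{\bullet}$ applied to the classifying map $k[x,\tfrac{1}{x(1-x)}] \to R$, $x\mapsto a$, which since $d\log$-Teichm\"uller is natural reduces the problem to the single ring $R_0 := k[x,\tfrac{1}{x(1-x)}]$ — a regular domain of finite type over $k$ with fraction field $k(x)$ — and the universal element $d\log[x]\wedge d\log[1-x]$. For such a regular domain the Gersten-type injectivity of the big de Rham--Witt complex (as established in \cite{KP crys}) embeds $\mathbb{W}_m\Omega_{R_0}^{\bullet}$ into $\mathbb{W}_m\Omega_{k(x)}^{\bullet}$, so it suffices to verify the relation over the field $k(x)$; but over any field this is exactly the well-definedness of the map $d\log_F\colon K^{M}_{2}(F) \to \mathbb{W}_m\Omega_F^{2}$, recalled above from \cite[Corollary~7.5]{KP crys}. (When $\mathrm{char}(k)=0$ one can argue directly instead: the ghost components identify $\mathbb{W}_m\Omega_{(-)}^{\bullet}$ on $\mathbb{Q}$-algebras with a product of copies of $\Omega_{(-)}^{\bullet}$, on each of which $d\log[a]\wedge d\log[1-a]$ is a scalar multiple of $\tfrac{da}{a}\wedge\tfrac{d(1-a)}{1-a} = 0$.) Therefore the two-sided Steinberg ideal of $T^{\bullet}_{\mathbb{Z}}(R^{\times})$ maps to $0$, and we obtain a homomorphism $\beta_{n-1}\colon K^{M}_{n-1}(R) \to \mathbb{W}_m\Omega_R^{n-1}$ sending $\{a_1,\dots,a_{n-1}\}$ to $d\log[a_1]\wedge\cdots\wedge d\log[a_{n-1}]$.

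Finally I would incorporate the Witt factor: the graded multiplication of $\mathbb{W}_m\Omega_R^{\bullet}$ makes $\mathbb{W}_m\Omega_R^{n-1}$ a module over $\mathbb{W}_m\Omega_R^{0} = \mathbb{W}_m(R)$, so $(b,\xi) \mapsto b\cdot\beta_{n-1}(\xi)$ is $\mathbb{Z}$-bilinear on $\mathbb{W}_m(R)\times K^{M}_{n-1}(R)$ and factors through $\mathbb{W}_m(R)\otimes_{\mathbb{Z}} K^{M}_{n-1}(R)$, producing ${\rm Con}$ with the stated effect on generators; naturality in $R$ is inherited from that of each ingredient. I expect the Steinberg relation to be the main obstacle, everything else being formal, and I would note that the regularity and essential-finite-type hypotheses on $R$ are not actually needed for the construction of ${\rm Con}$ itself — what one uses is their analogue for the universal ring $R_0$, which is where the Gersten reduction to the field case takes place.
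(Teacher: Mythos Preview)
The paper does not prove this theorem at all; it is recalled as a known result from Krishna--Park \cite[(7.4)]{KP crys}. Your argument reconstructs the expected proof and is correct in outline: the $d\log$-Teichm\"uller map on $R^{\times}$, multilinear extension, verification of the Steinberg relation, and finally $\mathbb{W}_m(R)$-linear extension.

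The one place where your route differs from the standard one in the cited literature is the Steinberg step. In \cite{KP crys} and in Geisser--Hesselholt \cite[Proposition~B.1.1]{GH} (both of which the paper references), the relation $d\log[a]\wedge d\log[1-a]=0$ in $\mathbb{W}_m\Omega_R^{2}$ is verified \emph{directly} from the internal structure of the de Rham--Witt complex, not by a Gersten reduction to the field case. Your detour through the universal ring $R_0=k[x,\tfrac{1}{x(1-x)}]$ and the injection $\mathbb{W}_m\Omega_{R_0}^{2}\hookrightarrow\mathbb{W}_m\Omega_{k(x)}^{2}$ is valid, but note that the injectivity you invoke is itself a nontrivial input (it is a consequence of the Gersten resolution for de Rham--Witt sheaves on smooth schemes), so you are trading one computation for another. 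The direct verification is arguably more elementary and avoids this dependency. Your closing remark --- that the regularity and finite-type hypotheses on $R$ play no role in the mere existence of $\mathrm{Con}$ --- is correct and a good observation; those hypotheses matter elsewhere in \cite{KP crys} but not for this construction.
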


The existence of ${\rm Dec}$ in Corollary \ref{cor:decon mod m} for $1 \leq m$ induces the following:

\begin{defn}\label{defn:additive regulator}
Let $X= \Spec (k[[t]])$ for a field $k$. For $1 \leq m $, define the following homomorphism, a composite of ${\rm Con}$ with ${\rm Dec}$:
$$
\rho_k : \CH^n_{{\rm v}} (X/ (m+1), n) \overset{\rm Dec}{\to} \CH^1_{{\rm v}} (X/ (m+1), 1) \otimes \CH^{n-1} (k, n-1) 
$$
$$
\hskip1cm  \overset{\simeq^{\dagger}}{\to} \mathbb{W}_m (k) \otimes K_{n-1} ^M (k) \overset{\rm Con}{\to} \mathbb{W}_m\Omega_k ^{n-1},$$
where the isomorphism $\dagger$ follows from Theorems \ref{thm:NST} and \ref{thm:graph final}, while the homomorphism ${\rm Con}$ is the map  in Theorem \ref{thm:KPCM7.4} with $R=k$.
\qed
\end{defn}

This map $\rho_k$ is \emph{not} an isomorphism in general, in particular it is \emph{not} the desired inverse of the inverse Bloch map $\varphi$. However, we can use it to prove the injectivity of $\varphi$ in \S \ref{sec:inj inverse Bloch}.

\subsection{Injectivity of the inverse Bloch map}\label{sec:inj inverse Bloch}

With the helps of the homomorphisms $\rho_k$ of Definition \ref{defn:additive regulator}, we are now ready to prove:

\begin{lem}\label{lem:inverse Bloch inj}
The inverse Bloch map of \eqref{eqn:inverse Bloch cy}
$$
\varphi_n: \TH^n (k, n;m)' \to \CH_{{\rm v}} ^n (X/ (m+1), n)
$$
for $X= \Spec (k[[t]])$ is injective. 
\end{lem}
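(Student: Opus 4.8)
The plan is to prove injectivity of $\varphi_n$ by producing a left inverse up to the well-understood behavior on generators of $\TH^n(k,n;m)'$. The key tool is the deconcatenation machinery just developed: from Corollary \ref{cor:decon mod m} we have ${\rm Dec}:\CH_{\rm v}^n(X/(m+1),n)\to \CH_{\rm v}^1(X/(m+1),1)\otimes\CH^{n-1}(k,n-1)$, and composing with ${\rm Con}$ (Theorem \ref{thm:KPCM7.4}, with $R=k$) yields $\rho_k:\CH_{\rm v}^n(X/(m+1),n)\to \mathbb{W}_m\Omega_k^{n-1}$ (Definition \ref{defn:additive regulator}). The strategy is to compute the composite $\rho_k\circ\varphi_n$ on the additive higher Chow group $\TH^n(k,n;m)'$ and identify it, under R\"ulling's isomorphism $gr_k:\mathbb{W}_m\Omega_k^{n-1}\xrightarrow{\sim}\TH^n(k,n;m)'$, with (a nonzero multiple of) the identity. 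Since $gr_k$ is an isomorphism, this forces $\varphi_n$ to be injective.

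Here is how I would carry it out. First I would reduce to generators: by Theorem \ref{thm:Rulling2}, $\TH^n(k,n;m)'$ is generated by classes of closed points $\mathfrak{p}\in B_n=\mathbb{A}^1\times\square^{n-1}$ with $\mathfrak{p}\notin\{x=0\}$ and $\mathfrak{p}$ off all faces. After a finite base change $\pi:\Spec(k')\to\Spec(k)$ with $k'=\kappa(\mathfrak{p})$, such a point is $k'$-rational, say $\mathfrak{p}'=(a,b_1,\dots,b_{n-1})$ with $a,b_i\in(k')^\times$, $b_i\neq 1$. Exactly as in the proof of Lemma \ref{lem:inverse 0-cycle}, $\varphi_n(\mathfrak{p}')$ is the graph cycle $\{y_1=1-\frac{t}{a},\,y_2=b_1-at,\dots,y_n=b_{n-1}-at\}$ in $z_{\rm v}^n(X_{k'},n)$. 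Now I would compute ${\rm Dec}$ of this cycle: $y_1$ is the vanishing coordinate (its closure hits $\{p\}\times\{y_1=1\}$ at $t=0$), so no cyclic permutation is needed, and the compactified projection to the first coordinate is $\{y=1-\frac{t}{a}\}$ while ${\rm ev}_p$ of the projection to the remaining coordinates gives the closed point $\{y_2=b_1,\dots,y_n=b_{n-1}\}$ in $\square^{n-1}_{k'}$, which under Theorem \ref{thm:NST} is the Milnor symbol $\{b_1,\dots,b_{n-1}\}$. Under the identifications $\CH_{\rm v}^1(X_{k'}/(m+1),1)\simeq\mathbb{W}_m(k')$ (sending $\{y=1-t/a\}$ to the Teichm\"uller lift $[a^{-1}]$, as in the proof of Theorem \ref{thm:inverse Bloch n=1}) and $\CH^{n-1}(k',n-1)\simeq K_{n-1}^M(k')$, this gives ${\rm Dec}(\varphi_n(\mathfrak{p}'))=[a^{-1}]\otimes\{b_1,\dots,b_{n-1}\}$. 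Applying ${\rm Con}$ yields $\rho_{k'}(\varphi_{n,k'}(\mathfrak{p}'))=[a^{-1}]\,d\log[b_1]\wedge\cdots\wedge d\log[b_{n-1}]\in\mathbb{W}_m\Omega_{k'}^{n-1}$.

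It remains to match this with $gr_{k'}^{-1}(\mathfrak{p}')$. Tracing R\"ulling's isomorphism $gr_{k'}$ (Theorem \ref{thm:Rulling2}), the class $[\mathfrak{p}']=[(a,b_1,\dots,b_{n-1})]$ in $\TH^n(k',n;m)'$ corresponds precisely to the big de Rham--Witt form built from the Teichm\"uller lift $[a^{-1}]$ (or $[1-t/a]$ in the $(1+tk'[[t]])^\times$-picture) wedged with $d\log[b_i]$'s — this is exactly the shape of R\"ulling's explicit description of $gr_k$ on $0$-cycle generators, as recalled around the discussion of Theorem \ref{thm:graph final} and in the proof of Theorem \ref{thm:inverse Bloch n=1} for $n=1$. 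Hence $\rho_{k'}\circ\varphi_{n,k'}=gr_{k'}^{-1}$ on generators, so it equals $gr_{k'}^{-1}$ everywhere. Then I would descend via the transfer/push-forward compatibilities: $\varphi_n$ commutes with $\pi_*$ (the corollary after Definition \ref{defn:inverse Bloch 1}), ${\rm Dec}$ commutes with $\pi_*$ on the $\mathbb{W}_m$-factor (functoriality of flat pull-back and finite push-forward from Lemmas \ref{lem:fpb}, \ref{lem:fpf}), and $gr_k$ commutes with transfers (Theorem \ref{thm:full transfer} together with R\"ulling's trace on Witt vectors, \cite[Proposition A.9]{R}), so $\rho_k\circ\varphi_{n,k}=gr_k^{-1}$ on all of $\TH^n(k,n;m)'$. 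Since $gr_k^{-1}$ is an isomorphism, $\varphi_{n,k}$ is injective.

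The main obstacle I anticipate is not the formal descent but the bookkeeping in the previous paragraph: verifying that R\"ulling's isomorphism $gr_k$ sends the $0$-cycle $[(a,b_1,\dots,b_{n-1})]$ to exactly the form $[a^{-1}]d\log[b_1]\wedge\cdots\wedge d\log[b_{n-1}]$, with the correct sign and the correct Teichm\"uller normalization (i.e.\ $[a^{-1}]$ versus $[a]$, matching the convention by which $\{y=1-t/a\}$ corresponds to $1-t/a\in(1+tk[[t]])^\times=\mathbb{W}_m(k)$). This requires carefully unwinding the definitions of $gr_k$, $\phi_1$, and ${\rm Con}$ to confirm all three conventions align; I would isolate this as a lemma (valid over any field, in particular over $k'$) and then the rest is automatic. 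One also has to confirm ${\rm Con}$ indeed receives the tensor factor in the right order so that no transposition sign intervenes, but this is cosmetic.
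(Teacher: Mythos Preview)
The proposal is correct and follows essentially the same approach as the paper: both show that $gr_k \circ \rho_k \circ \varphi_n = {\rm id}$ on $\TH^n(k,n;m)'$ by reducing to $k'$-rational generators $\mathfrak{p}'=(a,b_1,\dots,b_{n-1})$ and descending via push-forward/transfer compatibility. The only minor difference is that you compute ${\rm Dec}$ directly on the graph cycle $\{y_1=1-t/a,\ y_i=b_{i-1}-at\}$, whereas the paper first expands in $\widehat{K}_n^M$ via $\{b_{i-1}-at\}=\{b_{i-1}\}+\{1-at/b_{i-1}\}$ to split $\varphi_{k'}([\mathfrak{p}'])$ into a main term $\{1-t/a,b_1,\dots,b_{n-1}\}$ plus terms with $\geq 2$ vanishing coordinates (killed by ${\rm Dec}$ via Lemma~\ref{lem:decon well}); both routes produce the same $\rho_{k'}(\varphi_{k'}([\mathfrak{p}']))=[a^{-1}]\,d\log[b_1]\wedge\cdots\wedge d\log[b_{n-1}]$.
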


\begin{proof}
Since we use the above $\varphi_n$ over different base fields, we denote it by $\varphi_k$ to emphasize the base field $k$ in the following argument.

\medskip

Let $\mathfrak{p} \in \TZ^n (k, n;m)$ be an integral $0$-cycle. It is enough to show that we can recover the cycle class $[\mathfrak{p}] \in \TCH^n (k, n;m)'$ from $\varphi_k ([\mathfrak{p}])$. 

\medskip

Let $k':= \kappa (\mathfrak{p})$ be the residue field and let $\pi: \Spec (k') \to \Spec (k)$ be the associated morphism. Let $\varphi_{k'}$ be the inverse Bloch map corresponding to $k'$. There exists a $k'$-rational point $\mathfrak{p}'  \in B_n$ and its class $[\mathfrak{p}'] \in \TZ^n (k', n;m)$ such that we have $\pi_* ([\mathfrak{p}']) = [\mathfrak{p}]$.

 Consider the commutative diagram

$$
\xymatrix{
\TCH^n (k', n;m)' \ar[d] ^{\pi_*} \ar[r]^{\varphi_{k'}} & \widehat{K}_n ^M (k'_{m+1}, (t))  \ar[d] ^{{\rm Tr}_{k'/k}} \ar[r] ^{\ \ \ \ \rho_{k'}} & \mathbb{W}_m \Omega_{k'}^{n-1}   \ar[d] ^{{\rm Tr}_{k'/k}} \ar[r]^{gr_{k'}\ \ \ \ } _{\simeq \ \ \ \ }  & \TCH^n (k', n;m) ' \ar[d] ^{\pi_*} \\
\TCH^n (k, n;m)' \ar[r] ^{\varphi_k} & \widehat{K}_n ^M (k_{m+1}, (t)) \ar[r] ^{\ \ \ \ \rho_k} & \mathbb{W}_m \Omega_k ^{n-1} \ar[r] ^{gr_k \ \ \ \ }_{\simeq \ \ \ \ } & \TCH^n (k, n;m)',}
$$
where $\rho_{k}$ and $\rho_{k'}$ are the maps of Definition \ref{defn:additive regulator} under the identifications $\widehat{K}^M_{n} (k_{m+1}, (t)) \simeq \CH^n_{{\rm v}} (X/ (m+1), n)$ and $\widehat{K}^M_{n} (k'_{m+1}, (t)) \simeq \CH^n_{{\rm v}} (X_{k'}/ (m+1), n)$ of Theorem \ref{thm:graph final}, with $X_{k'} = \Spec (k'[[t]])$, $gr_k$ and $gr_{k'}$ are the isomorphisms of R\"ulling \cite{R} (Theorem \ref{thm:Rulling2}), the maps $\pi_*$ are the push-forwards of cycles, the first ${\rm Tr}_{k'/k}$ is the push-forward constructed in \cite[Theorem 1.1.2]{Park presentation} (recalled in Theorem \ref{thm:full transfer}) and the second $\Tr_{k'/k}$ is the trace map on the big de Rham-Witt forms constructed by K. R\"ulling \cite[Theorem 2.6]{R}.

Since $\pi_* ([ \mathfrak{p}']) = [ \mathfrak{p}]$, to show that $\varphi_k ([\mathfrak{p}])$ determines $[\mathfrak{p}] \in \TCH^n (k, n;m)'$, it is enough to show that $\varphi_{k'} ([\mathfrak{p}'])$ determines $[\mathfrak{p}'] \in \TCH^n (k', n;m)'$. We check it directly in what follows.

Since $\mathfrak{p}'$ is a $k'$-rational point of $B_n=\mathbb{A}^1 \times \square^{n-1}$, we can express it as
$$
 \mathfrak{p}' = \left( a, b_1, \cdots, b_{n-1} \right) \in \left(\mathbb{A}^1 \times  \square^{n-1}\right)(k'),
 $$
where $a \in k' \setminus \{ 0 \}$ and $b_i \in k' \setminus \{ 0, 1 \}$. 
Hence by the definition of $\varphi_{k'}$, we have
$$
 \varphi_{k'} ([\mathfrak{p}']) = \left\{ 1- \frac{ t}{a}, b_1-a t, \cdots, b_{n-1}-a t \right\} \in \widehat{K}_n ^M (k'_{m+1}, (t)).
 $$
For each $1 \leq i \leq n-1$, we have 
$$
\{b_i - a t \}=  \left\{ b_i \left( 1- \frac{ at}{ b_i} \right) \right\} = \{b_i \} + \left\{1- \frac{ a t}{b_i}\right\},
$$
 and by the multi-linearity and the distributive law on the Milnor symbols, we can express $\varphi_{k'}([\mathfrak{p}'])$ as the sum of the two parts
\begin{eqnarray*}
& &  \left\{ 1- \frac{ t}{a}, b_1, \cdots, b_{n-1} \right\}  + \left(  \left\{ 1- \frac{ t}{a}, 1 - \frac{at}{b_1} , b_2- a t, \cdots, b_{n-1} - a t  \right\} + \cdots \right) \\
&=:& \varphi^1_{k'} ([\mathfrak{p}']) + \varphi^2 _{k'}([\mathfrak{p}']),
\end{eqnarray*}
where $\varphi^2 _{k'} ([\mathfrak{p}']) $ has $(2^{n-1}-1)$ terms of Milnor symbols, and each has at least two coordinates $\equiv 1 \mod t$, e.g. $y_1 = 1 - \frac{t}{a}$ and $y_i = 1 - \frac{a t}{b_{i-1}}$ for some $i \geq 2$. Hence $\varphi^2 _{k'} ([\mathfrak{p}']) \in \ker ( \rho_{k'})$ by Lemma \ref{lem:decon well}. Thus 
$$
\rho_{k'} (\varphi_{k'} ([\mathfrak{p}']) )  = \rho_{k'} (\varphi^1_{k'} ([\mathfrak{p}']) ) = \frac{1}{ [ a]} \frac{ d [ b_1]}{[b_1]} \wedge \cdots \wedge \frac{d  [ b_{n-1}]}{[ b_{n-1}]},
$$
where $[c]$ for $c \in k'$ denotes the Teichm\"uller lift of $c$ in $\mathbb{W}_m (k')$. Following the definition of the isomorphism $gr_{k'}$ of R\"ulling \cite{R} (Theorem \ref{thm:Rulling2}), this gives
$$
gr_{k'} \left(  \frac{1}{ [ a]} \frac{ d [ b_1]}{[b_1]} \wedge \cdots \wedge \frac{d  [ b_{n-1}]}{[ b_{n-1}]} \right)= [ (a, b_1, \cdots, b_{n-1})] = [ \mathfrak{p}'] \in \TCH^n (k', n;m)',
$$
recovering the class of $[\mathfrak{p}']$. Thus applying $\pi_*$, we recover $[\mathfrak{p}]$ from $\rho_k ([\mathfrak{p}])$ as well. This proves the injectivity of $\varphi_k$. 
\end{proof}

\begin{remk}
In the above proof, the reader notes that $\varphi^1_{k'} ([\mathfrak{p}'])$ remembers $[\mathfrak{p}']$ and $\rho_{k'}$ behaves like a filter that allows us to recover $[\mathfrak{p}']$ from $\varphi_{k'} ([\mathfrak{p}'])$. The remining filtered part of $\varphi$, namely $\varphi_{k'} ^2 ([\mathfrak{p}'])$, plays a role in the proof of the surjectivity behind the scene.
\qed
\end{remk}

\section{Surjectivity of the inverse Bloch map}\label{sec:surj inverse Bloch}

In \S \ref{sec:surj inverse Bloch}, we prove the surjectivity of the inverse Bloch map $\varphi$ of \eqref{eqn:inverse Bloch cy}. This is the last remaining part of the proof of Theorem \ref{thm:intro main vanishing}. We still use $X = \Spec (k[[t]])$ and $p\in X$ is the closed point.

\subsection{A prepration}

Note that for each irreducible cycle $Z \in z^1_{{\rm d}} (X, 1)$, its closure $\overline{Z} \subset X \times \overline{\square}^1$ satisfies $\overline{Z} \cap \{ y_1 = \infty \} = \emptyset$ by Lemma \ref{lem:proper int face *}. Thus $\overline{Z} \subset X \times \mathbb{A}^1$ in fact, and it is defined by a height $1$ prime ideal $P \subset k[[t]] [ y_1]$ of the UFD. Hence there exists a unique irreducible polynomial $f= f_Z (t, y_1) \in k[[t]] [y_1]$ monic in $y_1$, that defines $\overline{Z}$ (see also \cite[\S 4.2]{Park presentation}). Its special fiber $Z_s$ over $p \in X$ is given by $f_Z (0, y_1 ) \in k[y_1]$, and the associated cycle on $\square_k ^1$ is denoted by $[Z_s ]$ in what follows.

\begin{defn}
Let $z^1_{{\rm d, nc}} (X, 1) \subset z^1 _{{\rm d}} (X, 1)$ be the subgroup generated by the irreducible cycles of the form $Z=\pi_* ( \{ y = c-t \})$, where
$\pi: \Spec (k') \to \Spec (k)$ runs over all finite extensions $k \hookrightarrow k'$ of fields, and $c $ runs over all members of $k' \setminus \{ 0, 1\}$. 
\qed
\end{defn}

Recall from Definition \ref{defn:ACH} that $\TZ^n (k, n;m)$ is independent of $m$.

\begin{defn}\label{defn:J n}
Let $n \geq 1$ be an integer. Let $\mathfrak{p} \in \TZ^n (k, n;m)$ and $Z \in z^1 _{{\rm d, nc}} (X, 1)$ be irreducible cycles. Here, $\mathfrak{p} \in \mathbb{A}^1 \times \square_k ^{n-1}$ and $Z \subset \square_X ^1$.

Define $J_n ([ \mathfrak{p}] \otimes Z )$ to be the $0$-cycle $ [\mathfrak{p}] \boxtimes [Z_s]= [\mathfrak{p} \times_k Z_s ]$ on $\mathbb{A}^1 \times \square^n$. One checks immediately that this belongs to $\TZ^{n+1} (k, n+1;m)$. We define the homomorphism
$$
J_{n}= J_{n, k}: \TZ^n (k, n;m) \otimes z^1 _{{\rm d, nc}} (X, 1) \to \TZ^{n+1} (k , n+1; m)
$$
by $\mathbb{Z}$-bilinearly extending it. 
\qed
\end{defn}

One checks the following from the definition of the push-forward on cycles:

\begin{lem}\label{lem:J n comm}
Let $n \geq 1$ be an integer. Let $\pi: \Spec (k') \to \Spec (k)$ be the morphism corresponding to a finite extension $k \hookrightarrow k'$ of fields. Then we have the commutative diagram
$$
\xymatrix{ 
\TZ^n (k', n;m) \otimes z_{{\rm d, nc}} ^1 (X_{k'}, 1) \ar[d] ^{ \pi_* \otimes \pi_*} \ar[rr] ^{ J_{n,k'}} & & \TZ^{n+1} ( k', n+1;m) \ar[d] ^{\pi_*} \\
\TZ^n (k, n;m) \otimes z_{{\rm d, nc}} ^1 (X, 1) \ar[rr]^{J_{n,k}} & & \TZ^{n+1} (k, n+1; m)},
$$
where $X_{k'}= \Spec (k'[[t]])$.
\end{lem}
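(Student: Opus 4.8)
The plan is to reduce the commutativity to a statement about $0$-cycles and then invoke the standard functoriality of proper push-forward. By $\mathbb{Z}$-bilinearity of all four maps, it suffices to check the outer square on a generator $[\mathfrak{p}'] \otimes Z'$, where $\mathfrak{p}' \in \mathbb{A}^1 \times \square_{k'}^{n-1}$ is a closed point subject to the conditions of Definition~\ref{defn:ACH}, and $Z' = \rho_*(\{y = c - t\}) \in z^1_{{\rm d,nc}}(X_{k'}, 1)$ is a chosen generator, for some finite extension $\rho \colon \Spec(k'') \to \Spec(k')$ and some $c \in k'' \setminus \{0, 1\}$. First I would note that the diagram makes sense: $\pi_* Z' = (\pi \rho)_*(\{y = c - t\})$ again lies in $z^1_{{\rm d,nc}}(X, 1)$ because $k''/k$ is again a finite extension, so $J_{n,k}$ is indeed defined on $\pi_*[\mathfrak{p}'] \otimes \pi_* Z'$.

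Next I would isolate the two compatibilities doing all the work. Writing $Z'_s$ for the special fibre of $Z'$ over the closed point of $X_{k'}$, the first is that base-change push-forward commutes with the external product over the ground field, $\pi_*(\alpha \boxtimes \beta) = (\pi_*\alpha) \boxtimes (\pi_*\beta)$ for $0$-cycles $\alpha$ on $\mathbb{A}^1 \times \square_{k'}^{n-1}$ and $\beta$ on $\square_{k'}^1$ (the usual compatibility of proper push-forward with fibre products of schemes over the base); the second is that push-forward commutes with taking special fibres, $(\pi_* Z')_s = \pi_*(Z'_s)$, where the left-hand $\pi_*$ is along $X_{k'} \to X$ and the right-hand one along $\Spec(k') \to \Spec(k)$. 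Granting these, the verification is the chain of identities on $\mathbb{A}^1 \times \square_k^n$
\begin{align*}
\pi_*\bigl(J_{n,k'}([\mathfrak{p}'] \otimes Z')\bigr)
&= \pi_*\bigl([\mathfrak{p}'] \boxtimes [Z'_s]\bigr)
= [\pi_*\mathfrak{p}'] \boxtimes [\pi_*(Z'_s)] \\
&= [\pi_*\mathfrak{p}'] \boxtimes [(\pi_* Z')_s]
= J_{n,k}\bigl((\pi_* \otimes \pi_*)([\mathfrak{p}'] \otimes Z')\bigr),
\end{align*}
which is exactly the asserted commutativity.

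The step I expect to be the main (and only) obstacle is the second compatibility $(\pi_* Z')_s = \pi_*(Z'_s)$: forming the special fibre is an intersection with the Cartier divisor $\{t = 0\}$, and one must check that this commutes with the finite push-forward $\rho_*$ and then with $\pi_*$ on the nose, with the residue-degree multiplicities agreeing on both sides. This is routine in the present setting, however, precisely because the generators of $z^1_{{\rm d,nc}}$ are by construction push-forwards of the graphs $\{y = c - t\}$, which are finite and flat over $X$; hence their closures in $X \times \overline{\square}^1$ meet $\{t = 0\}$ in a proper intersection with no component contained in it, and the equality follows by unwinding the definitions of the finite push-forward of cycles and of the specialization map (cf. Lemma~\ref{lem:fpf} for the push-forward and the morphism-of-complexes property of ${\rm ev}_p$ in Lemma~\ref{lem:specialization}). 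Everything else is formal bookkeeping.
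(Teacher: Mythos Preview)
Your argument breaks at what you call the ``first compatibility'', namely $\pi_*(\alpha \boxtimes_{k'} \beta) = (\pi_*\alpha) \boxtimes_k (\pi_*\beta)$. This identity is false in general. Take $k' = k(\sqrt d)$ with $d\in k^\times$ not a square, let $\mathfrak{p}'$ be a $k'$-rational point of $\mathbb{A}^1\times\square^{n-1}$ all of whose coordinates lie in $k$ (so its image $\mathfrak{p}$ is $k$-rational), and let $Z' = \{y = c - t\}$ with $c\in k\setminus\{0,1\}$. Then $\pi_*[\mathfrak{p}'] = 2[\mathfrak{p}]$ and $\pi_*Z' = 2Z$, so going left--bottom gives $J_{n,k}\bigl(2[\mathfrak{p}]\otimes 2Z\bigr)=4\,[\mathfrak{p}\times_k Z_s]$; on the other hand $\mathfrak{p}'\times_{k'}Z'_s$ is a single $k'$-rational point and going top--right gives $\pi_*[\mathfrak{p}'\times_{k'}Z'_s]=2\,[\mathfrak{p}\times_k Z_s]$. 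The underlying reason is that the map $(\mathbb{A}^1\times\square^n)_{k'}\to\mathbb{A}^1\times\square^n$ is a \emph{diagonal} base change, not the external product over $k$ of two separate finite maps, so there is no ``usual compatibility of proper push-forward with fibre products'' of the sort you invoke. Your second compatibility $(\pi_*Z')_s=\pi_*(Z'_s)$ is correct and you argue it well; the trouble is entirely in the first.

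Note that this same example shows the square in the lemma does not commute on arbitrary tensors, so the statement as literally written cannot be proved. The paper records it with the one-line ``one checks from the definition of the push-forward on cycles'' and uses it only once, in Case~2 of Proposition~\ref{prop:J comm}, for the specific tensor $[\mathfrak{p}']\otimes\{y=c-t\}$ where both factors are $L$-rational lifts, chosen with multiplicity one, of given irreducible $k$-cycles $\mathfrak{p}$ and $Z$. What would make your approach work is either a projection-formula style argument tailored to that restricted situation, or a reformulation of the lemma with those multiplicity-one hypotheses built in; the blanket product identity you rely on is simply not available.
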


\subsection{The twist map and the inverse Bloch map}

Before we proceed further, we introduce the following actions:
\begin{defn}\label{defn:twist} Let $X= \Spec (k[[t]])$.
\begin{enumerate}
\item For $a \in k^{\times}$, consider the scaling automorphism of the $k$-scheme
$$
 \tau_a: X \times \square^1 \to X \times \square^1
 $$
given by sending $t \mapsto at$. It induces the automorphism
$$ 
\tau_a ^*: z^1_{{\rm d}} (X, 1) \to z^1_{{\rm d}} (X, 1).
$$
We call it the \emph{twist by $a$}.

\item We generalize the above as follows. Let $\mathfrak{p} \in \TZ^n (k, n;m)$ be an integral $0$-cycle. Let $k':= \kappa (\mathfrak{p})$ be the residue field and let $\pi: \Spec (k') \to \Spec (k)$ be the corresponding morphism. There is a $k'$-rational point $\mathfrak{p}' \in B_n= \mathbb{A}^1 \times \square^{n-1}$ such that $\pi_*([\mathfrak{p}']) = [ \mathfrak{p}]$. Let $\mathfrak{q}' \in \mathbb{A}^1$ be the projection of the closed point $\mathfrak{p}' \in B_n$ to $\mathbb{A}^1$, which is a closed point $\not = 0$. Let $c \in k'\setminus \{ 0 \}$ be the coordinate value of $\mathfrak{q}'$. 

Let $Z \in z_{{\rm d}} ^1 (X, 1)$ be an irreducible cycle. Define
$$ 
\tau_{\mathfrak{p}} ^*: z_{{\rm d}} ^1 (X, 1) \to z_{{\rm d}} ^1 (X, 1)
$$
to be the map given by $Z \mapsto  \pi_* \left( \tau_c^* ( \pi^* (Z)) \right)$, where $\pi^*: z_{{\rm d}} ^1 (X, 1) \to z_{{\rm d}} ^1 (X', 1)$ and $\pi_* : z_{{\rm d}} ^1 (X', 1) \to z_{{\rm d}} ^1 (X, 1)$ are the pull-back and the push-forward for the base change $\pi: X'= \Spec (k'[[t]]) \to X$, and $\tau_c ^*$ is as in (1) over the base field $k'$. 

This induces the $\mathbb{Z}$-bilinear map 
$$
\tau: \TZ^n (k,n;m) \otimes z^1_{{\rm d}} (X, 1) \to z^1_{{\rm d}} (X, 1)
$$
given by $\mathfrak{p} \otimes Z \mapsto \tau_{\mathfrak{p}} ^* (Z)$. We call it the \emph{twist action}.

\item We define the \emph{twist map}
$$
{\rm tw}:=({\rm Id}, \tau) : \TZ^n (k,n;m) \otimes z^1_{{\rm d}} (X, 1) \to \TZ^n (k, n;m) \otimes z^1_{{\rm d}} (X, 1)
$$
given by ${\rm tw} (\mathfrak{p} \otimes Z) := \mathfrak{p} \otimes \tau ( \mathfrak{p} \otimes Z) = \mathfrak{p} \otimes \tau_{\mathfrak{p}} ^* (Z)$. \qed
\end{enumerate}
\end{defn}

\begin{prop}\label{prop:J comm}
Let $n \geq 1$ be an integer. The following diagram commutes: 
\begin{equation}\label{eqn:J comm 0}
\xymatrix{
\TZ^n (k, n;m) \otimes z^1_{{\rm d, nc}} (X, 1) \ar[d] _{ {\rm Id} \otimes \iota} \ar[rr]^{J_{n,k}}  \ar@{-->}[ddddrr]^{G_{n,k}}& & \TZ^{n+1} (k, n+1; m) \ar[ddd] ^{\varphi_{n+1}} \\
\TZ^n (k, n;m) \otimes z^1 _{{\rm d}} (X, 1) \ar[d] _{{\rm tw}=({\rm Id}, \tau)} & &  \\
\TZ^n (k, n;m) \otimes z^1 _{{\rm d}} (X, 1) \ar[d] _{\varphi_n \otimes {\rm Id}} & &  \\
z^n_{{\rm v}} (X, n) \otimes z^1 _{{\rm d}} (X, 1) \ar[rr] _{-\boxtimes-} & &z^{n+1} _{{\rm v}} (X, n+1) \ar@{->>}[d]^{{\rm can}}   \\
 & &\CH^{n+1}_{{\rm v}} (X/ (m+1), n+1), }
\end{equation}
where $\iota$ is the inclusion, ${\rm tw}$ is the twist map in Definition \ref{defn:twist}, $\boxtimes$ is the juxtaposition map, ${\rm can}$ is the canonical quotient map, and $\varphi_n, \varphi_{n+1}$ are the inverse Bloch maps as in \eqref{eqn:varphi n 0} for $n$ and $n+1$, respectively.
\end{prop}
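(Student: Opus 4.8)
The plan is to reduce the commutativity of \eqref{eqn:J comm 0} to an explicit identity of graph cycles over a field. By the $\mathbb{Z}$-(bi)linearity of all the arrows in \eqref{eqn:J comm 0} it suffices to check the identity on a generator $[\mathfrak{p}] \otimes Z$, with $\mathfrak{p} \in \TZ^n (k, n;m)$ an integral $0$-cycle and $Z = \pi_*(\{ y = c - t \}) \in z^1_{{\rm d, nc}} (X, 1)$ an integral generator, where $\pi : X_{k'} = \Spec(k'[[t]]) \to X$ is the base change along a finite field extension $k \hookrightarrow k'$ and $c \in k' \setminus \{ 0, 1 \}$. Unwinding the two routes on such an element, the diagonal composite $G_{n,k}$ outputs ${\rm can}\bigl( \varphi_n(\mathfrak{p}) \boxtimes \tau_{\mathfrak{p}}^*(Z) \bigr)$, while ${\rm can} \circ \varphi_{n+1} \circ J_{n,k}$ outputs ${\rm can}\bigl( \varphi_{n+1}([\mathfrak{p}] \boxtimes [Z_s]) \bigr) = {\rm can}\bigl( \varphi_{n+1}([\mathfrak{p} \times_k Z_s]) \bigr)$, where $Z_s$ is the special fibre of $Z$ over $p$. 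The structural reason these agree is that the defining equations of $\Lambda$ in \S\ref{sec:geo construct} have separated variables: $y_1$ depends only on $x$ and $t$, and each $y_{i+1}$ depends only on $z_i$, $x$ and $t$; so $\varphi$ converts a $k$-product of $0$-cycles into an $X$-fibre product of the associated graph cycles, the twist $\tau_{\mathfrak{p}}$ being exactly what reproduces the shear $z_i \mapsto z_i - xt$ performed by $\Lambda$. I will first settle the case $\mathfrak{p}$ is $k$-rational, then descend.

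When $\mathfrak{p} = (a, b_1, \dots, b_{n-1})$ with $a \in k^{\times}$ and $b_i \in k \setminus \{ 0, 1 \}$, the map \eqref{eqn:varphi n 0} gives $\varphi_n(\mathfrak{p}) = \{ y_1 = 1 - t/a,\ y_2 = b_1 - at,\ \dots,\ y_n = b_{n-1} - at \}$ on $X \times \square^n$, and $\varphi_{n+1}([\mathfrak{p} \times_k Z_s]) = \pi_*\{ y_1 = 1 - t/a,\ y_2 = b_1 - at,\ \dots,\ y_n = b_{n-1} - at,\ y_{n+1} = c - at \}$ on $X \times \square^{n+1}$. On the other side, since $\kappa(\mathfrak{p}) = k$ one has $\tau_{\mathfrak{p}}^* = \tau_a^*$; the twist $\tau_a$ commutes with the finite base change $\pi$ (the relevant square is Cartesian), and $\tau_a^*(\{ y = c - t \}) = \{ y = c - at \}$ by the definition of the twist, whence $\tau_{\mathfrak{p}}^*(Z) = \pi_*\{ y = c - at \}$. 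The juxtaposition $\varphi_n(\mathfrak{p}) \boxtimes \tau_{\mathfrak{p}}^*(Z)$ is the fibre product over $X$, and by the projection formula for the finite morphism $\pi$ it equals $\pi_*\bigl( \{ y_1 = 1 - t/a, \dots, y_n = b_{n-1} - at \} \times_{X_{k'}} \{ y_{n+1} = c - at \} \bigr)$, i.e. literally $\pi_*\{ y_1 = 1 - t/a, \dots, y_n = b_{n-1} - at,\ y_{n+1} = c - at \}$ --- the same cycle as $\varphi_{n+1}([\mathfrak{p} \times_k Z_s])$. Hence in the rational case \eqref{eqn:J comm 0} commutes already in $z^{n+1}_{{\rm v}}(X, n+1)$, before ${\rm can}$.

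For the descent, write $k_1 = \kappa(\mathfrak{p})$, let $\sigma : \Spec(k_1) \to \Spec(k)$ be the structure morphism, and pick a $k_1$-rational point $\mathfrak{p}_1$ with $\sigma_*[\mathfrak{p}_1] = [\mathfrak{p}]$. I will then invoke, in turn: Lemma \ref{lem:J n comm} (compatibility of $J_n$ with $\sigma$); the base-change compatibility of $\varphi_n$ and $\varphi_{n+1}$ under proper push-forward (the Corollary following Definition \ref{defn:inverse Bloch 1}, together with Lemma \ref{lem:fpf}); the projection formula for $\boxtimes$; and the identity $\tau_{\mathfrak{p}}^* = \sigma_* \circ \tau_a^* \circ \sigma^*$ built into Definition \ref{defn:twist}(2), which makes the twist compatible with base change. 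Composing these, the identity for $[\mathfrak{p}] \otimes Z$ follows from the one for $[\mathfrak{p}_1] \otimes \sigma^*Z$ over $k_1$; since $\sigma^*Z$ is again a finite sum of integral generators of $z^1_{{\rm d, nc}}(X_{k_1}, 1)$ and $\mathfrak{p}_1$ is $k_1$-rational, a last appeal to bilinearity and to the rational case completes the argument.

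I expect the genuine work to be in this last, descent step: organizing the base-change and push-forward compatibilities so the reduction to rational $\mathfrak{p}$ is clean. The subtlety is concentrated in the twist map, whose very definition already contains a base change along $\kappa(\mathfrak{p})$, so one must verify carefully how it interacts both with the push-forward $\pi_*$ hidden inside $Z = \pi_*(\{ y = c - t \})$ and with the intersection multiplicities that arise in $\kappa(\mathfrak{p}) \otimes_k \kappa(Z)$; all of this is routine cycle-theoretic bookkeeping, but it is where a full proof will spend its length. The rational-case identity itself, by contrast, is immediate from the shape of the equations defining $\Lambda$.
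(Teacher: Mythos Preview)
Your proposal is correct and takes the same approach as the paper: an explicit graph-cycle computation for rational $\mathfrak{p}$, followed by descent via push-forward compatibilities. The paper organizes the two steps slightly differently---its base case takes both $\mathfrak{p}$ and $Z$ rational over $k$, and its descent passes to a single common extension $L\supset \kappa(\mathfrak{p}),\,k'$ rather than to $\kappa(\mathfrak{p})$ alone---but the substance is identical, and the paper is equally informal about the final ``$\pi_*$ commutes with all operations'' bookkeeping that you flag as the main work.
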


The broken arrow $G_{n,k}$ is defined to be the counter-clockwise composition of the morphisms. It is used in Lemma \ref{lem:vertical surj}.

\begin{proof}
Let $\mathfrak{p} \in \TZ^n (k, n;m)$ and $Z \in z^1 _{{\rm d, nc}} (X, 1)$ be irreducible cycles.

\medskip

\textbf{Case 1:} First suppose $\mathfrak{p}$ is a $k$-rational point, and $Z=\{y= c-t \}$ for some $c \in k \setminus \{ 0, 1 \}$. Write $\mathfrak{p} = (a, b_1, \cdots, b_{n-1}) \in (\mathbb{A}^1 \times \square^{n-1}) (k)$ for some $a \in k^{\times}$, and $ b_1, \cdots, b_{n-1} \in k \setminus \{ 0, 1 \}$. 

Then $J_{n,k} ([\mathfrak{p}] \otimes Z ) = [ ( a, b_1, \cdots, b_{n-1}, c)]$ so that $(\varphi_{n+1} \circ J_{n,k} ) ( [ \mathfrak{p}] \otimes Z)$ is
\begin{equation}\label{eqn:J comm 1}
 \varphi_{n+1} ( [ (a, b_1, \cdots, b_{n-1}, c)]) = \left ( 1 - \frac{t}{a} , b_1 -at , \cdots, b_{n-1} -at, c - at \right).
\end{equation}

On the other hand, $((\varphi_n \otimes {\rm Id}) \circ {\rm tw}) ( [ \mathfrak{p}] \otimes \iota (Z) ) = \varphi_n ( [ \mathfrak{p}]) \otimes \tau_{a} ^* Z$
\begin{eqnarray*}
&= &\left( 1 - \frac{t}{a}, b_1 -at, \cdots, b_{n-1} - a  t\right) \otimes \tau_{a} ^* [ y=\{ c-  t\} ] \\
&=& \left( 1 - \frac{t}{a}, b_1 - at, \cdots, b_{n-1} - at \right) \otimes [y= \{  c - at \}] 
\end{eqnarray*}
so that after applying $\boxtimes$, the above becomes equal to \eqref{eqn:J comm 1}. Thus the commutativity of \eqref{eqn:J comm 0} holds for such elements.

\medskip

\textbf{Case 2:} In general, let $Z= \pi_* ( \{ y= c-t \})$ for some finite extension $\pi: \Spec (k') \to \Spec (k)$ and $c \in k' \setminus \{ 0, 1 \}$. Let $k''= \kappa (\mathfrak{p})$ be the residue field of $\mathfrak{p}$. By replacing $k'$ and $k''$ by the smallest finite extension $L$ (in an algebraic closure $\bar{k}$ of $k$) containing both $k'$ and $k''$, we may assume $\mathfrak{p}$ is $L$-rational and $Z= \pi_* ( \{ y = c-t \})$, for some $c \in L \setminus \{ 0,1 \}$, where $\pi: \Spec (L) \to \Spec (k)$ is the corresponding morphism. We can find an $L$-rational point $\mathfrak{p}' \in \TZ^n (L, n;m)$ such that $\pi_* ([\mathfrak{p}']) = [ \mathfrak{p}]$. Let $a \in L^{\times}$ be the first coordinate of $\mathfrak{p}' \in ( \mathbb{A}^1 \times \square^{n-1} )(L)$. Then,
\begin{eqnarray*}
J_{n, k} ( [ \mathfrak{p}] \otimes Z ) & =& J_{n,k} ( \pi_* [ \mathfrak{p}'] \otimes \pi_* ( \{ y= c-t \} )) = ^{\dagger}\pi_* \left( J_{n,L} ( [ \mathfrak{p}'] \boxtimes  \{ y= c-t \}) \right) \\
& = & \pi_* \left( [ \mathfrak{p}']  \boxtimes  \{ y = c \} \right),
\end{eqnarray*}
where we used Lemma \ref{lem:J n comm} for $\dagger$. Hence applying $\varphi_{n+1, k}$ to the above equation, we deduce
\begin{eqnarray*}
\varphi_{n+1,k} ( J_{n,k} ( [ \mathfrak{p}] \otimes Z)) & =& \varphi_{n+1,k} \left( \pi_*  \left( [ \mathfrak{p}'] \boxtimes  \{ y = c \} \right) \right) = ^{\dagger}\pi_* \left( \varphi_{n+1, L}   \left( [ \mathfrak{p}'] \boxtimes  \{ y = c \}  \right) \right) \\
& = ^{\ddagger}& \pi_* \left( \varphi_{n, L} ( [ \mathfrak{p}']) \boxtimes  \{ y = c - a t \}  \right) \\
&=& \pi_* \left( \varphi_{n,L} ( [ \mathfrak{p}']) \boxtimes \tau_{\mathfrak{p}'}^* \{ y = c-t \}\right) \\
&=& \pi_* ( \boxtimes \circ (\varphi_{n,L} \otimes {\rm Id}) \circ {\rm tw} \circ ({\rm Id} \otimes \iota) ( [ \mathfrak{p}'] \otimes \{ y = c -t \})) \\
&=& \boxtimes \circ (\varphi_{n,k} \otimes {\rm Id}) \circ {\rm tw} \circ ({\rm Id} \otimes \iota) ( [ \mathfrak{p}] \otimes Z),
\end{eqnarray*}
where $\dagger$ holds due to the way $\varphi_{n+1}$ is defined using push-forwards, $\ddagger$ holds by \textbf{Case 1} applied to $\mathfrak{p}'$ and $\{ y=c \}$ with the field $L$, and for the rest we used that $\pi_*$ commutes with the respective operations that appeared in the above, which is essentially due to how the operations are defined. We shrink some details on checking all the commutativity. This proves the commutativity of \eqref{eqn:J comm 0} in general.
\end{proof}

\begin{lem}\label{lem:gamma m n}
Let $m, n \geq 1$ be integers. Then we have the surjective homomorphism of groups
\begin{equation}\label{eqn:gamma m n}
 \gamma_{m,n}: \widehat{K}_n ^M (k_{m+1}, (t)) \otimes_{\mathbb{Z}} \widehat{K}_1 ^M (k_{m+1}) \to \widehat{K}^M_{n+1} (k_{m+1}, (t))
\end{equation}
induced from the identity map
$$
{\rm Id}: \underset{n}{\underbrace{k_{m+1} ^{\times} \otimes \cdots \otimes k_{m+1} ^{\times} }} \otimes k_{m+1} ^{\times} \to  \underset{n+1}{\underbrace{k_{m+1} ^{\times} \otimes \cdots \otimes k_{m+1} ^{\times} }}.
$$
\end{lem}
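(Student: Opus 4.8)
The plan is to realize $\gamma_{m,n}$ as the restriction of the multiplication pairing on the improved Milnor $K$-ring, and then to prove surjectivity by reducing to generators of Kato--Saito type.

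First I would recall that the improved Milnor $K$-groups $\widehat{K}^M_*(k_{m+1})$ form a graded-commutative ring, that the canonical transform $K^M_*(k_{m+1})\to\widehat{K}^M_*(k_{m+1})$ is a homomorphism of graded rings (Kerz \cite{Kerz finite}), and that the restriction $\mathrm{res}\colon\widehat{K}^M_*(k_{m+1})\to\widehat{K}^M_*(k)$ induced by $k_{m+1}\to k$ is a ring homomorphism by functoriality. The graded multiplication gives a $\mathbb{Z}$-bilinear pairing $\widehat{K}^M_n(k_{m+1})\times\widehat{K}^M_1(k_{m+1})\to\widehat{K}^M_{n+1}(k_{m+1})$; restricting the left variable to $\widehat{K}^M_n(k_{m+1},(t))=\ker(\mathrm{res})$ and using $\mathrm{res}(\alpha\cdot\beta)=\mathrm{res}(\alpha)\,\mathrm{res}(\beta)=0$ shows that such a product lands in $\widehat{K}^M_{n+1}(k_{m+1},(t))$. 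Bilinearity then produces the homomorphism $\gamma_{m,n}$ out of the tensor product, and by construction this is precisely the map induced by concatenation of symbols $\{a_1,\dots,a_n\}\otimes\{b\}\mapsto\{a_1,\dots,a_n,b\}$, i.e.\ the map of the statement.

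For surjectivity I would proceed in two steps. The purely formal step is that $K^M_{n+1}(k_{m+1},(t))\to\widehat{K}^M_{n+1}(k_{m+1},(t))$ is surjective: comparing the split short exact sequence $0\to K^M_{n+1}(k_{m+1},(t))\to K^M_{n+1}(k_{m+1})\to K^M_{n+1}(k)\to 0$ with its $\widehat{K}$-analogue, the middle transform is onto by Theorem~\ref{thm:Khat_univ}(1) and the right-hand one is an isomorphism by Theorem~\ref{thm:Khat_univ}(2b) since $k$ is a field, so a short diagram chase yields surjectivity on kernels. It then suffices to hit each generator of $K^M_{n+1}(k_{m+1},(t))$ furnished by Lemma~\ref{lem:KS}, namely a symbol $\{a_1,\dots,a_{n+1}\}$ with $a_i\in k_{m+1}^\times$ and $a_{i_0}\in 1+(t)=\ker(k_{m+1}^\times\to k^\times)$ for some $i_0$. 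Using graded-commutativity I would move $a_{i_0}$ into the first slot (at the cost of a sign) and write the symbol as $\pm\{a_{i_0},c_1,\dots,c_{n-1}\}\cdot\{c_n\}$ with $(c_1,\dots,c_n)$ a reordering of the remaining $a_j$'s; the first factor restricts to $\{1,\bar{c}_1,\dots,\bar{c}_{n-1}\}=0$ in $K^M_n(k)$, hence defines a class $\alpha\in\widehat{K}^M_n(k_{m+1},(t))$, while the second gives $\beta=\{c_n\}\in\widehat{K}^M_1(k_{m+1})$. Thus the image of the symbol equals $\pm\gamma_{m,n}(\alpha\otimes\beta)$ (when $n=1$ the first factor is just $\{a_{i_0}\}$), which gives surjectivity.

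The main obstacle is not a hard computation but the bookkeeping of keeping the ``relative'' coordinate in the right slot: one must split off the \emph{last} variable into $\widehat{K}^M_1$ so that the surviving $n$-symbol—which still carries the coordinate $a_{i_0}\equiv 1$—lands in $\widehat{K}^M_n(k_{m+1},(t))$ rather than merely in $\widehat{K}^M_n(k_{m+1})$, and it is exactly graded-commutativity that legitimizes this rearrangement. A secondary point to handle with care is the passage from $K^M$ to $\widehat{K}^M$, which I would dispatch once via the ring-homomorphism property of the transform and the comparison of the two split sequences above, rather than re-deriving relations inside the improved groups.
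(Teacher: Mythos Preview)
Your argument is correct and follows essentially the same route as the paper: use Lemma~\ref{lem:KS} to generate $K^M_{n+1}(k_{m+1},(t))$ by symbols with one entry in $1+(t)$, split off the last coordinate via the product structure, and pass to $\widehat{K}^M$. The only difference is in how the factorization through $\widehat{K}^M_n(k_{m+1},(t))$ is justified: the paper appeals to the transfer on the target (Theorem~\ref{thm:full transfer}) together with $\widehat{K}^M_1=K^M_1$, whereas you use directly that $K^M_*\to\widehat{K}^M_*$ is a graded-ring homomorphism and compare the two split exact sequences---your version is slightly more elementary and spells out more of the bookkeeping.
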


\begin{proof} The identity map ${\rm Id}$ induces the apparent surjective map
$$
K_n ^M (k_{m+1}) \otimes K_1 ^M (k_{m+1}) \to K_{n+1} ^M (k_{m+1}).$$
Hence using Lemma \ref{lem:KS}, we deduce the surjective map
$$ K_n ^M (k_{m+1}, (t)) \otimes K_1 ^M (k_{m+1}) \to K_{n+1} ^M (k_{m+1}, (t)) \to \widehat{K}_{n+1} ^M (k_{m+1}, (t)).$$
Since $\widehat{K}_1 ^M (k_{m+1}) = K_1 ^M (k_{m+1})$ and the target satisfies the (\'etale) transfer on $k$ (Theorem \ref{thm:full transfer}), we deduce the surjective homomorphism \eqref{eqn:gamma m n} as desired.
\end{proof}

\subsection{Generators with irreducible representatives}

To simplify our arguments, we continue to use Theorem \ref{thm:graph final} to identify the groups
$
 \CH_{{\rm v}} ^n (X/ (m+1), n) \simeq \widehat{K}_n ^M (k_{m+1}, (t)).
$

\medskip

Recall from Lemma \ref{lem:KS} and the anti-symmetricity, this group is generated by the symbols of the form $\{ \alpha_1, \cdots, \alpha_n \}$, where $\alpha_i \in k_{m+1} ^{\times}$ with $\alpha_1 \equiv 1 \mod t$. Here is an improvement of Lemma \ref{lem:KS} needed for our purpose:

\begin{lem}\label{lem:KS improved}
The group $\widehat{K}_n ^M (k_{m+1}, (t))$ is generated by symbols $\{ \alpha_1, \cdots, \alpha_n \}$, where $\alpha_i \in k_{m+1} ^{\times}$ with $\alpha_1 \equiv 1 \mod t$, such that we can find irreducible polynomial representatives $p_i (t) \in k[t]$ of $\alpha_i$ for all $1\leq i \leq n$. 

Furthermore, we may suppose that an irreducible representative $p_1 (t) \in k[t]$ of $\alpha_1$ is of the form $1- \frac{t^j}{a}$ for some $1 \leq j < m+1$ and $a \in k^{\times}$. 
\end{lem}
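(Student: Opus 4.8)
The plan is to sharpen the Kato--Saito presentation in three stages. \textbf{First}, observe that $\widehat{K}_n^M(k_{m+1},(t))$ is a quotient of the ordinary relative group: the surjection $K_n^M(k_{m+1})\twoheadrightarrow\widehat{K}_n^M(k_{m+1})$ of Theorem~\ref{thm:Khat_univ}(1) restricts to a surjection $K_n^M(k_{m+1},(t))\twoheadrightarrow\widehat{K}_n^M(k_{m+1},(t))$, because $k_{m+1}/(t)=k$ is a field so that $K_n^M(k)\xrightarrow{\ \sim\ }\widehat{K}_n^M(k)$ by Theorem~\ref{thm:Khat_univ}(2) kills no classes. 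Hence, by Lemma~\ref{lem:KS} together with the anti-symmetry of Milnor symbols, $\widehat{K}_n^M(k_{m+1},(t))$ is generated by symbols $\{\alpha_1,\dots,\alpha_n\}$ with $\alpha_i\in k_{m+1}^\times$ and $\alpha_1\equiv 1\bmod t$.

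\textbf{Second}, I would make the first entry a binomial and the remaining entries irreducible. Lifting $\alpha_1$ to $(1+tk[[t]])^\times$, applying Proposition~\ref{prop:Witt elements}, and reducing mod $t^{m+1}$ gives $\alpha_1=\prod_{j=1}^{m}\overline{1-\beta_j t^j}$ in $k_{m+1}^\times$ for suitable $\beta_j\in k$; discarding the trivial factors ($\beta_j=0$) and using multilinearity in the first slot, $\{\alpha_1,\dots,\alpha_n\}=\sum_{j:\,\beta_j\neq 0}\{\overline{1-\beta_j t^j},\alpha_2,\dots,\alpha_n\}$, and setting $\beta_j=1/a_j$ the first entry becomes $\overline{1-t^j/a_j}$ with $1\le j\le m$, $a_j\in k^\times$ (note this is automatically $\equiv 1\bmod t$). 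For the entries $\alpha_2,\dots,\alpha_n$ the trick is to use a \emph{monic} representative: represent each $\alpha_i$ ($i\ge 2$) by a monic $h_i\in k[t]$, e.g. its degree $\le m$ representative plus $t^{m+1}$, which automatically has nonzero constant term; factor $h_i=\prod_l q_{i,l}$ into monic irreducibles over $k$, each again with nonzero constant term (hence representing a unit of $k_{m+1}$); and distribute via multilinearity in the $i$-th slot. Iterating over $i=2,\dots,n$ expresses our symbol as a $\mathbb{Z}$-combination of symbols whose first entry is a binomial $\overline{1-t^j/a}$ and whose other entries are classes of monic irreducible polynomials. Insisting on monic representatives is precisely what prevents stray nonzero constant factors (which would have no irreducible representative over, e.g., $\mathbb{R}$) from appearing in the other slots.

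\textbf{Third}, and this is the main obstacle, one must deal with those $j\ge 2$ for which $1-t^j/a$ is \emph{not} irreducible over $k$ (already $1-t^2$ over $\mathbb{R}$, or $1-t^p\cdot c^p$ in characteristic $p$). By multilinearity in the first slot this reduces to a purely $\mathbb{W}$-theoretic statement: the classes of the \emph{irreducible} binomials $\overline{1-t^j/a}$, $1\le j\le m$, $a\in k^\times$, already generate $\widehat{K}_1^M(k_{m+1},(t))\simeq\mathbb{W}_m(k)$. The linear binomials $1-ct$ are always irreducible, and in characteristic $0$ one checks, using the formal logarithm and $\mathbb{Z}$-combinations of the $\log(1-ct)$, that these alone generate $\mathbb{W}_m(k)$. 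In characteristic $p$ the linear binomials are not enough when $k$ is imperfect — which is exactly why the statement permits $j$ up to $m$ — and here one combines Artin's irreducibility criterion ($x^{p^s}-a$, hence $1-at^{p^s}$, is irreducible when $a\notin k^p$) with the Frobenius relation $\overline{1-c^{p^s}t^{p^s d}}=p^s\,\overline{1-c t^d}$ in $\mathbb{W}_m(k)$ to reduce an arbitrary binomial to irreducible ones of smaller exponent, handling genuinely composite (non-$p$-power) exponents by factoring the binomial and inducting on $j$, or alternatively by passing to a finite extension $k'/k$ over which the binomial splits into linear factors and pushing forward via the transfer of Theorem~\ref{thm:full transfer} (and the corresponding trace on $\mathbb{W}_m$). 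Assembling the three stages, with $p_1=1-t^j/a$ itself as the representative whenever that binomial is irreducible, yields the claimed generating set.

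I expect essentially all of the difficulty to sit in that last step: everything up to it is formal manipulation with multilinearity, the Kato--Saito generators, and Bloch's product expansion, whereas identifying exactly which binomials are irreducible and proving that the irreducible ones still span $\mathbb{W}_m(k)$ (uniformly in $\operatorname{char}k$, including the imperfect case) is the genuine content.
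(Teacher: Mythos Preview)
Your first two stages coincide with the paper's argument: invoke the Kato--Saito generators, Witt-decompose $\alpha_1$ into binomials via Proposition~\ref{prop:Witt elements}, and factor polynomial representatives of $\alpha_2,\dots,\alpha_n$ into irreducibles. (The paper does not insist on monic lifts: it chooses any non-constant representative --- in fact one with a prescribed leading coefficient --- factors it, and absorbs the leading constant into one irreducible factor, so no stray constants ever appear and your monic device, while correct, is not needed.) The genuine divergence is your third stage. The paper handles a reducible binomial $1-t^j/a$ in one line: factor into irreducibles of smaller degree, re-apply Proposition~\ref{prop:Witt elements} to each non-binomial factor, and ``repeat''. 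You instead give a structural argument that irreducible binomials span $\mathbb{W}_m(k)$, splitting by characteristic and, in char~$p$, using the identity $(1-ct^d)^p=1-c^pt^{pd}$ together with the irreducibility criterion for $x^n-a$. Your instinct that this step carries the real content is well-founded: the paper's iteration is not transparently terminating --- over $\mathbb{Q}$ with $m\ge 6$, for instance, the Witt decomposition of the irreducible factor $1+t+t^2$ of $1-t^6$ produces the reducible binomial $1-t^3$, whose factorization returns $1+t+t^2$ --- so a direct span argument like yours is the cleaner route, even if (as you acknowledge) the composite-exponent case in char~$p$ and the transfer alternative remain only sketched. Both approaches ultimately rest on the same unstated sublemma, that the \emph{irreducible} binomials $1-t^j/a$ with $j\le m$ generate $\mathbb{W}_m(k)$, and isolating this as a standalone claim (proved, say, by induction on $m$ using the filtration $U^{m-1}/U^m\simeq k$) would make either argument airtight.
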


\begin{proof}
We make the following elementary observation for $k_{m+1} ^{\times}$.

\medskip

\textbf{Claim :} \emph{ Let $c \in k^{\times}$. Let $\alpha \in k_{m+1} ^{\times}$. Then there exists a non-constant polynomial in $k[t]$ representing $\alpha$, whose leading coefficient is exactly $c$.}

\medskip

Since $k_{m+1} =k[t]/(t^{m+1})$, we can find a polynomial $p (t) \in k[t]$ of degree $< m+1$ representing $\alpha$. In case it does not satisfy the requirement of the claim, choose any integer $d \geq m+1$, and let
$$
p(t):= p_1 (t) + c t^d \in k[t].
$$

Since $d \geq m+1$, this $p(t)$ also represents $\alpha \in k[t] /(t^{m+1}) = k_{m+1}$, and it is certainly a non-constant polynomial whose leading coefficient is exactly $c$, proving the Claim.

\medskip

Now, let $\{ \alpha_1, \alpha_2, \cdots, \alpha_n\}$ be any member of $\widehat{K}_n ^M (k_{m+1}, (t))$ such that $\alpha_i \in k_{m+1} ^{\times}$ and $\alpha_1 \equiv 1 \mod t$. Such elements generate the group by Lemma \ref{lem:KS}. We will replace these generators by the symbols with better properties required by the lemma.

\medskip

Using the Claim, we find non-constant polynomials $p_1 (t), \cdots, p_n (t) \in k[t]$ of degrees $\geq m+1$ that represent $\alpha_1, \cdots, \alpha_n$, respectively. For $i \geq 2$, if any one of them is reducible, say $p_i = p_{i1} \cdots p_{ir}$ for some non-constant irreducible polynomials $p_{ij}\in k[t]$, then for $\alpha_{ij}:= p_{ij} \mod t^{m+1}$ (they are invertible in $k_{m+1}^{\times}$ again) we have
$$
\{ \alpha_i \} = \{ \alpha_{i1} \} + \cdots + \{ \alpha_{in} \}.
$$
Hence replacing $\alpha_i$ by each of $\alpha_{ij}$, and using the distributive law, we may assume that for each $\alpha_i$ with $i \geq 2$, we have found its non-constant irreducible polynomial representative $p_i (t) \in k[t]$.

\medskip
For a representative $p_1 (t)$ of $\alpha_1$, we have $p_1 (t) \equiv \alpha_1 \equiv 1 \mod t$ so that by Proposition \ref{prop:Witt elements}, there is a factorization of $p_1(t)$ into a product of polynomials of the form 
$$
 \prod_{j=1} ^m \left( 1 -b_i t^i \right) \ \ \mod t^{m+1}
$$
for some $b_i \in k$. Hence by the multi-linearity of the Milnor symbols, replacing $\alpha_1$ by each of $1- b_i t^i$, and $p_1$ also by $1- b_i t^i \mod t^{m+1}$, we reduce the proof of the lemma to the case when $\alpha_1 = p_1 (t) = 1 - \frac{ t^i}{a}$ for some $a \in k^{\times}$ (taking $a= b^{-1}$). Depending on $a$ and the base field $k$, it may or may not be reducible. (e.g. $1- \frac{t^2}{r^2} = (1 - \frac{t}{r}) (1 + \frac{t}{r})$ is reducible always, while $1+ \frac{t^2}{r^2}$ may or may not be irreducible, depending on $k$.) If it is reducible, we can break it into a polynomial representative with smaller degrees, and using Proposition \ref{prop:Witt elements} again if needed, and repeating, we can reduce to the case when $p_1 (t) =1- \frac{ t^i}{a}$ is irreducible itself. This proves the lemma. \qed
\end{proof}

\begin{remk}
Because of the shape $1- \frac{t^i}{a}$ of the polynomial, this $p_1(t)$ is irreducible if either $i=1$ or $ i \geq 2$ and $p_1(t) = 0$ has no solution in $k$. \qed
\end{remk}

\subsection{Strong surjectivity}

\begin{defn}\label{defn:single coordinate}
Let's say that a $0$-cycle $C:= \sum_{i=1} ^r n_i \mathfrak{p}_i \in \TZ^n (k, n;m)$ with $n_i >0$ has \emph{a single $\mathbb{A}^1$-coordinate} if all projections of $\mathfrak{p}_i$ to $\mathbb{A}^1$ are equal. 
\qed
\end{defn}

\begin{defn}\label{defn:strong surj}
 Let's say that $\varphi_n$ is \emph{strongly surjective}, if for each generator $\{ \alpha_1, \cdots, \alpha_{n} \} \in \widehat{K}_n ^M (k_{m+1}, (t))$ with chosen irreducible polynomial representatives $p_1 (t), \cdots, p_n (t)$ satisfying the conditions of Lemma \ref{lem:KS improved}, there exists a $0$-cycle $C:=\sum_{i=1} ^r n_i \mathfrak{p}_i \in \TZ^n (k, n;m)$, $n_i > 0$, with a single $\mathbb{A}^1$-coordinate such that $\varphi_n (C) = \{ \alpha_1, \cdots, \alpha_n\}$ in $\widehat{K}_n ^M (k_{m+1}, (t))$. 
 
 Apparently, if $\varphi_n$ is strongly surjective, then it is surjective. One also remarks that $\varphi_1$ is strongly surjective by our argument in the proof of Theorem \ref{thm:inverse Bloch n=1} in the case $n=1$, and Remark \ref{remk:strong surj n=1}.
\qed
\end{defn}

\begin{lem}\label{lem:vertical surj}
Let $n \geq 1$ be an integer. Suppose that
\begin{equation}\label{eqn:simple inverse Bloch surj 1}
\varphi_n : \TZ^n (k, n;m) \to \CH_{{\rm v}} ^n (X/ (m+1), n)
\end{equation}
is strongly surjective. Then the broken arrow $G_{n,k}$ in \eqref{eqn:J comm 0} is surjective. 
\end{lem}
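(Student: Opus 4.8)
The goal is to show that $G_{n,k}$ is surjective, granting that $\varphi_n$ is strongly surjective. The plan is to trace the composition $G_{n,k}$ along the counter-clockwise edges of diagram \eqref{eqn:J comm 0} and to feed it, via the commutativity established in Proposition \ref{prop:J comm}, with the explicit $0$-cycles produced by strong surjectivity. Concretely, I would start from a generator $\{\alpha_1,\dots,\alpha_{n+1}\}\in \widehat K_{n+1}^M(k_{m+1},(t))$ presented by irreducible polynomial representatives $p_1(t),\dots,p_{n+1}(t)$ as in Lemma \ref{lem:KS improved}, with $p_1(t)=1-\tfrac{t^j}{a}$. Using the surjection $\gamma_{m,n}$ of Lemma \ref{lem:gamma m n}, I can write this generator as an image of $\{\alpha_1,\dots,\alpha_n\}\otimes \alpha_{n+1}$, and after applying antisymmetry/the distributive law I may arrange that $\alpha_1\equiv 1\bmod t$, so that $\{\alpha_1,\dots,\alpha_n\}$ is (the image of) a generator of $\widehat K_n^M(k_{m+1},(t))$ with irreducible representatives satisfying Lemma \ref{lem:KS improved}, while $\alpha_{n+1}$ has an irreducible representative $p_{n+1}(t)$.

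Next, by the hypothesis of strong surjectivity there is a $0$-cycle $C=\sum_i n_i\mathfrak p_i\in \TZ^n(k,n;m)$ with a single $\mathbb A^1$-coordinate such that $\varphi_n(C)=\{\alpha_1,\dots,\alpha_n\}$ under the identification of Theorem \ref{thm:graph final}. I would then produce a $1$-cycle $Z\in z^1_{{\rm d,nc}}(X,1)$ that "carries" $\alpha_{n+1}$: the irreducible representative $p_{n+1}(t)$ of $\alpha_{n+1}$ defines a closed point on $\mathbb A^1$, hence (after base change to its residue field and pushing forward, exactly as in the proof of Theorem \ref{thm:inverse Bloch n=1}) an element $Z=\pi_*(\{y=c-t\})$ of $z^1_{{\rm d,nc}}(X,1)$ whose special fiber recovers $\alpha_{n+1}$ up to the twist. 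The element $C\otimes Z\in \TZ^n(k,n;m)\otimes z^1_{{\rm d,nc}}(X,1)$ is then the candidate preimage. Applying Proposition \ref{prop:J comm}, $\varphi_{n+1}(J_{n,k}(C\otimes Z))$ equals the clockwise-then-down image, which by the computation in \textbf{Case 1}/\textbf{Case 2} of that proof is the juxtaposition $\varphi_n(C)\boxtimes \tau_{\mathfrak p}^*(Z)$ — and under the graph identification this juxtaposition is precisely the Milnor symbol $\{\alpha_1,\dots,\alpha_n,\alpha_{n+1}'\}$ where $\alpha_{n+1}'$ is the twisted version of $\alpha_{n+1}$. Since the twist only rescales the $t$-variable by a unit, $\{\alpha_1,\dots,\alpha_n,\alpha_{n+1}'\}=\{\alpha_1,\dots,\alpha_{n+1}\}$ in $\widehat K_{n+1}^M(k_{m+1},(t))$ (one checks $\tau$ fixes the relative $K$-class, or absorbs the discrepancy into the single-$\mathbb A^1$-coordinate freedom in $C$). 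Thus $G_{n,k}(C\otimes Z)$ hits the given generator, and since such generators span the target by Lemma \ref{lem:gamma m n} together with Lemma \ref{lem:KS improved}, $G_{n,k}$ is surjective.

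The main obstacle I anticipate is the bookkeeping around the twist map and the single-$\mathbb A^1$-coordinate hypothesis: one must check that the twist $\tau_{\mathfrak p}^*$ applied to the $1$-cycle $Z$ does not change the resulting relative Milnor class, i.e. that rescaling $t\mapsto ct$ in the last coordinate is harmless in $\widehat K_{n+1}^M(k_{m+1},(t))$ after the other coordinates are already fixed by $C$. This is where the requirement that $C$ have a \emph{single} $\mathbb A^1$-coordinate becomes essential: it guarantees a well-defined common scaling factor, so the twist is uniform across all components of $C\boxtimes Z$ and can be matched against a single re-choice of the representative $p_{n+1}(t)$ (replacing $c$ by $c/$(that factor), which stays a legitimate irreducible representative of a unit). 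Modulo this verification — which is of the same flavor as the computations already carried out in Proposition \ref{prop:J comm} and Theorem \ref{thm:inverse Bloch n=1} — the argument is a direct diagram chase, so I would keep the writeup short and point to those earlier computations rather than repeat them.
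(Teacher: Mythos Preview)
Your overall strategy matches the paper's---reduce via Lemmas \ref{lem:KS improved} and \ref{lem:gamma m n}, use strong surjectivity to produce $C$ with $\varphi_n(C)=\{\alpha_1,\dots,\alpha_n\}$, then find $Z\in z^1_{{\rm d,nc}}(X,1)$ for the last slot. But the construction of $Z$ is where the real content lies, and your proposal has a gap there. The twist is \emph{not} harmless: if the single $\mathbb{A}^1$-coordinate of $C$ is $\zeta$ (an $i$-th root of $a$, essentially forced once $p_1(t)=1-t^i/a$ is fixed), then $\tau_\zeta^*\{y=c-t\}=\{y=c-\zeta t\}$, and after push-forward this corresponds to a $k$-polynomial whose leading coefficient is a power of $-\zeta$. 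There is no reason the given irreducible representative $p_{n+1}(t)$ has such a leading coefficient, so taking $c$ to be a root of $p_{n+1}$ and then twisting does \emph{not} land on $\alpha_{n+1}$. You cannot fix this by adjusting $C$ (its $\mathbb{A}^1$-coordinate is already pinned down by $\{\alpha_1,\dots,\alpha_n\}$), nor by rescaling $c$ (that does not make the pushed-forward polynomial lie in $k[t]$ with the required leading term). And the assertion that the ring automorphism $t\mapsto ct$ ``fixes the relative $K$-class'' is simply false: already for $n=1$, $\{1-t\}\ne\{1-ct\}$ in $(1+tk_{m+1})^{\times}$.

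What the paper does is to replace $p_{n+1}$ by a new representative $\tilde p_{n+1}\in k[t]$ obtained by adding a tail of terms of degree $\ge m+1$: this is invisible modulo $t^{m+1}$, but lets one force the degree to be a multiple $di$ of $i$ and the leading coefficient to be exactly $(-1)^{di}\zeta^{di}=(-1)^{di}a^d\in k$. Then over a large enough extension one has $\tilde p_{n+1}(t)=\prod_j(c_j-\zeta t)$. A further choice of the tail coefficients ensures $\tilde p_{n+1}(\zeta_j^{-1})\ne 0$ for every conjugate $\zeta_j$, hence $c_j\ne 1$ for all $j$, so each $\{y=c_j-t\}$ is a legitimate cycle in $\square^1_X$. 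Taking $Z=\sum_\ell\pi_*\{y=c_\ell-t\}$ (one $c_\ell$ per irreducible $k$-factor of $\tilde p_{n+1}$), the twist now matches exactly and the computation gives $G_{n,k}(C\otimes Z)=\{\alpha_1,\dots,\alpha_{n+1}\}$ on the nose. This modification of $p_{n+1}$---engineering its leading coefficient to match the twist, while avoiding the forbidden root---is the substantive idea missing from your proposal.
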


\begin{proof}
The two bottom maps $\boxtimes$ and ${\rm can}$ in \eqref{eqn:J comm 0} induce the homomorphism
\begin{equation}\label{eqn:s 1 1}
z_{{\rm v}} ^n (X, n) \otimes z^1 _{{\rm d}} (X, 1) \twoheadrightarrow \CH_{{\rm v}} ^n (X/ (m+1), n)  \otimes \CH_{{\rm d}} ^1 (X/ (m+1), 1)
\end{equation}
$$
 \to \CH_{{\rm v}} ^{n+1} ( X/ (m+1), n+1),
 $$
and under the identifications
$$
\CH_{{\rm d}} ^i (X/ (m+1), i) \simeq \widehat{K}_i ^m (k_{m+1})
$$
and
$$
\CH_{{\rm v}} ^i (X/ (m+1), i) \simeq \widehat{K}_i ^m (k_{m+1}, (t)),
$$
of Theorem \ref{thm:graph final}, the second arrow \eqref{eqn:s 1 1} is identical to $\gamma_{m,n}$ in Lemma \ref{lem:gamma m n}. 

\medskip

To show that the map $G_{n,k}$ in \eqref{eqn:J comm 0} is surjective, it is enough to show that for each symbol in $\widehat{K}_{n+1} ^M (k_{m+1}, (t))$ of the following form with $(n+1)$ entries
\begin{equation}\label{eqn:alpha}
\{ \alpha_1, \alpha_2, \cdots, \alpha_{n+1}\},
\end{equation}
where $\alpha_i \in k_{m+1} ^{\times}$ with $\alpha_1 \equiv 1 \mod t$, we can find a member of $\TZ^n (k, n;m) \otimes z_{{\rm d, nc}} ^1 (X, 1)$ that maps to the Milnor symbol.

\medskip

Using Lemma \ref{lem:KS improved}, we may assume that the given $\alpha_1, \cdots, \alpha_{n+1} \in k_{m+1} ^{\times}$ have irreducible polynomial representatives $p_1 (t), \cdots, p_n (t), p_{n+1} (t)$ satisfying the requirements there. Since we are given that $\varphi_n$ is strongly surjective, for the Milnor symbol $\{ \alpha_1, \cdots, \alpha_n \}$ with the first $n$ entries and their chosen irreducible polynomial representatives $p_1 (t), \cdots, p_n (t)$, we can find a $0$-cycle $C \in \TZ^n (k, n;m)$ with a single $\mathbb{A}^1$-coordinate such that $\varphi_n (C) = \{ \alpha_1, \cdots, \alpha_n\}$. The remaining issue is to find a suitable $Z \in z_{{\rm d, nc}} ^1 (X, 1)$ that takes care of the remaining $\alpha_{n+1}$ and $p_{n+1} (t)$.

\medskip

Recall that we supposed that the chosen representative $p_1(t)$ of $\alpha_1$ is of the form $p_1 (t) = 1- \frac{t^i}{a}\in k[t]$ for some $a \in k^{\times}$, and it is assumed to be irreducible. Let $\zeta$ be an $i$-th root of $a$. Let $\{ \zeta_1, \cdots, \zeta_i\}$ be the set of all conjugates in an algebraic closure of $k$. Let $k' \supset k$ be a finite extension that contains $\zeta$. Choose a sufficiently large integer $d$ such that $(d-1) i > \max \{ m, \deg p_{n+1}, i \} $. For some unspecified members $u_1, \cdots, u_i \in k$, define
$$ 
\tilde{p}_{n+1} (t) := p_{n+1} (t) + u_i t ^{ (d-1)i} + u_{i-1} t^{ (d-1)i + 1} + \cdots + u_1 t^{ di - 1} +  (-1)^{di} ( \zeta t)^{di} 
$$
$$
= p_{n+1} (t) +  \sum_{j=1} ^i u_j t^{di-j}  + (-1)^{di} ( \zeta t)^{di}.
$$
Regardless of the choices of $u_1, \cdots, u_i \in k$, since $\zeta^i = a \in k$, we have $\tilde{p}_{n+1} (t) \in k[t]$.  Since $\tilde{p}_{n+1} (t) \equiv p_{n+1} (t) \mod t^{m+1}$, it also represents $\alpha_{n+1}$. We make the following elementary observation:

\medskip

\textbf{Observation :} {There exist $u_1, \cdots, u_i \in k$ such that $\tilde{p}_{n+1} (\zeta_j^{-1} ) \not =0$ for all $j$. In particular, $\tilde{p}_{n+1} (t) = 0$ does not have $\zeta^{-1} $ as a solution in any extension.}

\medskip

Choose such $u_1, \cdots, u_i \in k$. This polynomial $\tilde{p}_{n+1} (t)$ may not be irreducible in general. Let $\tilde{p}_{n+1} (t) = \prod_{\ell=1} ^r p_{n+1, \ell} (t)$ be a factorization into irreducible polynomials in $k[t]$. By construction, for all $1 \leq \ell \leq r$, we have $p_{n+1, \ell } (\zeta^{-1} )\not = 0$. Then for a finite extension $k''$ that contains $k'$ and all the roots of $\tilde{p}_{n+1}$, we can factorize $\tilde{p}_{n+1} (t)$ into a product of linear polynomials over $k''$
$$
\tilde{p}_{n+1} (t) = \prod_{j=1} ^{di}  (c_j - \zeta t),
$$
where $c_j / \zeta $ runs over all the roots of $\tilde{p}_{n+1} (t)$, for some $c_j \in (k'')^{\times}$. Here we have $c_j \not = 1$ for all $j$ for otherwise $\tilde{p}_{n+1} (t)$ has $\zeta^{-1}$ as a root, contradicting the choices of $u_1, \cdots u_i$ in the Observation.

After relabeling them if necessary, we may assume that $c_1/ \zeta, \cdots, c_r / \zeta$ are the solutions of the distinct irreducible factors $p_{n+1, 1} (t), \cdots, p_{n+1, r} (t)$ of $\tilde{p}_{n+1} (t)$, respectively.

Let $\pi: \Spec (k'') \to \Spec (k)$ be the corresponding morphism. Here, $\varphi_n (C) = \{ \alpha_1, \cdots, \alpha_n \} = \pi_* \left\{ 1 - \frac{t}{\zeta}, \alpha_2, \cdots, \alpha_n \right\}$. Take $Z= \sum_{\ell=1} ^r \pi_* \{ y= c_{\ell} - t \}$. Then
\begin{eqnarray*}
G_{n,k} (C \otimes Z) &=& \pi_*\left( \left\{ 1 - \frac{t}{\zeta}, \alpha_2 \cdots, \alpha_n \right\} \boxtimes \sum_{\ell=1} ^r \tau_{\zeta} ^*\{ y_{n+1} = c_{\ell} - t \} \right) \\
&=& \pi_*\left( \left\{ 1 - \frac{t}{\zeta}, \alpha_2 \cdots, \alpha_n \right\} \boxtimes \sum_{\ell=1 } ^r \{ y_{n+1} = c_{\ell} - \zeta t \} \right) \\
&=& \pi_* \left( \left\{ 1 - \frac{t}{\zeta}, \alpha_2 \cdots, \alpha_n \right\} \right) \boxtimes \sum_{\ell=1} ^r \pi_*  \{ y_{n+1} =  c_{\ell} - \zeta t \}\\
&=& \{ \alpha_1, \cdots, \alpha_n \} \boxtimes \sum_{\ell=1} ^r \{ y_{n+1} = p_{n+1, \ell} (t) \mod t^{m+1} \} \\
&=& \{ \alpha_1, \cdots, \alpha_n \} \boxtimes \{ y_{n+1} = \prod_{\ell=1} ^r p_{n+1, \ell} (t) \mod t^{m+1} \} \\
&=&\{ \alpha_1, \cdots, \alpha_n \}  \boxtimes \{ y_{n+1} = \tilde{p}_{n+1} (t) \mod t^{m+1} \} \\
&=&\{ \alpha_1, \cdots, \alpha_n \}  \boxtimes \{y_{n+1} = \alpha_{n+1} \} = \{ \alpha_1, \cdots, \alpha_n , \alpha_{n+1} \}.
\end{eqnarray*}
This proves the desired surjectivity.
\end{proof}

\begin{lem}\label{lem:strong surj}
If $\varphi_n$ is strongly surjective, then so is $\varphi_{n+1}$.
\end{lem}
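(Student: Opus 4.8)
The plan is to deduce the statement almost formally from Proposition~\ref{prop:J comm} together with the construction already carried out inside the proof of Lemma~\ref{lem:vertical surj}. First I would fix a generator $\{\alpha_1, \dots, \alpha_{n+1}\}$ of $\widehat{K}_{n+1}^M(k_{m+1}, (t))$ and choose, by Lemma~\ref{lem:KS improved}, irreducible polynomial representatives $p_1(t), \dots, p_{n+1}(t)$ for the $\alpha_i$ with $p_1(t) = 1 - t^i/a$; observe that the initial segment $p_1(t), \dots, p_n(t)$ again satisfies the conclusions of Lemma~\ref{lem:KS improved}. Since $\varphi_n$ is assumed strongly surjective, the argument in the proof of Lemma~\ref{lem:vertical surj} then produces a $0$-cycle $C = \sum_{\ell} n_\ell\, \mathfrak{p}_\ell \in \TZ^n(k, n; m)$ with $n_\ell > 0$, having a single $\mathbb{A}^1$-coordinate and $\varphi_n(C) = \{\alpha_1, \dots, \alpha_n\}$, together with an element $Z \in z^1_{{\rm d, nc}}(X, 1)$, such that $G_{n,k}(C \otimes Z) = \{\alpha_1, \dots, \alpha_{n+1}\}$ in $\widehat{K}_{n+1}^M(k_{m+1}, (t))$ under the identification of Theorem~\ref{thm:graph final}.

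Next I would set $C' := J_{n,k}(C \otimes Z)$, which lies in $\TZ^{n+1}(k, n+1; m)$ by Definition~\ref{defn:J n}. Running around the commutative diagram \eqref{eqn:J comm 0} of Proposition~\ref{prop:J comm} and then composing with the canonical surjection ${\rm can}$ onto $\CH_{{\rm v}}^{n+1}(X/(m+1), n+1)$ gives
$$
\varphi_{n+1}(C') = {\rm can}\!\left(\varphi_{n+1}\bigl(J_{n,k}(C \otimes Z)\bigr)\right) = G_{n,k}(C \otimes Z) = \{\alpha_1, \dots, \alpha_{n+1}\}
$$
in $\CH_{{\rm v}}^{n+1}(X/(m+1), n+1) \simeq \widehat{K}_{n+1}^M(k_{m+1}, (t))$. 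It then remains to check that $C'$ meets the two bookkeeping conditions in Definition~\ref{defn:strong surj}. Effectivity is clear because the $n_\ell$ are positive and $[Z_s]$ is an effective cycle of closed points of $\square_k^1$ (all distinct from $0$ and $1$, by the choice of $Z$ in the proof of Lemma~\ref{lem:vertical surj}), so every irreducible component of $C' = \sum_\ell n_\ell\, ([\mathfrak{p}_\ell] \boxtimes [Z_s])$ has positive multiplicity. For the single-coordinate condition, under the factorization $B_{n+1} = \mathbb{A}^1 \times \square^n = (\mathbb{A}^1 \times \square^{n-1}) \times \square^1$ the projection of $[\mathfrak{p}_\ell] \boxtimes [Z_s]$ to the $\mathbb{A}^1$-factor coincides with the projection of $\mathfrak{p}_\ell$ to $\mathbb{A}^1$, which by hypothesis is the same closed point of $\mathbb{A}^1$ for all $\ell$; hence every component of $C'$ shares one $\mathbb{A}^1$-coordinate. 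This exhibits the cycle required by Definition~\ref{defn:strong surj}, so $\varphi_{n+1}$ is strongly surjective.

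I do not expect a genuine obstacle here: the substantive inputs are Proposition~\ref{prop:J comm} and the explicit construction in the proof of Lemma~\ref{lem:vertical surj}, both already available. The only point requiring mild care is verifying that passing from $C$ to $J_{n,k}(C \otimes Z)$ neither disturbs the $\mathbb{A}^1$-coordinate nor leaves $\TZ^{n+1}(k, n+1; m)$ --- i.e., that $[Z_s]$ avoids the divisor $\{x = 0\}$ and the proper faces --- which is already guaranteed by Definition~\ref{defn:J n} and the shape of the cycle $Z$ chosen in the proof of Lemma~\ref{lem:vertical surj}. Combined with the base case $n = 1$ from Theorem~\ref{thm:inverse Bloch n=1} (see Remark~\ref{remk:strong surj n=1}), this inductive step yields strong surjectivity, hence surjectivity, of $\varphi_n$ for every $n \geq 1$.
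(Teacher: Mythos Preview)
Your proposal is correct and follows essentially the same route as the paper's own proof: invoke the construction from the proof of Lemma~\ref{lem:vertical surj} to obtain $C$ and $Z$ with $G_{n,k}(C\otimes Z)=\{\alpha_1,\dots,\alpha_{n+1}\}$, then use the commutativity of diagram~\eqref{eqn:J comm 0} to conclude $\varphi_{n+1}(J_{n,k}(C\otimes Z))=\{\alpha_1,\dots,\alpha_{n+1}\}$, and finally observe that $J_{n,k}(C\otimes Z)$ inherits the single $\mathbb{A}^1$-coordinate from $C$. You supply slightly more detail than the paper (the effectivity check and the explicit verification that the $\mathbb{A}^1$-projection is unchanged), but the argument is the same.
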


\begin{proof}
For each $\alpha:=\{ \alpha_1, \cdots, \alpha_{n+1}\}$ with a chosen set of irreducible polynomial representatives $p_1 (t), \cdots, p_n (t), p_{n+1} (t)$, in the proof of Lemma \ref{lem:vertical surj}, we saw that we can find a $0$-cycle $C\in \TZ^n (k, n;m)$ with a single $\mathbb{A}^1$-coordinate and a cycle $Z \in z_{{\rm d, nc}} ^1 (X, 1)$ such that $G_{n,k} ( C \otimes Z) = \alpha$. 

By the commutativity of the diagram \eqref{eqn:J comm 0}, we have $\varphi_{n+1} (J_{n,k} ( C \otimes Z)) = \alpha$. However, by construction $J_{n,k} ( C \otimes Z))$ is a $0$-cycle with a single $\mathbb{A}^1$-coordinate. Hence $\varphi_{n+1}$ is strongly surjective.
\end{proof}

We finally get to:

\begin{prop}\label{prop:inverse Bloch surj}
For each integer $n \geq 1$, the inverse Bloch map $\varphi_n$ in \eqref{eqn:inverse Bloch cy} is strongly surjective. Hence it is surjective.
\end{prop}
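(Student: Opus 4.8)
The plan is a straightforward induction on $n \geq 1$, using the two ingredients already assembled above. The base case $n=1$ is handled by Theorem \ref{thm:inverse Bloch n=1}, which not only shows that $\varphi_1$ is an isomorphism but, as recorded in Remark \ref{remk:strong surj n=1}, produces for each generator of $\widehat{K}_1^M(k_{m+1},(t))$ with its chosen irreducible representative a $0$-cycle with a single $\mathbb{A}^1$-coordinate mapping to it; this is exactly the requirement of Definition \ref{defn:strong surj}, so $\varphi_1$ is strongly surjective. For the inductive step I would simply invoke Lemma \ref{lem:strong surj}: assuming $\varphi_n$ is strongly surjective, that lemma gives the strong surjectivity of $\varphi_{n+1}$. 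Hence $\varphi_n$ is strongly surjective for all $n\geq 1$, and by the final remark of Definition \ref{defn:strong surj} strong surjectivity implies ordinary surjectivity, which yields the last assertion.

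It is worth noting where the real work sits, since the proposition as stated is a one-line induction. The inductive step rests on the commutative diagram \eqref{eqn:J comm 0} of Proposition \ref{prop:J comm} together with Lemma \ref{lem:vertical surj}: strong surjectivity of $\varphi_n$ forces the broken arrow $G_{n,k}$ to be surjective onto $\CH^{n+1}_{{\rm v}}(X/(m+1),n+1)\simeq \widehat{K}^M_{n+1}(k_{m+1},(t))$, and then the identity $\varphi_{n+1}\circ J_{n,k}=G_{n,k}$ transports a preimage in $\TZ^n(k,n;m)\otimes z^1_{{\rm d,nc}}(X,1)$ of a given symbol to a preimage in $\TZ^{n+1}(k,n+1;m)$; the juxtaposition $J_{n,k}$ visibly keeps the $\mathbb{A}^1$-coordinate single, so strong surjectivity propagates. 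The surjectivity of $G_{n,k}$ in turn uses the surjective map $\gamma_{m,n}$ of Lemma \ref{lem:gamma m n}, which depends on the Kato--Saito generation result (Lemma \ref{lem:KS}, refined in Lemma \ref{lem:KS improved}) and on the transfer of Theorem \ref{thm:full transfer}.

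The main obstacle, therefore, is not this proposition but the explicit cycle-theoretic construction inside Lemma \ref{lem:vertical surj}: one must choose an $i$-th root $\zeta$ of $a$ (where $p_1(t)=1-t^i/a$) and an auxiliary polynomial $\tilde p_{n+1}(t)=p_{n+1}(t)+\sum_{j=1}^i u_j t^{di-j}+(-1)^{di}(\zeta t)^{di}$ so that the twist action $\tau_\zeta^*$ is compatible with the linear factorization $\tilde p_{n+1}(t)=\prod_j(c_j-\zeta t)$ over a finite extension, and so that no $c_j$ equals $1$ (this is the point of the Observation on the coefficients $u_j$). Given that those lemmas are in place, the argument for Proposition \ref{prop:inverse Bloch surj} itself is the short induction sketched above.

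\begin{proof}
By Theorem \ref{thm:inverse Bloch n=1} and Remark \ref{remk:strong surj n=1}, the map $\varphi_1$ is strongly surjective. Assume inductively that $\varphi_n$ is strongly surjective for some $n\geq 1$. Then Lemma \ref{lem:strong surj} shows that $\varphi_{n+1}$ is strongly surjective. By induction, $\varphi_n$ is strongly surjective for all $n\geq 1$. As observed in Definition \ref{defn:strong surj}, a strongly surjective map is surjective, so each $\varphi_n$ is surjective.
\end{proof}
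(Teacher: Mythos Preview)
Your proof is correct and essentially identical to the paper's own argument: the paper also cites Remark \ref{remk:strong surj n=1} for the base case $n=1$ and then invokes Lemma \ref{lem:strong surj} for the inductive step. Your additional commentary on where the real work lies (Lemmas \ref{lem:vertical surj} and \ref{lem:KS improved}) is accurate and useful context, though not part of the formal proof itself.
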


\begin{proof}
When $n=1$, we already know that $\varphi_1$ is strongly surjective by the argument of the proof of Theorem \ref{thm:inverse Bloch n=1} (see also Remark \ref{remk:strong surj n=1}).

\medskip

Suppose $\varphi_n$ is strongly surjective for some $n \geq 1$. By Lemma \ref{lem:strong surj}, the map $\varphi_{n+1}$ is also strongly surjective. Hence the proposition holds by induction. 
\end{proof}

\subsection{Proof of the main theorem}

\begin{cor}\label{cor:inverse Bloch surj 3}
For all $ m,n \geq 1$, the inverse Bloch map $\varphi$ of \eqref{eqn:inverse Bloch cy} is an isomorphism. 
\end{cor}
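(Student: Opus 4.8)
The plan is simply to combine the two halves that have already been established. First I would invoke Lemma~\ref{lem:inverse Bloch inj}, which shows that $\varphi_n\colon \TH^n(k,n;m)' \to \CH_{{\rm v}}^n(X/(m+1),n)$ is injective for $X=\Spec(k[[t]])$: there the key mechanism is that precomposing R\"ulling's isomorphism $gr_k$ with the ``filter'' $\rho_k$ of Definition~\ref{defn:additive regulator} recovers the class of a generating $0$-cycle $[\mathfrak p]$ from $\varphi_n([\mathfrak p])$, because the lower-order part $\varphi^2$ of $\varphi_n([\mathfrak p])$ lies in $\ker(\rho_k)$ by the deconcatenation vanishing of Lemma~\ref{lem:decon well}, while the leading part $\varphi^1$ retains the full information of $\mathfrak p$. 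Second I would invoke Proposition~\ref{prop:inverse Bloch surj}, which shows that $\varphi_n$ is (strongly) surjective; its proof runs by induction on $n$, building preimages via the maps $J_{n,k}$ of Definition~\ref{defn:J n} and the twist map of Definition~\ref{defn:twist}, using the compatibility square~\eqref{eqn:J comm 0} of Proposition~\ref{prop:J comm} together with the surjectivity of $\gamma_{m,n}$ from Lemma~\ref{lem:gamma m n} and the improved Kato--Saito generation of Lemma~\ref{lem:KS improved}. Since a bijective homomorphism of abelian groups is an isomorphism, $\varphi_n$ is an isomorphism.

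It then remains to pass from $X=\Spec(k[[t]])$ to an arbitrary integral regular henselian local $k$-scheme $X$ of dimension $1$ with residue field $k$. For this I would observe that by Theorem~\ref{thm:graph final} the target $\CH_{{\rm v}}^n(X/(m+1),n)$ is canonically isomorphic to $\widehat{K}_n^M(k_{m+1},(t))$, independently of the choice of such $X$, so the isomorphism statement transports verbatim. Reading off the resulting zigzag
\[
\mathbb{W}_m\Omega_k^{n-1} \overset{gr_k}{\underset{\simeq}{\longrightarrow}} \TH^n(k,n;m)' \overset{\varphi_n}{\underset{\simeq}{\longrightarrow}} \CH_{{\rm v}}^n(X/(m+1),n) \overset{gr_{{\rm v}}^{-1}}{\underset{\simeq}{\longrightarrow}} \widehat{K}_n^M(k_{m+1},(t)),
\]
in which the outer two maps are the isomorphisms of Theorems~\ref{thm:Rulling2} and~\ref{thm:graph final}, exhibits the algebraic inverse Bloch map of Definition~\ref{defn:inverse Bloch} as an isomorphism, which is exactly the content of Theorem~\ref{thm:intro main vanishing} (and, together with Theorem~\ref{thm:NST}, of Corollary~\ref{cor:main intro motivic}).

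At this stage there is essentially no remaining obstacle: the substantive work has already been done, namely constructing $\varphi_n$ and verifying that it lands in the strict vanishing cycles (Lemma~\ref{lem:inverse 0-cycle}, Lemma~\ref{lem:tilde face}, Proposition~\ref{prop:inverse Bloch general adm}, Corollary~\ref{cor:tilde W face y_1}), establishing the existence of the deconcatenation (Proposition~\ref{prop:decon bdry 0}), proving injectivity (Lemma~\ref{lem:inverse Bloch inj}), and proving strong surjectivity (Proposition~\ref{prop:inverse Bloch surj}). The only point worth a sentence of care is that the reduction to $X=\Spec(k[[t]])$ used in Lemma~\ref{lem:inverse Bloch inj} and Proposition~\ref{prop:inverse Bloch surj} is harmless for the general statement, which is precisely the independence-of-$X$ remark above; accordingly, if I were to look for a conceptual difficulty it would not be in this corollary but in the earlier construction of the deconcatenation, whose very existence is the delicate input.
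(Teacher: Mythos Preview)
Your proposal is correct and takes essentially the same approach as the paper: the paper's proof is the one-liner ``injective by Lemma~\ref{lem:inverse Bloch inj} and surjective by Proposition~\ref{prop:inverse Bloch surj}'', which is exactly your core argument. Your additional paragraphs about passing to general $X$ and reading off the zigzag go beyond what this corollary requires (the map \eqref{eqn:inverse Bloch cy} is already set up with $X=\Spec(k[[t]])$); that material is the content of the subsequent Theorem~\ref{thm:main vanishing}, not of this corollary itself.
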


\begin{proof}
It is injective by Lemma \ref{lem:inverse Bloch inj} and surjective by Proposition \ref{prop:inverse Bloch surj}.
\end{proof}

Theorem \ref{thm:intro main vanishing} now follows. We rephrase the essence in the following form:

\begin{thm}\label{thm:main vanishing}
Let $k$ be an arbitrary field. Let $m, n \geq 1$ be integers. Then we have an isomorphism
$$
\mathbb{W}_m \Omega_k ^{n-1} \simeq \widehat{K}_n ^M (k_{m+1}, (t)).
$$

In particular, we have the a split short exact sequence
$$ 
0 \to \mathbb{W}_m \Omega_k ^{n-1} \to \widehat{K}_n ^M (k_{m+1}) \to K_n ^M (k) \to 0.
$$
\end{thm}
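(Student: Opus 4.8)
The plan is to assemble Theorem~\ref{thm:main vanishing} from the pieces already in place. The core isomorphism $\mathbb{W}_m\Omega_k^{n-1}\simeq\widehat{K}_n^M(k_{m+1},(t))$ is exactly the content of Definition~\ref{defn:inverse Bloch}, where the inverse Bloch map was assembled as a zigzag
$$
\mathbb{W}_m\Omega_k^{n-1}\xrightarrow[\simeq]{gr_k}\TH^n(k,n;m)'\xrightarrow{\varphi}\CH_{\rm v}^n(X/(m+1),n)\xleftarrow[\simeq]{gr_{\rm v}}\widehat{K}_n^M(k_{m+1},(t)),
$$
with $X=\Spec(k[[t]])$. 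The first map $gr_k$ is an isomorphism by Theorem~\ref{thm:Rulling2}, and $gr_{\rm v}$ is an isomorphism by Theorem~\ref{thm:graph final}-(2) since $X$ is regular. So the only nonformal input is that the middle arrow $\varphi$ of \eqref{eqn:inverse Bloch cy} is an isomorphism, and that is Corollary~\ref{cor:inverse Bloch surj 3}, which combines the injectivity (Lemma~\ref{lem:inverse Bloch inj}) and the strong surjectivity (Proposition~\ref{prop:inverse Bloch surj}). Composing the three, we obtain the stated isomorphism; this settles Theorem~\ref{thm:intro main vanishing}-(1)$\simeq$(4) as well, once the remaining identifications $(1)\simeq(2)$ and $(4)\simeq(5)$ recorded after that theorem are invoked.

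For the second assertion, the short exact sequence, I would argue as follows. Since $k_{m+1}=k[t]/(t^{m+1})$ admits the quotient map $k_{m+1}\to k$ together with the structural splitting $k\hookrightarrow k_{m+1}$, the functoriality of $\widehat{K}_n^M$ applied to this retraction gives a direct-sum decomposition. Concretely, the composite $k\hookrightarrow k_{m+1}\to k$ is the identity, so $\widehat{K}_n^M(k)\to\widehat{K}_n^M(k_{m+1})\to\widehat{K}_n^M(k)$ is the identity on $\widehat{K}_n^M(k)$; hence the surjection $\widehat{K}_n^M(k_{m+1})\to\widehat{K}_n^M(k)$ (surjective because it has a section) is split, with kernel $\widehat{K}_n^M(k_{m+1},(t))$ by the very definition of the relative group. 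Since $\widehat{K}_n^M(k)=K_n^M(k)$ by Theorem~\ref{thm:Khat_univ}-(2)(b), this yields the split short exact sequence
$$
0\to\widehat{K}_n^M(k_{m+1},(t))\to\widehat{K}_n^M(k_{m+1})\to K_n^M(k)\to 0,
$$
and substituting the isomorphism $\widehat{K}_n^M(k_{m+1},(t))\simeq\mathbb{W}_m\Omega_k^{n-1}$ from the first part gives the displayed sequence in the theorem. I should be slightly careful that the surjection here really is the map $\widehat{K}_n^M(k_{m+1})\to\widehat{K}_n^M(k)$ induced by $t\mapsto 0$ and not merely some abstract splitting; this is immediate since that map is the one whose kernel defines the relative group.

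There is essentially no new obstacle at this stage: the theorem is a packaging statement, and all the hard work has been discharged in the construction of $\varphi$ (the geometric manipulations of $\Lambda$, the deconcatenation map, and the inductive strong-surjectivity argument). If I had to name the subtlest point it would be bookkeeping: making sure that under the identifications of Theorems~\ref{thm:Rulling2} and~\ref{thm:graph final} the map $\varphi$ of Definition~\ref{defn:inverse Bloch} is literally the composite whose bijectivity Corollary~\ref{cor:inverse Bloch surj 3} asserts, rather than a variant differing by the inessential choice of $X$ — but $\CH_{\rm v}^n(X/(m+1),n)$ is independent of the regular henselian $X$ with residue field $k$ (Theorem~\ref{thm:graph final}), so taking $X=\Spec(k[[t]])$ throughout is harmless. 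With that noted, the proof is a two-line assembly followed by the standard splitting argument.
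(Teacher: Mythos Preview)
Your proof is correct and matches the paper's own argument essentially line for line: the first isomorphism is assembled as the same zigzag through $\TH^n(k,n;m)'$ and $\CH_{\rm v}^n(X/(m+1),n)$ citing Theorem~\ref{thm:Rulling2}, Corollary~\ref{cor:inverse Bloch surj 3}, and Theorem~\ref{thm:graph final}, and the split exact sequence follows from the definition of the relative group together with the section $k\hookrightarrow k_{m+1}$. Your additional remarks on $\widehat{K}_n^M(k)=K_n^M(k)$ and the independence of the choice of $X$ are accurate but already implicit in the paper's terser version.
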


\begin{proof}
We have a sequence of isomorphisms
$$
\mathbb{W}_m \Omega_k ^{n-1} \simeq^{\dagger} \TCH^n (k, n;m)' \simeq^{\ddagger} \CH_{{\rm v}} ^n (X/ (m+1), n) \simeq ^{ (1)}  \widehat{K}_n ^M (k_{m+1}, (t)),
$$
where $\dagger$ is the isomorphism $gr_k$ of R\"ulling \cite{R} (Theorem \ref{thm:Rulling2}), $\ddagger$ is the isomorphism $\varphi$ of Corollary \ref{cor:inverse Bloch surj 3} with $X= \Spec (k[[t]])$, and $\simeq^{(1)}$ is the isomorphism of Theorem \ref{thm:graph final}.

The second part of the theorem follows from the first part together with the definition of $\widehat{K}_n ^M (k_{m+1}, (t))= \ker ({\rm ev}_{t=0}: \widehat{K}_n ^M (k_{m+1}) \to K_n ^M (k))$. This gives a split short exact sequence.
\end{proof}

Here is an algebraic analogue of the deconcatenation map:

\begin{cor}
Let $k$ be a field, and let $m, n \geq 1$ be integers. Then we have the deconcatenation map on the big de Rham-Witt forms
$$
{\rm Dec} : \mathbb{W}_m \Omega_k ^{n-1} \to \mathbb{W}_m (k) \otimes_{\mathbb{Z}} K_{n-1} ^M (k).
$$
\end{cor}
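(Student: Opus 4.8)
The plan is to obtain ${\rm Dec}$ by transporting the geometric deconcatenation of Corollary~\ref{cor:decon mod m} along the isomorphisms already established, exactly as was done to build $\rho_k$ in Definition~\ref{defn:additive regulator} but stopping one step earlier (before applying ${\rm Con}$). Concretely, fix $X = \Spec(k[[t]])$ as in \S\ref{sec:decon 1}. By Theorem~\ref{thm:main vanishing}—whose proof factors through $gr_k$ of Theorem~\ref{thm:Rulling2} and the inverse Bloch isomorphism $\varphi$ of Corollary~\ref{cor:inverse Bloch surj 3}—there is a canonical isomorphism $\mathbb{W}_m\Omega_k^{n-1} \overset{\sim}{\to} \CH_{{\rm v}}^n(X/(m+1), n)$. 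On the target of Corollary~\ref{cor:decon mod m}, Theorem~\ref{thm:graph final} gives $\CH_{{\rm v}}^1(X/(m+1), 1) \simeq \widehat{K}_1^M(k_{m+1},(t)) = \mathbb{W}_m(k)$ (using also that $\widehat{K}_1^M = K_1^M$ for local rings, Theorem~\ref{thm:Khat_univ}), while Theorem~\ref{thm:NST} gives $\CH^{n-1}(k, n-1) \simeq K_{n-1}^M(k)$. I would then simply \emph{define} ${\rm Dec}$ on $\mathbb{W}_m\Omega_k^{n-1}$ to be the composite
\begin{align*}
\mathbb{W}_m\Omega_k^{n-1} &\overset{\sim}{\longrightarrow} \CH_{{\rm v}}^n(X/(m+1), n) \overset{\rm Dec}{\longrightarrow} \CH_{{\rm v}}^1(X/(m+1), 1)\otimes_{\mathbb{Z}} \CH^{n-1}(k, n-1)\\
&\overset{\sim}{\longrightarrow} \mathbb{W}_m(k)\otimes_{\mathbb{Z}} K_{n-1}^M(k),
\end{align*}
where the middle arrow is the geometric deconcatenation of Corollary~\ref{cor:decon mod m}.

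The only points needing a word of justification are that each arrow is a well-defined homomorphism of groups—the first and third are the isomorphisms cited above, and the middle one is a homomorphism by Proposition~\ref{prop:decon bdry 0} together with Corollary~\ref{cor:decon mod m}—and that the construction does not secretly depend on auxiliary choices. Here I would note that $X = \Spec(k[[t]])$ is precisely the regular henselian local $k$-scheme used throughout \S\ref{sec:inverse Bloch general}--\S\ref{sec:surj inverse Bloch} to define $\varphi$ and the graph maps, so that all three identifications are the canonical ones already appearing in the proof of Theorem~\ref{thm:main vanishing}; nothing further is required.

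There is essentially no obstacle: every ingredient is in place. If one wanted an intrinsic description—beyond the bare existence claimed in the statement—the mild extra task would be to unwind the composite on a generator $b\,d\log[c_1]\wedge\cdots\wedge d\log[c_{n-1}]$ of $\mathbb{W}_m\Omega_k^{n-1}$ by tracing the associated $\mathrm{v}$-cycle (a graph $\Gamma$ of the shape produced in the proof of Lemma~\ref{lem:inverse Bloch inj}) through ${\rm Dec}$ and reading off that it lands, up to sign, on $b\otimes\{c_1,\dots,c_{n-1}\}$; but this is a routine calculation of the type carried out in \S\ref{sec:inj inverse Bloch} and is not needed for the corollary. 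As remarked in the introduction, ${\rm Dec}$ and the concatenation ${\rm Con}$ of Theorem~\ref{thm:KPCM7.4} are not inverse to one another in general, so no such compatibility should be asserted here.
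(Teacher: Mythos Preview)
Your proposal is correct and follows essentially the same approach as the paper: the paper's proof is the single line ``It follows from Corollary~\ref{cor:decon mod m} and Theorem~\ref{thm:main vanishing},'' and your argument is precisely the unpacking of that sentence, transporting the geometric deconcatenation along the cited isomorphisms and identifying the target via Theorems~\ref{thm:graph final} and~\ref{thm:NST}.
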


\begin{proof}
It follows from Corollary \ref{cor:decon mod m} and Theorem \ref{thm:main vanishing}.
\end{proof}

Recall from Theorem \ref{thm:KPCM7.4} that we have the concatenation map
$$
{\rm Con}: \mathbb{W}_m (k) \otimes_{\mathbb{Z}} K_{n-1} ^M (k) \to \mathbb{W}_m \Omega_k ^{n-1}
$$
in the opposite direction. Of course, these maps ${\rm Con}$ and ${\rm Dec}$ are far from being inverse to each other.

\subsection{An example and a remark}

\begin{exm}
The arguments given in the proofs of Lemmas \ref{lem:KS improved} and \ref{lem:vertical surj} offer some hints on how one may construct concrete cycles that map to given Milnor symbols in $\widehat{K}_n ^M (k_{m+1}, (t))$. As an illustration, let us consider an example when $n=2$ and $m=2$. Here, by Proposition \ref{prop:Witt elements} and Lemma \ref{lem:KS}, the group $\widehat{K}_2 ^M (k_3, (t))$ is generated by the symbols of the form
$$
 \{ 1- a t, c\}, \ \{ 1- a t^2, c \}, \ \{ 1- a t, 1- bt \}, \ \{ 1- a t, 1- bt^2 \}, \ \{ 1- at^2, 1- b t^2 \},
 $$
for some $a, b, c \in k ^{\times}$, with $c \not = 1$. 

Let's consider $\{ 1- at, c \} = \{ 1 - at , c - a^3 t^3\}$, where $c \equiv c- a^3 t^3 \mod t^3$. If there is no $d \in k^{\times}$ such that $c= d^3$, then $c-a^3 t^3$ is irreducible. Consider the closed point $\mathfrak{p} =\left( \frac{1}{a}, \gamma \right)$ in $\mathbb{A}^1 \times \square^1$, where $\gamma \in \square^1$ is a closed point whose irreducible polynomial is $c- t^3$. Then, $\varphi (\mathfrak{p}) = \{ 1- at, c- a ^3 t^3 \} = \{ 1- at, c\}. $

If $c = d^3$ for some $d \in k^{\times}$, then 
$$
\{ 1- at, c- a^3 t^3 \} = \{ 1 - at, (d- a t) (d^2 + d a t + a^2 t^2) \}
$$
$$
 = \{ 1-at, d- a t \} + \{ 1-at, d^2 + d a t + a^2 t^2 \},
 $$
and the first symbol $\{ 1- at, d-at \}$ is $\varphi \left( ( \frac{1}{a}, d) \right)$. Depending on whether $d^2 + dat +a^2 t^2$ factorizes over $k$ or not (the latter is far more likely in general unless $k= \bar{k}$), we can also express the second symbol either as the sum of the $\varphi$ of two $k$-rational points of $\mathbb{A}^1 \times \square^1$, or the $\varphi$ of a degree $2$ closed point.

Other examples of the above generators can be computed similarly, while we may need to take into account that $\{ \alpha , -1 \} = 0$ in the Milnor $K$-groups.
\qed
\end{exm}

\begin{remk}\label{remk:GT comparison}
Those who are familiar with some relevant earlier works in the literature, such as Gorchinskiy-Tyurin \cite{GT}, may recall that a few of them tried to transform the generating Milnor symbols in $\widehat{K}_n ^M (k_{m+1}, (t))$ into sums of symbols of the form 
$$ 
N \{ 1- a t^i, c_1, \cdots, c_{n-1}\},
$$
for which one can prove that they belong to the image of the Bloch map, where $N \in \mathbb{N}$ are some large integers and $ a, c_1, \cdots, c_{n-1} \in k ^{\times}$. Then, they argued that using an extra assumption that a large number of integers, including $N$, are invertible in $k$, they were able to deduce that all symbols were in the image of the Bloch map.

As the reader may notice, this article actually does the \emph{opposite} of what those in the literature tried: instead of transforming the generating symbols into large integer multiples of symbols whose last $(n-1)$ entries are constant in the base field $k$, we represent them by \emph{non-constant} polynomials in $t$ of the degrees possibly $\geq m+1$. This eventually allowed us to give certain geometric descriptions in terms of suitable closed points of degrees $d \geq 1$, defined by their irreducible factors. This is how the Milnor symbols are related to the geometry of $0$-cycles.
\qed
\end{remk}

\section{Addendum: another potential approach}\label{sec:addendum}

On the main question of this article, those who are familiar with the $p$-typical de Rham-Witt forms as well as the article of Bloch-Kato \cite{BK} and R\"ulling-Saito \cite{RS} might have tried the following alternative approach, somewhat more pertinent to the usual de Rham-Witt business: to the system of the relative Milnor $K$-groups $\{ \widehat{K}^M_n (k_{m+1}, (t)) \}_{m, n \geq 1 }$, give certain extra structures, namely the restrictions $\{ R : \widehat{K}^M_n (k_{m+1}, (t)) \to \widehat{K}^M_n (k_m, (t)) \}_{m \geq 2}$, the Verschiebung maps $\{ V_r \}_{r \geq 1}$ and the Frobenius maps $\{ F_r \}_{r \geq 1}$ as well as the differentials $\{ d: \widehat{K}^M_n (k_{m+1}, (t)) \to \widehat{K}_{n+1} ^M (k_{m+1}, (t)) \}_{m, n \geq 1}$, and prove that they satisfy the axioms of Witt-complexes, compatible with the corresponding operators of the big de Rham-Witt forms. 

It was the initial approach followed by the author, although he eventually chose to shift the track to the present version, which was conceptually and technically simpler and more geometric. Those who wish to insist may still follow the above Witt-friendlier method: after defining the extra structures, they may encounter roughly two major obstacles: (i) to show that the level-wise homomorphisms exist (which requires the reader to check all the axioms of the Witt-complex structure) and (ii) to show that these are all isomorphisms. Here is a possible road map for those who might want to try this path assuming (i). Of course, the uninterested reader can simply skip \S \ref{sec:addendum} entirely.
\medskip

\subsection{Inverse Cartier operators}

Recall some basics on inverse Cartier operators. 
Let $R$ be a commutative ring with unity of characteristic $p>0$ for a prime $p$. Consider the absolute de Rham complex $\Omega_R ^{\bullet} = \Omega_{R/\mathbb{Z}} ^{\bullet}$ of $R$ over $\mathbb{Z}$, where we let $\Omega_R ^0 = R$. This is a complex of abelian groups, but not that of $R$-modules. We will see below that it becomes so for a different $R$-module structure, which is a kind of Frobenius twisted $R$-action.

\medskip

Let $M$ be an $R$-module in the usual sense. Its Frobenius twisted $R$-module structure (call it $R^{(p)}$-module) on $M$ is defined by
$$
 R \times M \overset{{\rm Fr}\times {\rm Id}_M}{\longrightarrow} R \times M \longrightarrow M,
 $$
which maps $(r,x)$ to $r^p x$, where $r \mapsto r^p$ is the Frobenius ring homomorphism. Let's denote this action by $r \cdot x$.

Using the above, regard each $\Omega_R ^i$ as an $R^{(p)}$-module. One checks directly that $d: \Omega_R ^i \to \Omega_R ^{i+1}$ is an $R^{(p)}$-module homomorphism, although it is not an $R$-module homomorphism. We let
$$ 
Z^i:= \ker ( d: \Omega_R ^i \to \Omega_R ^{i+1}), \ \ B^i:= {\rm im} (d: \Omega_R ^{i-1} \to \Omega_R ^i),
$$
so that $H^i = H^i (\Omega_R ^{\bullet}) = Z^i / B^i$ by definition. All of them are $R^{(p)}$-modules. For each $r \in R$, define $C^{-1} (r):= r^{p-1} dr \in \Omega_R^1$. Using that $dr \wedge dr = 0$, one checks that $C^{-1} (r) \in Z^1$. It is straightforward to check:

\begin{lem}\label{lem:inverse Cartier}
Let $r_1, r_2 \in R$. Then
\begin{enumerate}
\item $ C^{-1} (r_1 + r_2) - C^{-1} (r_1) - C^{-1} (r_2) = d P(r_1, r_2)$ for a polynomial $P (X, Y) \in \mathbb{Z}[X, Y]$. More specifically, we take
$$
P (X,Y) := \sum_{i=1} ^{p-1} \frac{1}{p} \begin{pmatrix} p \\ i \end{pmatrix} X^{p-i} Y ^i,
$$
where each $ \begin{pmatrix} p \\ i \end{pmatrix}$ is divisible by $p$ so that $ \frac{1}{p} \begin{pmatrix} p \\ i \end{pmatrix} \in \mathbb{Z}$.
\item $C^{-1} (r_1 r_2) = r_2 \cdot C^{-1} (r_1) + r_1 \cdot  C^{-1} (r_2).$
\end{enumerate}
\end{lem}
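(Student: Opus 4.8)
The plan is to reduce part (1) to an identity in the Kähler differentials of the universal polynomial ring $\mathbb{Z}[X,Y]$, where dividing by $p$ is legitimate, and then specialize; part (2) is a direct Leibniz computation. First I would record the elementary arithmetic fact that $\binom{p}{i}\equiv 0\pmod p$ for $1\le i\le p-1$, so that $P(X,Y)=\sum_{i=1}^{p-1}\frac1p\binom{p}{i}X^{p-i}Y^i$ is a genuine element of $\mathbb{Z}[X,Y]$, and that the binomial theorem gives $(X+Y)^p=X^p+Y^p+p\,P(X,Y)$ in $\mathbb{Z}[X,Y]$.

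For part (1), I would work inside $\Omega^1_{\mathbb{Z}[X,Y]/\mathbb{Z}}$, which is free on $dX,dY$ over $\mathbb{Z}[X,Y]$ and in particular has no $p$-torsion. Applying $d$ to the displayed polynomial identity yields $p(X+Y)^{p-1}d(X+Y)=pX^{p-1}dX+pY^{p-1}dY+p\,dP(X,Y)$, and cancelling the nonzerodivisor $p$ gives
\[
(X+Y)^{p-1}d(X+Y)-X^{p-1}dX-Y^{p-1}dY = dP(X,Y)
\]
in $\Omega^1_{\mathbb{Z}[X,Y]/\mathbb{Z}}$. Now the ring homomorphism $\mathbb{Z}[X,Y]\to R$ with $X\mapsto r_1$, $Y\mapsto r_2$ induces, by functoriality of absolute differentials, a homomorphism $\Omega^1_{\mathbb{Z}[X,Y]/\mathbb{Z}}\to\Omega^1_{R/\mathbb{Z}}=\Omega^1_R$; pushing the displayed identity forward along it, and recalling $C^{-1}(r)=r^{p-1}dr$ together with $d(r_1+r_2)=dr_1+dr_2$, produces exactly $C^{-1}(r_1+r_2)-C^{-1}(r_1)-C^{-1}(r_2)=dP(r_1,r_2)$. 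Along the way this also confirms the claim (asserted just above the lemma) that $C^{-1}$ lands in $Z^1$, since $dC^{-1}(r)=d(r^{p-1})\wedge dr=-r^{p-2}\,dr\wedge dr=0$.

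For part (2), I would simply expand in $\Omega^1_R$ using the Leibniz rule:
\[
C^{-1}(r_1r_2)=(r_1r_2)^{p-1}\bigl(r_1\,dr_2+r_2\,dr_1\bigr)=r_1^p\bigl(r_2^{p-1}dr_2\bigr)+r_2^p\bigl(r_1^{p-1}dr_1\bigr),
\]
and since the Frobenius-twisted $R^{(p)}$-module structure on $\Omega^1_R$ is by definition $r\cdot x=r^p x$, the right-hand side equals $r_1\cdot C^{-1}(r_2)+r_2\cdot C^{-1}(r_1)$, as required.

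I do not expect a genuine obstacle here; the only step demanding care is the detour through $\mathbb{Z}[X,Y]$ in part (1), which is forced precisely because $R$ has characteristic $p$, so the coefficients $\frac1p\binom{p}{i}$ must be assembled in $\mathbb{Z}$ and transported to $R$ via the naturality of $\Omega^1_{-/\mathbb{Z}}$ rather than manipulated directly inside $R$.
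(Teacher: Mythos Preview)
Your proof is correct. The paper omits the proof entirely, stating only that the lemma is ``straightforward to check''; your argument via the universal identity in $\Omega^1_{\mathbb{Z}[X,Y]/\mathbb{Z}}$ for part (1) and the direct Leibniz expansion for part (2) is exactly the standard verification one would supply, and there is nothing to compare.
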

Lemma \ref{lem:inverse Cartier} shows that $C^{-1}: R \to H^1$ is additive and a derivation on $R$ over $\mathbb{Z}$, where $H^1$ is given the $R^{(p)}$-module structure. In particular, by the universal property of the K\"ahler differentials, there exists a unique $R$-module homomorphism $\overline{C^{-1}} : \Omega_R ^1 \to H^1$ (where $\Omega_R ^1$ is the usual $R$-module and $H^1$ is the $R^{(p)}$-module) such that
$$
\xymatrix{
R \ar[rr] ^{C^{-1}} \ar[dr] _d  & & H^1 \\
& \Omega_R ^1 \ar[ru] _{\overline{C^{-1}} } &}
$$
commutes. For simplicity, we denote $\overline{C^{-1}}$ also by $C^{-1}$, and call it the inverse Cartier operator. We can immediately extend it to a group homomorphism 
$$ 
C^{-1}: \Omega_R ^i \to H^i (\Omega_R ^{\bullet}) = Z^i / B^i,
$$
for all $i \geq 1$, still denoted by the same symbol, and also called the inverse Cartier operator. It is known to be injective. So, after letting $Z_1 ^i := Z^i$ and $B_1 ^i:= B^i$, we can define a chain of subgroups (see Bloch-Kato \cite[(1.3)]{BK})
$$
0=:B_0 ^i  \subset B_1 ^i \subset \cdots \subset B_n ^i  \subset \cdots \subset Z_n^i \subset \cdots \subset Z_1 ^i \subset Z_0 ^i:= \Omega_R^i,$$
such that inductively we have $C^{-1} (Z_n ^i) = Z_{n+1} ^i / B_1 ^i$ and $C^{-1} (B_n ^i) = B_{n+1} ^i / B_1 ^i$. We can iterate the inverse Cartier operators to define for all $s \geq 1$ the composite
$$
C^{-s}: \Omega_R ^i \to Z_1 ^i/ B_1 ^i \to Z_2 ^i / B_2 ^i \to \cdots \to Z_s ^i / B_s ^i \subset \Omega_R ^i / B_s ^i.
$$
 For consistency, let $C^{-0}:= {\rm Id}$.

\medskip

Now let $q , m \geq 1$ be integers and write $m = m'  p^s$ for some $s \geq 0$ such that $(m',p)= 1$. Define (R\"ulling-Saito \cite[4.1.3]{RS}, cf. Bloch-Kato \cite[(4.7)]{BK})
$$
{\rm gr}_m ^q (R):= {\rm coker} \left( \theta: \Omega_R ^{q-1} \to \frac{ \Omega_R ^q}{B_s ^q} \oplus \frac{ \Omega_R ^{q-1}}{B_s ^{q-1}} \right),
$$
where $\theta (\alpha) = \left( C^{-s} (d \alpha), (-1)^{q-1} m' C^{-s} (\alpha) \right).$

\medskip

\subsection{A theorem of R\"ulling-Saito}

Recall the following result from R\"ulling-Saito \cite[Remark 4.2, Proposition 4.3]{RS}, part of which goes back to L. Illusie \cite[I. Corollary 3.9]{Illusie}, Bloch-Kato \cite[(1.3), (4.7)]{BK}, and Hyodo-Kato \cite[Theorem 4.4]{HK}. In \cite{RS}, the authors say a field $k$ is perfect at the beginning of \cite[\S 4.1.3]{RS}, but this assumption is superfluous:

\begin{thm}[R\"ulling-Saito]\label{thm:RS induction}
Let $k$ be a field of characteristic $p>0$. Let $q \geq 1, m \geq 2$ be integers. Let $R$ be an essentially smooth $k$-algebra.
Write $m= m' p^s$ for some $s\geq 0$ with $(m', p) = 1$. Then there exists a short exact sequence of groups
\begin{equation}\label{eqn:RS induction p}
0 \to {\rm gr}_m ^q (R) \to \mathbb{W} _m \Omega_R ^q \to \mathbb{W}_{m-1}  \Omega_R ^{q} \to 0,
\end{equation}
where the left arrow is induced from the map
$$
 \Omega_R ^q \oplus \Omega_R ^{q-1} \to \mathbb{W}_m \Omega_R ^q
$$
that sends $(\alpha, \beta)$ to $V_m (\alpha) + (-1) ^q d V_m (\beta)$.

In particular, we have an equivalent short exact sequence
\begin{equation}\label{eqn:RS induction p 2}
0 \to V_m \mathbb{W}_1 \Omega_R ^{q} + d V_m \mathbb{W}_1 \Omega_R ^{q-1} \to \mathbb{W} _m \Omega_R ^q \to \mathbb{W}_{m-1}  \Omega_R ^{q} \to 0,
\end{equation}
where the first arrow is the inclusion, where one notes $\mathbb{W}_1 \Omega_R ^q = \Omega_R ^q$.
\end{thm}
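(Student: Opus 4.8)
The plan is to reduce the statement to the structure theory of the $p$-typical de Rham--Witt complex together with the decomposition of the big complex over truncation sets, essentially assembling results of Illusie \cite{Illusie}, Bloch--Kato \cite{BK} and Hyodo--Kato \cite{HK}. Over a field $k$ of characteristic $p$ the big de Rham--Witt complex of any $k$-algebra splits canonically, compatibly with restriction: writing $\mathbb{W}_m\Omega_R^{\bullet}=\mathbb{W}_{\{1,\dots,m\}}\Omega_R^{\bullet}$ one has
\[
\mathbb{W}_m\Omega_R^{\bullet}\;\simeq\;\prod_{\substack{1\le j\le m\\ p\,\nmid\, j}} W_{s(j)+1}\Omega_R^{\bullet},\qquad s(j):=\max\{\,t\ge 0 : jp^{t}\le m\,\},
\]
the product being indexed by the integers $jp^{t}\le m$ with $p\nmid j$. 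Comparing the truncation sets $\{1,\dots,m\}$ and $\{1,\dots,m-1\}$, only the index $j=m'$ (recall $m=m'p^{s}$, $p\nmid m'$) has its exponent change, dropping from $s+1$ to $s$; with the convention $W_0\Omega:=0$ this also covers the case $p\nmid m$, i.e. $s=0$, where that factor disappears. Hence the first step is the identification
\[
\ker\!\left(\mathbb{W}_m\Omega_R^q\to\mathbb{W}_{m-1}\Omega_R^q\right)\;\simeq\;\ker\!\left(\mathfrak{R}\colon W_{s+1}\Omega_R^q\to W_{s}\Omega_R^q\right),
\]
occurring inside the weight-$m'$ factor, so the whole problem reduces to describing this top graded piece of a $p$-typical complex.

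Second, I would use the general fact (valid for the big complex, and provable from the Witt-complex axioms) that the kernel of the top restriction is the image of the highest Verschiebung, $\ker(\mathbb{W}_m\Omega_R^q\to\mathbb{W}_{m-1}\Omega_R^q)=V_m\Omega_R^q+dV_m\Omega_R^{q-1}$; this immediately gives the equivalence of \eqref{eqn:RS induction p} and \eqref{eqn:RS induction p 2} once the relations are pinned down, and shows that $(\alpha,\beta)\mapsto V_m(\alpha)+(-1)^q dV_m(\beta)$ surjects onto the kernel. The heart of the matter is to compute the kernel of $\Omega_R^q\oplus\Omega_R^{q-1}\to\mathbb{W}_m\Omega_R^q$. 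Invoking Illusie's computation of the graded pieces of the canonical filtration on the $p$-typical complex (where the iterated inverse Cartier operator $C^{-s}$ of the discussion before Lemma~\ref{lem:inverse Cartier} and the subgroups $B_s^q$, $B_s^{q-1}$ enter), together with the axioms $F_m dV_m=d$, $F_rV_r=r$ and $V_r(F_r(x)y)=xV_r(y)$, one finds that $V_m$ kills $B_s^q$, $dV_m$ kills $B_s^{q-1}$, and the remaining relations are exactly $V_m\bigl(C^{-s}(d\gamma)\bigr)=m'\,dV_m\bigl(C^{-s}(\gamma)\bigr)$ for $\gamma\in\Omega_R^{q-1}$; the integer $m'$ here is produced by the weight-$m'$ twist, via $F_{m'}V_{m'}=m'$ and the interaction of $d$ with the weight grading. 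This says precisely that the map factors through $\Omega_R^q/B_s^q\oplus\Omega_R^{q-1}/B_s^{q-1}$ and annihilates ${\rm im}(\theta)$, $\theta(\gamma)=\bigl(C^{-s}(d\gamma),(-1)^{q-1}m'C^{-s}(\gamma)\bigr)$, with no further relations, whence the induced map ${\rm gr}_m^q(R)={\rm coker}(\theta)\to\ker(\mathbb{W}_m\Omega_R^q\to\mathbb{W}_{m-1}\Omega_R^q)$ is an isomorphism; this is \eqref{eqn:RS induction p}.

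Finally, one must remove the perfectness hypothesis under which Illusie's results are stated. Here I would observe that $\Omega_R^{\bullet}$, the subgroups $B_s^{\bullet}$ and $Z_s^{\bullet}$, the inverse Cartier operators, and both the big and $p$-typical de Rham--Witt complexes all commute with filtered colimits and with localization, so every assertion is Zariski-local and reduces to the case where $R$ is smooth over $k$; for such $R$ one checks that the identities used above --- the vanishing of $V_m$ on $B_s^q$ and the inverse-Cartier relation --- are formal consequences of the universal Witt-complex identities and of the Frobenius-twisted module structure recalled before Lemma~\ref{lem:inverse Cartier}, and never appeal to perfectness of $k$; alternatively one descends the isomorphism along the faithfully flat map $R\to R\otimes_k k^{1/p^{\infty}}$. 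The step I expect to be the real obstacle is the middle one, namely the precise bookkeeping of the non-$p$-typical (weight-$m'$) twist --- obtaining the factor $m'$ and all signs correctly while tracking how the abstract product decomposition interacts with $V_m$ and $d$ --- rather than any conceptual point; the rest is essentially a matter of quoting Illusie and the decomposition of the big de Rham--Witt complex.
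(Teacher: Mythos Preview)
The paper does not supply its own proof of this theorem: it is merely recalled from the literature, with the attribution ``R\"ulling--Saito \cite[Remark 4.2, Proposition 4.3]{RS}, part of which goes back to L. Illusie \cite[I. Corollary 3.9]{Illusie}, Bloch--Kato \cite[(1.3), (4.7)]{BK}, and Hyodo--Kato \cite[Theorem 4.4]{HK},'' together with the remark that the perfectness hypothesis in \cite{RS} is superfluous. So there is nothing to compare against beyond those citations.

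Your sketch is the standard route through that literature and is correct in outline: the product decomposition of $\mathbb{W}_m\Omega_R^\bullet$ into $p$-typical factors indexed by $p$-prime integers $\le m$ (as in Hesselholt--Madsen/Hesselholt), the observation that passing from $\{1,\dots,m\}$ to $\{1,\dots,m-1\}$ only affects the factor at $m'$, and Illusie's identification of $\ker(W_{s+1}\Omega_R^q\to W_s\Omega_R^q)$ via $V^{s+1}$, $dV^{s+1}$ and the iterated inverse Cartier. The appearance of the factor $m'$ and the sign $(-1)^{q-1}$ in $\theta$ is exactly the bookkeeping you flag, and it is handled in \cite[\S4]{RS}. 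Your treatment of the imperfect-residue-field issue is also in line with the paper's one-line dismissal of perfectness as superfluous; the point is that the relevant identities are formal in the Witt-complex axioms and the absolute Frobenius on $R$, not in any Frobenius on $k$.
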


\subsection{A conditional double induction argument}

Theorem \ref{thm:RS induction} suggests an induction argument to be given in Lemma \ref{lem:inverse Bloch surj}, provided that one could prove the following rather nontrivial assertion:

\medskip

\textbf{Assumption ($\star$):} \emph{Let $k$ be a field of characteristic $p>0$. Let $m \geq 1, n \geq 2$ be integers. Suppose there is a Witt-complex structure on $\{ \widehat{K}_n ^M (k_{m+1}, (t)) \}$, equipped with $\{ R, F_r, V_r, d, R\}$. In particular, we have the maps $\varphi_{m,n} : \mathbb{W}_m \Omega_k ^{n-1} \to \widehat{K}^M_n (k_{m+1}, (t))$. 
Furthermore, suppose that inside the group $\widehat{K}_n ^M (k_{m+1}, (t))$, we can express the subgroup $\widehat{K}_n ^M (k_{m+1}, (t^m))$ as
$$
\widehat{K}_n ^M (k_{m+1}, (t^m)) = V_m \widehat{K}_n ^M (k_2, (t)) + d V_m \widehat{K} _{n-1} ^M (k_2, (t)).
$$}

\bigskip

\begin{cor}\label{cor:RS induction}
Let $k$ be a field of characteristic $p>0$. Let $n , m\geq 2$ be integers. Suppose {\rm \textbf{Assumption $(\star)$}} holds. Then the diagram
\begin{small}
$$
\xymatrix{
0 \ar[r] & { \begin{matrix} V_m \Omega_k ^{n-1} \\ + \delta V _m \Omega_k ^{n-2} \end{matrix}}  \ar[d] ^{\alpha_{m, n} } \ar[r] & \mathbb{W}_m \Omega_k ^{n-1} \ar[d] ^{\varphi_{m, n}} \ar[r] & \mathbb{W}_{m-1} \Omega_k ^{n-1} \ar[d] ^{\varphi_{m-1, n}} \ar[r] & 0 \\
0 \ar[r] & { \begin{matrix} V_m \widehat{K}_n ^M (k_2, (t)) \\ + d V_m \widehat{K} _{n-1} ^M (k_2, (t)) \end{matrix}}  \ar[r] & \widehat{K}_n ^M (k_{m+1}, (t)) \ar[r] & \widehat{K}_n ^M (k_m, (t)) \ar[r] & 0}
$$
\end{small}
commutes, where $\alpha_{m,n}  (V_m x + d V_m y) = V_m \varphi_{1,n} (x) + d V_m \varphi_{1, n-1} (y)$ for $x\in \Omega_k ^{n-1} $ and $y \in \Omega_k ^{n-2}$.

\end{cor}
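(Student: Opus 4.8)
The plan is to derive Corollary~\ref{cor:RS induction} formally from three ingredients: Theorem~\ref{thm:RS induction} (which supplies the top row), Assumption~$(\star)$ together with elementary facts about relative Milnor $K$-theory (which supply the bottom row), and the fact that $\varphi_{m,n}$ is the level-$m$ component of the unique morphism of restricted Witt complexes out of the initial object $\mathbb{W}\Omega_k$ (which forces the commutativity). For the top row, since a field is trivially essentially smooth over itself, I would apply Theorem~\ref{thm:RS induction} with $R=k$ and $q=n-1$ (legitimate as $n\geq 2$, $m\geq 2$); the exact sequence \eqref{eqn:RS induction p 2} then reads
$$
0 \to V_m\Omega_k^{n-1} + d V_m\Omega_k^{n-2} \to \mathbb{W}_m\Omega_k^{n-1} \overset{\mathfrak{R}}{\longrightarrow} \mathbb{W}_{m-1}\Omega_k^{n-1} \to 0,
$$
where $\mathbb{W}_1\Omega_k^{j}=\Omega_k^{j}$ and $\delta=d$, and in particular its first term is exactly $\ker(\mathfrak{R})$; this is the top row of the diagram.

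\emph{The bottom row.} Here the point is exactness. The restriction $R\colon\widehat{K}_n^M(k_{m+1},(t))\to\widehat{K}_n^M(k_m,(t))$ is the restriction to relative subgroups of the map induced by the surjection $k_{m+1}\twoheadrightarrow k_m$ of local rings; since units lift along it, $K_n^M(k_{m+1})\to K_n^M(k_m)$ is surjective, hence so is $\widehat{K}_n^M(k_{m+1})\to\widehat{K}_n^M(k_m)$ by Theorem~\ref{thm:Khat_univ}(1), and via the functorial splitting $\widehat{K}_n^M(k_{\ell+1})=\widehat{K}_n^M(k)\oplus\widehat{K}_n^M(k_{\ell+1},(t))$ coming from $k\hookrightarrow k_{\ell+1}$ and $t\mapsto 0$ this surjectivity descends to the relative summands, so $R$ is surjective. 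Its kernel is $\widehat{K}_n^M(k_{m+1},(t^m))$, using that $\widehat{K}_n^M(k_{m+1},(t^m))\subseteq\widehat{K}_n^M(k_{m+1},(t))$ because $k_{m+1}\to k$ factors through $k_m$; and by Assumption~$(\star)$ this kernel equals $V_m\widehat{K}_n^M(k_2,(t))+dV_m\widehat{K}_{n-1}^M(k_2,(t))$. Thus the bottom row is exact.

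\emph{Commutativity.} The morphism $\varphi_{m,n}$ is the component at level $m$ of a morphism of restricted Witt complexes $\mathbb{W}\Omega_k\to\{\widehat{K}_n^M(k_{\bullet+1},(t))\}$, so by Definition~\ref{defn:Witt complex} it commutes with $\mathfrak{R}$, with $d$, and with every $V_r$. Commutativity of the right square is precisely the identity $R\circ\varphi_{m,n}=\varphi_{m-1,n}\circ\mathfrak{R}$, i.e. compatibility with restriction, which is immediate. Since $\varphi_{m,n}$ commutes with $\mathfrak{R}$ it carries $\ker(\mathfrak{R})$ into $\ker(R)$; moreover, for \emph{any} presentation of an element $v$ of the top-left group as $v=V_m x + d V_m y$ with $x\in\Omega_k^{n-1}$, $y\in\Omega_k^{n-2}$, compatibility with $V_m$ and $d$ gives $\varphi_{m,n}(v)=V_m\varphi_{1,n}(x)+dV_m\varphi_{1,n-1}(y)$, which is exactly the formula defining $\alpha_{m,n}$. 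Consequently $\alpha_{m,n}$ is automatically well defined (it is just $\varphi_{m,n}$ restricted to $\ker(\mathfrak{R})$), it indeed lands in $V_m\widehat{K}_n^M(k_2,(t))+dV_m\widehat{K}_{n-1}^M(k_2,(t))$, and the left square commutes. This finishes the proof.

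\emph{Main obstacle.} Inside this argument nothing is deep: everything is formal once Theorem~\ref{thm:RS induction} and Assumption~$(\star)$ are granted, the only point requiring care being the surjectivity of $R$ and the identification of $\ker(R)$ with the relative group $\widehat{K}_n^M(k_{m+1},(t^m))$, which rest on the splitting of relative Milnor $K$-theory for split square-zero extensions. The genuine obstacle — and the reason the present article instead follows the geometric route of \S\ref{sec:inverse Bloch general}--\S\ref{sec:surj inverse Bloch} — lies in establishing Assumption~$(\star)$ itself: constructing the restricted Witt-complex structure on $\{\widehat{K}_n^M(k_{m+1},(t))\}$ by verifying \emph{all} the axioms of Definition~\ref{defn:Witt complex} directly on relative Milnor $K$-groups, and then proving the nontrivial identity $\widehat{K}_n^M(k_{m+1},(t^m))=V_m\widehat{K}_n^M(k_2,(t))+dV_m\widehat{K}_{n-1}^M(k_2,(t))$; even after Corollary~\ref{cor:RS induction} one would still have to run a double induction (on $m$ through these five-term comparisons, with base case $m=1$, and on $n$) to conclude that each $\varphi_{m,n}$ is an isomorphism.
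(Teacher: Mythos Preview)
Your proof is correct. The paper does not supply a proof of this corollary at all; it is stated without argument, presumably because it is meant to be read as the immediate formal consequence of Theorem~\ref{thm:RS induction} (giving the top row), Assumption~$(\star)$ (giving the bottom row and the existence of $\varphi_{m,n}$), and the initial-object property of $\{\mathbb{W}_m\Omega_k^\bullet\}$ (forcing compatibility with $\mathfrak{R}$, $V_m$, and $d$). What you have written is precisely this unwinding, and your closing paragraph accurately identifies that the real content lies in Assumption~$(\star)$ rather than in the corollary itself.
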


\begin{lem}\label{lem:inverse Bloch surj}
Suppose {\rm \textbf{Assumption ($\star$)}} holds, with $p > 2$. Then the maps $\varphi_{m,n} : \mathbb{W}_m \Omega_k ^{n-1} \to \widehat{K}_n ^M (k_{m+1}, (t))$ are isomorphisms for all pairs $(m,n) \in \mathbb{N}^2$.
\end{lem}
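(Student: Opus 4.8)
The plan is to run the $t$-adic filtration argument that Corollary \ref{cor:RS induction} makes available, reducing everything to two manageable pieces. Since \textbf{Assumption $(\star)$} equips $\{\widehat K_n^M(k_{m+1},(t))\}$ with a restricted Witt complex structure and $\{\mathbb W_m\Omega_k^{\bullet}\}$ is the initial such complex, the maps $\varphi_{m,n}$ are the components of the unique morphism $\varphi$ of Witt complexes, so they commute with $\mathfrak R$, $V_r$, $F_r$ and $d$. Put $F^j:=\ker(\mathbb W_m\Omega_k^{n-1}\to\mathbb W_j\Omega_k^{n-1})$ and $G^j:=\ker(\widehat K_n^M(k_{m+1},(t))\to\widehat K_n^M(k_{j+1},(t)))$ for $0\le j\le m$; because $\varphi$ commutes with restrictions it is filtered for $F^{\bullet}$, $G^{\bullet}$, and since these filtrations are finite and exhaustive, $\varphi_{m,n}$ is an isomorphism once every associated graded map is. By Theorem \ref{thm:RS induction} in the form \eqref{eqn:RS induction p 2}, the restriction maps on $\mathbb W_\bullet\Omega_k^{n-1}$ being surjective, we get $F^{j-1}/F^j\cong{\rm gr}_j^{n-1}(k)$ for $j\ge 2$ and $F^0/F^1\cong\mathbb W_1\Omega_k^{n-1}=\Omega_k^{n-1}$; by \textbf{Assumption $(\star)$} (applied with $m$ replaced by each $2\le j\le m$) we get $G^{j-1}/G^j\cong V_j\widehat K_n^M(k_2,(t))+dV_j\widehat K_{n-1}^M(k_2,(t))=\widehat K_n^M(k_{j+1},(t^j))$. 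The graded map at step $j$ is $\varphi_{1,n}$ for $j=1$ and the map $\alpha_{j,n}$ of Corollary \ref{cor:RS induction} for $j\ge 2$. So the statement reduces to: (a) $\varphi_{1,n}$ is an isomorphism for all $n\ge 1$; and (b) $\alpha_{j,n}$ is an isomorphism for all $j\ge 2$, $n\ge 1$.

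Claim (a) is the classical computation of the relative Milnor $K$-theory of the dual numbers. For $p>2$ the map $\varphi_{1,n}\colon\mathbb W_1\Omega_k^{n-1}=\Omega_{k/\mathbb Z}^{n-1}\to\widehat K_n^M(k[\epsilon],(\epsilon))$ is an isomorphism: for $n=1$ it is elementary (and is the case $m=1$ of Theorem \ref{thm:inverse Bloch n=1}), for $n=2$ it is van der Kallen \cite{vdK}, and for $n\ge 3$ it follows from the existence and invertibility of the Bloch map when $\tfrac1{N!}\in k$ with $N=2$, i.e.\ from Gorchinskiy--Tyurin \cite{GT} when $|k|\ge 6$ and from $\Omega_k^{n-1}=0=\widehat K_n^M(k[\epsilon],(\epsilon))$ for the remaining (perfect, hence finite) fields. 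This is precisely where the hypothesis $p>2$ is used.

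For Claim (b) the surjectivity of $\alpha_{j,n}$ is cheap: by \textbf{Assumption $(\star)$} the target $\widehat K_n^M(k_{j+1},(t^j))$ is generated by elements $V_j\xi+dV_j\eta$ with $\xi\in\widehat K_n^M(k_2,(t))$ and $\eta\in\widehat K_{n-1}^M(k_2,(t))$; by Claim (a) write $\xi=\varphi_{1,n}(x)$, $\eta=\varphi_{1,n-1}(y)$, and then $\alpha_{j,n}(V_jx+(-1)^{n-1}dV_jy)=V_j\xi+(-1)^{n-1}dV_j\eta$ since $\varphi$ commutes with $V_j$ and $d$. The injectivity of $\alpha_{j,n}$ is the genuine obstacle, and is the point at which this route costs more than the geometric one in the body. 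Both ${\rm gr}_j^{n-1}(k)$ and its image $\widehat K_n^M(k_{j+1},(t^j))$ are quotients of $\Omega_k^{n-1}\oplus\Omega_k^{n-2}$ via $(x,y)\mapsto V_jx+(-1)^{n-1}dV_jy$, matched by the isomorphisms $\varphi_{1,n}$, $\varphi_{1,n-1}$; and $\varphi_{1,n}\oplus\varphi_{1,n-1}$ already carries the kernel of the source map into the kernel of the target map, so what must be shown is that no further collapsing occurs in the target. The plan is to revisit the proof of Theorem \ref{thm:RS induction} and check that each relation imposed there to cut ${\rm gr}_j^{n-1}(k)$ out of $\Omega_k^{n-1}\oplus\Omega_k^{n-2}$ — the vanishing of $V_j(B_s^{n-1})$ and $dV_j(B_s^{n-2})$ and the relation $V_jC^{-s}(d\gamma)=\pm\,dV_j\bigl(m'\,C^{-s}(\gamma)\bigr)$, in the notation of \S\ref{sec:addendum} with $j=m'p^s$, $(m',p)=1$ — is a formal consequence of the axioms of a restricted Witt complex (Definition \ref{defn:Witt complex}), the inverse Cartier operator itself being the Witt-complex expression $F_p\,d\lambda([a])=\lambda([a]^{p-1})\,d\lambda([a])$ of axiom (v), iterated. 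Since $\{\widehat K_\bullet^M(k_{\bullet+1},(t))\}$ is a Witt complex, these relations hold in it, so the surjection $\Omega_k^{n-1}\oplus\Omega_k^{n-2}\twoheadrightarrow\widehat K_n^M(k_{j+1},(t^j))$ factors through ${\rm gr}_j^{n-1}(k)$ and produces a two-sided inverse of $\alpha_{j,n}$. An alternative, once $\alpha_{j,n}$ is known surjective: the transition maps of $\{\ker\varphi_{m,n}\}_m$ are then surjective, so $\lim_m\ker\varphi_{m,n}$ surjects onto each $\ker\varphi_{m,n}$, and feeding this into the pro-isomorphism of Theorem \ref{thm:main pro-iso} forces $\ker\varphi_{m,n}=0$. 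Either way the injectivity is the bottleneck — which is exactly why the body of the paper proves \thmref{thm:main vanishing} via algebraic cycles, and that theorem in any case already yields the conclusion of Lemma \ref{lem:inverse Bloch surj} unconditionally, so the present approach is offered only as an alternative.
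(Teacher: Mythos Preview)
Your approach is essentially the paper's: both run the induction on $m$ via the commutative ladder of Corollary~\ref{cor:RS induction}, with the base $m=1$ supplied by Gorchinskiy--Osipov (whence $p>2$). The paper phrases it as a double induction on $(m,n)$ and simply asserts in the inductive step that $\alpha_{m,n}$ is an isomorphism, then applies the snake lemma; your filtration argument is the same skeleton, and you are more explicit that the injectivity of $\alpha_{j,n}$ is the crux. Note that the paper's extra induction on $n$ does not visibly contribute to that injectivity either, so neither the paper's proof nor yours fully closes this point --- consistent with the Addendum being offered as a ``road map''.

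That said, your two proposed resolutions for the injectivity of $\alpha_{j,n}$ are not convincing as written. For the Witt-complex route: showing that the relations cutting ${\rm gr}_j^{n-1}(k)$ out of $\Omega_k^{n-1}\oplus\Omega_k^{n-2}$ are formal consequences of the axioms only establishes that these relations hold in the target, i.e.\ that the surjection to $\widehat K_n^M(k_{j+1},(t^j))$ factors through ${\rm gr}_j^{n-1}(k)$. But that factorization \emph{is} $\alpha_{j,n}$, not an inverse to it; what you would need is the converse, that the target Witt complex satisfies \emph{no further} relations among the $V_j x$ and $dV_j y$ than the initial object does --- and initiality runs in exactly the wrong direction for that. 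Concretely, the proof of Theorem~\ref{thm:RS induction} that the kernel is \emph{exactly} ${\rm im}(\theta)$ uses the explicit structure of $\mathbb W_\bullet\Omega_k^\bullet$, not just the axioms. For the pro-isomorphism route: you need that $\{\varphi_{m,n}\}_m$ itself realizes the pro-isomorphism of Theorem~\ref{thm:main pro-iso}, but that theorem is proved via non-level-wise maps $\mathbb W_{2m+1}\Omega_k^{n-1}\to\widehat K_n^M(k_{m+1},(t))$ coming from cycles with modulus, and there is no a~priori identification of that pro-map with the one coming from initiality of the Witt complex. Knowing merely that source and target are abstractly pro-isomorphic does not force a given pro-surjection between them to be a pro-isomorphism. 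So the bottleneck you correctly name remains open along this path --- which is, of course, the point of the Addendum.
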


\begin{proof}
We will use a double induction argument. 

When $n=1$, we already know algebraically that $\varphi_{m,1}$ are isomorphisms unconditionally for all $ m \geq 1$. 

When $m=1$, by Gorchinskiy-Osipov \cite{GO} we know that $\varphi_{1, n}$ are isomorphisms for all $n \geq 1$, except for the case $p=2$ (so we supposed $p>2$).

Now we move on to the double induction argument. Suppose $m, n \geq 2$, and suppose that $\varphi_{m', n'}$ are isomorphisms for all pairs $(m', n')$ with $1 \leq m' \leq m$ and $1 \leq n' \leq n$, except $(m', n') = (m,n)$. Under these assumptions, in the diagram of Corollary \ref{cor:RS induction}, the left vertical map $\alpha_{m,n}$ and the right vertical map $\varphi_{m-1, n}$ are both isomorphisms. The snake lemma now implies then that $\varphi_{m,n}$ is an isomorphism as well.
\end{proof}

The author does not have a good argument for {\rm \textbf{Assumption ($\star$)}} without resorting to the cycle-theoretic results proven in this article.

\section{Applications}\label{sec:applications}

We leave some applications, questions, and remarks.

\subsection{Logarithm and logarithmic derivative over $k_{m+1}$}\label{sec:dlog}
One immediate application of the identification $\mathbb{W}_m \Omega_k ^{n-1} \simeq \widehat{K}_n ^M (k_{m+1}, (t))$ stated in Theorem \ref{thm:main vanishing}, is the split short exact sequence for arbitrary field $k$
$$
0 \to \mathbb{W}_m \Omega_k ^{n-1} \to \widehat{K}_n ^M (k_{m+1})) \to K_n ^M (k) \to 0.
$$
Using this, we have another application regarding the logarithm, especially including the case when ${\rm char} (k) > 0$.

The isomorphism $\mathbb{W}_m\Omega_k ^{n-1} \overset{\simeq}{\rightarrow} \widehat{K}_n ^M (k_{m+1}, (t))$ of Theorem \ref{thm:main vanishing} turns the addition of the de Rham-Witt forms into the summation of Milnor symbols, which comes from the multiplication of members of $k_{m+1}^{\times}$. So, we regard the inclusion $\mathbb{W}_m \Omega_k ^{n-1} \hookrightarrow \widehat{K}_n ^M (k_{m+1})$ given by the inverse Bloch map as a sort of exponential map, and write it as $ \exp_n$, the $n$-th exponential. Its image is precisely $\widehat{K}_n ^M (k_{m+1}, (t))$. So, its inverse is:

\begin{defn}[Bloch map as the $n$-th logarithm]\label{defn:log log^0}
 Define the \emph{$n$-th logarithm}, denoted by
$$
\log= \log_n: \widehat{K}_n ^M (k_{m+1}, (t)) \to \mathbb{W}_m \Omega_k ^{n-1}
$$
to be the inverse of $\exp_n$, i.e. the Bloch map $B= \varphi^{-1}$. On the whole group ${K}_n ^M (k_{m+1})$, using the decomposition
\begin{equation}\label{eqn:K Milnor decomp}
\widehat{K}_n ^M (k_{m+1}) = \widehat{K}_n ^M (k_{m+1}, (t)) \oplus K_n ^M (k),
\end{equation}
we define the \emph{normalized $n$-th logarithm} to be
$$
\log^0= \log_n ^0 :  {K}_n ^M (k_{m+1})  \to^{\dagger} \widehat{K}_n ^M (k_{m+1}) \overset{pr_{rel}}{\longrightarrow} \widehat{K}_n ^M (k_{m+1}, (t) )   \overset{ \log_n}{\longrightarrow} \mathbb{W}_m \Omega_k ^{n-1},
$$
where $\dagger$ is the natural surjection in Theorem \ref{thm:Khat_univ} and $pr_{rel}$ is the projection to the first component in \eqref{eqn:K Milnor decomp}.\qed
\end{defn}

\begin{remk}
For instance when $n=1$, we have $k[[t]]^{\times} =  (1 + tk[[t]])^{\times} \oplus k^{\times} $, and 
$$
\log^0: k[[t]]^{\times} \to \mathbb{W} (k)= (1+ tk[[t]])^{\times}
$$
is given by sending a formal power series $f \in k[[t]]^{\times}$ to its scaling $f/ f(0)$. 

In case ${\rm char} (k) = 0$, we can further consider the usual formal logarithm
$$
 \Log : (1+ tk[[t]])^{\times} \overset{\sim}{ \to} tk[[t]]
 $$
by writing 
\begin{equation}\label{eqn:Log}
\Log(1 + t g) = \sum_{i=1} ^{\infty} \frac{1}{i} (-tg)^i= -tg + \frac{1}{2} (-tg)^2 + \frac{1}{3} (-tg)^3 + \cdots,
\end{equation}
and define $\Log^0 (f):= \Log (f / f(0))$ for $f \in k[[t]] ^{\times}$. This $\Log^0$ coincides with the function $\log^0$ considered in S. \"Unver \cite{Unver JAG} in ${\rm char} (k) = 0$.

The $\log^0$ defined in Definition \ref{defn:log log^0} is a natural generalization to all $n\geq 1$ and all characteristic, of the above $\Log^0$ as well as $\log^0$ of S. \"Unver. \qed
\end{remk}

Recall that we had the $d\log$ map (see Krishna-Park \cite[Corollary 7.5]{KP crys}, cf. Geisser-Hesselholt \cite[Proposition B.1.1]{GH})
\begin{equation}\label{eqn:dlog int}
 d\log= d\log_k : K_n ^M (k) \to \mathbb{W}_m \Omega_k ^n.
 \end{equation}
which sends the Milnor symbol $\{ a_1, \cdots, a_n \}$ for $a_i \in k^{\times}$ to $\frac{ d [a_1]}{[a_1]} \wedge \cdots \wedge \frac{ d [a_n]}{[a_n]}$, where $[c]$ for $c \in k^{\times}$ is the Teichm\"uller lift of $c$ in $\mathbb{W}_m (k)$. 

\medskip

Let $ {\rm ev}_{t=0}: K_n ^M (k_{m+1}) \to K_n ^M (k)$ denote the reduction mod $t$. We can now generalize the $d\log_k$ in \eqref{eqn:dlog int} to:

\begin{defn}[Logarithmic derivative] 
Define the \emph{extended $n$-th $d\log$ of modulus $m \geq 1$}
$$
d \log_t: K_n ^M (k_{m+1}) \to \mathbb{W}_m \Omega_k ^n
$$
by the formula
\begin{equation}\label{eqn:dlog_t}
d \log_t (\alpha) := d \left( \log^0 _n (\alpha) \right) + d \log_k ( {\rm ev}_{t=0} (\alpha))
\end{equation}
 for $\alpha \in K_n ^M (k_{m+1})$, where $\log^0_n$ is the normalized logarithm in Definition \ref{defn:log log^0}, and $d$ is the exterior derivation
 $$
 d: \mathbb{W}_{m} \Omega_k ^{n-1} \to \mathbb{W}_m \Omega_k ^n
 $$
 of the big de Rham-Witt forms, while $d\log_k$ is from \eqref{eqn:dlog int}.
\qed
\end{defn}

Notice that the map $\log_n$ is not defined on the whole group $K_n ^M (k_{m+1})$, but on the subgroup $K_n ^M (k_{m+1}, (t))$. Restricted on this subgroup, the map  $d\log_t$ of \eqref{eqn:dlog_t} is indeed related to $d$ of $\log_n$, i.e. the $d$ of the Bloch map $B$:

\begin{cor}
For $\alpha \in K_n ^M (k_{m+1}, (t))$, we have
$$
d \left( \log_n (\alpha) \right) = d\log_t (\alpha).
$$
\end{cor}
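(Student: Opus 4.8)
The statement to prove is that for $\alpha \in K_n ^M (k_{m+1}, (t))$ one has $d(\log_n(\alpha)) = d\log_t(\alpha)$. This should be essentially immediate once we unwind the two definitions on this subgroup. The key observation is that on the relative subgroup $K_n ^M (k_{m+1}, (t))$ the reduction ${\rm ev}_{t=0}(\alpha)$ vanishes by definition (this is the kernel of ${\rm ev}_{t=0}$), so the second summand $d\log_k({\rm ev}_{t=0}(\alpha))$ in the formula \eqref{eqn:dlog_t} is zero. Thus I would first record $d\log_t(\alpha) = d(\log^0_n(\alpha))$ for $\alpha$ in the relative subgroup.

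The remaining point is to identify $\log^0_n$ with $\log_n$ on $K_n ^M (k_{m+1}, (t))$. Here I would trace through Definition \ref{defn:log log^0}: $\log^0_n$ is the composite of the natural surjection $K_n^M(k_{m+1}) \to \widehat{K}_n^M(k_{m+1})$, the projection $pr_{rel}$ onto the relative component of the decomposition \eqref{eqn:K Milnor decomp}, and $\log_n$. For $\alpha \in K_n^M(k_{m+1},(t))$, the image under the natural surjection lands in $\widehat{K}_n^M(k_{m+1},(t))$ (since the surjection is compatible with ${\rm ev}_{t=0}$, by functoriality of $\widehat{K}$ and the fact that it agrees with $K^M$ on the field $k$), and then $pr_{rel}$ acts as the identity on that component; finally $\log_n$ is applied. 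Hence $\log^0_n(\alpha) = \log_n(\bar\alpha)$ where $\bar\alpha \in \widehat{K}_n^M(k_{m+1},(t))$ is the image of $\alpha$. One small subtlety I would address: $\log_n$ in Definition \ref{defn:log log^0} is defined on $\widehat{K}_n^M(k_{m+1},(t))$, so strictly $\log_n(\alpha)$ in the corollary's statement means $\log_n$ applied to the image $\bar\alpha$ of $\alpha$ under the surjection $K_n^M \to \widehat{K}_n^M$; with this reading the two sides agree tautologically after applying $d$.

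Concretely the plan is: (1) note ${\rm ev}_{t=0}(\alpha)=0$ so $d\log_t(\alpha)=d(\log^0_n(\alpha))$; (2) observe that the natural map $K_n^M(k_{m+1})\to\widehat{K}_n^M(k_{m+1})$ carries the relative subgroup into $\widehat{K}_n^M(k_{m+1},(t))$ and commutes with ${\rm ev}_{t=0}$, so $pr_{rel}$ is the identity on the image of $\alpha$; (3) conclude $\log^0_n(\alpha)=\log_n(\alpha)$ (in the sense just explained), hence $d(\log^0_n(\alpha))=d(\log_n(\alpha))$; (4) combine. I do not expect any serious obstacle here — the content is purely formal bookkeeping about the decomposition \eqref{eqn:K Milnor decomp} and the compatibility of $\widehat{K}^M$ with reduction. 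The only place warranting a sentence of care is the compatibility of the Gabber–Kerz surjection with ${\rm ev}_{t=0}$, which follows from functoriality of $\widehat{K}^M_n(-)$ applied to the ring map $k_{m+1}\to k$ together with the fact that $K^M_n(k)\to\widehat{K}^M_n(k)$ is an isomorphism (Theorem \ref{thm:Khat_univ}(2)(b)).
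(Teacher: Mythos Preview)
Your proposal is correct and follows essentially the same approach as the paper: the paper's proof simply notes that $\log^0_n(\alpha) = \log_n(\alpha)$ by definition for $\alpha$ in the relative subgroup, that ${\rm ev}_{t=0}(\alpha)=0$ kills the second summand, and then reads off the identity from \eqref{eqn:dlog_t}. Your extra remarks about the compatibility of the Gabber--Kerz surjection with ${\rm ev}_{t=0}$ are more careful than the paper but not strictly necessary.
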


\begin{proof}
Since $\alpha \in K_n ^M (k_{m+1}, (t))$, by definition we have $\log^0_n (\alpha) = \log _n(\alpha)$. On the other hand, ${\rm ev}_{t=0} (\alpha) $ is $0$ in $K_n ^M (k)$, thus $d\log_k ( {\rm ev}_{t=0} (\alpha) ) = 0$. Hence by the formula \eqref{eqn:dlog_t}, we have
$$
d \log_t (\alpha)=  d \left( \log _n(\alpha) \right) + 0 = d \left( \log_n (\alpha) \right),
$$
as desired.
\end{proof}


\subsection{Some questions related to $d\log_t$}
Suppose ${\rm char} (k) = p>0$. Recall for instance from M. Morrow \cite[Theorems 1.2, 5.1]{Morrow ENS} that the $d\log_k$ in \eqref{eqn:dlog int} induces $d\log: \widehat{K}_n ^M (k) \to W_i \Omega_k ^n$ for $i \geq 1$ to the $p$-typical de Rham-Witt forms, which in turn induces $d\log_k: \widehat{K}^M_n (k) / p^i \to W_i \Omega_k ^n$, whose image is the logarithmic de Rham-Witt forms $W_i \Omega_{k, \log} ^n$. In \emph{loc.cit.}, it is proven for more general rings. Combined with Geisser-Levine \cite{GL}, we have a description of the $p$-adic motivic cohomology of smooth $k$-schemes in terms of the logarithmic de Rham-Witt forms.

Our new $n$-th $d\log_t$ of modulus $m \geq 1$ in \eqref{eqn:dlog_t} raises a few questions.

\begin{ques}
Is $d\log_t: \widehat{K}_n ^M (k_{m+1}) \to \mathbb{W}_m \Omega_k ^n$ surjective? 
\end{ques}

For instance R\"ulling-Saito \cite[Proposition 4.7]{RS} gives a surjective map
$$
 \mathbb{W}_m (k) \otimes (K_n ^M (k) \oplus K_{n-1} ^M (k)) \to \mathbb{W}_m\Omega_k ^n,
 $$
but it is not clear to the author if it answers the above question.

\begin{ques}
Suppose $m, i \geq 1$ are integers such that $\{ 1, p, \cdots, p^{i} \} \subset \{ 1, 2, \cdots, m \}$.
\begin{enumerate}
\item Is the induced map
\begin{equation}\label{eqn:dlog_t p}
d \log_t: \widehat{K}_n ^M (k_{m+1}) \to \mathbb{W}_m \Omega_k ^n \to W_i \Omega_k ^n
\end{equation}
surjective? Here the second map is the projection from the big de Rham-Witt forms to the $p$-typical de Rham-Witt forms, namely the $p$-typicalization map.

\item If the map in \eqref{eqn:dlog_t p} is not surjective, then what is the image of this map?

\end{enumerate}
\end{ques}

Here is one rather outlandish question, in the spirit of the $p$-adic motivic cohomology as seen from Geisser-Levine \cite{GL}:

\begin{ques}
Suppose $m, i \geq 1$ are integers such that $\{ 1, p, \cdots, p^{i} \} \subset \{ 1, 2, \cdots, m \}$.
\begin{enumerate}
\item Does the map \eqref{eqn:dlog_t p} induce a map
$$
\widehat{K}_n ^M (k_{m+1})/ p^i \to W_i \Omega_k ^n?
$$
\item Is it an isomorphism? If so, is it reasonable to define the notion of ``$p$-adic motivic cohomology" of the non-reduced scheme $\Spec (k_{m+1})$ using it?
\end{enumerate}
\end{ques}

\bigskip

\noindent\textbf{Acknowledgments.}  A good part of this work was done while the author was on his sabbatical leave at the Center for Complex Geometry (CCG) of the Institute for Basic Science of South Korea in the years 2022-2023, and it took a few more years to finish writing it up. The author thanks the Director Jun-Muk Hwang and the staff members of CCG for their warm hospitality and the peaceful working environment. 

The author thanks Ahmed Abbes, Bruno Kahn, Shane Kelly, Shuji Saito, Luc Illusie for kindly taking time to listen to him in various opportunities, and offering comments and / or suggestions. The author particularly feels obliged to acknowledge with gratitude that Matthew Morrow taught him that the main question had been open in ${\rm char} (k) = p>0$ for a long time.

This work was supported by Samsung Science and Technology Foundation under Project Number SSTF-BA2102-03.

\end{document}